\documentclass[reqno,11pt]{amsart}

\usepackage{
    amsmath,
    amsfonts,
    amssymb,
    amsthm,
    amscd,
    comment,
    enumitem,
    etoolbox,
    textcomp,
    gensymb,
    mathtools,
    mathdots,
}
\usepackage[usenames,dvipsnames]{xcolor}
\usepackage[all]{xy}
\usepackage{tikz}

\usetikzlibrary{shapes.geometric}
\usetikzlibrary{decorations.markings,decorations.pathreplacing}
\usetikzlibrary{cd}
\usetikzlibrary{patterns}
\usetikzlibrary{calc}
\usepackage{setspace}
\usepackage{bm}
\usepackage{bbm} 

\usepackage[colorlinks=true, linkcolor=black, citecolor=purple, urlcolor=blue, breaklinks=true]{hyperref}

\tikzset{anchorbase/.style={baseline={([yshift=-0.5ex]current bounding box.center)}}}
\tikzset{wipe/.style={white,line width=4pt}}


\usepackage[nameinlink]{cleveref}

\crefname{corollary}{Corollary}{Corollaries}
\crefname{definition}{Definition}{Definitions}
\crefname{example}{Example}{Examples}
\crefname{remark}{Remark}{Remarks}
\crefname{lemma}{Lemma}{Lemmas}
\crefname{theorem}{Theorem}{Theorems}
\crefname{introtheorem}{Theorem}{Theorems}
\crefname{equation}{}{}
\crefname{enumi}{}{}
\crefname{section}{$\S\!\!$}{$\S\S\!\!$}

\makeatletter


\leftmargin=0in
\topmargin=0pt
\headheight=0pt
\oddsidemargin=0in
\evensidemargin=0in
\textheight=8.95in
\textwidth=6.2in
\parindent=0.5cm
\headsep=0.25in
\widowpenalty10000
\clubpenalty10000



\newtheorem{theorem}{Theorem}[section]
\newtheorem{introtheorem}{Theorem}

\newtheorem{lemma}[theorem]{Lemma}
\newtheorem{corollary}[theorem]{Corollary}

\theoremstyle{definition}
\newtheorem{definition}[theorem]{Definition}
\newtheorem{remark}[theorem]{Remark}
\newtheorem{example}[theorem]{Example}

\numberwithin{equation}{section}
\allowdisplaybreaks

\setcounter{tocdepth}{1}

\newcommand\one{\mathbbm{1}}

\def\qTilt{q\hspace{-.1mm}\text{-}\hspace{-.2mm}\mathbf{Tilt}^{\!+}}

\newcommand{\Hom}{\operatorname{Hom}}
\newcommand{\Z}{\mathbb{Z}}

\newcommand{\Q}{\mathbb{Q}}
\newcommand{\N}{\mathbb{N}}

\newcommand{\Ext}{\operatorname{Ext}}

\newcommand{\End}{\operatorname{End}}

\def\Comp{\Lambda}
\def\sComp{\Lambda_{s}}
\def\Par{\Lambda^+}
\def\tr{\mathtt{T}}
\def\Y{\mathtt{Y}}
\def\P{\mathtt{P}}
\newcommand{\rev}{\mathrm{rev}}
\newcommand{\op}{\mathrm{op}}

\newcommand{\cC}{\mathbf{C}}

\newcommand{\cI}{\mathbf{I}}
\newcommand{\cJ}{\mathbf{J}}

\renewcommand{\k}{\Bbbk}
\newcommand{\qbinom}{\textstyle\genfrac{[}{]}{0pt}{}}

\def\T{\mathtt{T}}

\def\barT{\overline{\mathtt{T}}}
\def\qG{{q\hspace{-.05mm}\text{-}\hspace{-.1mm}GL}}
\def\height{\operatorname{ht}}

\def\qdet{{\operatorname{det}_{q}}}
\def\A{\mathcal O}

\def\nset#1{\{1,\dots,#1\}}
\def\Std{\operatorname{Std}}
\def\Row{\operatorname{Row}}

\def\Mat#1#2{\mathrm{Mat}(#1,#2)}

\def\Schur{\mathbf{Schur}}
\def\qSchur{q\hspace{-.05mm}\text{-}\hspace{-.1mm}\mathbf{Schur}}
\def\schur{K} 
\def\sch{H}  
\def\R{c}

\def\diag{\operatorname{diag}}

\def\eps{\varepsilon}

\def\bh{\text{\boldmath$h$}}
\def\bi{\text{\boldmath$i$}}
\def\bj{\text{\boldmath$j$}}
\def\bk{\text{\boldmath$k$}}
\def\I{\mathrm I}
\def\mod{\!\operatorname{-mod}}

\newcommand\spot[1]{\filldraw (#1) circle (.9pt)} 
\newcommand\smallspot[1]{\filldraw (#1) circle (.6pt)} 

\def\zigzag{{\begin{tikzpicture}[baseline=0mm,scale=.45]
\draw[-,thin] (.4,.4) to (0,.4) to (.4,0) to (0,0) to (.08,.08) to
(0,0) to (0.08,-.08) to (0,0);
\draw[-,white] (-.04,0) to (.03,-.07);
\end{tikzpicture}}}

\def\ZigzagDownRev{{\begin{tikzpicture}[baseline=.2,scale=.6]
\draw[-,thin] (.4,.4) to (0,.4) to (.4,0) to (0,0) to (.08,.08) to
(0,0) to (0.08,-.08) to (0,0);
\end{tikzpicture}}}

\def\ZigzagDown{{\begin{tikzpicture}[baseline=.2,scale=.6]
\draw[-,thin] (-.4,.4) to (0,.4) to (-.4,0) to (0,0) to (-.08,.08) to
(0,0) to (-0.08,-.08) to (0,0);
\end{tikzpicture}}}

\def\ZigzagUp{{\begin{tikzpicture}[baseline=.2,scale=.6]
\draw[-,thin] (.32,.32) to (.4,.4) to (.32,.48) to (.4,.4)  to (0,.4) to (.4,0) to (0,0);
\end{tikzpicture}}}

\begin{document}

\title[The $q$-Schur category]{\boldmath The $q$-Schur category and polynomial tilting modules for quantum $GL_n$}
\author[J. Brundan]{Jonathan Brundan}
\address{
  Department of Mathematics \\
  University of Oregon \\
  Eugene, OR, USA
}
\urladdr{\href{https://pages.uoregon.edu/brundan}{https://pages.uoregon.edu/brundan}, \textrm{\textit{ORCiD}:} \href{https://orcid.org/0009-0009-2793-216X}{https://orcid.org/0009-0009-2793-216X}}
\email{brundan@uoregon.edu}

\subjclass[2020]{17B10, 18D10, 81R10}
\thanks{Research supported in part by NSF grants DMS-2101703
and DMS-2348840.}

\begin{abstract}
The {\em $q$-Schur category} 
is a $\Z[q,q^{-1}]$-linear monoidal category
closely related to the $q$-Schur algebra.
We explain how to construct it from  coordinate algebras of quantum $GL_n$ for all $n \geq 0$. 
Then we use Donkin's work on Ringel duality for $q$-Schur algebras to make precise the relationship between the $q$-Schur category and a $\Z[q,q^{-1}]$-form for the $U_q\mathfrak{gl}_n$-web category of Cautis, Kamnitzer and Morrison.
We construct explicit 
integral bases for morphism spaces in the latter category, and extend the
Cautis-Kamnitzer-Morrison theorem
to polynomial representations of quantum $GL_n$ at a root of unity over a field of any characteristic.
\end{abstract}

\maketitle

\section{Introduction}\label{s1-intro}

In this article, we revisit some algebra from the 1990s using the diagrammatic technique of string calculus for strict monoidal categories which has become ubiquitous in this area since then. 
The initial goal is to give a self-contained construction of a strict $\Z[q,q^{-1}]$-linear monoidal category, the {\em $q$-Schur category}, together with three 
important bases for its morphism spaces.
The path algebra of this category
is Morita equivalent to the direct sum of the
$q$-Schur algebras $S_q(n,n)$
of Dipper and James \cite{DJgl} for all $n \geq 0$.
In that context,
all three bases were studied in detail already 
30 years ago, and this part of the article is mainly expository. Indeed, there are already many generalizations in the literature---cyclotomic 
\cite{DJM}, affine
\cite{GreenAffine,MS,MakS}, and 2-categorical
\cite{Will,MSV,WebCat}, to name but a few.

Once the general framework is in place, we use the $q$-Schur category to
define a $\Z[q,q^{-1}]$-form
for the positive half of the
$U_q\mathfrak{gl}_n$-web category of Cautis, Kamnitzer and Morrison \cite{CKM}, complete with bases for its morphism spaces as free $\Z[q,q^{-1}]$-modules.
Integral bases in the latter category have previously been constructed in the unpublished paper of Elias \cite{Elias},
and their existence also 
follows theoretically from \cite{AST}, but the relationship to the known bases for the $q$-Schur algebra
is not apparent from that work.
We also explain how the canonical basis fits into this picture, something which is not mentioned at all in \cite{Elias}.

Our starting point is the definition of a
strict $\Z$-linear monoidal category 
called the {\em Schur category}, denoted simply by $\Schur$, from \cite[Def.~4.2]{BEEO}.
The object set of $\Schur$ is the set $\sComp$ of all {\em strict compositions}, that is, sequences $\lambda = (\lambda_1,\dots,\lambda_\ell)$ of positive integers for $\ell \geq 0$,
with tensor product of objects 
defined by concatenation.
For strict compositions $\lambda$ and $\mu$, the morphism space
 $\Hom_{\Schur}(\mu,\lambda)$ 
is zero unless $r := \sum_i \lambda_i = \sum_i \mu_i$, in which case
this morphism space is a free $\Z$-module with a distinguished {\em standard basis}
 parametrized by the set
 $(S_\lambda \backslash S_r / S_\mu)_{\min}$ of minimal length representatives for the double cosets 
 of the parabolic subgroups $S_\lambda$ and $S_\mu$ in the symmetric group $S_r$.
 Vertical composition making $\Schur$ into a $\Z$-linear category
 is defined by {\em Schur's product rule} as in the classical Schur algebra
 (see \cite[2.3b]{Greenbook}), and the horizontal composition making it into a monoidal category
 is induced by the natural embeddings $S_{a} \times S_{b} \hookrightarrow S_{a+b}$. 

As usual with strict monoidal categories, it is convenient to represent morphisms in $\Schur$ by certain string diagrams;
the vertical composition $f \circ g$ of 
morphisms $f$ and $g$
is obtained by stacking the string diagram for $f$ on top of the one for $g$, and their horizontal composition
$f \star g$ is obtained by stacking $f$ to the left of $g$.
We represent the standard basis elements for
 $\Hom_{\Schur}(\mu,\lambda)$ by $\lambda\times\mu$ {\em double coset diagrams}\footnote{Called ``chicken foot diagrams'' in \cite{BEEO}.}, such as the diagram on the left:
 $$
  \begin{tikzpicture}[anchorbase,scale=1.55]
\draw[-,line width=.6mm] (.212,.5) to (.212,.39);
\draw[-,line width=.75mm] (.595,.5) to (.595,.39);
\draw[-,line width=.15mm] (0.0005,-.396) to (.2,.4);
\draw[-,line width=.3mm] (0.01,-.4) to (.59,.4);
\draw[-,line width=.3mm] (.4,-.4) to (.607,.4);
\draw[-,line width=.45mm] (.79,-.4) to (.214,.4);
\draw[-,line width=.15mm] (.8035,-.398) to (.614,.4);
\draw[-,line width=.3mm] (.4006,-.5) to (.4006,-.395);
\draw[-,line width=.6mm] (.788,-.5) to (.788,-.395);
\draw[-,line width=.45mm] (0.011,-.5) to (0.011,-.395);
\node at (0.05,0.05) {$\scriptstyle 1$};
\node at (0.76,0.05) {$\scriptstyle 1$};
\node at (0.35,0.35) {$\scriptstyle 3$};
\node at (0.2,-0.26) {$\scriptstyle 2$};
\node at (0.5,-0.26) {$\scriptstyle 2$};
\end{tikzpicture}
\leftrightarrow
\begin{tikzpicture}[anchorbase,scale=1.2]
\draw[ultra thick] (-.01,-.02) to (0.41,-.02);
\node at (0.2,-0.13) {$\scriptstyle 3$};
\draw[ultra thick] (.59,-.02) to (0.81,-.02);
\node at (0.7,-0.13) {$\scriptstyle 2$};
\draw[ultra thick] (.99,-.02) to (1.61,-.02);
\node at (1.3,-0.13) {$\scriptstyle 4$};
\draw[ultra thick] (-.01,1.02) to (0.61,1.02);
\node at (0.3,1.13) {$\scriptstyle 4$};
\draw[ultra thick] (.79,1.02) to (1.61,1.02);
\node at (1.2,1.13) {$\scriptstyle 5$};
\draw[-] (0,0) to (0,1);
\draw[-] (0.2,0) to (.8,1);
\draw[-] (0.4,0) to (1,1);
\draw[-] (0.6,0) to (1.2,1);
\draw[-] (.8,0) to (1.4,1);
\draw[-] (1,0) to (.2,1);
\draw[-] (1.2,0) to (.4,1);
\draw[-] (1.4,0) to (.6,1);
\draw[-] (1.6,0) to (1.6,1);
\end{tikzpicture}\leftrightarrow
(2\:5\:8\:4\:7\:3\:6)
\in (S_{(4,5)} \backslash S_9 / S_{(3,2,4)})_{\min}
\leftrightarrow
A=\begin{bmatrix} 1&0&3\\2&2&1\end{bmatrix}.
$$
In this double coset diagram, there are strings of various thicknesses indicated by the numerical labels. Thick strings at the bottom split into thinner strings, which are allowed to cross each other 
forming a {\em reduced} diagram for a permutation
in the middle of the picture, before merging back into thick strings at the top.
Subsequently, we will
index $S_\lambda \backslash S_r / S_\mu$-double cosets also 
by the set
$\Mat{\lambda}{\mu}$ consisting of matrices of non-negative integers whose row and column sums are the entries of the compositions $\lambda$ and $\mu$, respectively.
The $ij$-entry $a_{i,j}$ of the matrix $A$ records the thickness of the string that connects the $i$th thick string at the top 
 to the $j$th thick string at the bottom of the corresponding double coset diagram. 

The $q$-analog of the Schur category is a strict 
$\Z[q,q^{-1}]$-linear monoidal category
denoted $\qSchur$
whose specialization at $q=1$ recovers $\Schur$.
In our approach, $\qSchur$ is defined from the outset 
to be the $\Z[q,q^{-1}]$-linear category with
 the same objects as $\Schur$,
 tensor product of objects being by concatenation as before. Its morphism spaces are
defined so that $\Hom_{\qSchur}(\mu,\lambda)$ 
is the
free $\Z[q,q^{-1}]$-module with a {\em standard basis} $\{\xi_A\:|\:A \in \Mat{\lambda}{\mu}\}$, which we represent graphically 
by almost the same double coset diagrams as above, except that we replace each singular crossing 
$\begin{tikzpicture}[anchorbase,scale=.6]
	\draw[-,thick] (0.3,-.3) to (-.3,.4);
	\draw[-,thick] (-0.3,-.3) to (.3,.4);
\end{tikzpicture}$
with a positive
crossing
$\begin{tikzpicture}[anchorbase,scale=.6]
	\draw[-,thick] (0.3,-.3) to (-.3,.4);
	\draw[-,line width=5pt,white] (-0.3,-.3) to (.3,.4);
	\draw[-,thick] (-0.3,-.3) to (.3,.4);
\end{tikzpicture}$.
Then we need rules for computing vertical and horizontal compositions of standard basis vectors.
Horizontal composition is defined by horizontally stacking diagrams just as in $\Schur$.
Vertical composition is defined by the $q$-analog of Schur's product rule; see \cref{schurs,qschurs}. Although there is no simple closed 
formula for this in general, 
it can be computed algorithmically using relations in
Manin's quantized coordinate algebra $\A_q(n)$ of $n \times n$ matrices from \cite{Manin}.

Our first theorem gives a
presentation for $\qSchur$ which incorporates the positive crossings as one of three types of generating morphism.
Setting $q=1$ in this recovers the presentation for $\Schur$ derived in \cite{BEEO}.

\begin{introtheorem}\label{th1alt}
As a strict $\Z[q,q^{-1}]$-linear monoidal category,
$\qSchur$ is generated by
the objects $(r)$ for $r > 0$ 
and morphisms called {\em merges}, {\em splits} and {\em positive crossings}
represented by 
\begin{align*}
\begin{tikzpicture}[anchorbase,scale=.8]
	\draw[-,line width=1pt] (0.28,-.3) to (0.08,0.04);
	\draw[-,line width=1pt] (-0.12,-.3) to (0.08,0.04);
	\draw[-,line width=2pt] (0.08,.4) to (0.08,0);
        \node at (-0.22,-.4) {$\scriptstyle a$};
        \node at (0.35,-.38) {$\scriptstyle b$};
        \node at (0.08,.52) {$\scriptstyle a+b$};
\end{tikzpicture} 
&:(a)\star(b) \rightarrow (a+b),&
\begin{tikzpicture}[anchorbase,scale=.8]
	\draw[-,line width=2pt] (0.08,-.3) to (0.08,0.04);
	\draw[-,line width=1pt] (0.28,.4) to (0.08,0);
	\draw[-,line width=1pt] (-0.12,.4) to (0.08,0);
        \node at (-0.22,.5) {$\scriptstyle a$};
        \node at (0.36,.52) {$\scriptstyle b$};
        \node at (0.08,-.42) {$\scriptstyle a+b$};
\end{tikzpicture}
&:(a+b)\rightarrow (a)\star(b),
&\begin{tikzpicture}[anchorbase,scale=.8]
	\draw[-,line width=1pt] (0.3,-.3) to (-.3,.4);
	\draw[-,line width=4pt,white] (-0.3,-.3) to (.3,.4);
	\draw[-,line width=1pt] (-0.3,-.3) to (.3,.4);
    \node at (-0.36,.52) {$\scriptstyle b$};
        \node at (0.36,.5) {$\scriptstyle a$};
        \node at (-0.36,-.4) {$\scriptstyle a$};
        \node at (0.36,-.4) {$\scriptstyle b$};
\end{tikzpicture}:(a)\star (b)
\rightarrow(b)\star (a)
\end{align*}
for $a,b > 0$,
subject to the {\em associativity} and {\em coassociativity 
relations}
\begin{align}
\begin{tikzpicture}[baseline = -.14mm,scale=1]
	\draw[-,thick] (0.35,-.3) to (0.08,0.14);
	\draw[-,thick] (0.1,-.3) to (-0.04,-0.06);
	\draw[-,line width=1pt] (0.085,.14) to (-0.035,-0.06);
	\draw[-,thick] (-0.2,-.3) to (0.07,0.14);
	\draw[-,line width=2pt] (0.08,.45) to (0.08,.1);
        \node at (0.45,-.41) {$\scriptstyle c$};
        \node at (0.07,-.4) {$\scriptstyle b$};
        \node at (-0.28,-.41) {$\scriptstyle a$};
\end{tikzpicture}
&=
\begin{tikzpicture}[baseline = -0.14mm,scale=1]
	\draw[-,thick] (0.36,-.3) to (0.09,0.14);
	\draw[-,thick] (0.06,-.3) to (0.2,-.05);
	\draw[-,line width=1pt] (0.07,.14) to (0.19,-.06);
	\draw[-,thick] (-0.19,-.3) to (0.08,0.14);
	\draw[-,line width=2pt] (0.08,.45) to (0.08,.1);
        \node at (0.45,-.41) {$\scriptstyle c$};
        \node at (0.07,-.4) {$\scriptstyle b$};
        \node at (-0.28,-.41) {$\scriptstyle a$};
\end{tikzpicture}\:,
&
\begin{tikzpicture}[baseline = -1mm,scale=1]
\draw[-,thick] (0.35,.3) to (0.08,-0.14);
	\draw[-,thick] (0.1,.3) to (-0.04,0.06);
	\draw[-,line width=1pt] (0.085,-.14) to (-0.035,0.06);
	\draw[-,thick] (-0.2,.3) to (0.07,-0.14);
	\draw[-,line width=2pt] (0.08,-.45) to (0.08,-.1);
        \node at (0.45,.4) {$\scriptstyle c$};
        \node at (0.07,.42) {$\scriptstyle b$};
        \node at (-0.28,.4) {$\scriptstyle a$};
\end{tikzpicture}
&=\begin{tikzpicture}[baseline = -1mm,scale=1]
	\draw[-,thick] (0.36,.3) to (0.09,-0.14);
	\draw[-,thick] (0.06,.3) to (0.2,.05);
	\draw[-,line width=1pt] (0.07,-.14) to (0.19,.06);
	\draw[-,thick] (-0.19,.3) to (0.08,-0.14);
	\draw[-,line width=2pt] (0.08,-.45) to (0.08,-.1);
        \node at (0.45,.4) {$\scriptstyle c$};
        \node at (0.07,.42) {$\scriptstyle b$};
        \node at (-0.28,.4) {$\scriptstyle a$};
\end{tikzpicture}
\label{asscoassoc}\end{align}
for $a,b,c > 0$, together with
\begin{align}
\label{altrels}
\begin{tikzpicture}[anchorbase,scale=.8]
	\draw[-,line width=2pt] (0.08,-.8) to (0.08,-.5);
	\draw[-,line width=2pt] (0.08,.3) to (0.08,.6);
\draw[-,thick] (0.1,-.51) to [out=45,in=-45] (0.1,.31);
\draw[-,thick] (0.06,-.51) to [out=135,in=-135] (0.06,.31);
        \node at (-.33,-.05) {$\scriptstyle a$};
        \node at (.45,-.05) {$\scriptstyle b$};
\end{tikzpicture}
&= 
\qbinom{a+b}{a}_{\!q}\:\:
\begin{tikzpicture}[anchorbase,scale=.8]
	\draw[-,line width=2pt] (0.08,-.8) to (0.08,.6);
         \node at (.08,.7) {$\scriptstyle \phantom {a+b}$};
        \node at (.08,-.9) {$\scriptstyle a+b$};
\end{tikzpicture},&
\begin{tikzpicture}[anchorbase,scale=1]
	\draw[-,line width=1.2pt] (0,0) to (.275,.3) to (.275,.7) to (0,1);
	\draw[-,line width=1.2pt] (.6,0) to (.315,.3) to (.315,.7) to (.6,1);
        \node at (0,1.13) {$\scriptstyle c$};
        \node at (0.63,1.13) {$\scriptstyle d$};
        \node at (0,-.1) {$\scriptstyle a$};
        \node at (0.63,-.1) {$\scriptstyle b$};
\end{tikzpicture}
&=
\sum_{\substack{0 \leq s \leq \min(a,c)\\0 \leq t \leq \min(b,d)\\t-s=d-a=c-b}}
q^{st}
\begin{tikzpicture}[anchorbase,scale=1]
	\draw[-,thick] (0.58,0) to (0.58,.2) to (.02,.8) to (.02,1);
	\draw[-,line width=4pt,white] (0.02,0) to (0.02,.2) to (.58,.8) to (.58,1);
	\draw[-,thick] (0.02,0) to (0.02,.2) to (.58,.8) to (.58,1);
	\draw[-,thin] (0,0) to (0,1);
	\draw[-,thin] (0.6,0) to (0.6,1);
        \node at (0,1.13) {$\scriptstyle c$};
        \node at (0.6,1.13) {$\scriptstyle d$};
        \node at (0,-.1) {$\scriptstyle a$};
        \node at (0.6,-.1) {$\scriptstyle b$};
        \node at (-0.1,.5) {$\scriptstyle s$};
        \node at (0.75,.5) {$\scriptstyle t$};
\end{tikzpicture}
\end{align}
for $a,b,c,d > 0$ with $a+b=c+d$.
Here, $\qbinom{n}{s}_{\!q}$ is the $q$-binomial coefficient \cref{qbin},
and splits/merges with a string of thickness zero should be interpreted as identities.
\end{introtheorem}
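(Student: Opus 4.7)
The plan is to adapt the proof of the analogous presentation for $\Schur$ given in \cite{BEEO}. Let $\cC$ denote the strict $\Z[q,q^{-1}]$-linear monoidal category presented by the generators and relations in the theorem. I would construct a monoidal functor $F : \cC \to \qSchur$ sending each generator to the obvious standard basis element $\xi_A$, and show that $F$ is an isomorphism. To establish the existence of $F$, one must verify that the relations in \cref{asscoassoc,altrels} hold in $\qSchur$: associativity and coassociativity reduce to calculations with trivial double cosets and are immediate from the $q$-Schur product rule; the digon relation expresses the composition of a two-strand split followed by a two-strand merge as a scalar multiple of the identity, with Gaussian binomial coefficient $\qbinom{a+b}{a}_q$ as the relevant structure constant; the square switch is the most substantive verification, encoding the $q$-Schur product rule for the composition of a merge followed by a split on two adjacent strands. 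This last identity can be checked either directly from the $q$-analog of Schur's product rule or via a computation in Manin's algebra $\A_q(n)$.

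Surjectivity of $F$ on morphism spaces is then immediate: every standard basis element $\xi_A$ is represented by a double coset diagram, which by construction is a composition of splits, positive crossings, and merges. For injectivity, it suffices to show that the double coset diagrams span $\Hom_{\cC}(\mu,\lambda)$ as a $\Z[q,q^{-1}]$-module, since their images under $F$ are the standard basis of $\Hom_{\qSchur}(\mu,\lambda)$, which is $\Z[q,q^{-1}]$-free of the corresponding rank; this forces the spanning set to be a basis and $F$ to be an isomorphism.

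The technical heart of the argument is a straightening algorithm which I would use to rewrite any composition of generators as a $\Z[q,q^{-1}]$-linear combination of double coset diagrams. Repeated application of \cref{asscoassoc} collapses nested merges and splits into multi-merges and multi-splits. Any local configuration consisting of a merge immediately followed by a split can then be rewritten via the square switch of \cref{altrels} as a sum of diagrams in which the split appears below the merge, up to split-merge bubbles that are evaluated by the digon relation. Iterating, one pushes all splits to the bottom and all merges to the top; the remaining pattern of positive crossings in the middle should realize a minimal $S_\lambda \backslash S_r / S_\mu$-double coset representative, with non-reduced occurrences being absorbed back into the surrounding splits and merges via further applications of \cref{altrels}.

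The main obstacle will be confluence of this straightening algorithm: different orderings of relation applications must yield the same final $\Z[q,q^{-1}]$-linear combination of double coset diagrams. In the $q=1$ setting of \cite{BEEO}, confluence is established by identifying the middle crossing word with a reduced expression for a minimal double coset representative and invoking the fact that any two such expressions are related by braid moves, which are themselves consequences of the specialization of the square switch relation at $q=1$. The $q$-deformed version is combinatorially parallel but requires careful bookkeeping of the $q^{st}$ factors produced by the square switch; ultimately, confluence is guaranteed a posteriori by the existence of $F$, since the images of the double coset diagrams are $\Z[q,q^{-1}]$-linearly independent in $\qSchur$. Once the spanning property is established, this immediately upgrades the spanning set to a basis and forces $F$ to be an isomorphism.
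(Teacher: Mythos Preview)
Your overall strategy---construct $F:\cC\to\qSchur$, verify the relations in $\qSchur$, observe that double coset diagrams hit the standard basis (giving fullness), and then prove the double coset diagrams span each morphism space in $\cC$ (giving faithfulness)---is exactly the paper's approach. The paper carries this out first for the version with $0$-strings (\cref{mainpres}) and then deduces \cref{th1alt} by contracting $0$-strings. Its spanning argument is organized a bit more tightly than yours: rather than straightening an arbitrary word, it proves the single claim that for any double coset diagram $\xi_A'$ and any generator $f$ (a merge or split tensored with identities), the composite $f\circ\xi_A'$ is a $\Z[q,q^{-1}]$-linear combination of double coset diagrams. Induction on word length then gives spanning.

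Where your sketch is thin is in the auxiliary relations required to run the straightening. The defining relations \cref{asscoassoc,altrels} do not by themselves tell you how a positive crossing commutes past a trivalent vertex, nor do they directly give the braid relation for positive crossings; both are essential for reducing the middle permutation to a well-defined reduced word and for pushing splits down past existing crossings. The paper derives these (the ``slider'' relations, the braid relation, and a twist-absorption relation) from \cref{asscoassoc,altrels} via nontrivial $q$-binomial identities, and relegates the calculations to an appendix---this is the actual technical heart of the proof, not the high-level rewriting scheme. Your remark that braid moves are ``consequences of the square switch at $q=1$'' is on the right track but understates the work in the $q$-deformed setting. Finally, your confluence discussion is unnecessary: you only need spanning, not uniqueness of the rewriting; linear independence comes for free from the fact that $F$ sends the spanning set to a basis.
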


The positive crossings are important because they define a braiding
making $\qSchur$ into a 
braided monoidal category. 
In fact, positive crossings and their inverses,
the {\em negative crossings}, can be written in terms of merges and splits:
\begin{align*}
\begin{tikzpicture}[baseline=-1mm]
	\draw[-,line width=1pt] (0.3,-.3) to (-.3,.4);
	\draw[-,line width=4pt,white] (-0.3,-.3) to (.3,.4);
	\draw[-,line width=1pt] (-0.3,-.3) to (.3,.4);
        \node at (-0.3,-.4) {$\scriptstyle a$};
        \node at (0.3,-.4) {$\scriptstyle b$};
\end{tikzpicture}
&=\sum_{s=0}^{\min(a,b)}
(-q)^{s}
\begin{tikzpicture}[anchorbase,scale=1]
	\draw[-,line width=1.2pt] (0,0) to (0,1);
	\draw[-,thick] (-0.8,0) to (-0.8,.2) to (-.03,.4) to (-.03,.6)
        to (-.8,.8) to (-.8,1);
	\draw[-,thin] (-0.82,0) to (-0.82,1);
        \node at (-0.81,-.1) {$\scriptstyle a$};
        \node at (0,-.1) {$\scriptstyle b$};
        \node at (-0.4,.9) {$\scriptstyle b-s$};
        \node at (-0.4,.13) {$\scriptstyle a-s$};
\end{tikzpicture}
=
\sum_{s=0}^{\min(a,b)}
(-q)^{s}
\begin{tikzpicture}[anchorbase,scale=1]
	\draw[-,line width=1.2pt] (0,0) to (0,1);
	\draw[-,thick] (0.8,0) to (0.8,.2) to (.03,.4) to (.03,.6)
        to (.8,.8) to (.8,1);
	\draw[-,thin] (0.82,0) to (0.82,1);
        \node at (0.81,-.1) {$\scriptstyle b$};
        \node at (0,-.1) {$\scriptstyle a$};
        \node at (0.4,.9) {$\scriptstyle a-s$};
        \node at (0.4,.13) {$\scriptstyle b-s$};
\end{tikzpicture},\\
\begin{tikzpicture}[baseline=-1mm]
 \draw[-,line width=1pt] (-0.3,-.3) to (.3,.4);
	\draw[-,line width=4pt,white] (0.3,-.3) to (-.3,.4);
 \draw[-,line width=1pt] (0.3,-.3) to (-.3,.4);
        \node at (-0.3,-.4) {$\scriptstyle a$};
        \node at (0.3,-.4) {$\scriptstyle b$};
\end{tikzpicture}
:=\left(\begin{tikzpicture}[baseline=-1mm]
	\draw[-,line width=1pt] (0.3,-.3) to (-.3,.4);
	\draw[-,line width=4pt,white] (-0.3,-.3) to (.3,.4);
	\draw[-,line width=1pt] (-0.3,-.3) to (.3,.4);
        \node at (-0.3,-.4) {$\scriptstyle b$};
        \node at (0.3,-.4) {$\scriptstyle a$};
\end{tikzpicture}\right)^{-1}
&=\sum_{s=0}^{\min(a,b)}
(-q)^{-s}
\begin{tikzpicture}[anchorbase,scale=1]
	\draw[-,line width=1.2pt] (0,0) to (0,1);
	\draw[-,thick] (-0.8,0) to (-0.8,.2) to (-.03,.4) to (-.03,.6)
        to (-.8,.8) to (-.8,1);
	\draw[-,thin] (-0.82,0) to (-0.82,1);
        \node at (-0.81,-.1) {$\scriptstyle a$};
        \node at (0,-.1) {$\scriptstyle b$};
        \node at (-0.4,.9) {$\scriptstyle b-s$};
        \node at (-0.4,.13) {$\scriptstyle a-s$};
\end{tikzpicture}
=
\sum_{s=0}^{\min(a,b)}
(-q)^{-s}
\begin{tikzpicture}[anchorbase,scale=1]
	\draw[-,line width=1.2pt] (0,0) to (0,1);
	\draw[-,thick] (0.8,0) to (0.8,.2) to (.03,.4) to (.03,.6)
        to (.8,.8) to (.8,1);
	\draw[-,thin] (0.82,0) to (0.82,1);
        \node at (0.81,-.1) {$\scriptstyle b$};
        \node at (0,-.1) {$\scriptstyle a$};
        \node at (0.4,.9) {$\scriptstyle a-s$};
        \node at (0.4,.13) {$\scriptstyle b-s$};
\end{tikzpicture}.
\end{align*}
The following gives a slighly more efficient presenatation for $\qSchur$ using only the merges and splits as generating morphisms.

\begin{introtheorem}\label{th1}
The monoidal category $\qSchur$ is generated
by the objects $(r)$ for $r > 0$ and the
morphisms
$\begin{tikzpicture}[baseline=-1mm,scale=.6]
	\draw[-,line width=1pt] (0.28,-.3) to (0.08,0.04);
	\draw[-,line width=1pt] (-0.12,-.3) to (0.08,0.04);
	\draw[-,line width=2pt] (0.08,.4) to (0.08,0);
        \node at (-0.22,-.43) {$\scriptstyle a$};
        \node at (0.35,-.43) {$\scriptstyle b$};
\end{tikzpicture}$ and
$\begin{tikzpicture}[baseline=0mm,scale=.6]
	\draw[-,line width=2pt] (0.08,-.3) to (0.08,0.04);
	\draw[-,line width=1pt] (0.28,.4) to (0.08,0);
	\draw[-,line width=1pt] (-0.12,.4) to (0.08,0);
        \node at (-0.22,.53) {$\scriptstyle a$};
        \node at (0.36,.55) {$\scriptstyle b$};
\end{tikzpicture}$ for $a,b > 0$, 
subject only to the relations \cref{asscoassoc} together with 
one of the equivalent {\em square-switch relations}
\begin{align}\label{squareswitch}
\begin{tikzpicture}[anchorbase,scale=1]
	\draw[-,thick] (0,0) to (0,1);
	\draw[-,thick] (.015,0) to (0.015,.2) to (.57,.4) to (.57,.6)
        to (.015,.8) to (.015,1);
	\draw[-,line width=1.2pt] (0.6,0) to (0.6,1);
        \node at (0.6,-.12) {$\scriptstyle b$};
        \node at (0,-.1) {$\scriptstyle a$};
        \node at (0.3,.84) {$\scriptstyle c$};
        \node at (0.3,.19) {$\scriptstyle d$};
\end{tikzpicture}
&=
\!\!\!\sum_{s=\max(0,c-b)}^{\min(c,d)}
\qbinom{a -b+c-\!d}{s}_{\!q}\ 
\begin{tikzpicture}[anchorbase,scale=.9]
	\draw[-,line width=1.2pt] (0,0) to (0,1);
	\draw[-,thick] (0.8,0) to (0.8,.2) to (.03,.4) to (.03,.6)
        to (.8,.8) to (.8,1);
	\draw[-,thin] (0.82,0) to (0.82,1);
        \node at (0.81,-.12) {$\scriptstyle b$};
        \node at (0,-.1) {$\scriptstyle a$};
        \node at (0.4,.9) {$\scriptstyle d-s$};
        \node at (0.4,.13) {$\scriptstyle c-s$};
\end{tikzpicture},&
\begin{tikzpicture}[anchorbase,scale=1]
	\draw[-,thick] (0,0) to (0,1);
	\draw[-,thick] (-.015,0) to (-0.015,.2) to (-.57,.4) to (-.57,.6)
        to (-.015,.8) to (-.015,1);
	\draw[-,line width=1.2pt] (-0.6,0) to (-0.6,1);
        \node at (-0.6,-.12) {$\scriptstyle b$};
        \node at (0,-.1) {$\scriptstyle a$};
        \node at (-0.3,.84) {$\scriptstyle c$};
        \node at (-0.3,.19) {$\scriptstyle d$};
\end{tikzpicture}&=\!\!\!
\sum_{s=\max(0,c-b)}^{\min(c,d)}
\qbinom{a-b+c-d}{s}_{\!q}\ 
\begin{tikzpicture}[anchorbase,scale=.9]
	\draw[-,line width=1.2pt] (0,0) to (0,1);
	\draw[-,thick] (-0.8,0) to (-0.8,.2) to (-.03,.4) to (-.03,.6)
        to (-.8,.8) to (-.8,1);
	\draw[-,thin] (-0.82,0) to (-0.82,1);
        \node at (-0.81,-.12) {$\scriptstyle b$};
        \node at (0,-.1) {$\scriptstyle a$};
        \node at (-0.4,.9) {$\scriptstyle d-s$};
        \node at (-0.4,.13) {$\scriptstyle c-s$};
\end{tikzpicture}
\end{align}
for $a,b,c,d \geq 0$ 
with $d \leq a$ and $c \leq b+d$.
\end{introtheorem}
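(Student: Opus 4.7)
My plan is to deduce Theorem~\ref{th1} from Theorem~\ref{th1alt} by showing that its three-generator presentation reduces to the two-generator one stated here. Let $\mathcal{C}$ denote the strict $\Z[q,q^{-1}]$-linear monoidal category with the generators and relations of Theorem~\ref{th1}. I would exhibit mutually inverse strict monoidal functors $F : \mathcal{C} \to \qSchur$ and $G : \qSchur \to \mathcal{C}$, each the identity on objects.

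To define $F$, send merges and splits to their counterparts in $\qSchur$. This requires checking the square-switch relations \cref{squareswitch} in $\qSchur$. Rather than computing directly with the $q$-Schur product rule, I would use Theorem~\ref{th1alt}: substitute the explicit formulas for positive crossings (from the display between the two theorems) into the second relation of \cref{altrels}, and collect coefficients. After a $q$-Vandermonde-type simplification of the resulting sums of $(-q)^s q^{st}$, the identity reduces exactly to \cref{squareswitch}. The relations \cref{asscoassoc} are immediate, as they lift from $\Schur$ to $\qSchur$.

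To define $G$, send merges and splits to themselves and each positive crossing to the explicit combination of merges and splits displayed before Theorem~\ref{th1}. By Theorem~\ref{th1alt}, the task reduces to verifying the relations \cref{asscoassoc} and \cref{altrels} in $\mathcal{C}$. The former are axioms. The second relation of \cref{altrels}, after expanding its crossing by definition, becomes a linear combination of square-switches; the scalars match by a $q$-binomial identity. The first (digon) relation of \cref{altrels}---asserting that a split followed by a merge on $(a+b)$ equals $\qbinom{a+b}{a}_q$ times the identity---is the crux. I would prove it by induction on $\min(a,b)$: decompose the outer split via coassociativity into a thickness-$1$ split stacked on a smaller split, then apply one instance of \cref{squareswitch} to commute the innermost merge past the thickness-$1$ split, reducing to a smaller digon; a Pascal-type recursion for the $q$-binomial coefficient closes the induction, with the $(1,b)$ base case handled by the simplest nontrivial instance of \cref{squareswitch}.

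The main obstacle is the digon relation: extracting the precise scalar $\qbinom{a+b}{a}_q$ from \cref{asscoassoc} and \cref{squareswitch} alone requires a careful reduction to the thickness-one base case together with a matching $q$-binomial identity. The equivalence of the two displayed forms of \cref{squareswitch} (left-handed versus right-handed) then follows from the horizontal reflection symmetry of the presentation, or can be verified directly using the digon and an auxiliary square-switch. Once these pieces are in place, $F \circ G$ and $G \circ F$ agree with the identity on the respective generating morphisms by construction, completing the proof.
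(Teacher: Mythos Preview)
Your overall strategy matches the paper's: construct mutually inverse strict monoidal functors between $\mathcal{C}$ and $\qSchur$ using the presentation of Theorem~\ref{th1alt}, sending the positive crossing to the displayed merge--split formula. However, you have inverted where the difficulty lies.

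The digon relation is \emph{not} the crux: since the square-switch relations \cref{squareswitch} are imposed for all $a,b,c,d\geq 0$ (with the convention that thickness-zero splits/merges are identities), setting $b=0$ and $c=d$ collapses the two-strand picture to one strand and yields the digon relation \emph{immediately}---the right-hand sum has a single term $s=d$ with coefficient $\qbinom{a}{d}_{\!q}$. No induction is needed. Your proposed induction on $\min(a,b)$ is both unnecessary and, as sketched, unlikely to close: with only strictly positive thicknesses, square-switch is intrinsically a two-strand relation and does not obviously reduce to a one-strand identity without passing through the $b=0$ case somewhere.

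Conversely, you undersell the genuine work, which is verifying the \emph{second} relation of \cref{altrels} in $\mathcal{C}$. Expanding the crossing on the right-hand side does give a double sum of ``square'' diagrams, but reducing this to the split-then-merge on the left is not a single application of square-switch; it requires a nested simplification in which one first combines strings via associativity and the (now available) digon relation, then applies square-switch, then collapses the remaining sum using the identities $\sum_{a+b=s}(-q)^{-b}\qbinom{m+a}{a}_{\!q}\qbinom{m}{b}_{\!q}=q^{ms}$ and $\sum_{s=0}^{n}(-1)^{s}q^{s(n-1)}\qbinom{n}{s}_{\!q}=\delta_{n,0}$. The paper isolates this as a separate lemma. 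Similarly, for the $F$ direction, the paper does not obtain \cref{squareswitch} by substituting the crossing formula into \cref{altrels}; rather it derives the square-switch identities \cref{jonsquare}--\cref{jonsquare2} directly inside $\qSchur$ from \cref{assrel} and \cref{mergesplit}, which is cleaner.
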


The presentations for $\qSchur$ 
in \cref{th1alt,th1} are not new, e.g., the relations can be found in \cite{LT} (with a different choice of normalization for the positive crossings coming from quantum $SL_n$ rather than quantum $GL_n$).
We give complete proofs here, rather than attempting to adapt related results 
already in the literature such as \cite{doty}.
Our general approach to the definition of $\qSchur$, equipping each of its morphism spaces with a standard basis over $\Z[q,q^{-1}]$ from the outset with structure constants which can computed algorithmically, 
facilitates calculations which seem quite awkward otherwise; 
e.g., see \cref{1984} for a 
formula for the composition of two positive crossings.
The ability to compute products effectively is also exploited in the proof of the
straightening formula in \cref{cellb}.

This straightening formula is
the key ingredient 
in the proof of \cref{th2}, which
constructs a second basis for morphism spaces in 
$\qSchur$. We formulate this in terms of the path algebra 
\begin{equation}\label{pathalgebra}
\sch := 
\bigoplus_{\lambda,\mu\in\sComp}
\Hom_{\qSchur}(\mu,\lambda)
\end{equation}
viewed as a locally unital algebra with 
distinguished idempotents $\{1_\lambda\:|\:\lambda \in \sComp\}$ arising from the identity endomorphisms of the objects of $\qSchur$.
Multiplication in $\sch$ is
induced by composition.
Let $\Par$ be the subset of $\sComp$ consisting of 
 all {\em partitions}, that is, ordered sequences
$\kappa = (\kappa_1 \geq \cdots \geq \kappa_\ell)$ of positive integers
for $\ell \geq 0$.
For $\lambda \in \sComp$ and $\kappa \in \Par$, we denote
the usual set of all semistandard tableaux of shape $\kappa$ and content $\lambda$ by $\Std(\lambda,\kappa)$.
For $P \in \Std(\lambda,\kappa)$, let
$A(P) \in \Mat{\lambda}{\kappa}$ be the matrix whose $ij$-entry records the number of times $i$ appears on row $j$ of $P$.
For the definition of ``symmetrically-based quasi-hereditary algebra''
used in the statement of the theorem, see \cref{bqh}.
The triangular bases in this definition
are {\em cellular bases} in the sense of \cite{GL}.
However, the axioms are simpler than the 
ones for a cellular algebra; they are also more restrictive since it follows automatically that the underlying
algebra is a split quasi-hereditary algebra with duality in the sense of \cite{CPS2}.

\begin{introtheorem}\label{th2}
The locally unital algebra 
$\sch = \bigoplus_{\lambda,\mu\in\sComp} 1_\lambda \sch 1_\mu$ is a symmetrically-based quasi-hereditary algebra with
weight poset $\Par$ ordered by the dominance ordering $\leq$,
anti-involution $\T:\sch \rightarrow \sch, \xi_A \mapsto \xi_{A^\tr}$, and
triangular basis
consisting of the {\em codeterminants}
$\xi_{A(P)} \xi_{A(Q)^\tr}$
for 
$(P,Q) \in \bigcup_{\lambda,\mu \in \sComp, \kappa \in \Par}\Std(\lambda,\kappa)\times \Std(\mu,\kappa)$.
\end{introtheorem}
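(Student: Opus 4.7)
The plan is to verify the axioms of a symmetrically-based quasi-hereditary algebra from \cref{bqh}, using the straightening formula \cref{cellb} together with the Robinson-Schensted-Knuth correspondence.

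First I would record the basic structural features. Since $A(P) \in \Mat{\lambda}{\kappa}$ and $A(Q)^\tr \in \Mat{\kappa}{\mu}$, we have $\xi_{A(P)} \in \Hom_{\qSchur}(\kappa,\lambda)$ and $\xi_{A(Q)^\tr} \in \Hom_{\qSchur}(\mu,\kappa)$, so every codeterminant factors through the idempotent $1_\kappa$ indexed by a partition $\kappa \in \Par$. Because $\T$ is anti-multiplicative and $\T(\xi_C)=\xi_{C^\tr}$, it sends $\xi_{A(P)}\xi_{A(Q)^\tr} \mapsto \xi_{A(Q)}\xi_{A(P)^\tr}$, interchanging $P$ and $Q$, which is the symmetry required by \cref{bqh}.

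Second, I would prove the basis property. The RSK correspondence
\begin{equation*}
\RSK\colon\Mat{\lambda}{\mu}\ \xrightarrow{\sim}\ \bigsqcup_{\kappa\in\Par}\Std(\lambda,\kappa)\times\Std(\mu,\kappa)
\end{equation*}
matches the codeterminants in $1_\lambda\sch 1_\mu$ bijectively with the standard basis $\{\xi_A\}$. It remains to exhibit a unitriangular change of basis, and this is exactly what \cref{cellb} provides: each $\xi_A$ equals $\xi_{A(P)}\xi_{A(Q)^\tr}$ with $(P,Q)=\RSK(A)$ of shape $\kappa$, plus a $\Z[q,q^{-1}]$-linear combination of codeterminants factoring through partitions $\nu\in\Par$ with $\nu>\kappa$ in the dominance order. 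Combining the counting statement with triangularity yields that the codeterminants form a free $\Z[q,q^{-1}]$-basis for $\sch$.

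Third, I would check the multiplicative triangularity required by \cref{bqh}. For tableaux of a common shape $\kappa$, one must show
\begin{equation*}
\bigl(\xi_{A(P)}\xi_{A(Q)^\tr}\bigr)\bigl(\xi_{A(R)}\xi_{A(S)^\tr}\bigr)\;\equiv\;\delta_{Q,R}\,\xi_{A(P)}\xi_{A(S)^\tr}\pmod{\sch^{>\kappa}},
\end{equation*}
where $\sch^{>\kappa}$ denotes the two-sided span of codeterminants factoring through partitions $\nu>\kappa$. Expanding the middle factor $\xi_{A(Q)^\tr}\xi_{A(R)}\in 1_\kappa\sch 1_\kappa$ via \cref{cellb}, its leading term in the dominance order reduces to $1_\kappa$ precisely when $Q=R$ and otherwise involves only codeterminants with strictly larger intermediate partitions; this is the standard combinatorial content of Schensted insertion for RSK. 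Once this is in hand, the split quasi-hereditary structure with duality follows formally from cellularity and the $\T$-symmetry, in the spirit of \cite{CPS2}.

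The hardest step is the last one: tracking the RSK shape through the straightening formula \cref{cellb} carefully enough to isolate the leading term of the middle product $\xi_{A(Q)^\tr}\xi_{A(R)}$ and confirm that its coefficient on $1_\kappa$ is exactly $\delta_{Q,R}$ rather than some unit of $\Z[q,q^{-1}]$. Once the leading coefficient is pinned down, the cellular axioms and hence the symmetrically-based quasi-hereditary structure drop out cleanly.
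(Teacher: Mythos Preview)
Your proposal misreads both what \cref{cellb} provides and what \cref{bqh} requires.

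First, \cref{cellb} is a straightening formula for a \emph{single} factor $\varphi_P=\xi_{A(P)}$ with $P$ a non-semistandard row tableau: it rewrites $\varphi_P$ in terms of $\varphi_S$ with $S<_{\mathrm{lex}}P$ of the same shape, plus products $\varphi_{P'}\T(\varphi_{Q'})$ of strictly larger shape. It says nothing about expanding a general $\xi_A$ as the codeterminant indexed by $\RSK(A)$ modulo higher terms. The unitriangularity-with-RSK-leading-term you attribute to \cref{cellb} is a substantially stronger statement and does not follow from it; establishing that would essentially require redoing RSK compatibility from scratch.

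Second, and more importantly, your third step is unnecessary. Look again at \cref{bqh}: the only axioms are that the products $xy$ form a basis, and the triangularity conditions $X(\lambda,\mu)\neq\varnothing\Rightarrow\lambda\leq\mu$, $Y(\lambda,\mu)\neq\varnothing\Rightarrow\lambda\geq\mu$, $X(\kappa,\kappa)=Y(\kappa,\kappa)=\{1_\kappa\}$. There is \emph{no} multiplicative cellularity axiom of the form $(xy)(x'y')\equiv\delta\cdot xy'$ here; that is a consequence, not a hypothesis, in this framework. So the ``hardest step'' you isolate---pinning down the $\delta_{Q,R}$ coefficient---is simply not part of what must be proved.

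The paper's argument (\cref{secrets}) is much lighter. The second axiom of \cref{bqh} is immediate from the combinatorics of semistandard tableaux: $\Std(\kappa,\kappa')\neq\varnothing$ forces $\kappa\leq\kappa'$, and $\Std(\kappa,\kappa)=\{\text{the unique tableau}\}$ corresponds to $1_\kappa$. For the basis axiom, \cref{cella} shows every morphism factors through a partition, so products $\varphi_P\T(\varphi_Q)$ over \emph{row} tableaux span; then \cref{cellb}, applied inductively (in lexicographic order on tableaux, and reverse-lexicographic on shapes), straightens these to codeterminants over \emph{semistandard} tableaux, giving spanning. RSK enters only to count: $|\Mat{\lambda}{\mu}|=\sum_\kappa|\Std(\lambda,\kappa)||\Std(\mu,\kappa)|$ matches the rank of the free module $1_\lambda\sch 1_\mu$, so a spanning set of that size over the commutative Noetherian ring $\Z[q,q^{-1}]$ is automatically a basis. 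No triangular change-of-basis matrix, and no analysis of middle products, is needed.
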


There is a third remarkable basis in this subject,
the {canonical basis}, which
appeared originally in the context of 
$q$-Schur algebras in \cite{BLM} and was studied in detail by Du \cite{Du1,Du,Du2} from the perspective of Hecke algebras; see also \cite[Ch.~9]{BDPW}.
To define it, 
take $\lambda,\mu \in \sComp$ such that $r := \sum_i \lambda_i = \sum_i \mu_i$,
and $A \in \Mat{\lambda}{\mu}$.
Writing $d_A^+ \in (S_\lambda \backslash S_r / S_\mu)_{\max}$ for the maximal length double coset representative indexed by $A$, let
$$
\theta_A := \sum_{B \in \Mat{\lambda}{\mu}}
q^{\ell(d_A^+) - \ell(d_B^+)} P_{d_A^+,d_B^+}(q^{-2})\xi_B,
$$
where
$P_{x,y}(t) \in \Z[t]$ is the
Kazhdan-Lusztig polynomial for $x,y \in S_r$.
Then the {\em canonical basis} for 
$\Hom_{\qSchur}(\mu,\lambda)$ is $\{\theta_A\:|\:A \in \Mat{\lambda}{\mu}\}$.
The canonical basis can also be defined in terms of
the {\em bar involution} $-:\qSchur \rightarrow \qSchur$,
the anti-linear
strict monoidal functor which fixes objects and the generating merge and split morphisms: $\theta_A$ is the unique
morphism in $\Hom_{\qSchur}(\mu,\lambda)$ such that
$\overline{\theta}_A = \theta_A$
and $\theta_A \equiv \xi_A \pmod{\sum_{B \in \Mat{\lambda}{\mu}}
q \Z[q] \xi_B}$.
Note also that the bar involution interchanges positive and negative crossings.

The canonical basis makes the path algebra $\sch$ into 
a ``standardly full-based algebra'' 
using the language  of 
\cite{DR}, with the same weight poset and cell ideals as the ones arising from the codeterminant basis in \cref{th2}. 
This follows from the results in 
\cite[$\S$5.3]{DR}, which imply that the canonical basis is cellular, hence, equivalent to a triangular basis; see \cref{fromsmallerones} for a precise statement.
\cref{th2} could be deduced as a consequence of this like in \cite[$\S$5.5]{DR}. 
It could also
be deduced from R.~M.~Green's construction \cite{RMGreen} of the 
$q$-analog of J.~A.~Green's codeterminant basis for the Schur algebra. The short 
self-contained proof of \cref{th2} given here is 
 similar to the one in \cite{RMGreen}
  (and in \cite{W} when $q=1$), but
incorporates simplifications made possible by working in the less constrained setting of the $q$-Schur category.
Analogous bases for cyclotomic $q$-Schur algebras of all levels (not merely level one)
have been constructed
in \cite[Th.~6.6]{DJM} by a different method.

At least one of these new bases (codeterminant or canonical) is needed 
in order to understand 
a certain truncation $\qSchur_n$
of the $q$-Schur category.
By definition, this is the quotient of $\qSchur$ by the two-sided tensor ideal $\cI_n$ generated by the identity endomorphisms $1_{(r)}$ for $r > n$.
The presentation for $\qSchur_n$ arising from \cref{th1} makes it clear that it is a version of Cautis-Kamnitzer-Morrison's $U_q\mathfrak{gl}_n$-web category, or rather, its positive half involving only upward-pointing strings.
The ideal $\cI_n$ is compatible with the basis from
\cref{th2}. Consequently, the path algebra of 
$\qSchur_n$ is also a symmetrically-based quasi-hereditary algebra with triangular basis given by the images of the codeterminants
$\xi_{A(P)}\xi_{A(Q)^\tr}$
for all pairs $(P,Q)$ of
semistandard tableaux whose shape $\kappa$ satisfies $\kappa_1 \leq n$.
This basis is of a similar nature to the integral bases
for morphism spaces in this category constructed in \cite{Elias}. The canonical basis also induces a cellular basis for the path algebra of $\qSchur_n$.

Let $\k$ be a field viewed as a $\Z[q,q^{-1}]$-algebra in some way, and consider the specialization $\qSchur_n(\k) := 
\k \otimes_{\Z[q,q^{-1}]} \qSchur_n$.
Also let $U_n$ be 
Lusztig's $\Z[q,q^{-1}]$-form for the quantized enveloping algebra $U_q\mathfrak{gl}_n$
with Chevalley generators $E_i,F_i\:(1 \leq i \leq n-1)$ and
$D_i^{\pm 1}\:(1 \leq i \leq n)$.
Let $U_n(\k) := \k \otimes_{\Z[q,q^{-1}]} U_n$.
We view it as a Hopf algebra with comultiplication $\Delta$ satisfying
\begin{align*}
\Delta(E_i) &= 1 \otimes E_i  + E_i \otimes D_i^{-1} D_{i+1},&
\Delta(F_i) &= F_i \otimes 1 + D_i D_{i+1}^{-1} \otimes F_i,&
\Delta(D_i)&=D_i \otimes D_i.
\end{align*}
The natural $U_n(\k)$-module $V$ 
is the vector space with
basis
$v_1,\dots,v_n$ such that
$E_i v_j = \delta_{i+1,j} v_i$,
$F_i v_j = \delta_{i,j} v_{i+1}$,
$D_i v_j = q^{\delta_{i,j}} v_j$.
Its $r$th quantum exterior power $\bigwedge^r V$ is
a certain quotient of the $r$th tensor power $V^{\otimes r}$
with a basis given by the monomials $v_{i_1}\wedge\cdots\wedge v_{i_r}$
that are images of the pure tensors
$v_{i_1}\otimes\cdots\otimes v_{i_r}$ for
$1 \leq i_1 < \dots < i_r \leq n$.
Let $\qTilt_n(\k)$, the category of 
{\em polynomial tilting modules}, be the full additive Karoubian monoidal subcategory of
$U_n(\k)\mod$
generated by the exterior powers
$\bigwedge^r V$ for all $r \geq 0$.
This is a braided (but not rigid) 
monoidal category with braiding 
$\R:-\otimes-\stackrel{\sim}{\Rightarrow} 
-\otimes^\rev-$ 
defined so that
\begin{align}\label{rmat}
\R_{V,V}:V \otimes V &\rightarrow V \otimes V,
&v_i \otimes v_j &\mapsto \begin{cases}
v_j \otimes v_i&\text{if $i < j$,}\\
q^{-1} v_j \otimes v_i&\text{if $i=j$,}\\
v_j \otimes v_i - (q-q^{-1}) v_i \otimes v_j &\text{if $i > j$.}
\end{cases}
\end{align}
If $\k$ is of characteristic 0 and the image of $q$ in $\k$ is not a root of unity, $\qTilt_n(\k)$ is a semisimple Abelian category, and
the following theorem can be deduced from \cite{CKM}.

\begin{introtheorem}\label{th3}
There is a $\k$-linear monoidal 
functor
$\Sigma_n:\qSchur_n(\k) \rightarrow \qTilt_n (\k)$
taking the generating object $(r)$ to $\bigwedge^r V$,
the merge 
$\begin{tikzpicture}[anchorbase,scale=.6]
	\draw[-,line width=1pt] (0.28,-.3) to (0.08,0.04);
	\draw[-,line width=1pt] (-0.12,-.3) to (0.08,0.04);
	\draw[-,line width=2pt] (0.08,.4) to (0.08,0);
        \node at (-0.22,-.43) {$\scriptstyle a$};
        \node at (0.35,-.43) {$\scriptstyle b$};
\end{tikzpicture}$
to
the natural surjection
${\textstyle\bigwedge^a V \otimes \bigwedge^b V}
\twoheadrightarrow {\textstyle\bigwedge^{a+b} V}$,
and the split
$\begin{tikzpicture}[anchorbase,scale=.6]
	\draw[-,line width=2pt] (0.08,-.3) to (0.08,0.04);
	\draw[-,line width=1pt] (0.28,.4) to (0.08,0);
	\draw[-,line width=1pt] (-0.12,.4) to (0.08,0);
        \node at (-0.22,.53) {$\scriptstyle a$};
        \node at (0.36,.55) {$\scriptstyle b$};
\end{tikzpicture}$
to the inclusion
$
{\textstyle\bigwedge^{a+b} V}
\hookrightarrow 
{\textstyle\bigwedge^a V \otimes \bigwedge^b V}$
defined by
$$
v_{i_1}\wedge\cdots\wedge v_{i_{a+b}}
\mapsto
q^{-ab} \!\!\!\!\!\!\!\!\!\!\!\!\sum_{w \in (S_{a+b}/ S_a \times S_b)_{\min}}\!\!\!\!\!\!\!\!\!\!\!\!
(-q)^{\ell(w)} 
v_{i_{w(1)}}\wedge\cdots\wedge v_{i_{w(a)}}\otimes
v_{i_{w(a+1)}}\wedge\cdots\wedge v_{i_{w(a+b)}}
$$
for $1 \leq i_1 < \cdots < i_{a+b}\leq n$.
This functor is full and faithful, and it induces a monoidal equivalence
between the 
additive Karoubi envelope of $\qSchur_n(\k)$
and $\qTilt_n (\k)$.
\end{introtheorem}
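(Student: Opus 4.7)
The plan is to proceed in three stages: first construct $\Sigma_n$ via the presentation in \cref{th1}; second establish full faithfulness by a dimension count on morphism spaces, with Donkin's Ringel duality for $q$-Schur algebras providing the key input on the target side; third extract the equivalence of Karoubi envelopes as a formal consequence.

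For the construction, I would define $\Sigma_n$ on the generating objects $(r)$ and the generating merges and splits by the stated formulas, and then verify the associativity and coassociativity relations \cref{asscoassoc} together with the square-switch relation \cref{squareswitch}. The first two can be verified directly on pure tensors $v_{i_1}\wedge\cdots\wedge v_{i_{a+b+c}}$: iterated merges are manifestly independent of bracketing, and one checks iterated splits agree using the explicit formula given for the split. The substantive check is the square-switch, which I would verify by expanding the split as a signed $q$-alternating sum, applying the merge, and reorganizing via standard $q$-binomial identities. Alternatively, since all structure constants lie in $\Z[q,q^{-1}]$, it suffices to verify the identity after base change to $\Q(q)$ with generic $q$, where it follows from \cite{CKM}. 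Because $\bigwedge^r V = 0$ for $r > n$, the functor annihilates $\cI_n$ and descends to $\qSchur_n(\k)$.

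For full faithfulness, I would compare dimensions. From \cref{th2} and the compatibility of $\cI_n$ with the codeterminant basis (it kills codeterminants of shape $\kappa$ with $\kappa_1 > n$),
\[
\dim_{\k} \Hom_{\qSchur_n(\k)}(\mu, \lambda) \ =\ \sum_{\substack{\kappa \in \Par\\ \kappa_1 \leq n,\ |\kappa| = \sum_i \lambda_i}} |\Std(\lambda, \kappa)|\, |\Std(\mu, \kappa)|.
\]
On the target side, each $\bigwedge^\lambda V$ is a polynomial tilting module (a tensor product of fundamental tiltings); the standard formula computing $\Hom$ between tiltings through $\Delta$- and $\nabla$-filtration multiplicities, combined with the Kostka-type identities $(\bigwedge^\mu V : \Delta(\kappa)) = |\Std(\mu, \kappa)|$ and $(\bigwedge^\lambda V : \nabla(\kappa)) = |\Std(\lambda, \kappa)|$, shows that the same sum equals $\dim_{\k} \Hom_{U_n(\k)}(\bigwedge^\mu V, \bigwedge^\lambda V)$. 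Donkin's Ringel duality for $S_q(n,r)$, valid over any commutative base ring, further asserts that the endomorphism algebra of $\bigoplus_\lambda \bigwedge^\lambda V$ (sum over strict compositions of $r$ with parts at most $n$) is generated under composition and tensor by merges and splits, so $\Sigma_n$ is surjective on $\Hom$ spaces. Matching dimensions then upgrades surjectivity to bijectivity.

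Since $\qTilt_n(\k)$ is by construction the additive Karoubian monoidal subcategory generated by the objects $\bigwedge^r V = \Sigma_n((r))$, fully faithful plus essentially surjective on these generators implies that the induced functor $\Kar(\qSchur_n(\k)) \to \qTilt_n(\k)$ is an equivalence of monoidal categories. The main obstacle will be the dimension-matching step together with the identification of $\End\bigl(\bigoplus_\lambda \bigwedge^\lambda V\bigr)$ as the merge/split subalgebra: this is precisely where the integrality of Donkin's Ringel duality (rather than only the semisimple CKM case) is essential, and is the technical heart of the argument.
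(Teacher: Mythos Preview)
Your plan is correct and lands on the same endgame as the paper (dimension count via the codeterminant basis on the source, tilting-module filtration multiplicities on the target, Donkin's Ringel duality for fullness, then Karoubi completion), but the construction of the functor is organized differently. The paper does \emph{not} build $\Sigma_n$ by verifying the relations of \cref{th1} on exterior powers. Instead it defines $\Sigma_n$ on every standard basis element $\xi_A$ at once, sending it to an explicit Hecke-algebra-defined map $\phi_A:\bigwedge^\mu V\to\bigwedge^\lambda V$ (\cref{haircut}), and then checks functoriality by showing $\phi_A\circ\phi_B=\sum_C Z(A,B,C)\phi_C$ as a consequence of \cref{met}. Fullness is then immediate from the surjectivity in \cref{met}, whose proof is a Schur-functor argument comparing $\bigwedge^\lambda V$ with signed permutation modules for $H_r$; this is the paper's concrete realization of what you call ``Donkin's Ringel duality.'' Your route, using \cref{th1} to reduce well-definedness to the square-switch relation on exterior powers, is legitimate and arguably more direct once the presentation is available, but note two points: first, your sentence ``Ringel duality asserts the endomorphism algebra is generated by merges and splits'' still requires identifying the Ringel-duality isomorphism with your $\Sigma_n$ on generators---this is exactly the content of \cref{haircut} and the explicit formulae after \cref{dthm}, and it is not a free consequence of the abstract statement; second, the multiplicity you want is $(\bigwedge^\mu V:\Delta(\kappa'))=|\Std(\mu,\kappa)|$ with a transpose on the highest weight, since $e_\mu=\sum_\kappa K_{\kappa,\mu}\,s_{\kappa'}$. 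The paper's ordering (build $\Sigma_n$ first, independent of the presentation; then prove the presentation; then quotient and compare dimensions) has the advantage that \cref{th3} does not logically depend on \cref{th1}.
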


The monoidal functor $\Sigma_n$ of \cref{th3}
is not a braided monoidal functor---it takes
 the positive crossing 
$\begin{tikzpicture}[anchorbase,scale=.6]
	\draw[-,thick] (0.3,-.3) to (-.3,.4);
	\draw[-,line width=5pt,white] (-0.3,-.3) to (.3,.4);
	\draw[-,thick] (-0.3,-.3) to (.3,.4);
    \node at (0.3,-.5) {$\scriptstyle b$};
        \node at (-0.3,-.5) {$\scriptstyle a$};
\end{tikzpicture}$
to $(-1)^{ab} \R_{\bigwedge^b V, \bigwedge^a V}^{-1}$ 
rather than to $\R_{\bigwedge^a V, \bigwedge^b V}$.
This twist, which may at first seem inconvenient,
is reasonable since the proof
involves some Ringel duality---the generating object $(r)$
of the $q$-Schur category corresponds more naturally to the $r$th
quantum {symmetric power} of the natural module rather than its
exterior power. 

There is one more important explanation to be made:
subsequently, the notation $\qSchur$ will be used to denote a slightly larger version of the $q$-Schur category than appears in this introduction,
with objects that are indexed by {\em all} compositions, not just strict ones. In other words, we adjoin an additional generating object $(0)$ which is isomorphic but not equal to the strict
identity object $\one$.
We prefer to use the same notation for both versions---it should be clear from context whether we are working with or without strings of thickness zero. The natural inclusion of the $q$-Schur category as defined in the introduction into the one with 
0-strings is a monoidal equivalence, making it easy to go back and forth between the two versions.
One advantage of $q$-Schur category {\em with} 0-strings is that there is a surjective algebra homomorphism from $U_n$ to the path algebra of the full subcategory
whose objects are compositions with exactly $n$ parts. Actually, it is more convenient to work with Lusztig's modified form $\dot U_n$ here; see \cref{chickencurry}.
Using this connection, the approach to $q$-Schur algebras
taken in \cite{doty}, exploiting Lusztig's refined Peter-Weyl theorem for $\dot U_n$ \cite[Sec.~29.3]{Lubook}, could be adapted to give yet another approach to the results here.

\vspace{2mm}
\noindent
{\em Acknowledgements.}
The first draft of this article was written in February 2020. I would like to thank Elijah Bodish, one of the few mathematicians I saw at the University of Oregon
for many months after that,
for his interest in this material.
This paper is dedicated to Gary Seitz who had a profound influence on my life---indeed, without Gary's encouragement and mentorship,
I would never have found my way to Oregon 
in the first place.

\section{Double coset combinatorics}\label{s2-combinatorics}

A {\em composition} $\lambda\vDash r$ is a finite 
sequence $\lambda =
(\lambda_1,\dots,\lambda_\ell)$
of non-negative integers summing to $r$.
We write $\ell(\lambda)$ for the total number $\ell$ of parts, which is allowed to be zero, and $|\lambda|$ for the sum of the parts.
We emphasize that we treat compositions of different lengths as being different, 
e.g., $() \neq (0) \neq (0,0)$.
A {\em partition} $\lambda \vdash r$ is a composition $\lambda=(\lambda_1,\dots,\lambda_\ell) \vDash r$
whose parts satisfy $\lambda_1 \geq \cdots \geq \lambda_\ell > 0$.
For partitions, we allow to write $\lambda_r$ even if $r > \ell(\lambda)$, in which case $\lambda_r = 0$.
We denote the sets of all compositions
and all partitions by $\Comp$ and $\Par$, respectively.
Let $\leq$ be the usual dominance ordering on $\Par$.

We denote the transposition $(i\ i\!+\!1)$
in the symmetric group $S_r$ by $s_i$,
$\ell:S_r \rightarrow \N$ is the length function, and $w_r \in S_r$ is the longest element.
Elements of $S_r$ act on the {\em left} on
the set $\{1,\dots,r\}$.
There is also a {\em right} action of $S_r$ on $\Z^r$ by place permutation: for 
$\bi =
(i_1,\dots,i_r) \in \Z^r$
and
$w \in S_r$, the $r$-tuple $\bi \cdot w$ has $j$th entry $i_{w(j)}$. 
For $\lambda=(\lambda_1,\dots,\lambda_\ell) \vDash r$,
the set
\begin{equation}\label{ilambda}
\I_\lambda := \left\{\bi = (i_1,\dots,i_r) \in \Z^r \:\big|\:
\#\!\left\{k=1,\dots,r\:|\:i_k = i\right\} = \lambda_i
\text{ for all }i \in \nset{\ell(\lambda)}\right\}
\end{equation}
is a single orbit under this action.
Also let 
$\bi^\lambda = (i^\lambda_1,\dots,i^\lambda_r)$
denote the unique element of $\I_\lambda$ whose entries 
are in weakly increasing order.
Its stabilizer in $S_r$ is
the parabolic subgroup
$S_\lambda = S_{\lambda_1}\times\cdots\times S_{\lambda_\ell}$.

For $\lambda,\mu\vDash r$, the symmetric group $S_r$ acts diagonally
on the right on 
$\I_\lambda \times \I_\mu$. The orbits are parametrized by the set
$\Mat{\lambda}{\mu}$ of all $\ell(\lambda)\times\ell(\mu)$ matrices
with non-negative integer entries such that
the entries in the $i$th row sum to
$\lambda_i$
and the entries in the $j$th column  sum to $\mu_j$ for all
$i\in \nset{\ell(\lambda)}$ and $j\in\nset{\ell(\mu)}$.
For $A = (a_{i,j}) \in \Mat{\lambda}{\mu}$, the corresponding $S_r$-orbit
on $\I_\lambda\times\I_\mu$ is
\begin{equation}
\Pi_A := \left\{(\bi,\bj) \in \I_\lambda\times\I_\mu \:\Bigg|\:
\begin{array}{l}
\#\!\left\{k=1,\dots,r\:|\:(i_k,j_k) = (i,j)\right\} = a_{i,j}\\
\text{for all  }
i \in \nset{\ell(\lambda)},   j\in \nset{\ell(\mu)}\end{array}
\right\}\!.
\end{equation}
The set $\Mat{\lambda}{\mu}$ is actually just one of many different sets
used in the literature to parametrize
the orbits of $S_r$ on $\I_\lambda\times\I_\mu$. 
Another is by the set $\Row(\lambda,\mu)$ of {\em row tableaux} of shape $\mu$ and content $\lambda$, that is, left justified arrays with $\mu_1$ boxes in row 1 (the top row),
$\mu_2$ boxes in row 2, and so on, with boxes
filled with integers so that entries are weakly increasing in order from left to right along each row, and there are a total of $\lambda_1$ entries equal to $1$, $\lambda_2$ equal to 2, and so on.
We use the explicit bijection
\begin{equation}\label{Amap}
A:\Row(\lambda,\mu) \rightarrow
\Mat{\lambda}{\mu}
\end{equation}
taking $P \in \Row(\lambda,\mu)$
to the matrix $A(P) \in \Mat{\lambda}{\mu}$
whose $ij$-entry
records the number of times $i$ appears on row $j$ of $P$.
The inverse bijection maps
$A \in \Mat{\lambda}{\mu}$ to the row tableau $P \in \Row(\lambda,\mu)$ whose $j$th row is equal to 
$1^{a_{1,j}}\ 2^{a_{2,j}}\ \cdots\ \ell^{a_{\ell(\lambda),j}}$.

A third way to parametrize orbits is by the double coset diagrams introduced already in the introduction.
We gave already there 
an example in which
$\lambda = (4,5)$, $\mu = (3,2,4)$,
for which the matrix $A \in \Mat{\lambda}{\mu}$, the corresponding double coset diagram, and the corresponding
row tableau $P\in \Row(\lambda,\mu)$ are
\begin{align}
A=\begin{bmatrix} 1&0&3\\2&2&1\end{bmatrix}
\qquad\leftrightarrow\qquad
\begin{tikzpicture}[anchorbase,scale=1.5]
\draw[-,line width=.6mm] (.212,.5) to (.212,.39);
\draw[-,line width=.75mm] (.595,.5) to (.595,.39);
\draw[-,line width=.15mm] (0.0005,-.396) to (.2,.4);
\draw[-,line width=.3mm] (0.01,-.4) to (.59,.4);
\draw[-,line width=.3mm] (.4,-.4) to (.607,.4);
\draw[-,line width=.45mm] (.79,-.4) to (.214,.4);
\draw[-,line width=.15mm] (.8035,-.398) to (.614,.4);
\draw[-,line width=.3mm] (.4006,-.5) to (.4006,-.395);
\draw[-,line width=.6mm] (.788,-.5) to (.788,-.395);
\draw[-,line width=.45mm] (0.011,-.5) to (0.011,-.395);
\node at (0.05,0.15) {$\scriptstyle 1$};
\node at (0.79,0.05) {$\scriptstyle 1$};
\node at (0.35,0.37) {$\scriptstyle 3$};
\node at (0.17,-0.3) {$\scriptstyle 2$};
\node at (0.5,-0.3) {$\scriptstyle 2$};
\end{tikzpicture}
\qquad\leftrightarrow \qquad P=
\begin{tikzpicture}[anchorbase,scale=1.2]
  \draw (0,0) to (1,0);
  \draw (0,.25) to (1,.25);
  \draw (0,.5) to (.75,.5);
  \draw (0,.75) to (.75,.75);
  \draw (0,0) to (0,.75);
  \draw (.25,0) to (.25,.75);
  \draw (.5,0) to (.5,.75);
  \draw (.75,.5) to (.75,.75);
  \draw (.75,0) to (.75,.25);
  \draw (1,0) to (1,.25);
  \node at (0.125,0.125) {$\scriptstyle 1$};
    \node at (0.375,0.125) {$\scriptstyle 1$};
  \node at (0.625,0.125) {$\scriptstyle 1$};
  \node at (0.875,0.125) {$\scriptstyle 2$};
    \node at (0.125,0.375) {$\scriptstyle 2$};
    \node at (0.375,0.375) {$\scriptstyle 2$};
     \node at (0.125,0.625) {$\scriptstyle 1$};
    \node at (0.375,0.625) {$\scriptstyle 2$};
  \node at (0.625,0.625) {$\scriptstyle 2$};
  \end{tikzpicture}
\label{filler1}
\end{align}
Unlike in the introduction, we are now allowing compositions with parts equal to 0, so double coset diagrams can also have strings labelled by 0. In fact, it is harmless to omit these zero thickness strings from the diagram entirely, but one should mark their endpoints.
Here is an example with
$\lambda = (4,0,5,0)$ and $\mu = (3,2,0,4)$:
\begin{align}\label{filler2}
A=\begin{bmatrix} 1&0&0&3\\0&0&0&0\\2&2&0&1\\0&0&0&0\end{bmatrix}
\qquad\leftrightarrow\qquad
\begin{tikzpicture}[anchorbase,scale=1.5]
\smallspot{.8,-.49};
\smallspot{.4,.49};
\smallspot{1.2,.49};
\draw[-,line width=.6mm] (.014,.5) to (.014,.39);
\draw[-,line width=.75mm] (.795,.5) to (.795,.39);
\draw[-,line width=.15mm] (0.0005,-.396) to (0.0005,.4);
\draw[-,line width=.3mm] (0.01,-.4) to (.79,.4);
\draw[-,line width=.3mm] (.4,-.4) to (.807,.4);
\draw[-,line width=.45mm] (1.19,-.4) to (.014,.4);
\draw[-,line width=.15mm] (1.21,-.398) to (.806,.4);
\draw[-,line width=.3mm] (.4006,-.5) to (.4006,-.395);
\draw[-,line width=.6mm] (1.197,-.5) to (1.197,-.395);
\draw[-,line width=.45mm] (0.011,-.5) to (0.011,-.395);
\node at (0.07,0.05) {$\scriptstyle 1$};
\node at (1.08,0.05) {$\scriptstyle 1$};
\node at (0.32,0.32) {$\scriptstyle 3$};
\node at (0.27,-0.25) {$\scriptstyle 2$};
\node at (0.57,-0.25) {$\scriptstyle 2$};
\end{tikzpicture}
\qquad\leftrightarrow \qquad P=
\begin{tikzpicture}[anchorbase,scale=1.2]
  \draw (0,-.25) to (1,-.25);
  \draw (0,0) to (1,0);
  \draw (0,.25) to (.5,.25);
  \draw (0,.5) to (.75,.5);
  \draw (0,.75) to (.75,.75);
  \draw (0,-.25) to (0,.75);
  \draw (.25,-.25) to (.25,0);
  \draw (.5,-.25) to (.5,0);
  \draw (.25,.25) to (.25,.75);
  \draw (.5,0.25) to (.5,.75);
  \draw (.75,.5) to (.75,.75);
  \draw (.75,0) to (.75,-.25);
  \draw (1,0) to (1,-.25);
  \node at (0.125,-0.125) {$\scriptstyle 1$};
    \node at (0.375,-0.125) {$\scriptstyle 1$};
  \node at (0.625,-0.125) {$\scriptstyle 1$};
  \node at (0.875,-0.125) {$\scriptstyle 3$};
    \node at (0.125,0.375) {$\scriptstyle 3$};
    \node at (0.375,0.375) {$\scriptstyle 3$};
     \node at (0.125,0.625) {$\scriptstyle 1$};
    \node at (0.375,0.625) {$\scriptstyle 3$};
  \node at (0.625,0.625) {$\scriptstyle 3$};
  \end{tikzpicture}
\end{align}
Two other sets in bijection with $\Mat{\lambda}{\mu}$
are the sets $(S_\lambda \backslash S_r / S_\mu)_{\min}$
and $(S_\lambda\backslash S_r / S_\mu)_{\max}$ of minimal length and maximal length double coset representatives.
For $A \in\Mat{\lambda}{\mu}$, we denote the corresponding
elements of
$(S_\lambda \backslash S_r / S_\mu)_{\min}$
and $(S_\lambda\backslash S_r / S_\mu)_{\max}$
by $d_A$ and $d_A^+$, respectively.

\begin{lemma}\label{doso}
Given $\lambda,\mu\vDash r$ and $A \in \Mat{\lambda}{\mu}$, 
let $\lambda^- \vDash r$ (resp., $\mu^+ \vDash r$)
be obtained by reading the entries of $A$ in
order along rows starting with the top row (resp., in order down columns starting with the leftmost column).
We have that 
\begin{align*}
\big(S_\lambda d_A\big) \cap \big(d_A S_\mu\big)
&=
d_A S_{\mu^+}= S_{\lambda^-}\ d_A.
\end{align*}
Every
element $w\in S_\lambda d_A S_\mu$ can be written
uniquely as $x d_A\, y$ for $x \in (S_\lambda / S_{\lambda^-})_{\min}$,
$y \in S_\mu$
(resp., $x d_A\, y$ for $x \in S_\lambda$, $y \in
(S_{\mu^+}\backslash S_\mu)_{\min}$), and we have that
$\ell(x d_A\, y) = \ell(x)+\ell(d_A)+\ell(y)$.
\end{lemma}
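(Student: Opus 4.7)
My plan is to first establish the two conjugation identities
\begin{align*}
d_A^{-1} S_\lambda d_A \cap S_\mu &= S_{\mu^+}, &
S_\lambda \cap d_A S_\mu d_A^{-1} &= S_{\lambda^-},
\end{align*}
which together imply $d_A S_{\mu^+} d_A^{-1} = S_{\lambda^-}$. These can be verified by direct combinatorial inspection of how $d_A$ acts on positions: the composition $\mu^+$ refines $\mu$ by splitting each $\mu_j$-block into sub-blocks of sizes $a_{1,j},\dots,a_{\ell(\lambda),j}$, and this is exactly the refinement of positions within $\bi^\mu$ that is induced by pulling back $\bi^\lambda$ along the minimal length representative $d_A$. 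A permutation $y \in S_\mu$ conjugates into $S_\lambda$ via $d_A$ precisely when it preserves this refinement, which is exactly the condition $y \in S_{\mu^+}$; the dual statement for $S_{\lambda^-}$ follows by interchanging rows and columns (equivalently, by passing to $d_A^{-1} \in (S_\mu \backslash S_r / S_\lambda)_{\min}$ indexed by $A^\tr$).

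From these identifications, the first displayed equation follows formally: if $x d_A = d_A y$ with $x \in S_\lambda$, $y \in S_\mu$, then $y = d_A^{-1} x d_A \in d_A^{-1} S_\lambda d_A \cap S_\mu = S_{\mu^+}$, giving $S_\lambda d_A \cap d_A S_\mu \subseteq d_A S_{\mu^+}$; the reverse containment and the equality $d_A S_{\mu^+} = S_{\lambda^-} d_A$ are immediate from $d_A S_{\mu^+} d_A^{-1} = S_{\lambda^-}$. Uniqueness of the decomposition $w = x d_A y$ (subject to the minimality of $x$ or $y$) also drops out: two decompositions $(x,y)$ and $(x',y')$ satisfy $x^{-1} x' \in S_\lambda \cap d_A S_\mu d_A^{-1} = S_{\lambda^-}$, so requiring $x$ to be minimal in its $S_{\lambda^-}$-coset pins down the decomposition, and dually for $y$.

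The main obstacle is the length additivity $\ell(x d_A y) = \ell(x) + \ell(d_A) + \ell(y)$. The cleanest approach is via the classical Deodhar-style lemma for Coxeter groups: minimality of $d_A$ in $(S_\lambda \backslash S_r / S_\mu)_{\min}$ is equivalent to $s d_A > d_A$ for every simple reflection $s \in S_\lambda$ and $d_A t > d_A$ for every simple $t \in S_\mu$. Combined with the exchange condition and induction on $\ell(x)$ (respectively $\ell(y)$), this produces a reduced expression for $x d_A y$ that concatenates reduced expressions for the three factors. Alternatively, using the explicit combinatorial description of $d_A$ read off from the double coset diagram of $A$, one can count inversions directly in the one-line notation of $x d_A y$ and observe that they partition cleanly into contributions from $x$, from $d_A$, and from $y$, precisely because the minimality condition on $x$ (or $y$) excludes the pairs of inversions that would cancel for an arbitrary representative of the coset $x S_{\lambda^-}$ (respectively $S_{\mu^+} y$).
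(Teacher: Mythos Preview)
Your argument is correct: the conjugation identities, the derivation of $S_\lambda d_A \cap d_A S_\mu = d_A S_{\mu^+} = S_{\lambda^-} d_A$, and the uniqueness of the factorisation are all standard and your reasoning is sound. For the length additivity, your Deodhar--style sketch is the right idea, though in practice one typically also uses that conjugation by $d_A$ carries simple reflections of $S_{\mu^+}$ to simple reflections of $S_{\lambda^-}$ (length-preservingly), so that after writing $y = y_1 y_2$ with $y_1 \in S_{\mu^+}$ and $y_2 \in (S_{\mu^+}\backslash S_\mu)_{\min}$ one can slide $y_1$ past $d_A$ and reduce to the separate one-sided additivity statements $\ell(x' d_A) = \ell(x') + \ell(d_A)$ and $\ell(d_A y_2) = \ell(d_A) + \ell(y_2)$. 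Your inversion-counting alternative would also work but is more laborious to write out carefully.

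The contrast with the paper is simply that the paper does not prove the lemma at all: it cites \cite[Lemma~1.6]{DJgl} (Dipper--James) and moves on. So you have supplied a genuine proof where the paper only gives a reference; your argument is essentially the standard one that underlies the cited result, and there is nothing to compare beyond that.
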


\begin{proof}
This follows from \cite[Lemma 1.6]{DJgl}.
\end{proof}

The double coset diagram
gives a convenient visual way to translate $A
\in \Mat{\lambda}{\mu}$ into
the minimal length double coset representatives $d_A$.
Alternatively, to obtain $d_A$, let
$(i_1,\dots,i_r) \in I_\lambda$ 
be the sequence $\ZigzagDown(P)$
obtained by reading the entries of the corresponding row tableau $P$ from left to right 
along rows, starting with the top row.
Then replace the $\lambda_1$ entries equal to 1 in this sequence by $1,\dots,\lambda_1$ in increasing order, 
the $\lambda_2$ entries equal to 2 by $\lambda_1+1,\dots,\lambda_1+\lambda_2$
in increasing order, and so on.
The result is $d_A$ written in one-line notation.
To compute $d_A^+$, we instead start from 
the sequence $\ZigzagDownRev(P)$ obtained by reading
entries of $P$ from right to left along rows, starting with the top row. Then we replace the entries $1$
by $\lambda_1,\dots,1$ in decreasing order, the entries 2
by $\lambda_1+\lambda_2,\dots,\lambda_1+1$ in decreasing order, and so on.
In the example \cref{filler1}, 
$\ZigzagDown(P) = (1,2,2,2,2,1,1,1,2)$ so $d_A = (1,5,6,7,8,2,3,4,9)$,
and $\ZigzagDownRev(P) = (2,2,1,2,2,2,1,1,1)$ so $d_A^+
= (9,8,4,7,6,5,3,2,1)$.

Let $\leq$ be the {\em Bruhat ordering} on the symmetric group
(so the identity element is {\em minimal}).
This restricts to partial
orders on the sets 
$(S_\lambda\backslash S_r / S_\mu)_{\min}$ and
$(S_\lambda\backslash S_r / S_\mu)_{\max}$,
such that 
\begin{equation}
d_A \leq d_B \qquad\Leftrightarrow\qquad d_A^+ \leq d_B^+
\end{equation}
if $d_A$ and $d_B$ are minimal length double coset representatives and $d_A^+$ and $d_B^+$ are the corresponding maximal ones (this coincidence is proved in \cite{HS}).
Using the bijections between these sets, we 
transport the Bruhat order to partial
orders
on $\Row(\lambda,\mu)$ and $\Mat{\lambda}{\mu}$. 
The resulting partial order on $\Mat{\lambda}{\mu}$ is given explicitly in terms of matrices by
\begin{equation}\label{bruhat}
A \leq B\Leftrightarrow 
\left(\sum_{i=1}^s \sum_{j=1}^t a_{i,j}
\geq 
\sum_{i=1}^s \sum_{j=1}^t b_{i,j}\text{ for all }s \in
\nset{\ell(\lambda)},
t \in \nset{\ell(\mu)}\right).
\end{equation}
One finds this elementary combinatorial observation
in many places in the literature, e.g., see \cite{BLM} which also
explains the geometric origin of this ordering.

\section{The quantized coordinate algebra}\label{s4-manin}

The ring $\Z[q,q^{-1}]$ has a bar involution $-$ which sends $q$ to $q^{-1}$. 
We will use the term
``anti-linear map'' for a $\Z$-module homomorphism between
$\Z[q,q^{-1}]$-modules which intertwines $q$ and $q^{-1}$ in this way.
For $\Z[q,q^{-1}]$-modules, $V \otimes W$ means tensor product over $\Z[q,q^{-1}]$
and $V^*$ denotes $\Hom_{\Z[q,q^{-1}]}(V, \Z[q,q^{-1}])$.
We will need the quantum integer 
\begin{equation}\label{qint}
[n]_q := \frac{q^n - q^{-n}}{q-q^{-1}}
\end{equation}
and 
quantum binomial coefficient 
\begin{equation}\label{qbin}
\qbinom{n}{s}_{\!q} := \displaystyle\frac{[n]_q [n-1]_q \cdots [n-s+1]_q}{
[s]_q [s-1]_q \cdots [1]_q},
\end{equation}
which we interpret as zero in case $s < 0$.
These satisfy the Pascal-type recurrence relation:
\begin{equation}\label{pascal}
\qbinom{n}{s}_{\!q} = 
q^s \qbinom{n-1}{s}_{\!q} + q^{s-n} \qbinom{n-1}{s-1}_{\!q}
= q^{-s} \qbinom{n-1}{s}_{\!q} + q^{n-s} \qbinom{n-1}{s-1}_{\!q}.
\end{equation}
The following play the role of the binomial theorem for positive
and negative exponents:
\begin{align}\label{qbinomial}
\prod_{s=1}^n \left(1+q^{2s-n-1} x\right) &=
\sum_{s=0}^n \qbinom{n}{s}_{\!q} x^s,
&
\prod_{s=1}^n \frac{1}{\left(1+q^{2s-n-1} x\right)} &=
\sum_{s=0}^n \qbinom{-n}{s}_{\!q} x^i.
\end{align}
Here are some more identities that will be needed later.

\begin{lemma}\label{vanishing}
For $n \geq 0$, we have that
$\displaystyle\sum_{s=0}^n (-1)^s q^{s(n-1)} \qbinom{n}{s}_{\!q}= \delta_{n,0}.$
\end{lemma}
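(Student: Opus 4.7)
The plan is to derive this identity as a direct specialization of the first $q$-binomial theorem in \cref{qbinomial}. Specifically, I would substitute $x = -q^{n-1}$ into the identity
\begin{equation*}
\prod_{s=1}^n \bigl(1 + q^{2s-n-1} x\bigr) = \sum_{s=0}^n \qbinom{n}{s}_{\!q} x^s.
\end{equation*}

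On the right-hand side, this substitution produces
\begin{equation*}
\sum_{s=0}^n \qbinom{n}{s}_{\!q} (-q^{n-1})^s = \sum_{s=0}^n (-1)^s q^{s(n-1)} \qbinom{n}{s}_{\!q},
\end{equation*}
which is precisely the sum appearing in the lemma. On the left-hand side, the factor corresponding to $s=1$ becomes $1 + q^{2-n-1}\cdot(-q^{n-1}) = 1 - q^0 = 0$, so the whole product vanishes whenever $n \geq 1$. For $n=0$ the product is empty and equal to $1$, matching the $s=0$ term on the right. Thus both sides equal $\delta_{n,0}$.

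There is no real obstacle here: the argument is a one-line specialization of a formula already recorded in the paper, and the only mild subtlety is checking that the $n=0$ case is handled correctly by interpreting the empty product as $1$. Alternatively, one could give a short inductive proof using the Pascal-type recurrence \cref{pascal}, but invoking \cref{qbinomial} directly is cleaner.
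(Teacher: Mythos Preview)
Your proof is correct and follows exactly the same approach as the paper: specialize the first identity in \cref{qbinomial} at a value of $x$ that kills one factor of the product. The paper writes $x=-q^{-n-1}$, which appears to be a typo for your $x=-q^{n-1}$ (with $x=-q^{-n-1}$ no factor $1+q^{2s-n-1}x$ vanishes for $1\le s\le n$), so your version is in fact the cleaner statement of the intended argument.
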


\begin{proof}
Set $x = -q^{-n-1}$ in the first identity from \cref{qbinomial}.
\end{proof}

\begin{lemma}\label{second}
For $m,n \in \Z$ and $s \geq 0$, we have that
$\displaystyle
\sum_{a+b=s} q^{mb-na}\qbinom{m}{a}_{\!q}\qbinom{n}{b}_{\!q} = \qbinom{m+n}{s}_{\!q}.
$
\end{lemma}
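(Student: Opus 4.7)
My plan is to deduce the identity from the generating-function form of the $q$-binomial theorem in \cref{qbinomial}, and then remove the sign hypothesis on $m, n$ by a polynomial-identity argument. First I treat the case $m, n \geq 0$: applying the first identity of \cref{qbinomial} with $n$ replaced by $m$ and $x$ replaced by $q^{-n}x$ gives
\[
\prod_{i=1}^{m}\bigl(1+q^{2i-m-n-1}x\bigr) = \sum_{a\geq 0} q^{-na}\qbinom{m}{a}_{\!q} x^a,
\]
and applying it again (keeping $n$ but substituting $x \mapsto q^{m}x$) gives
\[
\prod_{j=1}^{n}\bigl(1+q^{2j+m-n-1}x\bigr) = \sum_{b\geq 0} q^{mb}\qbinom{n}{b}_{\!q} x^b.
\]
Reindexing $i = j+m$ in the second product merges the two left hand sides into $\prod_{i=1}^{m+n}\bigl(1+q^{2i-m-n-1}x\bigr)$, which by one more application of \cref{qbinomial} equals $\sum_{s \geq 0}\qbinom{m+n}{s}_{\!q} x^s$. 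Reading off the coefficient of $x^s$ from the product of the two expansions now delivers the identity when $m, n \geq 0$.

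To extend to arbitrary $m, n \in \Z$, I view both sides as elements of $\Q(q)[X^{\pm 1}, Y^{\pm 1}]$ specialized at $X = q^m$, $Y = q^n$. Using $[k]_q = (q^k - q^{-k})/(q - q^{-1})$, one has
\[
\qbinom{m}{a}_{\!q} = \frac{\prod_{i=0}^{a-1}\bigl(q^{-i}X - q^{i}X^{-1}\bigr)}{\prod_{i=1}^{a}\bigl(q^{i} - q^{-i}\bigr)}\bigg|_{X = q^m},
\]
with analogous formulas for $\qbinom{n}{b}_{\!q}$ in $Y$ and for $\qbinom{m+n}{s}_{\!q}$ in $XY$. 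Thus both sides of the claimed identity arise as specializations of genuine Laurent polynomials $F(X, Y), G(X, Y) \in \Q(q)[X^{\pm 1}, Y^{\pm 1}]$. Since $F$ and $G$ already agree on the infinite grid $\{(q^m, q^n) : m, n \geq 0\}$, the standard polynomial-identity argument in two variables forces $F = G$, and specializing at $X = q^m$, $Y = q^n$ yields the identity for all integers $m, n$.

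The only potentially fiddly step is the $q$-exponent bookkeeping in the generating-function multiplication, where one has to confirm that the shift $-n$ coming from the first substitution and the shift $+m$ coming from the second combine so that the merged product takes the symmetric form $\prod_{i=1}^{m+n}(1+q^{2i-m-n-1}x)$. Everything else is routine: the denominator $\prod_{i=1}^{a}(q^i - q^{-i})$ is nonzero in $\Q(q)$, and the polynomial-identity extension is entirely standard.
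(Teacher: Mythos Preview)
Your proof is correct. The treatment of the case $m,n\geq 0$ via the generating-function identity \cref{qbinomial} is exactly the standard argument the paper alludes to. For the extension to arbitrary $m,n\in\Z$, you take a slightly different route: rather than handling the various sign cases separately by combining the positive- and negative-exponent forms of \cref{qbinomial}, you observe that both sides are specializations at $(X,Y)=(q^m,q^n)$ of Laurent polynomials over $\Q(q)$ and invoke a Zariski-density argument. This is clean and avoids any case analysis; the trade-off is that the paper's implicit approach stays entirely within elementary power-series manipulations, whereas yours needs the (admittedly routine) polynomial-identity lemma in two variables.
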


\begin{proof}
This is proved by a standard argument 
using \cref{qbinomial}.
See also \cite[Prop.~4.1(5)]{fiebig} (where this is called the Chu-Vandermonde convolution formula).
\end{proof}

\begin{lemma}\label{A}
For $m \in \Z$ and  $s \geq 0$, we have that
$\displaystyle
\sum_{a+b=s} (-q)^{-b}
\qbinom{m+a}{a}_{\!q} \qbinom{m}{b}_{\!q}
= q^{ms}.
$
\end{lemma}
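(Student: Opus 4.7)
The plan is to deduce this from \cref{second} by a suitable specialization, together with the standard reflection identity for $q$-binomials and the bar involution.

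First, I would record the reflection formula $\qbinom{-m-1}{b}_{\!q}=(-1)^b\qbinom{m+b}{b}_{\!q}$, valid for all $m\in\Z$ and $b\geq 0$, which is immediate from the product definition \cref{qbin} together with $[-k]_q=-[k]_q$. The special case $\qbinom{-1}{s}_{\!q}=(-1)^s$ will also be needed.

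Next, I would apply \cref{second} with $n$ replaced by $-m-1$. The exponent $mb-na$ collapses to $m(a+b)+a=ms+a$, the right-hand side becomes $\qbinom{-1}{s}_{\!q}=(-1)^s$, and substituting $\qbinom{-m-1}{b}_{\!q}=(-1)^b\qbinom{m+b}{b}_{\!q}$ and then dividing through by $(-1)^s$ yields the companion identity
\[\sum_{a+b=s}(-q)^{a}\,\qbinom{m}{a}_{\!q}\qbinom{m+b}{b}_{\!q}=q^{-ms}.\]
Finally, applying the bar involution $q\mapsto q^{-1}$---under which each $\qbinom{n}{k}_{\!q}$ is fixed, while $q^{-ms}\mapsto q^{ms}$ and $(-q)^{a}\mapsto (-q)^{-a}$---and relabelling $a\leftrightarrow b$ produces exactly the stated identity.

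I do not anticipate any real obstacle here. Bar-invariance of the $q$-binomials is automatic from $\overline{[n]_q}=[n]_q$, and \cref{second} was already proved for arbitrary $m,n\in\Z$, so no case analysis is needed; the only thing requiring care is the sign bookkeeping in the reflection step.
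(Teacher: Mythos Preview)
Your argument is correct. The reflection identity $\qbinom{-m-1}{b}_{\!q}=(-1)^b\qbinom{m+b}{b}_{\!q}$ follows immediately from \cref{qbin} and $[-k]_q=-[k]_q$; substituting $n=-m-1$ in \cref{second} gives exactly the companion identity you wrote, and the bar involution step is clean since each $\qbinom{n}{k}_{\!q}$ is bar-invariant.

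The paper itself does not give an argument: it simply cites \cite[Lem.~3.1(3)]{BK}. (A suppressed alternative proof in the source proceeds instead by setting up a recurrence for $f(m,s):=\sum_{a+b=s}(-q)^{b}\qbinom{m+a}{a}_{\!q}\qbinom{m}{b}_{\!q}$ via a Pascal-type identity for trinomial coefficients and then inducting.) Your route is genuinely different and arguably preferable in context: it is entirely self-contained, using only \cref{second}, which has already been established for all $m,n\in\Z$, together with an elementary reflection and the bar involution. The trade-off is that your proof relies on \cref{second} holding for negative parameters, whereas the recurrence argument works directly from \cref{pascal}; but since the paper explicitly states and uses \cref{second} in that generality, this is no loss.
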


\begin{proof}
This is the $q$-analog of \cite[Lem.~A.1]{BEEO}.
See \cite[Lem.~3.1(3)]{BK} for its proof.
\end{proof}

Let $\A_q(n)$ be Manin's quantized coordinate algebra of $n \times n$
matrices \cite{Manin}, which is the  $\Z[q,q^{-1}]$-algebra on generators
$\{x_{i,j}\:|\:1 \leq i,j \leq n\}$ subject to the relations
\begin{align}\label{r1}
x_{i,j} x_{k,l} &= 
\begin{cases}
x_{k,l} x_{i,j}&\text{if $i < k$ and $j > l$},\\
x_{k,l} x_{i,j} - (q-q^{-1}) x_{i,l} x_{k,j}&\text{if $i > k$ and $j >l$},\\
q^{-1} x_{k,l} x_{i,j}&\text{if $i=k, j > l$,}\\
q x_{k,l} x_{i,j}&\text{if $i<k, j = l$.}
\end{cases}
\end{align}
We view $\A_q(n)$ as a bialgebra with comultiplication $\Delta:\A_q(n) \rightarrow
\A_q(n) \otimes \A_q(n)$ and counit $\eps:\A_q(n) \rightarrow
\Z[q,q^{-1}]$
defined by 
\begin{align}
\Delta(x_{i,k}) &=
\sum_{j=1}^n x_{i,j} \otimes x_{j,k},
&
\eps(x_{i,j}) &= \delta_{i,j}.
\end{align}

\begin{lemma}\label{fact}
In $\A_q(2)$, we have for $a,b\geq 0$ that
$$
x_{2,2}^a x_{1,1}^b = \sum_{s=0}^{\min(a,b)} q^{-s(s-1)/2} (q^{-1}-q)^s [s]^!_q
\qbinom{a}{s}_{\!q}\qbinom{b}{s}_{\!q}
x_{2,1}^s x_{1,1}^{b-s} x_{2,2}^{a-s} x_{1,2}^s.
$$
\end{lemma}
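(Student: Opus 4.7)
The plan is to prove the identity by induction on $a$, the base case $a=0$ being immediate since the right-hand side collapses to its $s=0$ term $x_{1,1}^b$. For the inductive step I will need an auxiliary commutation formula for passing a single $x_{2,2}$ past a power of $x_{1,1}$. From the defining relations \cref{r1} I read off the quasi-commutations $x_{2,2} x_{2,1} = q^{-1} x_{2,1} x_{2,2}$, $x_{1,2} x_{2,2} = q x_{2,2} x_{1,2}$, $x_{1,1} x_{2,1} = q\, x_{2,1} x_{1,1}$ and $x_{1,2} x_{1,1} = q^{-1} x_{1,1} x_{1,2}$, together with the ``hard'' relation $x_{2,2} x_{1,1} = x_{1,1} x_{2,2} - (q-q^{-1}) x_{2,1} x_{1,2}$. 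A short subsidiary induction on $b$ using these, whose recursion $\alpha_{b+1} = 1 + q^{-2}\alpha_b$ is solved by $\alpha_b = q^{-(b-1)}[b]_q$ (this is a special case of the Pascal identity), gives
\begin{equation*}
x_{2,2}\, x_{1,1}^b \;=\; x_{1,1}^b x_{2,2} \;-\; (q-q^{-1})\,q^{-(b-1)}[b]_q\, x_{1,1}^{b-1} x_{2,1} x_{1,2}.
\end{equation*}

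With this in hand, the main inductive step writes $x_{2,2}^a x_{1,1}^b = x_{2,2}\cdot(x_{2,2}^{a-1} x_{1,1}^b)$ and expands the second factor by the inductive hypothesis. Pushing the leading $x_{2,2}$ past $x_{2,1}^s$ contributes $q^{-s}$; the auxiliary identity applied to $x_{2,2}\, x_{1,1}^{b-s}$ produces two terms; and then additional quasi-commutations of the extra $x_{2,1}$ past $x_{1,1}^{b-s-1}$ and of the extra $x_{1,2}$ past $x_{2,2}^{a-1-s}$ restore the ordered form $x_{2,1}^\bullet x_{1,1}^\bullet x_{2,2}^\bullet x_{1,2}^\bullet$. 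After reindexing $s \mapsto s-1$ in the correction sum and collecting terms, matching the coefficient of $x_{2,1}^s x_{1,1}^{b-s} x_{2,2}^{a-s} x_{1,2}^s$ on both sides reduces to verifying
\begin{equation*}
\qbinom{a}{s}_{\!q} \;=\; q^{-s}\qbinom{a-1}{s}_{\!q} \;+\; q^{a-s}\qbinom{a-1}{s-1}_{\!q},
\end{equation*}
which is exactly the second form of the Pascal-type recurrence \cref{pascal}.

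The main obstacle is purely bookkeeping of the various $q$-powers: one simultaneously acquires $q^{-s}$ from $x_{2,2} x_{2,1}^s$, the factor $q^{-(b-s-1)}[b-s]_q$ from the auxiliary identity, and further powers $q^{b-s-1}$ and $q^{a-1-s}$ from the two final quasi-commutations, and these must combine together with the ratio of the $q^{-s(s-1)/2}$ prefactors for adjacent values of $s$ to produce exactly the $q^{-s}$ and $q^{a-s}$ demanded by Pascal. Once the exponent arithmetic is organized, the induction closes and the identity follows.
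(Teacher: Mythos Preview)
Your proof is correct and follows essentially the same two-step induction as the paper, with only the roles of $a$ and $b$ interchanged: the paper first establishes $x_{2,2}^a x_{1,1} = x_{1,1} x_{2,2}^a - (q-q^{-1})[a]_q\, x_{2,1} x_{2,2}^{a-1} x_{1,2}$ by induction on $a$ and then inducts on $b$, whereas you first establish the analogous formula for $x_{2,2}\,x_{1,1}^b$ by induction on $b$ and then induct on $a$. Your bookkeeping of the $q$-powers is right, and the reduction to the Pascal recurrence \cref{pascal} is exactly what the paper does as well.
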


\begin{proof}
Use induction on $a$ to check that
$x_{2,2}^a x_{1,1} = x_{1,1} x_{2,2}^a - (q-q^{-1}) [a] x_{2,1} x_{2,2}^{a-1}
x_{1,2}$.
This treats the case $b=1$. Then proceed by induction on $b$ using \cref{pascal}.
\end{proof}

\begin{lemma}\label{Assad}
In $\A_q(2)$, we have for $a \geq 0$ and $i,j \in\{1,2\}$ that
$$
\Delta(x_{i,j}^a) = \sum_{s=0}^a \qbinom{a}{s}_{\!q}
x_{i,1}^s x_{i,2}^{a-s} \otimes x_{2,j}^{a-s} x_{1,j}^s.
$$
\end{lemma}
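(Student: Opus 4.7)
I would prove this by induction on $a$, the base case $a=0$ being immediate since $\Delta(1)=1\otimes 1 = \qbinom{0}{0}_{\!q}\,(1\otimes 1)$.

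For the inductive step, write $\Delta(x_{i,j}) = A+B$ where $A := x_{i,1}\otimes x_{1,j}$ and $B := x_{i,2}\otimes x_{2,j}$, so that $\Delta(x_{i,j}^{a+1}) = (A+B)\,\Delta(x_{i,j}^a)$. Introduce the abbreviation $C_s^{(a)} := x_{i,1}^s x_{i,2}^{a-s}\otimes x_{2,j}^{a-s}x_{1,j}^s$ for the terms appearing on the right-hand side, so that the induction hypothesis reads $\Delta(x_{i,j}^a) = \sum_{s=0}^a \qbinom{a}{s}_{\!q} C_s^{(a)}$. The main calculation then is to move each factor in $A$ and $B$ past the monomials in $C_s^{(a)}$ using only two instances of the defining relations \cref{r1}, namely $x_{i,2}x_{i,1} = q^{-1} x_{i,1}x_{i,2}$ (the ``$i=k$, $j>l$'' case) and $x_{1,j}x_{2,j} = q\,x_{2,j}x_{1,j}$ (the ``$i<k$, $j=l$'' case). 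Applying the first $s$ times and the second $(a-s)$ times respectively gives
\begin{align*}
A\cdot C_s^{(a)} &= q^{a-s}\,C_{s+1}^{(a+1)}, & B\cdot C_s^{(a)} &= q^{-s}\,C_s^{(a+1)}.
\end{align*}

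Substituting these into $(A+B)\sum_{s=0}^a \qbinom{a}{s}_{\!q} C_s^{(a)}$ and reindexing the resulting sum yields
\[
\Delta(x_{i,j}^{a+1}) = \sum_{s=0}^{a+1}\left(q^{-s}\qbinom{a}{s}_{\!q} + q^{a+1-s}\qbinom{a}{s-1}_{\!q}\right) C_s^{(a+1)}.
\]
The coefficient in parentheses is exactly the right-hand side of the Pascal-type recurrence \cref{pascal} for $\qbinom{a+1}{s}_{\!q}$, completing the induction.

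There is no real obstacle: the only thing to be careful about is tracking the $q$-powers when passing $A$ and $B$ through the monomials $C_s^{(a)}$, and then recognising the sum as the second form of the Pascal recurrence rather than the first. The ``quantum binomial theorem'' flavor of the result ultimately reflects the fact that in $\A_q(2)\otimes\A_q(2)$ the elements $A$ and $B$ satisfy $AB = q^2 BA$, though one does not need to invoke this identity explicitly if one proceeds by the direct induction sketched above.
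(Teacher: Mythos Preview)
Your proof is correct and is exactly the kind of argument intended: the paper leaves this lemma as an exercise, and the standard induction on $a$ using the Pascal recurrence \cref{pascal} is the expected solution. Your tracking of the $q$-powers in $A\cdot C_s^{(a)}$ and $B\cdot C_s^{(a)}$ via the relations \cref{r1} is accurate, and the coefficient you obtain matches precisely the second form of \cref{pascal} with $n=a+1$.
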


\begin{proof}
Exercise.
\end{proof}

The character group of the $n$-dimensional torus 
consisting of diagonal matrices in $GL_n$
is naturally identified
with the
Abelian group $\Z^n$,
with standard coordinates $\eps_1,\dots,\eps_n$.
There is a scalar product on
$\Z^n$ such that $\eps_i \cdot \eps_j = \delta_{i,j}$.
We also have the {\em dominance order} on $\Z^n$  defined by $\lambda \leq \mu$
if the difference $\mu - \lambda$ is a sum of simple roots
$\alpha_i := \eps_i - \eps_{i+1}$ for $i=1,\dots,n-1$.

The algebra $\A_q(n)$
admits two different gradings.
It is $\Z$-graded with $x_{i,j}$ in degree one, and it is
bigraded by the character group $\Z^n$
with $x_{i,j}$ of bidegree $(\eps_i,\eps_j)$:
\begin{equation}\label{fool}
\A_q(n) = \bigoplus_{r \geq 0} \A_q(n,r)=
\bigoplus_{\lambda,\mu \in \Z^n} \A_q[\lambda,\mu].
\end{equation}
These two gradings are compatible with each other:
\begin{equation}\label{fooler}
\A_q(n,r) = \bigoplus_{\lambda,\mu \in \Comp(n,r)} \A_q[\lambda,\mu],
\end{equation}
where $\Comp(n,r):=\{\lambda
\vDash r\:|\:\ell(\lambda)=n\}$ is the set of all $\lambda \in \Z^n$
such that $\lambda_1,\dots,\lambda_n \geq 0$ and
$\lambda_1+\cdots+\lambda_n = r$.
It is also important to observe that $\A_q(n,r)$
is a subcoalgebra of $\A_q(n)$.

Let 
\begin{equation}
\I(n,r) := 
\big\{\bi = (i_1,\dots,i_r)\in \Z^r\:\big|\:1 \leq i_1,\dots,i_r \leq
n\big\}=
\bigcup_{\lambda \in \Comp(n,r)}
\I_\lambda.
\end{equation}
For $\bi, \bj \in \I(n,r)$, we use the shorthand
$x_{\bi,\bj} := x_{i_1,j_1} \cdots x_{i_r,j_r}$.
Then $\A_q(n,r)$ is free as a $\Z[q,q^{-1}]$-module
with the following basis,
which we call the {\em normally-ordered monomial basis}:
\begin{equation}\label{costandard}
\left\{
x_{\bi,\bj}\:\big|\:
\bi,\bj \in \I(n,r),
j_1 \leq \cdots \leq j_r\text{ and } i_s \geq i_{s+1}\text{ when }j_s=j_{s+1}
\right\}.
\end{equation}
There are several different proofs of this, e.g., in \cite[$\S$6]{Bdual} it
is derived from another realization of $\A_q(n)$ as a braided tensor
product of quantum symmetric algebras; normally-ordered here corresponds to the ``terminal double indexes'' in \cite{Bdual}.
Another relevant basis
is
\begin{equation}\label{standard}
\left\{
x_{\bi,\bj}\:\big|\:
\bi,\bj \in \I(n,r),
i_1 \geq \cdots \geq i_r\text{ and } j_s \leq j_{s+1}\text{ when }i_s=i_{s+1}
\right\}.
\end{equation}
This is the monomial basis in \cite{Bdual} indexed by ``initial double
indexes''.

Following \cite[Theorem 16]{Bdual}, 
the {\em bar involution} on $\A_q(n)$ is the anti-linear map
\begin{equation}
-:\A_q(n) \rightarrow \A_q(n)
\end{equation}
which fixes all of the generators $x_{i,j}$ and satisfies
\begin{equation}
\overline{x y} = q^{\lambda\cdot \mu - \lambda'\cdot \mu'} \overline{y}\: \overline{x}
\end{equation}
for $x$ of bidegree $(\lambda, \lambda')$ and $y$ of bidegree
$(\mu, \mu')$.
It is indeed an involution. 
Moreover:

\begin{lemma}\label{inv}
The bar involution is an anti-linear coalgebra automorphism.
\end{lemma}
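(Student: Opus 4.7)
The plan is to verify the two coalgebra-homomorphism axioms: compatibility of bar with the counit $\eps$ and with the comultiplication $\Delta$. Here I take the bar involution on $\A_q(n) \otimes \A_q(n)$ to be $\overline{x \otimes y} := \overline{x} \otimes \overline{y}$, which is well-defined because both tensor factors are anti-linear in $q$.

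For the counit, I check agreement on generators ($\eps(\overline{x_{i,j}}) = \delta_{i,j} = \overline{\eps(x_{i,j})}$) and then observe that any monomial $x_{i_1,j_1}\cdots x_{i_r,j_r}$ on which $\eps$ is nonzero must have $i_k = j_k$ for every $k$. For such ``diagonal'' monomials the first and second bidegrees agree pairwise, so the twists that arise when reversing the order under bar are all of the form $q^{\eps_a\cdot\eps_b - \eps_a\cdot\eps_b} = 1$; hence $\overline{x_{i_1,i_1}\cdots x_{i_r,i_r}}$ equals the reversed product with no $q$-twist, and $\eps$ takes the same value on both. Thus $\eps$ commutes with bar on every monomial, and by $\Z[q,q^{-1}]$-linearity on all of $\A_q(n)$.

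For the comultiplication identity $\Delta(\overline{x}) = \overline{\Delta(x)}$, verification on generators is immediate: $\Delta(\overline{x_{i,j}}) = \Delta(x_{i,j}) = \sum_k x_{i,k} \otimes x_{k,j}$, and each summand is bar-fixed. I then induct on the $\Z$-degree $r$ using the grading of \cref{fool}. For the inductive step, take a product $xy$ with $x$ of bidegree $(\lambda,\lambda')$ and $y$ of bidegree $(\mu,\mu')$, and write $\Delta(x) = \sum_\nu x^{(1)}_\nu \otimes x^{(2)}_\nu$ with $x^{(1)}_\nu \in \A_q[\lambda,\nu]$ and $x^{(2)}_\nu \in \A_q[\nu,\lambda']$, and $\Delta(y) = \sum_\rho y^{(1)}_\rho \otimes y^{(2)}_\rho$ analogously.

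Using the twisted antihomomorphism property of bar, the fact that $\Delta$ is an algebra map, and the inductive hypothesis yields
\[
\Delta(\overline{xy}) = q^{\lambda\cdot\mu - \lambda'\cdot\mu'}\,\Delta(\overline{y})\,\Delta(\overline{x}) = q^{\lambda\cdot\mu - \lambda'\cdot\mu'}\,\overline{\Delta(y)}\cdot\overline{\Delta(x)}.
\]
On the other hand, applying bar factor-by-factor to $\Delta(x)\Delta(y) = \sum_{\nu,\rho} x^{(1)}_\nu y^{(1)}_\rho \otimes x^{(2)}_\nu y^{(2)}_\rho$ produces a twist $q^{\lambda\cdot\mu - \nu\cdot\rho}$ in the first tensor slot and $q^{\nu\cdot\rho - \lambda'\cdot\mu'}$ in the second. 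The decisive observation is that the intermediate weight $\nu\cdot\rho$ enters with opposite signs in the two slots, so their product is the $\nu,\rho$-independent scalar $q^{\lambda\cdot\mu - \lambda'\cdot\mu'}$; regrouping the $\overline{y}$'s and $\overline{x}$'s then reproduces the previous display, closing the induction. The main obstacle is simply the bookkeeping of these bidegree twists, but once the cancellation of $\nu\cdot\rho$ across the two tensor slots is spotted the verification goes through.
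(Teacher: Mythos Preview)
Your proof is correct and follows essentially the same route as the paper's: induction on the $\Z$-degree, using the twisted anti-homomorphism property of the bar involution together with the fact that $\Delta$ is an algebra map, and observing that the intermediate bidegree contribution $\nu\cdot\rho$ cancels between the two tensor factors. The paper's (very terse) proof only records the comultiplication check; your additional verification of counit compatibility is a welcome completeness, and your argument there (that nonzero $\eps$-values force diagonal bidegree, whence the twist exponents vanish) is exactly the right observation.
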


\begin{proof}
Let $\overline{\Delta}$
denote the composition $- \otimes - \circ \Delta$.
We must show that $\overline{\Delta}(x) = \Delta(\overline{x})$ for any $x \in \A_q(n)$.
This follows by induction on degree.
\end{proof}

For $\lambda,\mu \in \Comp(n,r)$,
recall the set $\Mat{\lambda}{\mu}$ 
of matrices with these
row and column sums from \cref{s2-combinatorics},
which parametrizes the
orbits $\Pi_A$ 
of $S_r$ on $I_\lambda \times I_\mu$.
For $A \in \Mat{\lambda}{\mu}$, let
\begin{equation}\label{shoo}
x_A := x_{\bi,\bj} \text{ for $(\bi,\bj) \in \Pi_A$ such that
$j_1 \leq \cdots \leq j_d$ and $i_k \geq i_{k+1}$ when $j_k =
j_{k+1}$}.
\end{equation}
In other words, if $A$ corresponds to $P \in \Row(\lambda,\mu)$ under \cref{Amap} then
$\bi = \ZigzagDownRev(P)$ and $\bj = \bi^\mu$; the notation $\ZigzagDownRev(P)$ means the sequence obtained by reading the entries of $P$ in the order suggested by the arrow.
Hence $\bi = \bi^\lambda \cdot d(A)w_0$ where
$w_0$ is the longest element of $S_r$.
The set 
$\{x_A\:|\:\lambda,\mu \in \Comp(n,r), A \in \Mat{\lambda}{\mu}\}$ is the normally-ordered
monomial basis of
$\A_q(n,r)$
from \cref{costandard}, we have merely parametrized it in a more
convenient way.
By \cite[Theorem 16]{Bdual} again, the image of the normally-ordered
monomial $x_A$ under the bar involution is
\begin{equation}\label{shoe}
\overline{x}_A 
:= x_{\bi,\bj}\text{ for $(\bi,\bj)\in\Pi_A$ such that 
$i_1 \geq \cdots \geq i_r$ and $j_k \leq j_{k+1}$ when
$i_k = i_{k+1}$.}
\end{equation}
In other words, if $A^\tr$ corresponds to $Q \in \Row(\mu,\lambda)$ under \cref{Amap} then
$\bi = \bi^\lambda \cdot w_r$ and $\bj = \ZigzagUp(Q)$.
The set $\{\overline{x}_A\:|\:\lambda,\mu\in\Comp(n,r),A \in \Mat{\lambda}{\mu}\}$ 
is the basis for $\A_q(n,r)$ from \cref{standard}.

Recall the partial order \cref{bruhat} on $\Mat{\lambda}{\mu}$.
The bar involution acts on the normally-ordered monomial basis
in a unitriangular fashion:
$$
\overline{x}_A = x_A + 
\left(\text{a $\Z[q,q^{-1}]$-linear combination of $x_B$'s for $B > A$}\right).
$$
This may be seen explicitly 
by using the relations 
\cref{r1} to rewrite \cref{shoe} in terms of normally-ordered
monomials.
So one can apply Lusztig's Lemma to define another basis for
$\A_q[\lambda,\mu]$,
the {\em dual canonical basis} $\{b_A\:|\:A \in
\Mat{\lambda}{\mu}\}$.
The dual canonical basis element $b_A$ is the unique bar-invariant
vector in $\A_q[\lambda,\mu]$ such that
$b_A \equiv x_A \pmod{\sum_{B \in \Mat{\lambda}{\mu}} q \Z[q] x_B}$.
The dual canonical basis is discussed further in \cite{Bdual} (and many
other places).
In particular, the polynomials $p_{A,B}(q) \in \Z[q]$ defined from
\begin{equation}\label{count}
x_B = \sum_{A \in \Mat{\lambda}{\mu}} p_{A,B}(q) b_A
\end{equation}
are (renormalized)
Kazhdan-Lusztig polynomials: writing $P_{x,y}(t) \in \Z[t]$ for 
the usual 
Kazhdan-Lusztig polynomial associated to $x,y \in S_r$, we have that
\begin{equation}\label{countier}
p_{A,B}(q) = q^{\ell(d_A^+) - \ell(d_B^+)} P_{d_A^+,d_B^+}(q^{-2}).
\end{equation}
This is explained in \cite[Rem.~10]{Bdual}.
We have that $p_{A,B}(q) = 0$ unless $A \geq 
B$, $p_{A,A}(q) = 1$, and
$p_{A,B}(q) \in q \N[q]$ if $A > B$. The last assertion, which follows from positivity of Kazhdan-Lusztig polynomials, will not be needed here.

\begin{lemma}\label{business}
Suppose we are given $A', B' \in \Mat{\lambda'}{\mu'}$ 
for $\lambda',\mu' \in \Comp(n,r)$ 
and $1 \leq
i,j \leq n$ 
such that $\lambda_i' = \mu_j' = 0$.
Let $A$ and $B$ be the matrices obtained from $A'$ and $B'$ by
removing the $i$th row and $j$th column.
Then $p_{A,B}(q) = p_{A',B'}(q)$.
\end{lemma}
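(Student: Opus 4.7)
The plan is to reduce the claim to the corresponding statement about Kazhdan-Lusztig polynomials of permutations via the formula \cref{countier}, and then to observe that the permutations $d_A^+, d_B^+ \in S_r$ assigned to $A, B$ coincide with those assigned to $A', B'$.

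More precisely, the first step is to note that since $S_0$ is the trivial group, the parabolic subgroups $S_{\lambda'}$ and $S_{\mu'}$ of $S_r$ are literally equal (as subgroups, not merely isomorphic) to $S_\lambda$ and $S_\mu$; the ``missing'' $S_0$ factor at position $i$ (resp.\ $j$) contributes nothing. Consequently the sets of double cosets $S_{\lambda'}\backslash S_r/ S_{\mu'}$ and $S_\lambda \backslash S_r / S_\mu$ are identical, and therefore so are the sets of maximal length representatives.

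The second step is to match the parametrizations. The orbit $\Pi_{A'} \subset I_{\lambda'}\times I_{\mu'}$ records how often each ordered pair $(k,l)$ of entry values appears; because the $i$th row and $j$th column of $A'$ are zero, no sequence in $I_{\lambda'}$ contains the entry $i$ and no sequence in $I_{\mu'}$ contains the entry $j$. The bijection $I_{\lambda'} \cong I_\lambda$, $I_{\mu'} \cong I_\mu$ obtained by relabeling the alphabets $\{1,\dots,n\}\setminus\{i\}$ and $\{1,\dots,n\}\setminus\{j\}$ order-preservingly onto $\{1,\dots,n-1\}$ therefore identifies $\Pi_{A'}$ with $\Pi_A$, whence $d_{A'}^+ = d_A^+$ (and likewise $d_{B'}^+ = d_B^+$) as elements of $S_r$.

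The final step is simply to plug into \cref{countier}:
\[
p_{A',B'}(q) = q^{\ell(d_{A'}^+)-\ell(d_{B'}^+)} P_{d_{A'}^+, d_{B'}^+}(q^{-2}) = q^{\ell(d_A^+)-\ell(d_B^+)} P_{d_A^+, d_B^+}(q^{-2}) = p_{A,B}(q).
\]
There is no real obstacle here: the entire content is the triviality $S_0 = 1$, which ensures that all the combinatorial data attached to $(A',B')$ inside $S_r$ coincide with that attached to $(A,B)$. The only point that requires a moment's care is checking that the bijection on double cosets induced by row/column removal agrees with the one coming from the matrix description, but this is immediate from the definition of $A(P)$ in \cref{Amap} and the description of $d_A, d_A^+$ via zigzag reading of the associated row tableau.
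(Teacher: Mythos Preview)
Your argument is correct: the permutations $d_{A'}^+$ and $d_A^+$ (and likewise $d_{B'}^+$ and $d_B^+$) are literally equal as elements of $S_r$, so \cref{countier} gives the result immediately.

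That said, your route differs from the paper's. The paper does not go through the Kazhdan--Lusztig formula \cref{countier} at all; instead it observes directly that the defining relations \cref{r1} of $\A_q(n)$ depend only on the \emph{relative} order of the indices $i,j,k,l$, never on their actual values. Since $\lambda_i'=\mu_j'=0$ means that no monomial $x_{A'}$ (or $x_{B'}$) involves a variable $x_{p,q}$ with $p=i$ or $q=j$, the entire straightening calculation that produces the dual canonical basis element $b_{A'}$ out of the $x_{B'}$'s is formally identical to the one producing $b_A$ out of the $x_B$'s. The coefficients therefore agree.

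The trade-off is this: the paper's argument is more self-contained, since \cref{countier} itself rests on a nontrivial identification (cited from \cite[Rem.~10]{Bdual}) between the dual canonical basis and Kazhdan--Lusztig polynomials. Your argument, by contrast, takes that identification as a black box and reduces everything to the transparent combinatorial fact $S_0=1$. Both are short and valid; the paper's is slightly more elementary in its dependencies.
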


\begin{proof}
This is clear from the nature of the defining relations
\cref{r1} for $\A_q(n)$: they only depend on the relative positions
of the indices in the total order on the
set $\nset{n}$, not on the actual values.
\end{proof}

\begin{example}\label{rank2} 
For $\lambda,\mu \in \Comp(2,r)$ and $A \in \Mat{\lambda}{\mu}$,
we have that
\begin{equation*}
b_A = x_{2,1}^{a_{2,1}} x_{1,1}^{a_{1,1}-\min(a_{1,1}, a_{2,2})}
\left(x_{1,1}x_{2,2}-q x_{2,1}x_{1,2}\right)^{\min(a_{1,1}, a_{2,2})}
x_{2,2}^{a_{2,2}-\min(a_{1,1}, a_{2,2})} x_{1,2}^{a_{1,2}}.
\end{equation*}
This follows from a special case
of \cite[Theorem 20]{Bdual}, which gives a closed formula for the dual canonical basis element $b_A$ for all $A \in \Mat{\lambda}{\mu}$ 
providing either $\lambda$ or $\mu$ has
at most two non-zero parts.
Expanding the binomial gives
\begin{equation*}
b_A = x_A - q^{M}\qbinom{m}{1}_{\!q}x_{A+B}
+ q^{2(M-1)} \qbinom{m}{2}_{\!q} x_{A+2B}
- \cdots + (-1)^m q^{m(M+1-m)}\qbinom{m}{m}_{\!q} x_{A+mB}
\end{equation*}
where $m := \min(a_{1,1},a_{2,2}), M := \max(a_{1,1},a_{2,2})$
and $B := \left[\begin{smallmatrix}-1&1\\1&-1\end{smallmatrix}\right]$.
\end{example}

\begin{lemma}\label{trump}
There is a surjective bialgebra
homomorphism
$$
\Y^*:\A_q(m+n) \twoheadrightarrow \A_q(m)\otimes \A_q(n),
\quad
x_{i,j}\mapsto \begin{cases}
x_{i,j}\otimes 1&\text{if $1 \leq i,j \leq m$,}\\
1\otimes 
x_{i-m,j-m}&\text{if $m+1 \leq i,j \leq m+n$,}\\
0&\text{otherwise.}
\end{cases}
$$
Moreover, this intertwines the bar involution on $\A_q(m+n)$ with the
bar involution $- \otimes -$ on
$\A_q(m)\otimes \A_q(n)$.
\end{lemma}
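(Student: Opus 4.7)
The plan is to verify in order: well-definedness as an algebra homomorphism, compatibility with comultiplication and counit, surjectivity, and finally compatibility with the bar involution. The first point is where all the work is; the rest are formal.

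For well-definedness, since $\A_q(m+n)$ is defined by the relations \cref{r1} on the generators $x_{i,j}$, it suffices to check that each such relation holds after applying the prescribed map. Split the generators of $\A_q(m+n)$ into three classes: the \emph{$m$-block} ($1\le i,j\le m$, sent to $x_{i,j}\otimes 1$), the \emph{$n$-block} ($m+1\le i,j\le m+n$, sent to $1\otimes x_{i-m,j-m}$), and the \emph{off-diagonal} generators (sent to $0$). Check the relations case by case on unordered pairs $\{x_{i,j},x_{k,l}\}$. If both generators lie in the same block, the defining relations of that $\A_q$ give the result. If one is in the $m$-block and the other in the $n$-block, one has $i<k$ and $j<l$ (or the symmetric case), and the relation involves a correction term of the form $x_{i,l}x_{k,j}$: both of these mixed-index factors are off-diagonal, hence killed, so the relation reduces to commutativity, which holds in $\A_q(m)\otimes\A_q(n)$. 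If any generator involved is off-diagonal, then \emph{every} monomial appearing in the relation contains at least one off-diagonal factor (this is a short check using the index bookkeeping in each of the four cases of \cref{r1} together with their transposes), so both sides vanish after applying the map.

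For the coalgebra structure, since $\Delta$ and $\eps$ are algebra maps into $\A_q(m+n)\otimes\A_q(m+n)$ and $\Z[q,q^{-1}]$, it suffices to check $\Delta\circ\Y^*=(\Y^*\otimes\Y^*)\circ\Delta$ and $\eps\circ\Y^* = \eps$ on the generators $x_{i,k}$. On the counit this is immediate from $\delta_{i,k}=0$ whenever $i$ and $k$ lie in different blocks. For comultiplication, expand
\[
(\Y^*\otimes\Y^*)\Delta(x_{i,k})=\sum_{j=1}^{m+n}\Y^*(x_{i,j})\otimes\Y^*(x_{j,k});
\]
in the intermediate summand $\Y^*(x_{i,j})\otimes\Y^*(x_{j,k})$ to be nonzero, the index $j$ must lie in the block containing $i$ (so that $x_{i,j}$ survives) and also in the block containing $k$ (so that $x_{j,k}$ survives). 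This is impossible if $i$ and $k$ lie in different blocks (in which case $\Y^*(x_{i,k})=0$ also), and if they lie in the same block the sum reduces to the comultiplication of $x_{i,k}$ in the appropriate tensor factor, as required. Surjectivity is then immediate: the generators $x_{i,j}\otimes 1$ and $1\otimes x_{i,j}$ of the codomain are hit by $x_{i,j}$ and $x_{i+m,j+m}$, respectively.

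For the bar involution, note that $\Y^*$ preserves $\Z^{m+n}$-bidegree if we identify $\Z^m\oplus\Z^n$ with $\Z^{m+n}$ in the natural way. Consequently, the twisted commutation rule $\overline{xy}=q^{\lambda\cdot\mu-\lambda'\cdot\mu'}\bar y\,\bar x$ on $\A_q(m+n)$ matches, under $\Y^*$, the analogous rule on $\A_q(m)\otimes\A_q(n)$ with bar acting as $\overline{x\otimes y}=\bar x\otimes\bar y$ (the scalars $q^{(\lambda_1\oplus\mu_1)\cdot(\lambda_2\oplus\mu_2)-(\lambda_1'\oplus\mu_1')\cdot(\lambda_2'\oplus\mu_2')}$ split as products of the scalars from each tensor factor, which is the content compatibility). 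Since both bar involutions are anti-linear maps fixing the generators, and $\Y^*(\bar x_{i,j})=\Y^*(x_{i,j})=\overline{\Y^*(x_{i,j})}$ on generators (each image is either a generator of a factor or zero, hence bar-fixed), an induction on total degree using the twisted product rule on both sides gives $\Y^*\circ-\,=\,-\circ\Y^*$ on all of $\A_q(m+n)$. The mildly delicate point to get right is keeping the bidegrees matched through the identification $\Z^m\oplus\Z^n\cong\Z^{m+n}$, but this is essentially bookkeeping.
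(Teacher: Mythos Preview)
Your proof is correct and, for the algebra/coalgebra homomorphism and surjectivity, follows exactly the outline the paper indicates (the paper merely writes ``follows from the relations'' and ``one checks'', while you actually carry out the case analysis).

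For the bar involution the two arguments diverge slightly. The paper observes that $\Y^*$ sends the normally-ordered monomial $x_{\diag(A,B)}$ to $x_A\otimes x_B$ and the opposite-ordered monomial $\overline{x}_{\diag(A,B)}$ to $\overline{x}_A\otimes\overline{x}_B$ (and kills $x_C,\overline{x}_C$ for $C$ not block-diagonal); since bar is by definition the anti-linear map taking $x_C$ to $\overline{x}_C$, compatibility follows by comparing on this basis. Your argument instead runs an induction on degree using the twisted product rule $\overline{xy}=q^{\lambda\cdot\mu-\lambda'\cdot\mu'}\bar y\,\bar x$ on both sides, after checking that $\Y^*$ preserves $\Z^{m+n}$-bidegree so that the scalar twists match. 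The paper's route is shorter once one has the monomial bases \cref{costandard,standard} in hand; yours is more self-contained, needing only the defining property of bar and not the existence of those bases. Both are valid and neither is materially harder than the other.
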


\begin{proof}
The existence of this algebra homomorphism follows from the
relations. Then one checks that it is a coalgebra homomorphism too.
Finally, for the statement about the bar involution, note 
for an $m \times m$ matrix $A$ and an $n \times n$ matrix $B$ that
$\Y^*$
sends $x_{\diag(A,B)}$ to $x_A \otimes x_B$
and $\overline{x}_{\diag(A,B)}$ to $\overline{x}_A \otimes
\overline{x}_B$.
\end{proof}

There is also an anti-linear algebra anti-automorphism
\begin{equation}
\barT^*:\A_q(n) \rightarrow \A_q(n),
\qquad
x_{i,j} \mapsto x_{j,i}.
\end{equation}
This is a coalgebra anti-automorphism, i.e.,
$\barT^* \otimes \barT^* \circ \Delta = \P \circ \Delta \circ \barT^*$
where $\P$ is the tensor flip.
Comparing \cref{shoo} and \cref{shoe}, we see that
$\barT^*(x_A) = \overline{x}_{A^\T}$
where $A^\T$ is the transpose matrix. Since $\barT^*$ is an
involution, it follows that it commutes with the bar involution. Let
\begin{equation}\label{Transposed}
\T^* := - \circ \barT^* = \barT^* \circ - :\A_q(n)\rightarrow \A_q(n).
\end{equation}
This is a linear coalgebra anti-automorphism (but {\em not} an algebra anti-automorphism)
which commutes with the bar involution and sends $x_A$ to $x_{A^\T}$.
It follows that
\begin{equation}\label{dracula}
\T^*(b_A) = b_{A^\T}.
\end{equation}

The dual canonical basis element $b_A$ indexed by $A =
I_n$, which is minimal in the Bruhat order, is the {\em quantum determinant}
\begin{equation}\label{qd}
\qdet := \sum_{w \in S_n} (-q)^{\ell(w)} x_{w(1),1} \cdots
x_{w(n),n}.
\end{equation}
This is central in $\A_q(n)$.
It is also a group-like element, 
i.e., $\Delta(\qdet) = \qdet \otimes \qdet$
and $\eps(\qdet)=1$.
The coordinate algebra of the {\em quantum general linear group} $\qG_n$
is the Ore localization of $\A_q(n)$ at the quantum determinant.
The bialgebra structure on
$\A_q(n)$ extends to make this into a Hopf algebra.
We will not work explicitly with this Hopf algebra here, but
its existence 
underpins all subsequent language and notation.

By a {\em polynomial representation of 
$\qG_n$} we mean a right $\A_q(n)$-comodule.
We use the notation 
$\Hom_{\qG_n}(-,-)$ to denote morphisms 
in the category of polynomial representations.
Since $\A_q(n)$ is a bialgebra, this is a monoidal category.
For example, we have the {\em natural representation} of $\qG_n$, which is 
the free $\Z[q,q^{-1}]$-module $V$
with basis
$v_1,\dots,v_n$ and comodule structure map
$\eta:V \rightarrow V \otimes \A_q(n,1)$ 
defined from
\begin{equation}
\eta(v_j) = \sum_{i=1}^n v_i \otimes x_{i,j}.
\end{equation}
It is a polynomial representation of degree 1, hence, its
$r$th tensor power $V^{\otimes r}$ is a 
polynomial representation of degree $r$, meaning that it is a right $\A_q(n,r)$-comodule.

The category of polynomial representations of $\qG_n$ is also braided, with 
braiding $c$ that is uniquely determined
by requiring that
$\R_{V,V} \in \End_{\qG_n}(V\otimes V)$
is the $\Z[q,q^{-1}]$-linear map defined by \cref{rmat}.
We have that $(\R_{V,V}+q)(\R_{V,V}-q^{-1}) = 0$, hence, $\R_{V,V}$ has eigenvalues
$-q$ and $q^{-1}$. After localizing at
$[2]=q+q^{-1}$, the tensor square 
$V \otimes V$ decomposes as the direct sum of the corresponding eigenspaces.
The $q^{-1}$-eigenspace is spanned by 
\begin{equation}\label{qeigen}
\{v_j \otimes v_i +
q v_i \otimes v_j \:|\:1 \leq i < j \leq n\}\cup \{v_k \otimes
v_k\:|\:1 \leq k \leq n\}.
\end{equation}
The {\em quantum exterior algebra}
\begin{equation}
{\textstyle\bigwedge}(V)
=\bigoplus_{r\geq 0} {\textstyle\bigwedge^r} V
\end{equation}
is the quotient
of the tensor algebra $T(V)$ by the two-sided ideal generated
by the quadratic tensors from \cref{qeigen}. 
This is studied in \cite{PW} (see also
\cite[$\S$5]{Bdual}),
where it is proved that $\bigwedge^r V$
is free as a $\Z[q,q^{-1}]$-module with basis 
$$
\big\{v_I := v_{i_1}\wedge\cdots\wedge v_{i_r}\:\big|\:I = \{i_1
< \cdots < i_r\}\subseteq\nset{n}\big\}.
$$
The
comodule structure map $\eta$ for 
$\bigwedge^r V$
satisfies
$\eta(v_J) = \sum_I v_I \otimes x_{I,J}$
where
\begin{equation}
x_{I,J} := 
\sum_{w \in S_r} (-q)^{\ell(w)}
x_{i_{w(1)}, j_1} \cdots x_{i_{w(r)}, j_r}
\end{equation}
for $I = \{i_1 < \cdots < i_r\}$ and $J = \{j_1 < \cdots< j_r\}$.
These so-called {\em quantum minors} include the quantum determinant \cref{qd} as a special case.

\section{The \texorpdfstring{$q$}{q}-Schur algebra}\label{s4-schuralgebra}

We continue to work over $\Z[q,q^{-1}]$ like in the previous section.
The {\em $q$-Schur algebra} is the $\Z[q,q^{-1}]$-linear dual
\begin{equation}
S_q(n,r) := \A_q(n,r)^* = \bigoplus_{\lambda,\mu \in
  \Comp(n,r)}
\A_q[\lambda,\mu]^*.
\end{equation}
It is an algebra with multiplication 
$S_q(n,r)\otimes S_q(n,r)
\rightarrow S_q(n,r)$ defined by the dual map to 
the restriction 
$\A_q(n,r) \rightarrow \A_q(n,r) \otimes \A_q(n,r)$ of the comultiplication on $\A_q(n)$.
For this, we are identifying $f \otimes g \in S_q(n,r) \otimes S_q(n,r)$
 with an element of $(\A_q(n,r) \otimes \A_q(n,r))^*$ so that 
$\langle f \otimes g, x \otimes y \rangle := \langle f, x \rangle
\langle g, y \rangle$
for $f,g \in S_q(n,r), x, y \in \A_q(n,r)$.

The unit element $1 \in S_q(n,r)$ is the restriction of the counit
$\eps$ to $\A_q(n,r)$. 
For $\lambda \in \Comp(n,r)$, let $1_\lambda$ be the
function which is equal to $\eps$ on $\A_q[\lambda,\lambda]$ and is
zero on all other summands $\A_q[\lambda,\mu]$ in the decomposition
\cref{fooler}.
This defines mutually orthogonal idempotents $\{1_\lambda\:|\:\lambda
\in \Comp(n,r)\}$ in $S_q(n,r)$ whose sum is the
identity. Moreover, 
$1_\lambda S_q(n,r) 1_\mu = \A_q[\lambda,\mu]^*$.

The dual map to the bar involution on $\A_q(n,r)$ 
defines a bar involution on $S_q(n,r)$ which we denote with the same
notation, so
$\langle \overline{f}, x \rangle = \overline{\langle f,
  \overline{x}\rangle}$
for $f \in S_q(n,r), x \in \A_q(n,r)$.
\cref{inv} implies that $-:S_q(n,r)\rightarrow S_q(n,r)$  is an anti-linear algebra automorphism.
The dual of the restriction
$\A_q(m+n,r) \rightarrow \bigoplus_{a+b=r} \A_q(m,a)\otimes \A_q(n,b)$ of the
homomorphism $\Y^*$ from \cref{trump} 
defines an injective 
algebra homomorphism
\begin{align}\label{guard}
\Y_r:\bigoplus_{a+b=r}
S_q(m,a) \otimes S_q(n,b)
&\hookrightarrow
S_q(m+n,r),&
\xi_A \otimes \xi_B &\mapsto \xi_{\diag(A,B)}.
\end{align}
This intertwines the bar involutions $- \otimes -$ on each
$S_q(m,a)\otimes 
S_q(n,b)$ with the bar involution on $S_q(m+n,r)$.
The dual of \cref{Transposed} gives us a
transposition involution $\T:S_q(n,r)\rightarrow S_q(n,r)$.
This is a linear algebra anti-automorphism.

The dual bases to $\{x_A\:|\:A \in \Mat{\lambda}{\mu}\}$ and
$\{b_A\:|\:A \in \Mat{\lambda}{\mu}\}$ give bases for 
$1_\lambda S_q(n,r) 1_\mu$ denoted
$\{\xi_A\:|\:A \in \Mat{\lambda}{\mu}\}$, the {\em standard basis}, and
$\{\theta_A\:|\:A \in \Mat{\lambda}{\mu}\}$,
the {\em canonical basis}.
The canonical basis
element $\theta_A\in 1_\lambda
S_q(n,r) 1_\mu$ is the unique bar-invariant element
such that
$\theta_A \equiv \xi_A\pmod{\sum_{B \in \Mat{\lambda}{\mu}} q \Z[q] \xi_B}$.
In fact, we have that $\theta_A = \xi_A+$(a $q \N[q]$-linear combination of $\xi_B$
for $B < A$), because by \cref{count}
we have that
\begin{equation}
\label{weird}
\theta_A = \sum_{B \in \Mat{\lambda}{\mu}}
p_{A,B}(q) \xi_B,
\end{equation} 
where $p_{A,B}(q)$ is the Kazhdan-Lusztig
polynomial from \cref{countier}.
There is also a geometric construction of the canonical basis via
intersection cohomology.
This is explained in \cite[$\S$1.4]{BLM},
where the standard basis element $\xi_A$ is denoted $[A]$ and
$\theta_A$ is denoted $\{A\}$ (up to some renormalization).

The counit $\eps$ is zero on all of the normally-ordered 
monomials in $\A_q[\lambda,\lambda]$ except for 
$x_{1,1}^{\lambda_1} \cdots x_{n,n}^{\lambda_n}$, proving the first
equality in
\begin{equation}
1_\lambda = \xi_{\diag(\lambda_1,\dots,\lambda_n)} = \theta_{\diag(\lambda_1,\dots,\lambda_n)}.
\end{equation}
The second equality follows because 
$\overline{\xi}_A = \xi_A + ($a $\Z[q,q^{-1}]$-linear combination of $\xi_B$'s for $B < A)$ and
$A=\diag(\lambda_1,\dots,\lambda_n)$ is minimal in the Bruhat
ordering,
so 
$\xi_{\diag(\lambda_1,\dots,\lambda_n)}$
is bar invariant.
More generally, since the homomorphism $\Y_r$
is bar equivariant, we have that
\begin{equation}\label{sleepydog}
\Y_r(\theta_A \otimes \theta_B) = \theta_{\diag(A,B)}.
\end{equation}
Also, by \cref{dracula}, we have that 
\begin{align}\label{cabbage}
\T(\xi_A) &= \xi_{A^\T},&
\T(\theta_A) &= \theta_{A^\T}.
\end{align}

\begin{example}\label{rank2canonical}
For $A \in \Mat{\lambda}{\mu}$ with $\lambda,\mu \in\Comp(2,r)$ we
have that
\begin{equation}\label{pogs}
\theta_A = \sum_{s=0}^{\min(a_{1,2}, a_{2,1})}
q^{s(s+\max(a_{1,1},a_{2,2}))} 
\qbinom{s+\min(a_{1,1},a_{2,2})}{s}_q
 \xi_{A-s B}
\end{equation}
where $B := \left[\begin{smallmatrix}-1&1\\1&-1\end{smallmatrix}\right]$.
This follows by inverting the transition matrix from \cref{rank2}.
\end{example}

For $n \times n$ matrices $A,B,C$ with non-negative integer entries,
define
\begin{equation}\label{schurs}
Z(A,B,C) := \langle \xi_A \otimes \xi_B, \Delta(x_C)\rangle \in \Z[q,q^{-1}].
\end{equation}
These are the {\em structure constants} for multiplication in the standard
basis of the $q$-Schur
algebra: 
we have that
\begin{equation}\label{qschurs}
\xi_A \circ \xi_B := 
\sum_{C} Z(A,B,C)
\xi_C.
\end{equation}
This formula can be viewed as a $q$-analog of Schur's product rule.
For a completely different approach to the definition of these
structure constants (counting points over a finite field), see \cite[$\S$1.1]{BLM}.
The structure constants have following stabilization property, which
will be relevant in the next
section.

\begin{lemma}\label{stability}
Suppose we are given $A'\in \Mat{\lambda'}{\mu'}, B' \in
\Mat{\mu'}{\nu'}$
and $C' \in \Mat{\lambda'}{\nu'}$
for $\lambda',\mu',\nu' \in \Comp(n,r)$
and $1 \leq i,j,k \leq n$ such that $\lambda'_i = \mu'_j =
\nu'_k = 0$.
Let $A,B,C$ be the matrices obtained
by removing the $i$th row and $j$th column of $A'$,
the $j$th row and $k$th column of $B'$, and the $i$th row and $k$th
column of $C'$, respectively.
Then we have that
$Z(A,B,C) = Z(A',B',C')$.
\end{lemma}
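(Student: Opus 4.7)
The plan is to adapt the argument used in \cref{business}: the defining relations \cref{r1} for $\A_q(n)$ depend only on the relative order of the generator indices, not on their actual values. Let $\sigma,\tau,\rho:\nset{n-1}\rightarrow\nset{n}$ be the order-preserving bijections onto $\nset{n}\setminus\{i\}$, $\nset{n}\setminus\{j\}$, $\nset{n}\setminus\{k\}$ respectively. Since relation \cref{r1} is preserved under order-preserving relabelings of row and column indices, the assignments $x_{a,b}\mapsto x_{\sigma(a),\tau(b)}$, $x_{a,b}\mapsto x_{\tau(a),\rho(b)}$, and $x_{a,b}\mapsto x_{\sigma(a),\rho(b)}$ define algebra isomorphisms from $\A_q(n-1)$ onto the subalgebras of $\A_q(n)$ generated by $\{x_{u,v}: u\neq i, v\neq j\}$, $\{x_{u,v}: u\neq j, v\neq k\}$, and $\{x_{u,v}: u\neq i, v\neq k\}$, respectively.

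Under these relabeling isomorphisms the normally-ordered monomials $x_A, x_B, x_C$ map to $x_{A'}, x_{B'}, x_{C'}$, since the ordering conditions in \cref{shoo} are manifestly preserved by order-preserving bijections; dually, the standard basis elements $\xi_A,\xi_B$ correspond to $\xi_{A'},\xi_{B'}$. Next I would expand $\Delta(x_{C'}) = \sum_{\bj\in\I(n,r)} x_{\bi,\bj}\otimes x_{\bj,\bk}$ (for the $\bi,\bk$ with $x_{C'} = x_{\bi,\bk}$); since $\xi_{A'}\otimes\xi_{B'}$ is supported in $\A_q[\lambda',\mu']\otimes\A_q[\mu',\nu']$, only the summands with $\bj\in\I_{\mu'}$ contribute, and the condition $\mu'_j=0$ forces these $\bj$ to avoid the index $j$. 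Consequently the relevant projection of $\Delta(x_{C'})$ onto $\A_q[\lambda',\mu']\otimes\A_q[\mu',\nu']$ lies entirely in the tensor product of the two ``forbidden-index-free'' subalgebras identified above, and under the relabeling isomorphisms it corresponds term-by-term with the projection of $\Delta(x_C)$ onto $\A_q[\lambda,\mu]\otimes\A_q[\mu,\nu]$ inside $\A_q(n-1)\otimes\A_q(n-1)$.

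The equality $Z(A',B',C')=Z(A,B,C)$ then follows by pairing: the relabeling isomorphisms identify the dual bases, so $\langle \xi_{A'}\otimes\xi_{B'}, \Delta(x_{C'})\rangle = \langle \xi_A\otimes\xi_B, \Delta(x_C)\rangle$. There is no significant technical obstacle; the only mild nuisance is the bookkeeping needed to track three separate index-removal bijections $\sigma,\tau,\rho$ acting on the row and column indices of the various graded components simultaneously.
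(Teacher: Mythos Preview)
Your argument is correct and follows exactly the same idea as the paper's proof, which simply says ``This follows for the same reason as \cref{business}''---namely, that the relations \cref{r1} depend only on the relative order of indices, so order-preserving relabelings give algebra isomorphisms under which the normally-ordered monomials (and hence their dual standard basis elements) correspond. You have merely spelled out in detail the bookkeeping with the three injections $\sigma,\tau,\rho$ and the bigraded projection of $\Delta(x_{C'})$ that the paper leaves implicit.
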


\begin{proof}
This follows for the same reason as \cref{business}.
\end{proof}

Let $H_r$ be the Hecke algebra of the symmetric
group, that is, the $\Z[q,q^{-1}]$-algebra on generators
$\tau_1,\dots,\tau_{r-1}$
subject to the relations
\begin{equation}
(\tau_i+q)(\tau_i-q^{-1}) = 0,
\qquad
\tau_i\tau_j = \tau_j \tau_i\text{ if $|i-j|> 1$},
\qquad
\tau_i \tau_{i+1} \tau_i = \tau_{i+1}\tau_i \tau_{i+1}.
\end{equation}
For $w \in S_r$, we have the corresponding element $\tau_w \in H_r$
defined from a reduced expression for $w$, and the
elements $\{\tau_w\:|\:w \in S_r\}$ give a basis for $H_w$ as a free
$\Z[q,q^{-1}]$-module.
Recall also that the Hecke algebra has its own anti-linear bar involution $-:H_w\rightarrow H_w,
\tau_w \mapsto \tau_{w^{-1}}^{-1}$.

\begin{lemma}\label{schurfunctor}
Suppose that $r \leq n$ and let $\omega := (1^r\  0^{n-r}) \in
\Comp(n,r)$.
There is an algebra isomorphism
$H_r \stackrel{\sim}{\rightarrow} 1_\omega S_q(n,r) 1_\omega$
sending $\tau_w$ to the standard basis element $\xi_A$ for the
matrix $A \in \Mat{\omega}{\omega}$ such that $a_{w(i), i} =
1$ for $i=1,\dots,r$ and all other entries are zero.
This map intertwines the bar involutions on $H_r$ and $S_q(n,r)$.
\end{lemma}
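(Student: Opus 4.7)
The plan is to reduce everything to explicit computations in $\A_q(2)$ and $\A_q(3)$ using stability (\cref{stability}) and the bialgebra embedding $\Y_r$ (\cref{guard}), then to verify the defining relations of $H_r$ on the images of the Coxeter generators. For $w\in S_r$, let $A_w\in \Mat{\omega}{\omega}$ denote the matrix with $(A_w)_{w(i),i}=1$ for $i=1,\dots,r$ and zeros elsewhere, as in the statement. Every matrix in $\Mat{\omega}{\omega}$ has this form, since its rows and columns indexed by $r+1,\dots,n$ must be zero, so $w\mapsto A_w$ is a bijection with $S_r$. Because $S_\omega$ is trivial, the standard basis $\{\xi_{A_w}\mid w\in S_r\}$ of $1_\omega S_q(n,r)1_\omega$ matches the basis $\{\tau_w\mid w\in S_r\}$ of $H_r$ in cardinality, so the prescribed assignment $\tau_w\mapsto \xi_{A_w}$ is a priori a $\Z[q,q^{-1}]$-module isomorphism between two free modules of rank $r!$. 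By \cref{stability}, any structure constant $Z(A_w,A_{w'},A_{w''})$ is unchanged upon padding with zero rows and columns, so henceforth I assume $n=r$.

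For multiplicativity, I verify the three defining relations of $H_r$ on the images of the Coxeter generators $\tau_i\mapsto \xi_{A_{s_i}}$. For the quadratic relation, reduce to $r=2$, where \cref{shoo} gives $x_{A_e}=x_{1,1}x_{2,2}$ and $x_{A_{s_1}}=x_{2,1}x_{1,2}$. Expanding $\Delta(x_{A_e})$ and $\Delta(x_{A_{s_1}})$ as products using multiplicativity of $\Delta$, then renormalizing the few resulting tensor monomials of matching bidegree into the standard basis via \cref{r1}, yields after pairing with $\xi_{A_{s_1}}\otimes \xi_{A_{s_1}}$ the identity $\xi_{A_{s_1}}\circ \xi_{A_{s_1}} = 1_\omega + (q^{-1}-q)\xi_{A_{s_1}}$, matching $\tau_1^2=1+(q^{-1}-q)\tau_1$. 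For $|i-j|>1$, the embedding $\Y_r$ identifies $\xi_{A_{s_i}}$ and $\xi_{A_{s_j}}$ with elements in two commuting tensor factors, so commutativity is automatic. The braid relation for $|i-j|=1$ reduces by stability to a finite but nontrivial check in $S_q(3,3)$; this is the main computational obstacle, and it can either be verified by direct expansion against the six-element standard basis of $1_\omega S_q(3,3)1_\omega$ or, more elegantly, by recognizing $\xi_{A_{s_i}}$ as the image of the $R$-matrix $\R_{V,V}$ acting in positions $i,i+1$ on the weight-$\omega$ space of $V^{\otimes r}$, so that the braid relation becomes the Yang-Baxter equation for $\R_{V,V}$ on $V^{\otimes 3}$. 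With all three Hecke relations established, an induction on Bruhat order shows that for any reduced expression $w=s_{i_1}\cdots s_{i_k}$ the product $\xi_{A_{s_{i_1}}}\circ\cdots\circ \xi_{A_{s_{i_k}}}$ equals $\xi_{A_w}$ modulo $\sum_{v<w}\Z[q,q^{-1}]\xi_{A_v}$, giving surjectivity and hence bijectivity of the algebra homomorphism $H_r \to 1_\omega S_q(r,r)1_\omega$.

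For bar compatibility, it suffices to check on the Coxeter generators, since the bar involution is an algebra homomorphism on both sides. Reducing again to $r=2$, the initial-monomial formula \cref{shoe} combined with \cref{r1} gives $\overline{x}_{A_e}=x_{A_e}-(q-q^{-1})x_{A_{s_1}}$ and $\overline{x}_{A_{s_1}}=x_{A_{s_1}}$; dualizing via $\langle\overline{\xi},x\rangle=\overline{\langle\xi,\overline{x}\rangle}$ yields $\overline{\xi}_{A_{s_1}}=\xi_{A_{s_1}}+(q-q^{-1})1_\omega$, matching $\overline{\tau}_1 = \tau_1^{-1} = \tau_1 + (q-q^{-1})$ in $H_r$.
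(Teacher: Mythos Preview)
There is a genuine gap. Verifying the three Hecke relations among the $\xi_{A_{s_i}}$ yields an algebra homomorphism $\phi:H_r\to 1_\omega S_q(n,r)1_\omega$ with $\phi(\tau_i)=\xi_{A_{s_i}}$, but the lemma asserts the stronger statement that $\phi(\tau_w)=\xi_{A_w}$ for \emph{every} $w$. Your ``induction on Bruhat order'' claim that $\xi_{A_{s_{i_1}}}\cdots\xi_{A_{s_{i_k}}}\equiv\xi_{A_w}$ modulo lower terms is not a consequence of the quadratic, commutativity and braid relations alone: to carry out any such induction you must know how $\xi_{A_v}\cdot\xi_{A_{s_i}}$ expands in the standard basis for \emph{arbitrary} $v$, which is exactly the computation you have not done. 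And even granting unitriangularity, you would obtain that $\phi$ is bijective, not that $\phi(\tau_w)=\xi_{A_w}$ on the nose.

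The paper's approach avoids this by checking directly, for all $w$ at once, the exchange relation
\[
\xi_{A_w}\,\xi_{A_{s_i}}=
\begin{cases}
\xi_{A_{ws_i}} & \text{if } w(i)<w(i+1),\\
\xi_{A_{ws_i}}-(q-q^{-1})\xi_{A_w} & \text{if } w(i)>w(i+1),
\end{cases}
\]
as an identity of structure constants. This immediately gives $\phi(\tau_w)=\xi_{A_w}$ by induction on $\ell(w)$, with the quadratic, commutativity and braid relations falling out as the special cases $w=s_i$, $w=s_j$, $w=s_is_{i+1}$. Your argument can be repaired by proving this exchange relation instead; the three Hecke relations you checked are then corollaries rather than the main step. (As a secondary point, your $R$-matrix alternative for the braid relation presupposes the identification of $\xi_{A_{s_i}}$ with $\R_{V,V}$ on the $\omega$-weight space, which is itself a structure-constant computation equivalent to the one you are trying to avoid, and in the paper's logical order is established only after the present lemma.)
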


\begin{proof}
Check that 
the relation 
$$
\tau_w \tau_{i} = 
\begin{cases}
\tau_{w s_i}&\text{if $w(i) < w(i+1)$}\\
\tau_{w s_i} - (q-q^{-1}) \tau_w&\text{if $w(i) > w(i+1)$}
\end{cases}
$$
holds in $S_q(n,r)$
by explicitly calculating the corresponding structure constants.
This is well known so we omit the details.
\end{proof}

Let $V$ be the natural representation of $\qG_n$.
In addition to our definition of $S_q(n,r)$
by dualizing $\A_q(n,r)$, and the approach in \cite{BLM} where the
$q$-Schur algebra arises as the endomorphism algebra of a permutation
representation of the finite general linear group, the $q$-Schur
algebra can be realized as an endomorphism algebra for an action of
the Hecke algebra $H_r$ on the tensor space $V^{\otimes r}$. 
To explain this, note that $V^{\otimes r}$ has basis
$v_\bi := v_{i_1}\otimes\cdots\otimes v_{i_r}$ for $\bi \in \I(n,r)$.
There is a {\em right} action of $H_r$ on $V^{\otimes r}$
such that $\tau_i$ acts as the braiding $1^{\otimes (i-1)} \otimes \R_{V,V}
\otimes 1^{r-i-1}$ from \cref{rmat}.
Since $V^{\otimes r}$ is a polynomial representation of degree $r$,
it is a left $S_q(n,r)$-module.
The action of $H_r$ commutes with the action of $S_q(n,r)$.
Hence, there is a well-defined algebra homomorphism
\begin{equation}\label{altdef}
S_q(n,r) \rightarrow 
\End_{H_r}\left(V^{\otimes r}\right).
\end{equation}
This homomorphism is actually an algebra {\em isomorphism}. There are several ways to see this, e.g., it can be deduced from
\cite{DJ}.
In fact, in \cite{DJ}, the authors work with a different realization
of the right $H_r$-module $V^{\otimes r}$ as a direct sum of
permutation modules. In this form, one obtains a basis for the endomorphism algebra on the
right hand side of \cref{altdef} quite easily from the Mackey
theorem, and then just need to check that this basis is also the image of the
standard basis for $S_q(n,r)$ under the homomorphism \cref{altdef}.
Since this is quite important for us, we go through some details in
the next paragraph.

For $\lambda \in \Comp(n,r)$, 
let $H_\lambda$ be the parabolic subalgebra of $H_r$ associated to $S_\lambda$.
Let $X_\lambda$ be the
the free $\Z[q,q^{-1}]$-module of rank one with basis $m_\lambda$ viewed as a right $H_\lambda$-module so that $m_\lambda  \tau_i = q^{-1} m_\lambda$ for each $\tau_i \in H_\lambda$.
The (right) {\em permutation module} is the induced module $M(\lambda) := X_\lambda \otimes_{H_\lambda} H_r$.
There is a unique $H_r$-module homomorphism
\begin{equation}\label{reptheory}
f_\lambda:M(\lambda) \rightarrow 1_\lambda
V^{\otimes r},\qquad
m_\lambda \otimes 1 \mapsto v_{\bi^\lambda}.
\end{equation} 
This is actually an {\em isomorphism} because the vectors $\left\{m_\lambda \otimes \tau_w\:\big|\:w \in
  (S_\lambda\backslash S_r)_{\min}\right\}$ give a basis for $M(\lambda)$, and $f_\lambda$ maps
them to the basis 
 $\{v_\bi\:|\:\bi \in \I_\lambda\}$
for $1_\lambda V^{\otimes r}$.
Summing over all $\lambda \in \Lambda(n,r)$, this gives us an $H_r$-module isomorphism
\begin{equation}\label{fnr}
f:
\bigoplus_{\lambda \in \Comp(n,r)} M(\lambda)
\stackrel{\sim}{\rightarrow} 
V^{\otimes r}.
\end{equation}
The following lemma explains how to transport the natural action of $S_q(n,r)$ on $V^{\otimes r}$
through $f$ to obtain an action on this direct sum of permutation modules.

\begin{lemma}\label{extralargebone}
Suppose that $\lambda,\mu \in \Comp(n,r)$ and $A \in \Mat{\lambda}{\mu}$.
The diagram 
$$
\begin{tikzcd}
1_\mu V^{\otimes r}\arrow[r,"\xi_A"]& 1_\lambda V^{\otimes r}\\
M(\mu)\arrow[r]\arrow[u,"f_\mu" left]& M(\lambda)\arrow[u,"f_\lambda" right]
\end{tikzcd}
$$
commutes, 
where the top map is defined by acting on the left
with
$\xi_A$,
and the bottom map
is the $H_r$-module 
homomorphism sending
\begin{equation}\label{hardplace}
m_\mu \otimes 1 
\mapsto \sum_{w \in (S_{\mu^+} \backslash S_\mu)_{\min}} q^{\ell(w_0)-\ell(w)} m_\lambda \otimes \tau_{d_A} \tau_w,
\end{equation}
where
$\mu^+ \vDash r$ is as in \cref{doso}
and $w_0$ is the longest element of 
$(S_{\mu^+} \backslash S_\mu)_{\min}$.
\end{lemma}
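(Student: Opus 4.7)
\smallskip

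\noindent\textbf{Proof plan.} Both composites in the diagram are homomorphisms of right $H_r$-modules. For the top route, this is because the $\xi_A$-action is a left $S_q(n,r)$-action on $V^{\otimes r}$, and the left $S_q(n,r)$-action and the right $H_r$-action commute (this is the definition of the map in \cref{altdef}). For the bottom route, one first checks that the element on the right-hand side of \cref{hardplace} is annihilated by $\tau_i + q$ for each simple reflection $s_i \in S_\mu$; this is essentially the content of the length-additive decomposition in \cref{doso} (applied to the coset space $S_{\mu^+}\backslash S_\mu$), and gives a well-defined $H_r$-module map from $M(\mu)$ out of its universal property. It therefore suffices to verify that the two composites agree on the generator $m_\mu \otimes 1$, i.e., that
\begin{equation*}
\xi_A \cdot v_{\bi^\mu}
= \sum_{w \in (S_{\mu^+}\backslash S_\mu)_{\min}} q^{\ell(w_0) - \ell(w)}\, v_{\bi^\lambda} \cdot \tau_{d_A}\tau_w.
\end{equation*}

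\smallskip

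\noindent The main task is to compute $\xi_A \cdot v_{\bi^\mu}$ explicitly. Unwinding the comodule structure on $V^{\otimes r}$ gives
\begin{equation*}
\xi_A \cdot v_{\bi^\mu}
= \sum_{\bi \in \I(n,r)} v_\bi\, \bigl\langle \xi_A,\, x_{\bi, \bi^\mu}\bigr\rangle,
\end{equation*}
so the plan is to evaluate $\langle \xi_A, x_{\bi,\bi^\mu}\rangle$ by expanding $x_{\bi,\bi^\mu}$ in the normally-ordered monomial basis \cref{costandard} using the relations \cref{r1}. Since the second indices in $\bi^\mu$ are already weakly increasing, only the $j=l$ case of \cref{r1} intervenes, which is a clean $q$-commutation $x_{i,j}x_{k,j} = q\, x_{k,j}x_{i,j}$ for $i<k$ with no lower-order corrections. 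Thus for any $\bi$ with $(\bi,\bi^\mu)\in\Pi_B$, the monomial $x_{\bi,\bi^\mu}$ equals a pure $q$-power times $x_B$, and the pairing with $\xi_A$ is nonzero precisely when $B=A$.

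\smallskip

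\noindent Using \cref{doso}, the orbit $\Pi_A$ can be parametrized as $\{(\bi^\lambda \cdot d_A w,\, \bi^\mu):w\in (S_{\mu^+}\backslash S_\mu)_{\min}\}$: the stabilizer of $\bi^\mu$ is $S_\mu$, which decomposes with length-additivity via the second assertion of \cref{doso}, and $S_{\mu^+}$ is precisely what double-stabilizes $(\bi^\lambda\cdot d_A,\bi^\mu)$. The number of $q$-swaps needed to bring $x_{\bi^\lambda\cdot d_A w,\,\bi^\mu}$ into normally-ordered form within each column block equals the number of inversions that $w$ fails to have inside $S_\mu$, i.e., $\ell(w_0)-\ell(w)$ where $w_0$ is the longest element of $(S_{\mu^+}\backslash S_\mu)_{\min}$. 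This produces the claimed coefficient, yielding
\begin{equation*}
\xi_A \cdot v_{\bi^\mu} \;=\; \sum_{w \in (S_{\mu^+}\backslash S_\mu)_{\min}} q^{\ell(w_0) - \ell(w)}\, v_{\bi^\lambda \cdot d_A w}.
\end{equation*}
Finally, since $d_A\in(S_\lambda\backslash S_r/S_\mu)_{\min}$ and $w\in(S_{\mu^+}\backslash S_\mu)_{\min}$, the product $d_A w$ is a minimal length left $S_\lambda$-coset representative with $\ell(d_A w)=\ell(d_A)+\ell(w)$; consequently $\tau_{d_A w}=\tau_{d_A}\tau_w$ and its action on $v_{\bi^\lambda}$ is the simple permutation $v_{\bi^\lambda\cdot d_A w}$ with no $R$-matrix corrections. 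Applying $f_\lambda$ then identifies the right-hand side with \cref{hardplace}, completing the check.

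\smallskip

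\noindent\textbf{Main obstacle.} The delicate point is the bookkeeping in the middle paragraph: verifying that rewriting $x_{\bi,\bi^\mu}$ in normally-ordered form uses \emph{only} the scalar $q$-commutation (not the trilinear relation from \cref{r1}), and that the resulting $q$-exponent matches the length function on $(S_{\mu^+}\backslash S_\mu)_{\min}$ rather than some other combinatorial statistic. Both facts ultimately follow from the length-additive decomposition furnished by \cref{doso}, which is why that lemma is invoked here.
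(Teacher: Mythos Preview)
Your proposal is correct and follows essentially the same route as the paper's proof: compute $\xi_A\cdot v_{\bi^\mu}$ via the comodule structure, observe that only the $q$-commutation relation $x_{i,j}x_{k,j}=q\,x_{k,j}x_{i,j}$ (the last case of \cref{r1}) is needed to bring $x_{\bi,\bi^\mu}$ to normal order, parametrize the contributing $\bi$'s by $(S_{\mu^+}\backslash S_\mu)_{\min}$ via \cref{doso}, and then identify $v_{\bi^\lambda\cdot d_A w}=v_{\bi^\lambda}\tau_{d_A}\tau_w$ using that $\bi^\lambda$ is weakly increasing. The only difference is that you explicitly discuss the well-definedness of the bottom map, whereas the paper leaves this implicit (it follows a posteriori from the commutativity, since the vertical maps are $H_r$-isomorphisms and the top map is $H_r$-linear).
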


\begin{proof}
The comodule structure map $\eta$ of $V^{\otimes r}$
satisfies $\eta(v_\bj) = \sum_{\bi \in \I(n,r)} v_\bi \otimes x_{\bi,\bj}$.
Hence, for $\bj \in\I_\mu$, we have that
\begin{equation}\label{sss}
\xi_A v_\bj = \sum_{\bi \in \I_\lambda} \langle \xi_A, x_{\bi,\bj}
\rangle v_\bi.
\end{equation}
By the definition
\cref{shoo},
we have that
$x_A = x_{\bi^\lambda \cdot d_A w_0, \bi^\mu}$.
The $S_\mu$-orbit of $\bi^\lambda\cdot d_A w_0$ is
$\{\bi^\lambda \cdot d_A w\:|\:w \in 
(S_{\mu^+} \backslash S_\mu)_{\min}\}$.
Also for $w \in 
(S_{\mu^+} \backslash S_\mu)_{\min}$ we have that
$x_{\bi^\lambda \cdot d_A w, \bi^\mu} = 
q^{\ell(w_0)-\ell(w)} x_{\bi^\lambda\cdot d_A w_0, \bi^\mu}$ as one needs
  to use the last relation in \cref{r1} a total of
  $\ell(w_0)-\ell(w)$ times.
Putting this together shows that
$$
\xi_A v_{\bi^\mu} =
\sum_{w \in (S_{\mu^+} \backslash S_\mu)_{\min}} q^{\ell(w_0)-\ell(w)}
v_{\bi^\lambda \cdot d_A w}.
$$
The lemma now follows
since
$f_\lambda$ sends 
$m_\lambda \otimes 1$ to $v_{\bi^\lambda}$, $f_\mu$ sends $m_\mu \otimes 1$
to $v_{\bi^\mu}$, and
$v_{\bi^\lambda \cdot d_A w} = v_{\bi^\lambda} \tau_{d_A} \tau_w$ as $i^\lambda_1 \leq
\cdots\leq i_r^\lambda$.
\end{proof}

Let $m$ be another natural number.
For $\lambda \in \Comp(m,r)$, let 
$Y(\lambda)$ be the free $\Z[q,q^{-1}]$-module of rank one with basis $n_\lambda$
viewed as a left $H_\lambda$-module so that
$\tau_i n_\lambda = -q n_\lambda$ for each $\tau_i \in H_\lambda$. The (left) signed permutation module is the induced module $N(\lambda) := H_r
\otimes_{H_\lambda} Y(\lambda)$.

\begin{lemma}\label{schurf}
There is an algebra isomorphism
$S_q(m,r)
\stackrel{\sim}{\rightarrow} \End_{H_r}\left(\bigoplus_{\lambda \in \Comp(m,r)}
  N(\lambda)\right)$
sending $\xi_A \in 1_\lambda S_q(m,r) 1_\mu$
to the unique $H_r$-module homomorphism such that
$$
1 \otimes n_\mu \mapsto 
\sum_{w \in (S_{\mu^+}\backslash S_\mu)_{\min}}
 (-1)^{\ell(w)+\ell(d_A)} q^{\ell(w_0)-\ell(w)}
\tau_w^{-1} \tau_{d_A}^{-1} \otimes n_\lambda
$$
where $\mu^+$ is as in \cref{doso}
and $w_0$ is the longest element of $(S_{\mu^+}\backslash S_\mu)_{\min}$,
and $1 \otimes n_\nu \mapsto 0$ for $\nu \neq \mu$.
\end{lemma}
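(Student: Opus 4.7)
The plan is to deduce this from Lemma~\ref{extralargebone} by means of a suitable anti-involution of the Hecke algebra. First I would define $\Psi: H_r \to H_r$ to be the unique $\Z[q,q^{-1}]$-linear anti-algebra involution satisfying $\Psi(\tau_i) = -\tau_i^{-1}$ for each simple reflection. To see this is well defined, one checks the braid and quadratic relations directly: the braid relations follow from the braid relations for $\tau_i^{-1}$, and the quadratic relation $\tau_i^2 = 1+(q^{-1}-q)\tau_i$ yields, on taking inverses, $\tau_i^{-2} = 1+(q-q^{-1})\tau_i^{-1}$, which matches $\Psi$ of the quadratic after the sign. An easy induction on length then gives $\Psi(\tau_w) = (-1)^{\ell(w)}\tau_w^{-1}$ for every $w \in S_r$, and in particular $\Psi^2=\id$.

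For any right $H_r$-module $M$, write $M^\Psi$ for the same underlying set viewed as a left $H_r$-module via $h \cdot m := m \cdot \Psi(h)$. Since $\Psi$ is an involution, a $\Z[q,q^{-1}]$-linear map between two right modules is right-$H_r$-linear if and only if, after the twist, it is left-$H_r$-linear. Next I would check that there is a left $H_r$-module isomorphism $\iota_\lambda: N(\lambda) \stackrel{\sim}{\to} M(\lambda)^\Psi$ sending $1 \otimes n_\lambda$ to $m_\lambda \otimes 1$: for $\tau_i \in H_\lambda$, one has
\begin{equation*}
\tau_i \cdot (m_\lambda \otimes 1) = (m_\lambda \otimes 1) \cdot \Psi(\tau_i) = -(m_\lambda \otimes \tau_i^{-1}) = -q(m_\lambda \otimes 1),
\end{equation*}
matching $\tau_i(1 \otimes n_\lambda) = -q(1 \otimes n_\lambda)$, and a rank count shows the resulting map is bijective. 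Using $\Psi^2=\id$, a short formal manipulation shows $\iota_\lambda^{-1}(m_\lambda \otimes h) = \Psi(h) \otimes n_\lambda$ for all $h \in H_r$.

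The final step is transport of structure. Composing the algebra isomorphism $S_q(m,r) \stackrel{\sim}{\to} \End_{H_r}(\bigoplus_\mu M(\mu))$ of Lemma~\ref{extralargebone} with conjugation by $\bigoplus_\mu \iota_\mu^{\pm 1}$ produces an algebra isomorphism $S_q(m,r) \stackrel{\sim}{\to} \End_{H_r}(\bigoplus_\mu N(\mu))$. To identify it with the formula stated, apply $\iota_\lambda^{-1}$ to the image of $m_\mu \otimes 1$ from Lemma~\ref{extralargebone}: since $\tau_{d_A}\tau_w = \tau_{d_A w}$ by Lemma~\ref{doso}, we get
\begin{equation*}
\iota_\lambda^{-1}(m_\lambda \otimes \tau_{d_A}\tau_w) = \Psi(\tau_{d_A w}) \otimes n_\lambda = (-1)^{\ell(d_A)+\ell(w)}\,\tau_w^{-1}\tau_{d_A}^{-1} \otimes n_\lambda,
\end{equation*}
which reproduces the required expression after summing over $w \in (S_{\mu^+}\backslash S_\mu)_{\min}$ with the weight $q^{\ell(w_0)-\ell(w)}$.

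The main obstacle is simply locating the correct twist. Once one recognizes that the left, sign-twisted version of the trivial right $H_\lambda$-generator is implemented precisely by the anti-involution $\Psi$ above, the formula in the statement is forced by that of Lemma~\ref{extralargebone}; the only genuine input is the verification that $\Psi$ is well defined, which is a short computation with the Hecke relations.
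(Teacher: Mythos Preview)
Your argument is correct and is essentially the same as the paper's: both use the anti-involution $\tau_x\mapsto(-1)^{\ell(x)}\tau_x^{-1}$ to identify $N(\lambda)$ with the twist of $M(\lambda)$ and then transport the formula from \cref{extralargebone}. The only minor imprecision is that the algebra isomorphism $S_q(m,r)\stackrel{\sim}{\to}\End_{H_r}\big(\bigoplus_\mu M(\mu)\big)$ is not \cref{extralargebone} itself but rather \cref{altdef} combined with \cref{fnr}, with \cref{extralargebone} supplying the explicit action of $\xi_A$.
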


\begin{proof}
We start from the algebra isomorphism
\cref{altdef}. Using 
\cref{fnr} and \cref{extralargebone}, 
and replacing $n$ by $m$, this gives us
an algebra isomorphism 
$S_q(m,r) \stackrel{\sim}{\rightarrow}
\End_{H_r}\left(\bigoplus_{\lambda \in \Lambda(m,r)} M(\lambda)\right)$ such that $\xi_A \in 1_\lambda S_q(m,r) 1_\mu$
acts on $m_\mu \otimes 1 \in M(\mu)$ according to
\cref{hardplace}, 
and it acts as zero on all other summands.
Then we use the
algebra anti-automorphism
$H_r \rightarrow H_r, \tau_x\mapsto(-1)^{\ell(x)}
\tau_x^{-1}$.
The pull-back of the right $H_r$-module $M(\lambda)$ along this map is isomorphic
the left $H_r$-module $N(\lambda)$, there being a unique isomorphism such that
 $m_\lambda \otimes \tau_x \mapsto (-1)^{\ell(x)} \tau_x^{-1} \otimes n_\lambda$ for all $x \in S_r$. 
 We deduce that
 $\End_{H_r}\left(\bigoplus_{\lambda \in \Lambda(m,r)} M(\lambda)\right)
\cong
 \End_{H_r}\left(\bigoplus_{\lambda \in \Lambda(m,r)} N(\lambda)\right)$.
It just remains to note that the action of $\xi_A \in 1_\lambda S(m,r)1_\mu $ on
$\bigoplus_{\lambda \in \Lambda(m,r)} M(\lambda)$
translates into the action on
$\bigoplus_{\lambda \in \Lambda(m,r)} N(\lambda)$
described explicitly 
in the statement of the lemma.
\end{proof}

The goal now is to replace $H_r$ 
and the signed permutation modules $N(\lambda)$
in
\cref{schurf}
with the quantum general linear group $\qG_n$
and its polynomial representations
\begin{equation}\label{bigbones}
{\textstyle\bigwedge^\lambda V} := {\textstyle\bigwedge^{\lambda_1}
  V}
\otimes\cdots\otimes {\textstyle\bigwedge^{\lambda_{\ell(\lambda)}} V}.
\end{equation}

\begin{lemma}\label{haircut}
Take $\lambda,\mu \in \Lambda(m,r)$ and $A \in \Mat{\lambda}{\mu}$.
There is a unique $\qG_n$-module homomorphism
$\phi_A:\bigwedge^\mu V \rightarrow \bigwedge^\lambda V$ such that
the diagram 
$$
\begin{tikzcd}
V^{\otimes r}\arrow[d]\arrow[r]&V^{\otimes r}\arrow[d]\\
\bigwedge^\mu V\arrow[r,"\phi_A"]&\bigwedge^\lambda V
\end{tikzcd}
$$
commutes, where the vertical maps are the
quotient maps and the
top map 
is right multiplication by 
$\sum_{w \in (S_{\mu^+}\backslash S_\mu)_{\min}}
(-1)^{\ell(w)+\ell(d_A)} q^{\ell(w_0)-\ell(w)}
\tau_w^{-1} \tau_{d_A}^{-1}$
where $\mu^+$ is defined as in \cref{doso}
and $w_0$ is the longest element of $(S_{\mu^+} \backslash S_\mu)_{\min}$.
\end{lemma}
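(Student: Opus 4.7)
The plan is to verify that right multiplication $R_h$ by
$$h := \sum_{w \in (S_{\mu^+}\backslash S_\mu)_{\min}} (-1)^{\ell(w)+\ell(d_A)} q^{\ell(w_0)-\ell(w)} \tau_w^{-1} \tau_{d_A}^{-1} \in H_r$$
on $V^{\otimes r}$ descends to a well-defined map $\phi_A : \bigwedge^\mu V \to \bigwedge^\lambda V$. First I would observe that, since the braiding $\R_{V,V}$ through which each $\tau_i$ acts is itself a $\qG_n$-comodule intertwiner, the right $H_r$-action on $V^{\otimes r}$ commutes with the $\qG_n$-coaction; consequently $R_h$ is automatically $\qG_n$-equivariant, and hence so is any map $\phi_A$ that it induces. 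Uniqueness follows from surjectivity of the quotient $\pi_\mu : V^{\otimes r} \twoheadrightarrow \bigwedge^\mu V$. Thus the lemma reduces to the inclusion $R_h(K(\mu)) \subseteq K(\lambda)$, where $K(\nu) := \ker \pi_\nu$.

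Next I would describe $K(\mu)$ explicitly. Since $\bigwedge^\mu V$ is a tensor product of quantum exterior powers, the defining relations from \cref{qeigen} show that $K(\mu)$ is generated as a $\Z[q,q^{-1}]$-module by two families of elements: (i) $v(\tau_k+q)$ for $v \in V^{\otimes r}$ and $s_k \in S_\mu$, coming from the relations $v_j \otimes v_i + q v_i \otimes v_j$; and (ii) $v_{\bi}$ for $\bi \in \I(n,r)$ with $i_k = i_{k+1}$ for some $s_k \in S_\mu$, coming from the relations $v_k\otimes v_k$.

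For the type (i) generators I would invoke \cref{schurf}: the well-definedness there of the $H_r$-module map $N(\mu) \to N(\lambda)$, $1 \otimes n_\mu \mapsto h \otimes n_\lambda$, is precisely the statement that $(\tau_k + q)h \in \sum_{\tau_j \in H_\lambda} H_r(\tau_j + q)$ inside $H_r$ for each $\tau_k \in H_\mu$. Passing this through the right $H_r$-module structure on $V^{\otimes r}$ yields
$$V^{\otimes r}(\tau_k+q)\,h \;\subseteq\; \sum_{\tau_j \in H_\lambda} V^{\otimes r}(\tau_j + q) \;\subseteq\; K(\lambda).$$

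The main obstacle is the type (ii) generators, since the relation $v_k\otimes v_k$ is not of the form ``something times $(\tau_k+q)$'' over $\Z[q,q^{-1}]$. I would resolve this using the observation from \cref{rmat} that $v_\bi \tau_k = q^{-1} v_\bi$ whenever $i_k = i_{k+1}$, so $v_\bi(\tau_k+q) = (q+q^{-1})\, v_\bi$. The type (i) conclusion then gives $(q+q^{-1})\, v_\bi h \in K(\lambda)$. Since $\bigwedge^\lambda V = V^{\otimes r}/K(\lambda)$ is a free $\Z[q,q^{-1}]$-module by the explicit basis recorded right after \cref{qeigen}, the submodule $K(\lambda)$ is saturated in $V^{\otimes r}$; as $q+q^{-1}$ is a nonzero-divisor in $\Z[q,q^{-1}]$, we may cancel it to conclude $v_\bi h \in K(\lambda)$, completing the verification.
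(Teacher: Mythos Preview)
Your proof is correct and takes a genuinely different route from the paper's. The paper works directly inside the Hecke algebra: it identifies $K(\mu)$ with $\sum_{s_j\in S_\mu}\ker(\tau_j^{-1}-q)$ (which absorbs your types (i) and (ii) in one stroke, since this kernel is exactly the span of the quadratic relations), then for $v$ with $v\tau_j^{-1}=qv$ it decomposes $(S_{\mu^+}\backslash S_\mu)_{\min}=X\sqcup Xs_j\sqcup Y$ via \cite[Lem.~1.1]{DJgl}, observes that the $X$- and $Xs_j$-contributions to $vh$ cancel in pairs, and uses \cref{doso} to show each surviving $Y$-term $v\tau_y^{-1}\tau_{d_A}^{-1}$ lies in $\ker(\tau_i^{-1}-q)$ for some $s_i\in S_\lambda$. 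Your argument instead reads off the needed Hecke-algebra containment $(\tau_k+q)h\in\sum_j H_r(\tau_j+q)$ from the already-established well-definedness of the map in \cref{schurf}, bypassing the coset combinatorics entirely; the price is that the diagonal relations $v_i\otimes v_i$ are not in the image of $(\tau_k+q)$ over $\Z[q,q^{-1}]$, forcing the separate saturation step. Your approach is shorter and more structural, leveraging \cref{schurf} and the freeness of $\bigwedge^\lambda V$; the paper's is self-contained within Hecke combinatorics and exhibits explicitly why the map descends.
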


\begin{proof}
By the definition of quantum exterior powers,
the kernel of the projection $V^{\otimes r} \twoheadrightarrow
\bigwedge^\mu V$ is generated by the kernels of the endomorphisms
$\tau_j-q^{-1} = \tau_j^{-1} - q$ for all $j$ with $s_j \in S_\mu$.
So we need to show for such a $j$ and $v \in V^{\otimes r}$ with
$v\tau_j^{-1} = q v$
that the vector 
$$
v' := \sum_{w \in  (S_{\mu^+}\backslash S_\mu)_{\min}}
(-1)^{\ell(w)+\ell(d_A)} q^{\ell(w_0)-\ell(w)} v \tau_w^{-1} \tau_{d_A}^{-1}
$$ 
is in the sum of the kernels of the maps $\tau_i^{-1}-q$ for all $i$ with
$s_i \in S_\lambda$.
We have that
$$
(S_{\mu^+}\backslash S_\mu)_{\min}=
X \sqcup X s_j
\sqcup Y
$$ such that 
$\ell(x s_j) = \ell(x)+1$ for all $x \in X$, and 
$y s_j y^{-1} \in S_{\mu^+}$ for all $y \in Y$.
This follows from \cite[Lemma 1.1]{DJgl}.
For $x \in X$, we have that
$$
(-1)^{\ell(x)+\ell(d_A)} q^{\ell(w_0)-\ell(x)} v \tau_x^{-1}
\tau_{d_A}^{-1}
+
(-1)^{\ell(x s_j)+\ell(d_A)} q^{\ell(w_0)-\ell(x s_j)} v \tau^{-1}_j \tau_x^{-1}
\tau_{d_A}^{-1} = 0
$$
as $v \tau_j^{-1} = q v$.
This implies that
$$
v' = \sum_{y \in Y} 
(-1)^{\ell(y)+\ell(d_A)} q^{\ell(w_0)-\ell(y)} v \tau_y^{-1}
\tau_{d_A}^{-1}.
$$
It remains to show for $y \in Y$ that $v \tau_y^{-1} \tau_{d_A}^{-1}$ is in the kernel
of 
$\tau_i^{-1}-q$ for some $i$ with
$s_i \in S_\lambda$.
We have that $d_A y s_j = t d_A y$
for $t := d_A (y s_j y^{-1}) d_A^{-1}$.
Since $y s_j y^{-1} \in S_{\mu^+}$, 
we deduce using \cref{doso} that $t \in S_\lambda$ (in fact, it is in $S_{\lambda^-} \leq S_\lambda$ in the notation from the lemma), and that
$\ell(t d_A y) =
\ell(t)+\ell(d_A)+\ell(y)$. Since $\ell(d_A y s_j) \leq
\ell(d_A)+\ell(y)+1$,
we deduce that $\ell(t) = 1$. Hence, $t = s_i$ for some $i$ such that $s_i \in S_\lambda$.
Moreover
$v \tau^{-1}_y \tau_{d_A}^{-1}\tau_i^{-1} = v \tau_j^{-1}
\tau_y^{-1} \tau_{d_A}^{-1} 
= q v \tau_y^{-1}\tau_{d_A}^{-1}$.
\end{proof}

The following theorem is the quantum analog of \cite[Proposition 3.11]{D1}. See also
\cite[4.2(19)]{D2} for a closely
related result already in the quantum setting.

\begin{theorem}\label{met}
Fix $m,r \in\N$.
For any $n \geq 0$,
there is a surjective algebra homomorphism
\begin{equation}\label{donkiniso}
g_{n}:S_q(m,r) \twoheadrightarrow \End_{\qG_n}\left(\bigoplus_{\lambda \in \Comp(m,r)}
{\textstyle\bigwedge^\lambda} V\right)
\end{equation}
sending 
$\xi_A \in 1_\lambda S_q(m,r) 1_\mu$
to the endomorphism that is equal to the homomorphism
$\phi_A$ from \cref{haircut} 
on the summand
$\bigwedge^\mu V$,
and is zero on all other summands.
Moreover, $g_n$ is an isomorphism if $n\geq r$.
\end{theorem}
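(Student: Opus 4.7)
The plan is to construct $g_n$ as the image of the algebra isomorphism in \cref{schurf} under the tensor product functor $F := V^{\otimes r} \otimes_{H_r}(-)$, using $V^{\otimes r}$ as a $(\qG_n, H_r)$-bimodule via \cref{altdef}. For each $\lambda \in \Comp(m,r)$, there is a natural $\qG_n$-module surjection
$$F(N(\lambda)) \cong V^{\otimes r} \otimes_{H_\lambda} Y(\lambda) \twoheadrightarrow {\textstyle\bigwedge^\lambda V},\qquad v \otimes n_\lambda \mapsto \overline{v},$$
well-defined because $v(\tau_i + q) \otimes n_\lambda = 0$ in $V^{\otimes r} \otimes_{H_\lambda} Y(\lambda)$ for $s_i \in S_\lambda$, and $v(\tau_i+q)$ lies in the $q^{-1}$-eigenspace of $\tau_i$ by the Hecke relation, hence maps to zero in $\bigwedge^\lambda V$. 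Applying $F$ to \cref{schurf} then yields an algebra homomorphism $S_q(m,r) \to \End_{\qG_n}\bigl(\bigoplus_\lambda F(N(\lambda))\bigr)$, and restricting along the surjections above defines $g_n$; this restriction is legitimate by \cref{haircut}, which precisely asserts that $F$ applied to the $H_r$-module map corresponding to $\xi_A$ descends to $\phi_A$. Multiplicativity of $g_n$ then follows from functoriality of $F$, and the identification $g_n(\xi_A) = \phi_A$ is immediate by construction, since both maps arise from right multiplication by the same element $h_A := \sum_w (-1)^{\ell(w)+\ell(d_A)} q^{\ell(w_0)-\ell(w)} \tau_w^{-1} \tau_{d_A}^{-1}$ of $H_r$.

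For surjectivity, I would take $\psi \in \Hom_{\qG_n}(\bigwedge^\mu V, \bigwedge^\lambda V)$ and compose with the quotient $V^{\otimes r} \twoheadrightarrow \bigwedge^\mu V$ to obtain a $\qG_n$-morphism $\tilde\psi : V^{\otimes r} \to \bigwedge^\lambda V$. The essential step is to lift $\tilde\psi$ along $V^{\otimes r} \twoheadrightarrow \bigwedge^\lambda V$ to a $\qG_n$-endomorphism $\hat\psi$ of $V^{\otimes r}$; then $\hat\psi$ corresponds to right multiplication by some element of $H_r$ (via the surjection $H_r \twoheadrightarrow \End_{\qG_n}(V^{\otimes r})$ from the right action), and expanding this element in the basis $\{h_A\}$ expresses $\psi$ as a $\Z[q,q^{-1}]$-linear combination of $\phi_A$'s. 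The lifting can be carried out by first working over $\mathbb Q(q)$, where $V^{\otimes r} \twoheadrightarrow \bigwedge^\lambda V$ admits a $\qG_n$-splitting through the normalized quantum antisymmetrizer associated to $\lambda$, and then verifying integrality over $\Z[q,q^{-1}]$ by comparing against the Bruhat-triangular expansion of the standard basis $\{\xi_A\}$. This integrality verification is the main obstacle.

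For the isomorphism statement when $n \geq r$, it remains to establish injectivity of $g_n$. The key input is that Schur-Weyl duality provides a faithful right action $H_r \hookrightarrow \End_{\qG_n}(V^{\otimes r})$ when $n \geq r$, so $V^{\otimes r}$ is a faithful right $H_r$-module. Since each signed permutation module $N(\lambda) = H_r \otimes_{H_\lambda} Y(\lambda)$ is a direct summand of a free $H_r$-module (by projectivity of induced modules in the Hecke-algebra setting), the functor $F$ is faithful on the subcategory generated by the $N(\lambda)$'s, and this faithfulness translates back through \cref{schurf} to injectivity of $g_n$.
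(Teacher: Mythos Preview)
Your construction of $g_n$ as an algebra homomorphism is sound: applying $F = V^{\otimes r}\otimes_{H_r}(-)$ to the isomorphism of \cref{schurf} and descending along the surjections $F(N(\lambda))\twoheadrightarrow\bigwedge^\lambda V$ via \cref{haircut} does give a well-defined multiplicative map with $g_n(\xi_A)=\phi_A$.

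However, both the surjectivity and the injectivity arguments have genuine gaps. For injectivity when $n\geq r$: your claim that $N(\lambda)$ is a direct summand of a free $H_r$-module is false over $\Z[q,q^{-1}]$. The sign module $Y(\lambda)$ is not projective over $H_\lambda$ (already for $H_2$ one would need to invert $[2]_q$ to split $H_2\twoheadrightarrow Y$), so induction does not produce a projective. Worse, the surjection $F(N(\lambda))\to\bigwedge^\lambda V$ has a nonzero torsion kernel even when $n\geq r$: for $n=r=2$ and $\lambda=(2)$ one computes $F(N(\lambda))\cong \Z[q,q^{-1}]\oplus\big(\Z[q,q^{-1}]/([2]_q)\big)^{2}$ while $\bigwedge^2 V$ has rank one. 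So faithfulness of $F$ would not by itself yield injectivity of $g_n$. (This particular issue is repairable by base-changing to $\Q(q)$, where $H_r$ is semisimple and $F$ becomes an equivalence, using that $S_q(m,r)$ is torsion-free.)

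For surjectivity, the lifting step is the real problem. Over $\Z[q,q^{-1}]$, or over a field with $q$ a root of unity, the quotient $V^{\otimes r}\twoheadrightarrow\bigwedge^\lambda V$ does not split $\qG_n$-equivariantly, so there is no reason $\tilde\psi$ lifts to $\hat\psi$. Your suggestion to lift over $\Q(q)$ and then ``verify integrality via the Bruhat-triangular expansion'' is not a proof; and the phrase ``expanding $h$ in the basis $\{h_A\}$'' is ill-posed, since the elements $h_A$ for $A\in\Mat{\lambda}{\mu}$ are not a basis of $H_r$. The paper avoids all of this by first base-changing to an arbitrary field $\k$, then for $n\geq r$ using the Schur functor $\pi=1_\omega(-)$ from $S_q(n,r)$-modules to $H_r$-modules: one shows $\pi(\bigwedge^\lambda V)\cong N(\lambda)$ and, crucially, that $\pi$ induces an isomorphism on $\Hom$ between such modules because their heads and socles are $p$-restricted (this uses the highest weight structure of polynomial $\qG_n(\k)$-representations). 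Surjectivity for $n<r$ is then deduced by truncation from $\qG_N$ to $\qG_n$ with $N\geq r$, again relying on standard homological properties of the highest weight category. Your approach does not supply a substitute for this input.
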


\begin{proof}
Using the base change functor $\k \otimes_{\Z[q,q^{-1}]}-$, it
suffices to prove the analogous statement
when $\Z[q,q^{-1}]$ is replaced by a field $\k$ 
and $q$ is any non-zero element. In the remainder of the proof, we assume we are
working over a field in this way, writing $\qG_n(\k)$ for the quantum general linear group over $\k$, whose coordinate algebra is $\k \otimes_{\Z[q,q^{-1}]} \A_q(n)$.
The category of polynomial
representations of $\qG_n(\k)$ is a highest weight category
satisfying standard homological properties. This is justified e.g. in
\cite{PW} or \cite{D2}\footnote{It can also be deduced by using the results of \cref{s7-basis} to show that $S_q(n,r)$ is a split quasi-hereditary algebra.}.
In the next paragraph, we treat the case that $n \geq r$. Then the existence and
surjectivity
of $g_{n}$ for $n < r$ follows from the existence and
surjectivity of $g_{N}$
for $N \geq r$
by an argument involving truncation to the subgroup $\qG_n <
\qG_N$ using \cite[4.2(11)]{D2} (this requires the standard homological properties).

So now assume that $n \geq r$ and that we are working over a field. 
We must show that $g_{n}$ is a
well-defined algebra isomorphism.
To see this, we use the 
{\em Schur functor}, that is, the 
idempotent truncation functor $\pi:
S_q(n,r)\mod \rightarrow H_r\mod$ defined by the idempotent $1_\omega$, notation as
in \cref{schurfunctor}. This sends an
$S_q(n,r)$-module to its $\omega$-weight space viewed as an
$H_r$-module via the isomorphism from that lemma.
We have that $\pi \left(\bigwedge^\lambda V\right) \cong
N(\lambda)$,
there being a unique such isomorphism sending
the canonical image of $v_1 \otimes \cdots \otimes v_r$
in $\bigwedge^\lambda V$ to $1 \otimes n_\lambda$.
Moreover, the Schur functor induces an isomorphism
$$
\Hom_{S_q(n,r)}(\textstyle\bigwedge^\mu V, \bigwedge^\lambda V)
\stackrel{\sim}{\rightarrow}
\Hom_{H_r}(N(\mu), N(\lambda)).
$$
This follows by general principles (e.g., see \cite[Th.~2.12]{JS}) because
the head of $\bigwedge^\mu V$ and the socle of $\bigwedge^\lambda
V$ are $p$-restricted, i.e., they only involve irreducible
modules $L$ which are not annihilated by $\pi$. Indeed,
these modules are both submodules and quotient modules of the tensor
space $V^{\otimes r}$, which has $p$-restricted head and socle because
$V^{\otimes r} \cong S_q(n,r) 1_\omega$
by the isomorphisms \cref{altdef,reptheory}, hence, 
$$
\Hom_{S_q(n,r)}(L, V^{\otimes r})\cong
\Hom_{S_q(n,r)}(V^{\otimes r},L)
\cong \Hom_{S_q(n,r)}(S_q(n,r) 1_\omega,
L) \cong 1_\omega L
$$
for any self-dual module $L$.
Consequently, $\pi$ induces an algebra isomorphism
$$
\End_{\qG_n}\left(\bigoplus_{\lambda \in \Comp(m,r)} \textstyle \bigwedge^\lambda V\right)
\cong
\End_{H_r}\left(\bigoplus_{\lambda \in \Comp(m,r)} N(\lambda)\right).
$$
Composing this with the isomorphism from \cref{schurf}
gives the
desired isomorphism $g_{n}$.

It just remains to identify the endomorphism $g_{n}(\xi_A)$ 
with $\phi_A$.
For this, it suffices to check for $\xi_A \in 1_\lambda S_q(m,r) 1_\mu$
that the maps 
$g_{n}(\xi_A)$ and $\phi_A$  are equal on the canonical
image of $v_1\otimes\cdots\otimes v_r$ in $\bigwedge^\mu V$.
By the definition from \cref{haircut}, $\phi_A$ sends this vector to 
the canonical image of 
$$
\sum_{w \in (S_{\mu^+}\backslash S_\mu)_{\min}} (-1)^{\ell(x)+\ell(d_A)} q^{\ell(w_0)-\ell(w)}
(v_1 \otimes \cdots \otimes v_r) \tau_{w}^{-1}
\tau_{d_A}^{-1}
$$
in $\bigwedge^\lambda V$.
On the other hand, $g_{n}(\xi_A)$ takes
this vector to the image of
$$
\sum_{w \in (S_{\mu^+} \backslash S_\mu)_{\min}}
(-1)^{\ell(x)+\ell(d_A)} q^{\ell(w_0)-\ell(w)}
\tau_{w}
\tau_{d_A}^{-1} (v_1 \otimes \cdots \otimes v_r) 
$$
where the left action of $H_r$ on $1_\omega V^{\otimes r}$ comes from
  the left action of $S_q(n,r)$ via the isomorphism of \cref{schurfunctor}.
Now observe for any $x \in S_r$ that $\tau_x (v_1 \otimes\cdots\otimes v_r) = (v_1
\otimes \cdots \otimes v_r) \tau_x$ as, by the definitions, both
vectors are equal to $v_{x(1)}\otimes\cdots\otimes v_{x(r)}$.
\end{proof}

\section{The \texorpdfstring{$q$}{q}-Schur category}\label{s5-schurcategory}

It is easy to adapt \cref{schurs} to define 
$Z(A,B,C) \in \Z[q,q^{-1}]$ for 
$A \in \Mat{\lambda}{\mu}$, $B \in \Mat{\mu}{\nu}$, $C \in
\Mat{\lambda}{\nu}$ and
compositions 
$\lambda,\mu,\nu \vDash r$ that are not necessarily of the same length. To do so,
we pick any 
$n \geq \ell(\lambda),\ell(\mu),\ell(\nu)$ and
let $\lambda',\mu'$ and $\nu'$ be compositions of length $n$
obtained from $\lambda,\mu$ and $\nu$ by adding some extra entries
equal to zero. Let $A' \in \Mat{\lambda'}{\mu'}$, $B' \in
\Mat{\mu'}{\nu'}$ and $C' \in \Mat{\lambda'}{\nu'}$ be the matrices
obtained 
by inserting corresponding rows and columns of
zeros into $A, B$ and $C$;
see \cref{filler1,filler2} for an example.
Then we define $Z(A,B,C)$ to be the structure constant
$Z(A', B', C')$ for the $q$-Schur algebra $S_q(n,r)$ 
exactly as defined earlier. The stability from \cref{stability} implies
that this is well-defined independent of all choices.

The following theorem defines the {\em $q$-Schur category with $0$-strings}. The version without $0$-strings discussed in the introduction is the full subcategory with object set $\sComp \subset \Comp$.

\begin{theorem}\label{bigdef}
There is a $\Z[q,q^{-1}]$-linear 
strict monoidal category $\qSchur$
with
\begin{itemize}
\item objects that are all compositions $\lambda
\in \Comp$;
\item 
for $\lambda\vDash r$ and $\mu \vDash s$,
the morphism space $\Hom_{\qSchur}(\mu,\lambda)$ is $\{0\}$ unless
$r=s$, and it is 
the free $\Z[q,q^{-1}]$-module
with basis $\{\xi_A\:|\:A \in \Mat{\lambda}{\mu}\}$ if $r=s$;
\item
tensor product of objects is defined by concatenation of compositions;
\item
tensor product 
of morphisms (horizontal composition) 
is defined by $\xi_A \star \xi_B :=
\xi_{\diag(A,B)}$;
\item
vertical composition of morphisms is defined 
as in \cref{qschurs}.
\end{itemize}
The strict identity object $\mathbbm{1}$ is the composition of length zero, and
the identity endomorphism $1_\lambda$ of an object
$\lambda \in \Comp$ is $\xi_{\diag(\lambda_1,\dots,\lambda_{\ell(\lambda)})}$.
\end{theorem}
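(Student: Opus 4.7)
The plan is to reduce the verification of all category and monoidal axioms to facts already established for the $q$-Schur algebras $S_q(n,r)$ of \cref{s4-schuralgebra}, using the stabilization results to free the constructions from a choice of $n$.

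First, I would check that vertical composition as defined by \cref{qschurs} is well-defined and associative. Given $\xi_A \in \Hom_{\qSchur}(\mu,\lambda)$ and $\xi_B \in \Hom_{\qSchur}(\nu,\mu)$ with $\lambda,\mu,\nu \vDash r$, choose any $n \geq \max(\ell(\lambda),\ell(\mu),\ell(\nu))$, pad $\lambda,\mu,\nu$ with zeros to obtain $\lambda',\mu',\nu' \in \Comp(n,r)$, and pad $A,B$ with corresponding rows/columns of zeros to obtain $A',B'$. The proposed composition matches, term by term, the product $\xi_{A'}\circ \xi_{B'}$ in $S_q(n,r)$ by the extended definition of $Z(A,B,C)$ at the start of this section, and \cref{stability} guarantees this is independent of $n$. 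Associativity $(\xi_A \circ \xi_B)\circ \xi_C = \xi_A \circ (\xi_B\circ \xi_C)$ then follows from associativity of multiplication in $S_q(n,r)$ for $n$ large enough to accommodate all four compositions.

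Next, for horizontal composition I would invoke the injective algebra homomorphism
$\Y_r:\bigoplus_{a+b=r}S_q(m,a)\otimes S_q(n,b)\hookrightarrow S_q(m+n,r)$
from \cref{guard}, which sends $\xi_A\otimes \xi_B \mapsto \xi_{\diag(A,B)}$. Given $\xi_A\in \Hom_{\qSchur}(\mu,\lambda)$ and $\xi_B\in \Hom_{\qSchur}(\nu,\pi)$, the definition $\xi_A\star\xi_B := \xi_{\diag(A,B)}$ makes sense after padding with zeros as before, and is again independent of padding by stability. Strict associativity $(\xi_A\star\xi_B)\star\xi_C=\xi_A\star(\xi_B\star\xi_C)=\xi_{\diag(A,B,C)}$ is immediate from the block-diagonal form, and the empty composition $\mathbbm{1}$ is the two-sided identity for $\star$ because $\diag(A,\emptyset)=A=\diag(\emptyset,A)$.

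For the interchange law $(\xi_A\circ \xi_B)\star(\xi_C\circ \xi_D)=(\xi_A\star\xi_C)\circ(\xi_B\star\xi_D)$, the crucial point is that $\Y_r$ is an algebra homomorphism. After padding with zeros to a common $n$, both sides of the identity become equal under the image of $\Y_r$: the left-hand side corresponds to $(\xi_{A'}\xi_{B'})\otimes(\xi_{C'}\xi_{D'})$ and the right-hand side to $(\xi_{A'}\otimes \xi_{C'})(\xi_{B'}\otimes \xi_{D'})$ in $S_q(m,a)\otimes S_q(n,b)$, and these agree by the very definition of multiplication in a tensor product of algebras. Finally, $1_\lambda = \xi_{\diag(\lambda_1,\dots,\lambda_{\ell(\lambda)})}$ is the identity endomorphism of $\lambda$: after padding, it becomes the idempotent $1_{\lambda'}$ in $S_q(n,r)$, which acts as the identity on $1_{\lambda'}S_q(n,r)1_{\mu'}$ and $1_{\mu'}S_q(n,r)1_{\lambda'}$ for all $\mu'$.

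The main obstacle, such as it is, is bookkeeping: one must check that the padding-by-zeros operation is compatible simultaneously with vertical composition, horizontal composition, and their interaction in the interchange law, so that every equation to be verified is pulled back to a true equation in some $S_q(n,r)$ for $n$ large. Once \cref{stability} is in hand, however, these checks are entirely mechanical, and no new structural input is required.
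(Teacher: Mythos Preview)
Your proposal is correct and follows essentially the same approach as the paper: both reduce associativity of vertical composition to associativity in $S_q(n,r)$ via stability (\cref{stability}), and both derive the interchange law from the fact that $\Y_r$ in \cref{guard} is an algebra homomorphism. The only cosmetic difference is that the paper verifies the interchange law in its equivalent ``one-sided'' form $(\xi_A \star 1_\sigma)\circ(1_\mu\star\xi_B)=(1_\lambda\star\xi_B)\circ(\xi_A\star 1_\rho)$, whereas you state the full version directly.
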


\begin{proof}
Most of the axioms of strict monoidal category are straightforward to check.
The fact that vertical
composition is associative is a consequence of associativity of
multiplication in the $q$-Schur algebra.
To check the interchange law, we must show that
$$
(\xi_A \star 1_\sigma) \circ (1_\mu \star \xi_B) =
(1_\lambda \star \xi_B) \circ (\xi_A \star 1_\rho)
$$
for
$\lambda,\mu\vDash a,\sigma,\rho\vDash b$
and $A \in \Mat{\lambda}{\mu}, B \in \Mat{\sigma}{\rho}$, that is,
$$
\xi_{\diag(A, \sigma_1,\dots,\sigma_{\ell(\sigma)})}
\circ \xi_{\diag(\mu_1,\dots,\mu_{\ell(\mu)},B)}
= \xi_{\diag(\lambda_1,\dots,\lambda_{\ell(\lambda)}, B)} \circ \xi_{\diag(A,\rho_1,\dots,\rho_{\ell(\rho)})}.
$$
Using the stability from \cref{stability}, we may assume that $\ell(\lambda)=\ell(\mu)=m$ and
$\ell(\sigma)=\ell(\rho)=n$.
We have that
$(\xi_A \otimes 1_\sigma) (1_\mu \otimes \xi_B)
= (1_\lambda \otimes \xi_B) (A \otimes 1_\rho)$
in the algebra
$S_q(m,a) \otimes S_q(n,b)$.
Now apply the algebra homomorphism $\Y_{a+b}$ from \cref{guard}.
\end{proof}

\begin{remark}
It is clear from the definition that
the path algebra of the full subcategory of $\qSchur$ generated by objects in $\Lambda(n,r)$
may be identified with the $q$-Schur algebra,
that is, 
\begin{equation}
S_q(n,r) = \bigoplus_{\lambda,\mu \in \Lambda(n,r)}
\Hom_{\qSchur}(\mu,\lambda).
\end{equation}
By \cref{altdef,extralargebone}, we have that 
$1_\lambda S_q(n,r) 1_\mu \cong \Hom_{H_r}(M(\mu),M(\lambda))$ for $\lambda,\mu \in \Lambda(n,r)$.
It follows that the full subcategory
of $\qSchur$
generated by objects in $\Lambda(r) := \bigcup_{n \geq 0} \Lambda(n,r)$
is isomorphic to the category $\qSchur(r)$ with 
object set $\Lambda(r)$ and morphism spaces
\begin{equation}\label{anotherway}
\Hom_{\qSchur(r)}(\mu,\lambda) := \Hom_{H_r}(M(\mu),M(\lambda)),
\end{equation}
with the natural composition law.
The categories $\qSchur(r)$ for all $r$ can then be assembled to obtain an alternative approach to the definition of $\qSchur$, with 
tensor product arising from
the bifunctors
$\qSchur(r)\times \qSchur(s) \rightarrow \qSchur(r+s)$
induced by the natural embeddings
$H_r \otimes H_s \hookrightarrow H_{r+s}$.
We have emphasized the based approach in \cref{bigdef} since it allows composition of standard basis elements to be computed effectively 
using the coalgebra structure on $\A_q(n)$. This will be used several times later on.
\end{remark}

We have defined the morphism space
$\Hom_{\qSchur}(\mu,\lambda)$ so that it comes equipped with 
the standard basis $\{\xi_A\:|\:A \in \Mat{\lambda}{\mu}\}$.
We can also transfer the canonical basis
from the $q$-Schur algebra to $\qSchur$, as follows. 
Take any $\lambda,\mu \vDash r$ and 
$A, B \in
\Mat{\lambda}{\mu}$.
There is a corresponding Kazhdan-Lusztig polynomial $p_{A,B}(q) \in \Z[q]$.
To define this, 
we again
pick any $n \geq \ell(\lambda),\ell(\mu)$, add extra
zeros to $\lambda$ and $\mu$ to make them into compositions of the
same length
$n$,
and add corresponding rows and columns of zeros to $A$ and $B$ to
obtain
$A', B' \in \Mat{\lambda'}{\mu'}$.
Then we let $p_{A,B}(q) := p_{A',B'}(q)$, where the latter polynomial
comes from \cref{weird}.
This is well defined independent of the choices thanks to \cref{business}. It is also clear from the proof of that lemma
that the slightly more general polynomials $p_{A,B}(q)$ still satisfy \cref{countier}.
Let
\begin{equation}
\theta_A := \sum_{B \in \Mat{\lambda}{\mu}} p_{A,B}(q)\xi_B,
\end{equation}
thereby defining the canonical basis $\{\theta_A\:|\:A \in
\Mat{\lambda}{\mu}\}$
for $\Hom_{\qSchur}(\mu,\lambda)$.

\begin{lemma}\label{bio}
There is an anti-linear strict monoidal functor
$-:\qSchur \rightarrow \qSchur$
which is the identity on objects and, on 
the
morphism space
$\Hom_{\qSchur}(\mu,\lambda)$,
is the unique anti-linear map which fixes the canonical basis
$\{\theta_A\:|\:A \in \Mat{\lambda}{\mu}\}$.
\end{lemma}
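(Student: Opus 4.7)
The plan is to verify that the anti-linear map defined on each morphism space of $\qSchur$ by the formula $\overline{\theta_A} = \theta_A$ assembles into a strict monoidal functor. On each individual hom space the map is well-defined and unique by anti-linearity, so nothing has to be checked there. Identity morphisms are preserved because $1_\lambda = \xi_{\diag(\lambda_1,\dots,\lambda_{\ell(\lambda)})} = \theta_{\diag(\lambda_1,\dots,\lambda_{\ell(\lambda)})}$, exactly as for the $q$-Schur algebra (the diagonal matrix is minimal in the Bruhat order, so its standard and canonical basis elements coincide). Thus the content of the lemma is compatibility with vertical composition and with the monoidal product of morphisms.

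For vertical composition, fix canonical basis elements $\theta_A \in \Hom_{\qSchur}(\mu,\lambda)$ and $\theta_B \in \Hom_{\qSchur}(\nu,\mu)$, and choose $n \geq \max(\ell(\lambda),\ell(\mu),\ell(\nu))$. Pad $\lambda,\mu,\nu$ with zero parts to obtain $\lambda',\mu',\nu' \in \Comp(n,r)$ and expand $A,B$ by inserting zero rows and columns to get $A',B'$ in $S_q(n,r)$. By \cref{business,stability}, the Kazhdan-Lusztig polynomials defining $\theta_A$, $\theta_B$ and the structure constants computing $\theta_A \circ \theta_B$ in $\qSchur$ are the restrictions of the corresponding data in $S_q(n,r)$. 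Hence the desired identity $\overline{\theta_A \circ \theta_B} = \overline{\theta_A} \circ \overline{\theta_B}$ reduces to the statement that the bar involution on $S_q(n,r)$ is an algebra automorphism, which was established as the dual of the coalgebra automorphism in \cref{inv}.

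For horizontal composition, it is cleanest to prove the stronger identity $\theta_A \star \theta_B = \theta_{\diag(A,B)}$, from which bar-invariance is immediate. Expanding the canonical basis in the standard basis and using $\xi_C \star \xi_D = \xi_{\diag(C,D)}$ gives
\begin{equation*}
\theta_A \star \theta_B \;=\; \sum_{C,D} p_{A,C}(q)\, p_{B,D}(q)\, \xi_{\diag(C,D)}.
\end{equation*}
The same expression is obtained by applying the injective algebra homomorphism $\Y_{r+s}$ from \cref{guard} to $\theta_A \otimes \theta_B$, and \cref{sleepydog} identifies this image with $\theta_{\diag(A,B)}$. After padding to a common length $n$ so that both sides live in the same $q$-Schur algebra (again invoking \cref{business,stability}), we conclude $\theta_A \star \theta_B = \theta_{\diag(A,B)}$, which is a canonical basis element and thus bar-invariant.

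I expect the main subtlety to be the bookkeeping required to transfer these identities from a fixed $S_q(n,r)$ back to the stabilized definitions used in $\qSchur$: one must verify carefully that padding with zero rows, columns, and parts does not alter the canonical basis, the Kazhdan-Lusztig polynomials, or the vertical and horizontal composition rules. All three stability statements are already packaged in \cref{business,stability,sleepydog}, so the argument reduces to a mechanical assembly once this framework is in place, and the resulting functor is automatically strict and an involution since it squares to the identity on the canonical basis.
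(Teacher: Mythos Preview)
Your proposal is correct and follows essentially the same approach as the paper: the paper's proof likewise reduces functoriality to the bar involution on $S_q(n,r)$ being an anti-linear algebra automorphism, and establishes the monoidal compatibility via $\theta_A \star \theta_B = \theta_{\diag(A,B)}$ using \cref{sleepydog}. You have simply spelled out the stability bookkeeping (via \cref{business,stability}) that the paper leaves implicit in its brief two-sentence proof.
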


\begin{proof}
Since the bar involution for $q$-Schur algebras is an anti-linear 
algebra automorphism, this prescription gives a well-defined anti-linear functor.
To see that it is strict monoidal,
it suffices to observe that
$\theta_A \star \theta_B = \theta_{\diag(A,B)}$.
This follows from \cref{sleepydog}.
\end{proof}

Similarly, we upgrade the involution $\T$ on $S_q(n,r)$ to a
strict linear monoidal functor
\begin{equation}\label{starinv}
\T:\qSchur \rightarrow (\qSchur)^{\operatorname{op}}
\end{equation}
which is the identity on objects, commutes with the bar involution, and sends $\xi_A\mapsto
\xi_{A^\T}$, $\theta_A \mapsto \theta_{A^\T}$. This follows
by \cref{cabbage}.

\begin{theorem}\label{dthm}
There is a full $\Z[q,q^{-1}]$-linear monoidal functor 
$\Sigma_n$
from $\qSchur$ to the category of polynomial representations of $\qG_n$
sending 
the object $\lambda \vDash d$ to the polynomial representation
$\bigwedge^\lambda V$ of degree $r$ from \cref{bigbones},
and the morphism $\xi_A$ for 
$\lambda,\mu \vDash r$ and $A \in \Mat{\lambda}{\mu}$
to the homomorphism 
$\phi_A:\bigwedge^\mu V \rightarrow\bigwedge^\lambda V$ from
\cref{haircut}.
\end{theorem}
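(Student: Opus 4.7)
The plan is to deduce the theorem from \cref{met} by transferring the algebra statement there to the categorical setting via zero-padding. First, define $\Sigma_n$ on objects and standard basis morphisms by the formulas in the statement, then extend $\Z[q,q^{-1}]$-linearly. The identity $1_\lambda = \xi_{\diag(\lambda_1,\dots,\lambda_{\ell(\lambda)})}$ is sent to $\id_{\bigwedge^\lambda V}$ by direct inspection of \cref{haircut}: in this case $d_A = 1$, $\mu^+ = \mu$, and the right multiplication by the relevant Hecke element collapses to the identity. Similarly $\Sigma_n(\one) = \bigwedge^0 V$ is the monoidal unit of the target.

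The main functoriality check is the identity $\phi_A \circ \phi_B = \sum_C Z(A,B,C)\,\phi_C$ for all $\lambda,\mu,\nu \vDash r$ and admissible matrices. I would reduce to \cref{met} by padding $\lambda,\mu,\nu$ to a common length $m \geq \max(\ell(\lambda),\ell(\mu),\ell(\nu),n)$ and padding $A,B,C$ with corresponding rows and columns of zeros to produce $A',B',C'$. On the algebra side, $Z(A,B,C) = Z(A',B',C')$ by \cref{stability}. On the representation side, the defining data for $\phi_A$ in \cref{haircut}---namely the parabolic subgroups $S_\lambda$, $S_{\mu^+}$ and the minimal double coset representative $d_A$---are unchanged by padding since $S_0$ is trivial; the canonical isomorphism $\bigwedge^{\lambda'} V \cong \bigwedge^\lambda V$ induced by the unitors (using $\bigwedge^0 V \cong \Z[q,q^{-1}]$) then identifies $\phi_{A'}$ with $\phi_A$. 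The required relation therefore becomes the image under the algebra homomorphism $g_n$ of \cref{met} of the corresponding identity $\xi_{A'} \xi_{B'} = \sum_{C'} Z(A',B',C')\,\xi_{C'}$ in $S_q(m,r)$.

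Monoidality amounts to checking $\phi_{\diag(A,B)} = \phi_A \otimes \phi_B$ for $A \in \Mat{\lambda}{\mu}$ and $B \in \Mat{\sigma}{\rho}$. Inspection of \cref{haircut} reveals that the minimal representative $d_{\diag(A,B)}$ lies in the image of $S_{|\mu|} \times S_{|\rho|} \hookrightarrow S_{|\mu|+|\rho|}$ and coincides with $d_A \cdot d_B$, while $(\mu\rho)^+ = \mu^+\rho^+$ and $(S_{(\mu\rho)^+}\backslash S_{\mu\rho})_{\min}$ factors as the Cartesian product of $(S_{\mu^+}\backslash S_\mu)_{\min}$ with $(S_{\rho^+}\backslash S_\rho)_{\min}$. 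Consequently the Hecke algebra element defining $\phi_{\diag(A,B)}$ is the external tensor product of the ones defining $\phi_A$ and $\phi_B$, which implies the desired equality after passing to the quotients $\bigwedge^{\mu\rho} V = \bigwedge^\mu V \otimes \bigwedge^\rho V$.

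Finally, fullness is the surjectivity statement in \cref{met}: for fixed $\lambda,\mu \vDash r$, pad to length $m \geq \max(\ell(\lambda),\ell(\mu),n)$ and use that $g_n$ maps the component $1_{\lambda'} S_q(m,r) 1_{\mu'}$ onto $\Hom_{\qG_n}(\bigwedge^{\mu'}V, \bigwedge^{\lambda'}V)$, which agrees with $\Hom_{\qG_n}(\bigwedge^\mu V, \bigwedge^\lambda V)$ via the canonical isomorphisms. The main obstacle I anticipate is the bookkeeping around zero-padding, ensuring that the algebraic structure constants (handled by \cref{stability}) and the Hecke-theoretic formula for $\phi_A$ (handled by triviality of $S_0$) are padded consistently; both invariances have the same origin, so the two reductions to $\Comp(m,r)$ are compatible.
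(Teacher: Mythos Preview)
Your proposal is correct and follows essentially the same route as the paper: reduce functoriality and fullness to \cref{met} via zero-padding to a common length, and verify monoidality by unpacking \cref{haircut} to see that $\phi_{\diag(A,B)}=\phi_A\otimes\phi_B$. The paper's version is terser (it leaves the padding bookkeeping implicit and simply says the monoidality identity is ``clear from the explicit description''), but the content is the same; your extra requirement $m\geq n$ is harmless but unnecessary since \cref{met} holds for any $m$.
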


\begin{proof}
To see that $\Sigma_n$ is a functor, we must show that
$\Sigma_n(\xi_A \circ \xi_B) = \Sigma_n(\xi_A) \circ \Sigma_n(\xi_B)$
for $A \in \Mat{\lambda}{\mu}$, $B \in
  \Mat{\mu}{\nu}$,
$\lambda,\mu,\nu \vDash r$ and $r \geq 0$.
In view of the definition of vertical composition in $\qSchur$, this follows because we have that
$\phi_A \circ \phi_B = \sum_{C \in \Mat{\lambda}{\nu}} Z(A,B,C) \phi_C$ by
\cref{met}, taking $m \geq \ell(\lambda), \ell(\mu), \ell(\nu)$.
The same theorem also shows that $\Sigma_n$ is full.
Finally, to see that $\Sigma_n$ is a monoidal functor,
we need to check that
$\phi_A \otimes \phi_B = \phi_{\diag(A,B)}$.
This is clear from the explicit description of these maps given by
\cref{haircut}.
\end{proof}

\begin{remark}\label{ginandtonic}
  (1)
  Using the final statement from \cref{met}, 
 the proof of \cref{dthm} also shows that
the functor $\Sigma_n$ defines an {\em isomorphism}
$\Hom_{\qSchur}(\mu,\lambda)
\stackrel{\sim}{\rightarrow}
\Hom_{\qG_n}({\textstyle\bigwedge^\mu
  V},{\textstyle\bigwedge^\lambda V})$
providing $n \geq |\lambda|,|\mu|$.
So one could say that $\Sigma_n$ is {\em
  asymptotically faithful} as $n \rightarrow \infty$.
In \cref{webn} below, we will give an explicit description of the
kernel of $\Sigma_n$, that is, the tensor ideal of $\qSchur$ consisting of the
morphisms that it annihilates.

\vspace{1mm}
\noindent  
  (2) 
  Let $\k$ be a field viewed as a $\Z[q,q^{-1}]$-algebra in some way, and
consider the specialization $\qSchur(\k) := \k \otimes_{\Z[q,q^{-1}]} \qSchur$. The functor $\Sigma_n$ in \cref{dthm} induces a
$\k$-linear monoidal functor from $\qSchur(\k)$ to the category of polynomial representations of $\qG_n(\k)$. By the proofs of \cref{dthm,met}, this induced functor is also full.
\end{remark}

By {\em merges}, {\em splits}, and {\em positive
  crossings}, 
we mean the morphisms
$\xi_{\left[\begin{smallmatrix}a&b\end{smallmatrix}\right]}$,
$\xi_{\left[\begin{smallmatrix}a\\b\end{smallmatrix}\right]}$,
and
$\xi_{\left[\begin{smallmatrix}0&b\\a&0\end{smallmatrix}\right]}$
for $a,b \geq 0$.
The images 
$\phi_{\left[\begin{smallmatrix}a&b\end{smallmatrix}\right]}$,
$\phi_{\left[\begin{smallmatrix}a\\b\end{smallmatrix}\right]}$,
and
$\phi_{\left[\begin{smallmatrix}0&b\\a&0\end{smallmatrix}\right]}$
of these special morphisms under the functor $\Sigma_n$
from \cref{dthm} are
the natural projection
\begin{equation}
{\textstyle\bigwedge^a V \otimes \bigwedge^b V}
\twoheadrightarrow {\textstyle\bigwedge^{a+b} V},
\qquad
v\otimes w \mapsto v \wedge w,
\end{equation}
the inclusion
\begin{align}
{\textstyle\bigwedge^{a+b} V}
&\hookrightarrow 
{\textstyle\bigwedge^a V \otimes \bigwedge^b V},\\\notag
v_{i_1}\wedge\cdots\wedge v_{i_{a+b}}
&\mapsto
q^{-ab} \!\!\!\!\!\!\!\!\sum_{w \in (S_{a+b}/ S_a \times S_b)_{\min}}\!\!\!\!\!\!\!\!
(-q)^{\ell(w)} 
v_{i_{w(1)}}\wedge\cdots\wedge v_{i_{w(a)}}\otimes
v_{i_{w(a+1)}}\wedge\cdots\wedge v_{i_{w(a+b)}},
\end{align}
and the isomorphism
\begin{equation}\label{why}
(-1)^{ab} \R_{\bigwedge^b V, \bigwedge^a V}^{-1}:
{\textstyle\bigwedge^a V \otimes \bigwedge^b V}
\stackrel{\sim}{\rightarrow}
{\textstyle\bigwedge^b V \otimes \bigwedge^a V}
\end{equation}
where $\R_{\bigwedge^b V, \bigwedge^a V}: \bigwedge^b V \otimes
\bigwedge^a V \rightarrow
\bigwedge^a V \otimes
\bigwedge^b V$ is the braiding
on the monoidal category of polynomial representations of $\qG_n$.
This follows from the explicit description of $\phi_A$ in
\cref{haircut}.
We refer to the morphisms
$\overline{\xi}_{\left[\begin{smallmatrix}0&b\\a&0\end{smallmatrix}\right]}$
for $a,b \geq 0$
as {\em negative crossings}.
The following lemma implies that the image
of
$\overline{\xi}_{\left[\begin{smallmatrix}0&b\\a&0\end{smallmatrix}\right]}$
under the functor $\Sigma_n$ is the isomorphism
\begin{equation}
(-1)^{ab} \R_{\bigwedge^a V, \bigwedge^b V}:
{\textstyle\bigwedge^a V \otimes \bigwedge^b V}
\stackrel{\sim}{\rightarrow}
{\textstyle\bigwedge^b V \otimes \bigwedge^a V}.
\end{equation}

\begin{lemma}\label{pigs}
For $a,b \geq 0$, we have that 
$\overline{\xi}_{\left[\begin{smallmatrix}0&b\\a&0\end{smallmatrix}\right]}
=
\xi_{\left[\begin{smallmatrix}0&a\\b&0\end{smallmatrix}\right]}^{-1}.$
Also the merge and split morphisms 
$\xi_{\left[\begin{smallmatrix}a&b\end{smallmatrix}\right]}$
and
$\xi_{\left[\begin{smallmatrix}a\\b\end{smallmatrix}\right]}$
are invariant under the bar involution, hence, they
coincide with the canonical basis elements
$\theta_{\left[\begin{smallmatrix}a&b\end{smallmatrix}\right]}$
and
$\theta_{\left[\begin{smallmatrix}a\\b\end{smallmatrix}\right]}$.
\end{lemma}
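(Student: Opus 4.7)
The second assertion is immediate. Both $\Mat{(a+b)}{(a,b)}$ and $\Mat{(a,b)}{(a+b)}$ are singletons, so each of the Hom spaces $\Hom_{\qSchur}((a,b),(a+b))$ and $\Hom_{\qSchur}((a+b),(a,b))$ is free of rank one over $\Z[q,q^{-1}]$. The unique standard basis element in each admits no non-trivial lower-Bruhat correction, so it coincides with the canonical basis element $\theta$, which is bar-invariant by construction.

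For the first assertion, write $C_s := \left[\begin{smallmatrix}s&b-s\\a-s&s\end{smallmatrix}\right]$ for $0 \le s \le \min(a,b)$; these exhaust $\Mat{(b,a)}{(a,b)}$. Applying \cref{rank2canonical} to each $C_s$ yields the upper-unitriangular expansion
$$\theta_{C_s} = \sum_{u=s}^{\min(a,b)} q^{u(u-s)} \qbinom{u}{s}_{\!q} \xi_{C_u}.$$
The key computational step is to invert this system explicitly, establishing
$$\xi_{C_0} = \sum_{s=0}^{\min(a,b)} (-q)^s \theta_{C_s}.$$
Indeed, substituting and collecting the coefficient of $\xi_{C_u}$ produces $q^{u^2}\sum_{s=0}^u(-1)^s q^{s(1-u)}\qbinom{u}{s}_{\!q}$; by the first form of \cref{qbinomial} with $x=-q^{1-u}$, this rewrites as $q^{u^2}\prod_{k=1}^u(1-q^{2k-2u})$, which vanishes for $u\ge 1$ (the $k=u$ factor is zero) and equals $1$ for $u=0$. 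Applying the anti-linear bar involution, which fixes each $\theta_{C_s}$, gives the closed formula
$$\overline{\xi}_{C_0} = \sum_{s=0}^{\min(a,b)} (-q)^{-s} \theta_{C_s}.$$

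To identify this element with $\xi_{D_0}^{-1}$, where $D_0 := C_0^\T = \left[\begin{smallmatrix}0&a\\b&0\end{smallmatrix}\right]$, my plan is to verify $\xi_{D_0}\circ\overline{\xi}_{C_0}=1_{(a,b)}$ by direct computation inside the rank-two subalgebra $S_q(2,a+b)$ of the path algebra $\sch$. The symmetric analogue (interchanging $a$ and $b$) gives $\xi_{D_0}=\sum_s(-q)^s\theta_{D_s}$ for $D_s:=\left[\begin{smallmatrix}s&a-s\\b-s&s\end{smallmatrix}\right]$, so the composition reduces to a double sum $\sum_{s,u}(-q)^{s-u}\theta_{D_s}\circ\theta_{C_u}$. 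Each $\theta_{D_s}\circ\theta_{C_u}$ can be expanded in the standard basis $\{\xi_{E_t}:E_t=\left[\begin{smallmatrix}a-t&t\\t&b-t\end{smallmatrix}\right]\}$ of $\End_{\qSchur}((a,b))$ via the Schur product rule \cref{qschurs}, with structure constants made explicit by \cref{fact,Assad} applied to $\A_q(2)$.

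The main obstacle is the final combinatorial collapse: one must show that the resulting triple sum over $(s,u,t)$ of $q$-binomial products reduces to $\delta_{t,0}$, which I expect to follow by a Chu--Vandermonde style argument leveraging \cref{second,A} together with the vanishing identity \cref{vanishing}. A cleaner fallback, should this collapse prove unwieldy, is to apply the functor $\Sigma_n$ for $n\ge a+b$, which is fully faithful on the relevant subcategory by \cref{ginandtonic}(1), and verify directly using \cref{haircut} that the pushforward of $\sum_u\beta_u\xi_{C_u}$ (where $\beta_u=q^{u^2}\prod_{j=1}^u(1-q^{-2j})$ is the coefficient produced after re-expanding $\overline{\xi}_{C_0}$ in the standard basis) equals $(-1)^{ab}\R_{\bigwedge^a V,\bigwedge^b V}$; here the quadratic relation $(\R_{V,V}+q)(\R_{V,V}-q^{-1})=0$ on $V\otimes V$ forces the analogous quadratic relation on the exterior powers and closes the argument.
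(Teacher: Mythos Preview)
Your treatment of the second assertion is correct and matches the paper.

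For the first assertion, your setup is sound: inverting the rank-two canonical basis formula to obtain $\xi_{C_0}=\sum_s(-q)^s\theta_{C_s}$ and then applying the bar involution is a legitimate route to a closed formula for $\overline{\xi}_{C_0}$. But the proof is not finished. You reduce to exactly the same identity the paper proves, namely $\xi_{D_0}\circ\overline{\xi}_{C_0}=1_{(a,b)}$, and then leave the verification as an ``expected'' triple-sum collapse or a vague functorial fallback. Neither is carried through: the products $\theta_{D_s}\circ\theta_{C_u}$ are not computed, and the claim that the quadratic relation for $\R_{V,V}$ ``closes'' the $\Sigma_n$ argument on exterior powers does not by itself identify $\sum_u\beta_u\phi_{C_u}$ with $(-1)^{ab}\R$.

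The paper's argument is shorter and avoids the canonical basis entirely, via a trick you are missing. Since the bar involution on $S_q(n,r)$ is defined by $\langle\overline f,x\rangle=\overline{\langle f,\overline x\rangle}$, the family $\{\overline{\xi}_B\}$ is automatically the dual basis to $\{\overline{x}_B\}$. Hence the $\xi_A$-coefficient of the product $\overline{\xi}_{\left[\begin{smallmatrix}0&a\\b&0\end{smallmatrix}\right]}\cdot\xi_{\left[\begin{smallmatrix}0&b\\a&0\end{smallmatrix}\right]}$ is the $x_{2,1}^bx_{1,2}^a\otimes x_{2,1}^ax_{1,2}^b$-coefficient of $\Delta(x_A)$ when expanded in the \emph{mixed} basis $\{\overline{x}_B\otimes x_C\}$. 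A single application of \cref{Assad} shows this coefficient is $\delta_{A,\diag(a,b)}$, with no combinatorial collapse required. Your approach would succeed eventually, but the mixed-basis trick is the real content.
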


\begin{proof}
The part about merge and split morphisms is trivial as
these matrices are not comparable to any other in the Bruhat ordering.
For first part, we show that
$\overline{\xi}_{\left[\begin{smallmatrix}0&a\\b&0\end{smallmatrix}\right]}
\circ \xi_{\left[\begin{smallmatrix}0&b\\a&0\end{smallmatrix}\right]}
= 1_{(a,b)}$. 
This identity (with $a$ and $b$ switched) together with the image of this identity under the bar involution implies the result.
Take any $A \in \Mat{(a,b)}{(a,b)}$ and consider the coefficient of 
$\xi_A$ when the product 
$\overline{\xi}_{\left[\begin{smallmatrix}0&a\\b&0\end{smallmatrix}\right]}
\xi_{\left[\begin{smallmatrix}0&b\\a&0\end{smallmatrix}\right]} \in S_q(2,a+b)$
is expanded in terms of the standard basis.
Since multiplication in $S_q(2,a+b)$ is dual to comultiplication in $\A_q(2,a+b)$,
this coefficient is equal to the $x_{2,1}^b x_{1,2}^a \otimes x_{2,1}^a x_{1,2}^b$-coefficient
of $
\Delta\left(x_{2,1}^{a_{2,1}} x_{1,1}^{a_{1,1}} x_{2,2}^{a_{2,2}}
x_{1,2}^{a_{1,2}}\right)$ when expanded in terms of the basis
$\{\overline{x}_B \otimes x_C\:|\:B, C \in \Mat{(a,b),(a,b)}\}$.
By \cref{Assad},
\begin{multline*}
\Delta\left(x_{2,1}^{a_{2,1}} x_{1,1}^{a_{1,1}} x_{2,2}^{a_{2,2}}
x_{1,2}^{a_{1,2}}\right)
=\sum_{a_{2,1}'=0}^{a_{2,1}} 
\sum_{a_{1,1}'=0}^{a_{1,1}} 
\sum_{a_{2,2}'=0}^{a_{2,2}} 
\sum_{a_{1,2}'=0}^{a_{1,2}} 
\qbinom{a_{2,1}}{a_{2,1}'}_{\!q}
\qbinom{a_{1,1}}{a_{1,1}'}_{\!q}
\qbinom{a_{2,2}}{a_{2,2}'}_{\!q}
\qbinom{a_{1,2}}{a_{1,2}'}_{\!q} \times \\
x_{2,1}^{a_{2,1}'} x_{2,2}^{a_{2,1}-a_{2,1}'} x_{1,1}^{a_{1,1}'}
x_{1,2}^{a_{1,1}-a_{1,1}'}
x_{2,1}^{a_{2,2}'} x_{2,2}^{a_{2,2}-a_{2,2}'}
x_{1,1}^{a_{1,2}'} x_{1,2}^{a_{1,2}-a_{1,2}'}
\otimes \\
x_{2,1}^{a_{2,1}-a_{2,1}'} x_{1,1}^{a_{2,1}'}
x_{2,1}^{a_{1,1}-a_{1,1}'} x_{1,1}^{a_{1,1}'}
x_{2,2}^{a_{2,2}-a_{2,2}'} x_{1,2}^{a_{2,2}'}
x_{2,2}^{a_{1,2}-a_{1,2}'} x_{1,2}^{a_{1,2}'}.
\end{multline*}
To get $x_{2,1}^a x_{1,2}^b$ on straightening 
using \cref{r1}
into the normal order 
in the second tensor position, we must
have that $a_{2,1}'=a_{1,1}'=0$, $a_{1,2}'=a_{1,2}$ and
$a_{2,2}'=a_{2,2}$.
This term is
\begin{equation}\label{dyfed}
x_{2,2}^{a_{2,1}} 
x_{1,2}^{a_{1,1}}
x_{2,1}^{a_{2,2}}
x_{1,1}^{a_{1,2}}
\otimes
x_{2,1}^{b}
x_{1,2}^{a}=
q^{-a_{2,1}a_{2,2}-a_{1,1}a_{1,2}}
x_{2,1}^{a_{2,2}}
x_{2,2}^{a_{2,1}} 
x_{1,1}^{a_{1,2}}
 x_{1,2}^{a_{1,1}}
 \otimes
x_{2,1}^{b}
x_{1,2}^{a}.
\end{equation}
Because we are using the ordering from \cref{standard}
for monomials in the first tensor (rather than the normal ordering),
we only get a non-zero coefficient when
$A = \left[\begin{smallmatrix}a&0\\0&b\end{smallmatrix}\right]$,
when the coefficient is $1$. This shows the product is $1_{(a,b)}$.
\end{proof}

\begin{remark}\label{tolerate}
By a similar argument to the proof of \cref{pigs}, one can also prove
the following ``quadratic relation'':
$$
\xi_{\left[\begin{smallmatrix}0&a\\b&0\end{smallmatrix}\right]}\circ
\xi_{\left[\begin{smallmatrix}0&b\\a&0\end{smallmatrix}\right]}
=
\sum_{s=0}^{\min(a,b)}
q^{-s(s-1)/2-s(a+b-2s)}
(q^{-1}-q)^s 
[s]_q^! \ 
\xi_{\left[\begin{smallmatrix}a-s&s\\s&b-s\end{smallmatrix}\right]}.
$$
Indeed, from \cref{dyfed},
the coefficient of
$\xi_{\left[\begin{smallmatrix}a-s&s\\s&b-s\end{smallmatrix}\right]}$
in 
$\xi_{\left[\begin{smallmatrix}0&a\\b&0\end{smallmatrix}\right]}\circ \xi_{\left[\begin{smallmatrix}0&b\\a&0\end{smallmatrix}\right]}$
is $q^{-s(a-s)-s(b-s)}$ times the coefficient of $x_{2,1}^b x_{1,2}^a$
when $x_{2,1}^{b-s} x_{2,2}^s x_{1,1}^s x_{1,2}^{a-s}$ is expanded
in terms of the normally-ordered monomial basis.
The latter coefficient is $q^{-s(s-1)/2} (q^{-1}-q)^s [s]_q^!$ 
by \cref{fact}.
\end{remark}

More generally, a {\em merge of $n$ strings} is a morphism of the form
$\xi_{A}$
for a $1 \times n$ matrix $A$,
an
{\em split of $n$ strings} is a morphism of the form $\xi_A$
for an $n \times 1$ matrix $A$,
and a {\em positive permutation of $n$ strings}
is a morphism
of the form $\xi_A$ for an $n \times n$ matrix 
$A$ such that in each row and column there is at most one non-zero entry.
Positive permutations of $n$ strings can be parametrized instead by $w \in S_n$
and a composition
$\mu$ of length $n$, setting
\begin{equation}\label{genperm}
\tau_{w;\mu}:=\xi_A
\quad\text{where $A \in \Mat{\mu\cdot w^{-1}}{\mu}$
has $a_{w(1),1}=\mu_1,\dots,a_{w(n),n}=\mu_n$.}
\end{equation}
If $\mu \in \Lambda(n,r)$ then
the same formula defines an element of 
$1_{\mu\cdot w^{-1}} S_q(n,r) 1_\mu$;
for example, for $w \in S_r \leq S_n$, the image of $\tau_w \in H_r$ under the isomorphism of \cref{schurfunctor} is
$\tau_{w;\omega}$.
For $1 \leq i < n$, we have that
\begin{align}
\tau_{s_i;\mu} &= 1_{(\mu_1,\dots,\mu_{i-1})} \star
\xi_{\left[\begin{smallmatrix}0&\mu_{i+1}\\\mu_i
      &0\end{smallmatrix}\right]}
\star 1_{(\mu_{i+2},\dots,\mu_n)}.
\end{align}
So $\tau_{s_i;\mu}$, which we call a {\em simple permutation of $n$ strings},
is a positive crossing
tensored on the left and right with the appropriate
identity morphisms.
The following lemma implies that any positive permutation of $n$ strings
can be obtained by composing simple
permutations.

\begin{lemma}\label{chambers}
Suppose that $\mu\in \Lambda(n,r)$ and $w \in
S_n$
is a permutation such that $w(i) < w(i+1)$ for some $1 \leq i < n$. 
Then $\tau_{ws_i;\mu} = \tau_{w;\mu\cdot s_i} \circ \tau_{s_i;\mu}$.
\end{lemma}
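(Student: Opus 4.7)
The plan is to evaluate both sides of the claimed identity via the Hecke algebra description of $\qSchur$-morphism spaces supplied by \cref{anotherway} and \cref{extralargebone}: after enlarging $n$ if necessary (using the stability of structure constants from \cref{stability}), the morphism space $\Hom_{\qSchur}(\mu,\mu\cdot w^{-1})$ is identified with $\Hom_{H_r}(M(\mu),M(\mu\cdot w^{-1}))$.

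The first step is a simplification of the formula \cref{hardplace} for the three matrices involved. Each of the matrices defining $\tau_{s_i;\mu}$, $\tau_{w;\mu\cdot s_i}$, and $\tau_{ws_i;\mu}$ is of ``weighted permutation'' type, meaning each column $j$ has a single nonzero entry in some row, equal to the corresponding part of the source composition. Reading down the columns, the composition $\mu^+$ arising in \cref{doso} consists of the parts of $\mu$ interspersed with zeros, so $S_{\mu^+}=S_\mu$ and the set $(S_{\mu^+}\backslash S_\mu)_{\min}$ is trivial. Consequently \cref{hardplace} collapses to the single term $m_\mu\otimes 1\mapsto m_{\mu\cdot w^{-1}}\otimes \tau_d$, with $d$ the minimal length double coset representative of the relevant matrix. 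I denote these double coset representatives by $d_A$, $d_B$, $d_C$ for the matrices associated to $\tau_{s_i;\mu}$, $\tau_{w;\mu\cdot s_i}$, and $\tau_{ws_i;\mu}$ respectively.

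Step two uses $H_r$-linearity: the composite $\tau_{w;\mu\cdot s_i}\circ\tau_{s_i;\mu}$ sends $m_\mu\otimes 1$ first to $m_{\mu\cdot s_i}\otimes\tau_{d_A}=(m_{\mu\cdot s_i}\otimes 1)\tau_{d_A}$ and then to $(m_{\mu\cdot s_i\cdot w^{-1}}\otimes \tau_{d_B})\tau_{d_A}=m_{(\mu\cdot s_i)\cdot w^{-1}}\otimes\tau_{d_B}\tau_{d_A}$. Since $(ws_i)^{-1}=s_i w^{-1}$, the target composition agrees with that of $\tau_{ws_i;\mu}$, so the lemma reduces to the identity $\tau_{d_B}\tau_{d_A}=\tau_{d_C}$ in $H_r$, which in turn follows from the two combinatorial statements $d_C=d_B d_A$ in $S_r$ and $\ell(d_C)=\ell(d_B)+\ell(d_A)$.

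The combinatorial verification is the main obstacle. Using the explicit recipe for the minimal double coset representative recalled after \cref{filler2}, $d_A$ is the ``block swap'' permutation exchanging the two adjacent blocks of sizes $\mu_i$ and $\mu_{i+1}$, with $\ell(d_A)=\mu_i\mu_{i+1}$. The permutations $d_B$ and $d_C$ are the corresponding block permutations in $S_r$ of the $S_n$-elements $w$ and $ws_i$, weighted by $\mu\cdot s_i$ and $\mu$ respectively. The hypothesis $w(i)<w(i+1)$ is precisely the assertion that $(i,i+1)$ is not an inversion of $w$; this translates into the statement that for each pair of positions $(p,p')$ with $p$ in the $i$-th and $p'$ in the $(i+1)$-th block of $\mu$, the pair is an inversion of $d_Bd_A$ (coming from $d_A$) but not of $d_B$. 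A straightforward inversion count then shows that the inversion sets of $d_A$ and of the pullback of $d_B$'s inversions are disjoint and together exhaust those of $d_Bd_A$, giving both $d_C=d_Bd_A$ (they are equal block permutations with the same underlying $S_n$-permutation $ws_i$) and $\ell(d_C)=\ell(d_B)+\ell(d_A)$. This is standard and may alternatively be recognized as the parabolic analogue of the familiar length-additivity $\ell(ws_i)=\ell(w)+1$ lifted from $S_n$ to $S_r$.
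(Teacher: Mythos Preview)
Your argument is correct. You work through the permutation-module realization \cref{hardplace}, observe that for a ``weighted permutation'' matrix the formula collapses to the single term $m_\mu\otimes 1\mapsto m_\lambda\otimes\tau_{d_A}$, and then reduce the identity to the Hecke algebra statement $\tau_{d_C}=\tau_{d_B}\tau_{d_A}$, which you verify via $d_C=d_Bd_A$ together with the length additivity of the corresponding block permutations under the hypothesis $w(i)<w(i+1)$.

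The paper takes a different, somewhat slicker route: it works with the left action of $S_q(n,r)$ on $V^{\otimes r}$ and establishes the single claim $\tau_{w;\mu}\,v_{\bi^\mu}=v_{w\cdot\bi^\mu}$ (left action of $S_n$ on entries), using \cref{sss} and the fact that $x_{w\cdot\bi^\mu,\bi^\mu}$ is already normally ordered. The lemma then follows by a short computation with the right $H_r$-action on tensor space: $\tau_{s_i;\mu}v_{\bi^\mu}=v_{s_i\cdot\bi^\mu}=v_{\bi^{\mu\cdot s_i}}\tau_d$ for the appropriate Grassmann permutation $d$, and the condition $w(i)<w(i+1)$ is used only to check $v_{w\cdot\bi^{\mu\cdot s_i}}\tau_d=v_{(w\cdot\bi^{\mu\cdot s_i})\cdot d}$ via \cref{rmat}. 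The paper's approach thus hides the block-permutation length count inside the tensor-space action, whereas yours makes that combinatorics explicit. Your approach has the advantage of being entirely at the level of the Hecke algebra and its permutation modules, without appealing to quantum $GL_n$ or tensor space; the paper's has the advantage that the key claim $\tau_{w;\mu}v_{\bi^\mu}=v_{w\cdot\bi^\mu}$ is immediate from the definitions and no inversion-counting is needed.
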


\begin{proof}
It suffices to prove the analogous statement in the $q$-Schur algebra $S_q(n,r)$.
There is a left action of $S_n$ on $\I(n,r)$
by its action on entries.
This commutes with the right action of $S_r$.
We claim that the left action of $S_q(n,r)$ on $V^{\otimes r}$
satisfies
$\tau_{w;\mu} v_{\bi^\mu} = v_{w\cdot\bi^\mu}$.
To see this, the normally-ordered monomial in $\A_q(n,r)$
that is dual to the standard basis vector
$\tau_{w;\mu}$ is $x_{w\cdot\bi^\mu, \bi^\mu}$. 
Moreover, $w \cdot \bi^\mu$ is the only $\bi \in \I(n,r)$ such that
$x_{w\cdot\bi^\mu, \bi^\mu}$ appears in the normally-ordered monomial
basis expansion
of $x_{\bi, \bi^\mu}$.
So the claim follows from \cref{sss}.

To prove the lemma, it suffices to show that $\tau_{w;\mu\cdot s_i} \tau_{s_i;\mu}$
and $\tau_{w s_i;\mu}$ act in the same way on 
$v_{\bi^\mu}$.
The latter gives $v_{w s_i\cdot\bi^\mu}$ by the claim.
Also $\tau_{s_i;\mu} v_{\bi^\mu} = v_{s_i\cdot\bi^\mu}$.
So we are reduced to checking that
$
\tau_{w;\mu\cdot s_i} v_{s_i\cdot\bi^\mu} = v_{ws_i\cdot\bi^\mu}.
$
Let $d \in (S_{\mu\cdot s_i} \backslash S_r)_{\min}$ be the unique Grassmann permutation
such that $\bi^{\mu\cdot s_i} \cdot d = 
s_i\cdot\bi^\mu$. The action of
$H_r$ on $V^{\otimes r}$ was defined using
\cref{rmat}, from which we see that 
$v_{\bi^{\mu\cdot s_i}} \tau_d=
v_{\bi^{\mu\cdot s_i} \cdot d}$.
Similarly, because  $w(i)< w(i+1)$,
we get that 
$v_{w\cdot \bi^{\mu\cdot s_i})} \tau_d = v_{(w\cdot \bi^{\mu \cdot s_i}) \cdot d}$.
So
\begin{align*}
\tau_{w;\mu\cdot s_i} v_{s_i\cdot\bi^\mu}&=
\tau_{w;\mu\cdot s_i} v_{\bi^{\mu\cdot s_i}\cdot d}
=\tau_{w;\mu\cdot s_i} v_{\bi^{\mu\cdot s_i}} \tau_d
=
v_{w\cdot\bi^{\mu\cdot s_i}} \tau_d\\& = v_{(w\cdot\bi^{\mu \cdot s_i}) \cdot d}
= v_{w\cdot(\bi^{\mu \cdot s_i} \cdot d)}
=
v_{w\cdot (s_i\cdot \bi^\mu)}
= v_{ws_i\cdot\bi^\mu}.
\end{align*}

\vspace{-6mm}
\end{proof}

A special case of the next lemma implies that
\begin{align}\label{kitchen}
\xi_{\left[\begin{smallmatrix}a_1+\cdots+a_s&b_1+\cdots+b_t\end{smallmatrix}\right]}
\circ
\big(\xi_{\left[\begin{smallmatrix}a_1&\cdots&a_s\end{smallmatrix}\right]}\star \xi_{\left[\begin{smallmatrix}b_1&\cdots&b_t\end{smallmatrix}\right]}\big)
&=
\xi_{\left[\begin{smallmatrix}a_1&\cdots&a_s&b_1&\cdots&b_t\end{smallmatrix}\right]},\\
\big(\xi_{\left[\begin{smallmatrix}a_1&\cdots&a_s\end{smallmatrix}\right]^\T}\star\xi_{\left[\begin{smallmatrix}b_1&\cdots&b_t\end{smallmatrix}\right]^\T}
\big)\circ
\xi_{\left[\begin{smallmatrix}a_1+\cdots+a_s&b_1+\cdots+b_t\end{smallmatrix}\right]^\T}
&=\xi_{\left[\begin{smallmatrix}a_1&\cdots&a_s&b_1&\cdots&b_t\end{smallmatrix}\right]^\T},
\label{cabinet}\end{align}
for 
$a_1,\dots,a_s, b_1,\dots,b_t \geq 0$.
Hence, all merges/splits of $n$ strings can be expressed as compositions of tensor products of
merges/splits of 2 strings and appropriate identity morphisms.

\begin{lemma}\label{ged}
Suppose that $\lambda,\mu \vDash r$, 
 $A \in \Mat{\lambda}{\mu}$
and $1 \leq i \leq \ell(\lambda), 1 \leq j \leq \ell(\mu)$.
\begin{enumerate}
\item
Let $B$ be obtained from $A$ by replacing
its $i$th row
by two rows of length $\ell(\mu)$, the first of which has
entries $a_{i,1},\dots,a_{i,j},0,\dots,0$
with sum $\lambda_i'$,
and the second has entries
$0,\dots,0,a_{i,j+1},\dots,a_{i,\ell(\mu)}$
with sum $\lambda_i''$ (so $\lambda_i'+\lambda_i''=\lambda_i$).
Then we have that
\begin{equation*}
\xi_A = \big(1_{(\lambda_1,\dots,\lambda_{i-1})} \star \xi_{
\left[\begin{smallmatrix}\lambda_i'&\lambda_i''\end{smallmatrix}\right]} \star
1_{(\lambda_{i+1},\dots,\lambda_{\ell(\lambda)})}\big) \circ \xi_B.
\end{equation*}
\item
Let $B$ be obtained from $A$ by replacing
its $j$th column
by two columns of length $\ell(\lambda)$, the first of which has
entries $a_{1,j},\dots,a_{i,j},0,\dots,0$
with sum $\mu_j'$,
and the second has entries
$0,\dots,0,a_{i+1,j},\dots,a_{\ell(\lambda),j}$
with sum $\mu_j''$ (so $\mu_j'+\mu_j''=\mu_j$).
Then we have that
\begin{equation*}
\xi_{A}=
\xi_{B}\circ
\bigg(1_{(\mu_1,\dots,\mu_{j-1})} \star \xi_{\left[\begin{smallmatrix}\mu_j'\\\mu_j''\end{smallmatrix}\right]} \star
1_{(\mu_{j+1},\dots,\mu_{\ell(\mu)})}\bigg).
\end{equation*}
\end{enumerate}
\end{lemma}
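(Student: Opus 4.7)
The plan is to prove part (a) by evaluating both sides on the cyclic vector $m_\mu\otimes 1\in M(\mu)$ via the permutation-module realization of \cref{extralargebone}, and then deduce part (b) from part (a) by transposition. Using the stability of structure constants from \cref{stability}, it suffices to verify the identity inside $S_q(n,r)$ for any $n\geq\ell(\lambda)+1$.

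For (a), I would write $D$ for the $\ell(\lambda)\times(\ell(\lambda)+1)$ matrix whose only multi-entry row is row $i$, which reads $(0,\dots,0,\lambda_i',\lambda_i'',0,\dots,0)$ with nonzero entries in columns $i,i+1$; padded with extra zero rows and columns as necessary, we have $1_{(\lambda_1,\dots,\lambda_{i-1})}\star\xi_{[\lambda_i'\;\lambda_i'']}\star 1_{(\lambda_{i+1},\dots,\lambda_{\ell(\lambda)})}=\xi_D$. Every column of $D$ contains at most one nonzero entry, and reading $D$ along rows or down columns produces weakly increasing sequences. The explicit reading recipes following \cref{doso} therefore yield $d_D=1$ and identify the relevant parabolic for $D$ with $S_{\lambda^B}$ itself, where $\lambda^B:=(\lambda_1,\dots,\lambda_{i-1},\lambda_i',\lambda_i'',\lambda_{i+1},\dots,\lambda_{\ell(\lambda)})$ is the row composition of $B$. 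Hence the formula in \cref{extralargebone} collapses to $\xi_D(m_{\lambda^B}\otimes 1)=m_\lambda\otimes 1$, with $\xi_D$ annihilating every other summand $M(\nu)$.

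Combining this with the formula of \cref{extralargebone} for $\xi_B(m_\mu\otimes 1)$ and the $H_r$-linearity of $\xi_D$ gives
\[
(\xi_D\circ\xi_B)(m_\mu\otimes 1)=\sum_{w\in(S_{\mu_B^+}\backslash S_\mu)_{\min}}q^{\ell(w_0)-\ell(w)}\,m_\lambda\otimes\tau_{d_B}\tau_w.
\]
To match this with $\xi_A(m_\mu\otimes 1)$, I compare the combinatorial data. Since $B$ is obtained from $A$ by splitting row $i$ at column $j$ with zero entries filling the complementary positions, reading down any column of $B$ gives the same nonzero sequence as the corresponding column of $A$ (only extra zeros are interlaced), so $S_{\mu_A^+}=S_{\mu_B^+}$ and $w_0$ is common to both. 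Reading along the rows of the associated row tableaux produces identical sequences with just a single extra row-break inserted in the middle of row $i$, so the row-reading recipe following \cref{doso} returns the same permutation, forcing $d_A=d_B$. Therefore $\xi_D\circ\xi_B$ and $\xi_A$ coincide on $m_\mu\otimes 1$, and both vanish on every other summand $M(\nu)$ since each lies in $1_\lambda S_q(n,r)1_\mu$.

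Part (b) is obtained from part (a) by applying the contravariant strict monoidal involution $\T$ of \cref{starinv} to the (a)-identity for $A^\T$ after swapping the roles of $i$ and $j$; since $\T$ sends $\xi_C$ to $\xi_{C^\T}$, interchanges merges with splits, and reverses the order of vertical composition, the resulting equality is precisely the identity asserted in (b). The only genuine subtlety in the argument is the combinatorial check that zero-padding a split row leaves the double-coset data $(d,\mu^+)$ unchanged, which is an immediate consequence of the explicit reading recipes.
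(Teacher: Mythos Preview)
Your proof is correct and takes a genuinely different route from the paper's.

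The paper proves (b) first, then deduces (a) by applying $\T$. For (b) it works directly with the coalgebra structure of $\A_q(n)$: it must show that the $x_B\otimes x_C$-coefficient of $\Delta(x_D)$ (in the normally-ordered monomial basis) equals $\delta_{A,D}$, where $C$ is the split-matrix. This is done by an explicit monomial computation, introducing an ad hoc ``height'' statistic on monomials to argue that the error terms arising from the commutation relations \cref{r1} never contribute to the relevant coefficient.

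You instead prove (a) first via the permutation-module realization of \cref{extralargebone}, reducing everything to the combinatorial identity $d_A=d_B$ and $S_{\mu_A^+}=S_{\mu_B^+}$, which you verify directly from the row-reading recipe following \cref{doso}. This is cleaner: no monomial straightening is needed, and the single cyclic-vector check replaces the coefficient-by-coefficient analysis. The paper's approach, on the other hand, stays entirely within the coalgebra picture and does not invoke the Hecke-algebra action or the isomorphism \cref{altdef}, so it is more self-contained relative to the structure-constant definition of $\qSchur$.

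One small point: when you pass to $S_q(n,r)$ you should take $n\geq\max(\ell(\lambda)+1,\ell(\mu))$ rather than just $n\geq\ell(\lambda)+1$, so that $\mu$ (padded with zeros) also lies in $\Lambda(n,r)$ and \cref{extralargebone} applies to all three matrices involved.
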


\begin{proof}
We just prove (b). Then (a) follows on
applying $\T$.
By the way that composition in $\qSchur$ is defined, the statement we are trying to prove reduces to the following claim about multiplication in $S_q(n,r)$:

\vspace{1mm}
\noindent
{\em
Suppose that we are given $\lambda,\mu \in \Comp(n,r)$,
$A \in \Mat{\lambda}{\mu}$ and $1 \leq i \leq n, 1 \leq j \leq n-1$ with $\mu_{j+1}=0$.
Let $B\in\Mat{\lambda}{\mu'}$ be obtained from $A$ by replacing
the $j$th and $(j+1)$th columns with $[a_{i,1}\ \cdots\ a_{i,j}\ 0\ \cdots\ 0]^\tr$ and 
$[0\ \cdots\ 0\ a_{i+1,j}\ \cdots\ a_{n,j}]^\tr$,
respectively,
and $C\in\Mat{\mu'}{\mu}$
be $\diag\left(\mu_1,\dots,\mu_{j-1},\left[\begin{smallmatrix}\mu_j'&0\\\mu_{j+1}'&0\end{smallmatrix}\right],\mu_{j+2},\dots,\mu_n\right)$.
Then we have that 
$\xi_A = \xi_B\xi_C$ in $S_q(n,r)$.
 }

\vspace{1mm}
\noindent

To see this, it suffices to show for $D \in \Mat{\lambda}{\mu}$
that $g_D$,
the $x_B\otimes x_C$-coefficient of 
$\Delta(x_D)$ when expanded in terms of normally-ordered monomials,
is equal to $\delta_{A,D}$.
We have that
\begin{align*}
  x_B &= x_{\bi_1, (1^{\mu_1})} \cdots x_{\bi_{j-1}, ((j-1)^{\mu_{j-1}})} \big(x_{i,j}^{a_{i,j}}
\cdots x_{1,j}^{a_{1,j}} x_{n,j+1}^{a_{n,j}} \cdots x_{i+1,j+1}^{a_{i+1,j}}\big)
x_{\bi_{j+2}, ((j+2)^{\mu_{j+2}})} \cdots x_{\bi_n, (n^{\mu_n})},\\
x_C &= x_{1,1}^{\mu_1} \cdots x_{j-1,j-1}^{\mu_{j-1}}
\big(x_{j+1,j}^{\mu_{j+1}'} x_{j,j}^{\mu_j'}\big) x_{j+2,j+2}^{\mu_{j+2}}\cdots
      x_{n,n}^{\mu_n},\\
x_D &= x_{\bj_1,(1^{\mu_1})} \cdots x_{\bj_{j-1},((j-1)^{\mu_{j-1}})} \big(x_{n,j}^{d_{n,j}}
      \cdots x_{1,j}^{d_{1,j}}\big)
x_{\bj_{j+2},((j+2)^{\mu{j+2}})}\cdots x_{\bj_n, (n^{\mu_n})}    
\end{align*}
where $\bi_k := (n^{a_{n,k}}\ \cdots\ 1^{a_{1,k}})$ and 
$\bj_k = (n^{d_{n,k}}\ \cdots \ 1^{d_{1,k}})$.
It is easy to see that the $x_{\bi_k,(k^{\mu_k})} \otimes x_{k,k}^{\mu_k}$-coefficient
of $\Delta\big(x_{\bj_k,(k^{\mu_k})}\big)$ is 0 unless $\bj_k = \bi_k$, when it is
1. This implies that $g_D=0$ unless $\bj_k = \bi_k$
for each $k=1,\dots,j-1,j+2,\dots,n$
in which case, by weight considerations, we have that $d_{1,j}=a_{1,j},\dots,d_{n,j}=a_{n,j}$, hence, $D = A$.
Thus, we are reduced to showing that $g_A = 1$.

Our argument shows that $g_A$ is the coefficient of 
$x_{i,j}^{a_{i,j}}
\cdots x_{1,j}^{a_{1,j}} x_{n,j+1}^{a_{n,j}} \cdots x_{i+1,j+1}^{a_{i+1,j}}
\otimes 
x_{j+1,j}^{\mu_{j+1}'} x_{j,j}^{\mu_j'}$
in the normally-ordered expansion of 
$$
\Delta\big(x_{n,j}^{a_{n,j}}
      \cdots x_{1,j}^{a_{1,j}}) 
      =\sum_{\bk\in \I(n,r)} x_{(n^{a_{n,j}}\
  \cdots\ 1^{a_{1,j}}), \bk}\otimes x_{\bk, (j^{\mu_j})}.
$$
To complete the proof, we claim for $\bk \in \I(n,r)$ that
$x_{i,j}^{a_{i,j}}
\cdots x_{1,j}^{a_{1,j}} x_{n,j+1}^{a_{n,j}} \cdots x_{i+1,j+1}^{a_{i+1,j}}$
appears with non-zero coefficient in the normally-ordered expansion of
$x_{(n^{a_{n,j}}\
  \cdots\ 1^{a_{1,j}}), \bk}$ if and only if $\bk = ((j+1)^{\mu_{j+1}'}\
j^{\mu_j'})$, in which case the coefficient is 1.
Certainly, $\bk$ must be a permutation of 
$((j+1)^{\mu_{j+1}'}\
j^{\mu_j'})$.
For any such $\bk$ and any $\bh$ that is a permutation of 
$(n^{a_{n,j}}\
  \cdots\ 1^{a_{1,j}})$, we
define the {\em height} of the monomial $x_{\bh,\bk}$
to be $\sum_{s} h_s$ where the sum is over all $1 \leq s \leq \mu_j$
such that $k_s=j$.
The monomial
$x_{i,j}^{a_{i,j}}
\cdots x_{1,j}^{a_{1,j}} x_{n,j+1}^{a_{n,j}} \cdots x_{i+1,j+1}^{a_{i+1,j}}$
  is of height $\sum_{k=1}^i k a_{k,j}$. Also $x_{(n^{a_{n,j}}\
  \cdots\ 1^{a_{1,j}}), \bk}$ 
  is of the same height if
$\bk = ((j+1)^{\mu'_{j+1}}\ j^{\mu'_j})$,
and otherwise its height is strictly bigger.
In order to straighten
$x_{(n^{a_{n,j}}\
  \cdots\ 1^{a_{1,j}}), \bk}$, we need to use the commutation relations
$x_{p,j+1} x_{p,j} = q^{-1} x_{p,j} x_{p,j+1}$ 
and
$x_{p,j+1} x_{q,j} = x_{q,j} x_{p,j+1} - (q-q^{-1}) x_{p,j} x_{q,j+1}$ 
for  $p > q$. Monomials arising from the ``error term'' $x_{p,j} x_{q,j+1}$ are of strictly greater height, so do not contribute to the coefficient, 
and the others have the same height.
The claim follows.
\end{proof}

Now take any $A \in \Mat{\lambda}{\mu}$
and define $\lambda^-, \mu^+$ as in \cref{doso}.
Note that $n:=\ell(\lambda^-)=\ell(\mu^+) = \ell(\lambda)\ell(\mu)$.
We can convert $A$ into a matrix
$A^\circ \in \Mat{\lambda^-}{\mu^+}$ with at most one non-zero entry in each row and column
by applying a sequence of the operations $A
\mapsto B$ described in \cref{ged}(a)--(b).
For example:
\begin{equation*}
\left[\begin{smallmatrix} 1&0&3\\2&2&1\end{smallmatrix}\right]
\mapsto
\left[\begin{smallmatrix} 1&0&0\\0&0&3\\
2&2&1\end{smallmatrix}\right]
\mapsto
\left[\begin{smallmatrix} 1&0&0\\0&0&0\\
0&0&3\\
2&2&1\end{smallmatrix}\right]
\mapsto
\left[\begin{smallmatrix} 1&0&0\\0&0&0\\
0&0&3\\
2&0&0\\
0&2&1\end{smallmatrix}\right]
\mapsto
\left[\begin{smallmatrix} 1&0&0\\0&0&0\\
0&0&3\\
2&0&0\\
0&2&0\\
0&0&1\end{smallmatrix}\right]
\mapsto
\left[\begin{smallmatrix} 1&0&0&0\\0&0&0&0\\
0&0&0&3\\
0&2&0&0\\
0&0&2&0\\
0&0&0&1\end{smallmatrix}\right]
\mapsto
\left[\begin{smallmatrix} 1&0&0&0&0\\0&0&0&0&0\\
0&0&0&0&3\\
0&2&0&0&0\\
0&0&0&2&0\\
0&0&0&0&1\end{smallmatrix}\right]
\mapsto
\left[\begin{smallmatrix} 1&0&0&0&0&0\\0&0&0&0&0&0\\
0&0&0&0&3&0\\
0&2&0&0&0&0\\
0&0&0&2&0&0\\
0&0&0&0&0&1\end{smallmatrix}\right].
\end{equation*}
The $n \times n$ matrix $A^\circ$ obtained in this way is uniquely determined. 
It corresponds to the permutation of $n$ strings arising from the middle part of the
double coset diagram of $A$.
\cref{ged} plus \cref{kitchen,cabinet}
gives us an explicit algorithm to express the standard basis element
$\xi_A$ as a composition 
\begin{equation}\label{ducks}
\xi_A = 
\xi_{A^-} \circ \xi_{A^\circ} \circ \xi_{A^+}
\end{equation}
where $\xi_{A^-}$ is a tensor product of $\ell(\lambda)$ merges of $\ell(\mu)$ strings and $\xi_{A^+}$ is a tensor product of $\ell(\mu)$ split of $\ell(\lambda)$ strings.
The double coset diagrams of $A^- \in
\Mat{\lambda}{\lambda^-}$ and $A^+
\in \Mat{\mu^+}{\mu}$ 
are given 
explicitly by the top part or the bottom part of the diagram of $A$, respectively.

\begin{lemma}\label{laura}
For $a,b \geq 0$, we have that 
$\xi_{\left[\begin{smallmatrix}a&b\end{smallmatrix}\right]} \circ
\xi_{\left[\begin{smallmatrix}a\\b\end{smallmatrix}\right]} =
\qbinom{a+b}{a}_{\!q}
\xi_{\left[\begin{smallmatrix}a+b\end{smallmatrix}\right]}$.
\end{lemma}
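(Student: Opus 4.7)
Since both compositions $(a+b)$ in the source and target have length $1$, the set $\Mat{(a+b)}{(a+b)}$ consists of the single $1 \times 1$ matrix $[a+b]$, and $\xi_{[a+b]} = 1_{(a+b)}$. Hence
$$
\xi_{\left[\begin{smallmatrix}a&b\end{smallmatrix}\right]} \circ \xi_{\left[\begin{smallmatrix}a\\b\end{smallmatrix}\right]} = Z\cdot 1_{(a+b)}
$$
for a single scalar $Z \in \Z[q,q^{-1}]$, and everything reduces to computing $Z$ via the $q$-analog of Schur's product rule \cref{schurs,qschurs}.

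To compute $Z$, the plan is to pad everything to length $n=2$ using the stability property of \cref{stability}, which is what defines the structure constants for the $q$-Schur category in any case. The padded matrices are $A' = \left[\begin{smallmatrix}a&b\\0&0\end{smallmatrix}\right]$, $B' = \left[\begin{smallmatrix}a&0\\b&0\end{smallmatrix}\right]$, and $C' = \left[\begin{smallmatrix}a+b&0\\0&0\end{smallmatrix}\right]$. Working in $\A_q(2)$, the normally-ordered monomials \cref{shoo} attached to these matrices are computed directly from the definition: $x_{A'} = x_{1,1}^a x_{1,2}^b$, $x_{B'} = x_{2,1}^b x_{1,1}^a$, and $x_{C'} = x_{1,1}^{a+b}$. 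Then
$$
Z = \langle \xi_{A'} \otimes \xi_{B'}, \Delta(x_{C'})\rangle
$$
is the coefficient of $x_{A'} \otimes x_{B'} = x_{1,1}^a x_{1,2}^b \otimes x_{2,1}^b x_{1,1}^a$ when $\Delta(x_{1,1}^{a+b})$ is expanded in the tensor product of normally-ordered monomial bases.

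The final step is to invoke \cref{Assad} with $i=j=1$ and $a$ replaced by $a+b$, which gives
$$
\Delta(x_{1,1}^{a+b}) = \sum_{s=0}^{a+b} \qbinom{a+b}{s}_{\!q} x_{1,1}^s x_{1,2}^{a+b-s} \otimes x_{2,1}^{a+b-s} x_{1,1}^s.
$$
The term matching $x_{A'} \otimes x_{B'}$ is the $s = a$ term, and reading off its coefficient yields $Z = \qbinom{a+b}{a}_{\!q}$, as required. There is no serious obstacle here — the whole argument is a one-shot application of the coalgebra formula of \cref{Assad} — but one does need to be careful to check that no other terms in the expansion of $\Delta(x_{1,1}^{a+b})$ contribute, which is immediate because the monomials on the right-hand side of \cref{Assad} are already normally ordered and pairwise distinct.
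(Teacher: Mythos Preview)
Your proof is correct and follows exactly the same approach as the paper: both compute the structure constant as the $x_{1,1}^a x_{1,2}^b \otimes x_{2,1}^b x_{1,1}^a$-coefficient of $\Delta(x_{1,1}^{a+b})$ via \cref{Assad}. The paper simply compresses this to a single sentence, whereas you have spelled out the padding to $n=2$ and the identification of the normally-ordered monomials explicitly.
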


\begin{proof}
The $x_{1,1}^a x_{1,2}^b \otimes x_{2,1}^b x_{1,1}^a$-coefficient of
$\Delta(x_{1,1}^{a+b})$ is $\qbinom{a+b}{a}_{\!q}$ by \cref{Assad}.
\end{proof}

\begin{lemma}\label{rider}
For $a,b,c,d \geq 0$ with $a+b=c+d$, 
we have that
\begin{align*}
\theta_{\left[\begin{smallmatrix}0&c\\a&d-a\end{smallmatrix}\right]}&=
\xi_{\left[\begin{smallmatrix}c\\d\end{smallmatrix}\right]}
\circ
\xi_{\left[\begin{smallmatrix}a&b\end{smallmatrix}\right]}
=
\sum_{s=0}^{\min(a,c)}
q^{s(s+d-a)}
\xi_{\left[\begin{smallmatrix}s&c-s\\a-s&s+d-a\end{smallmatrix}\right]}
&&\text{if $a \leq d$ and $b \geq c$,}\\
\theta_{\left[\begin{smallmatrix}c-b&b\\d&0\end{smallmatrix}\right]}
&=\xi_{\left[\begin{smallmatrix}c\\d\end{smallmatrix}\right]}
\circ
\xi_{\left[\begin{smallmatrix}a&b\end{smallmatrix}\right]}
=
\sum_{t=0}^{\min(b,d)}
q^{t(t+c-b)}
\xi_{\left[\begin{smallmatrix}t+c-b&b-t\\d-t&t\end{smallmatrix}\right]}
&&\text{if $a \geq d$ and $b \leq c$.}
\end{align*}
\end{lemma}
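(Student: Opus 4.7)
The plan is to compute the composition $\xi_{\left[\begin{smallmatrix}c\\d\end{smallmatrix}\right]} \circ \xi_{\left[\begin{smallmatrix}a&b\end{smallmatrix}\right]}$ explicitly as a $\Z[q,q^{-1}]$-linear combination of standard basis elements using Schur's product rule \cref{schurs,qschurs}, then recognize the result as the stated canonical basis vector via bar-invariance of merges and splits together with a Bruhat order analysis.

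To carry out the first step, I would embed both factors in $S_q(2, a+b)$ by padding with zeros, so the merge becomes $\xi_A$ for $A = \left[\begin{smallmatrix}a&b\\0&0\end{smallmatrix}\right]$ and the split becomes $\xi_{A'}$ for $A' = \left[\begin{smallmatrix}c&0\\d&0\end{smallmatrix}\right]$. The indexing set $\Mat{(c,d)}{(a,b)}$ is parametrized by a single integer $s$ via $C_s := \left[\begin{smallmatrix}s&c-s\\a-s&s+d-a\end{smallmatrix}\right]$ for $\max(0,a-d)\leq s\leq\min(a,c)$, with normally-ordered monomial $x_{C_s} = x_{2,1}^{a-s} x_{1,1}^s x_{2,2}^{s+d-a} x_{1,2}^{c-s}$. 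Applying \cref{Assad} to each of the four factors and forcing the second tensor slot to have no occurrence of $x_{2,\star}$ (as required to equal $x_A = x_{1,1}^a x_{1,2}^b$) pins all four summation indices to their extremal values, so every $q$-binomial collapses to $1$. The first tensor slot then becomes $x_{2,1}^{a-s} x_{1,1}^s x_{2,1}^{s+d-a} x_{1,1}^{c-s}$, which after straightening using $x_{1,1} x_{2,1} = q x_{2,1} x_{1,1}$ (from \cref{r1}) equals $q^{s(s+d-a)} x_{2,1}^d x_{1,1}^c = q^{s(s+d-a)} x_{A'}$. Hence $Z(A', A, C_s) = q^{s(s+d-a)}$, verifying the middle equalities in both identities of the lemma (the two are the same expansion, re-parametrized by $s \leftrightarrow t + (a-d)$).

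For the remaining equality, by \cref{pigs} the merge $\xi_{\left[\begin{smallmatrix}a&b\end{smallmatrix}\right]}$ and the split $\xi_{\left[\begin{smallmatrix}c\\d\end{smallmatrix}\right]}$ are bar-invariant, and by \cref{bio} the bar involution is monoidal, so the composition is bar-invariant. The Bruhat description \cref{bruhat} shows that $C_s \leq C_{s'}$ iff $s \geq s'$, so the Bruhat-maximal matrix among the $C_s$ is the one with smallest admissible $s$. In the first case ($a\leq d$, $b\geq c$), this is $s = 0$, giving $C_0 = \left[\begin{smallmatrix}0&c\\a&d-a\end{smallmatrix}\right]$ with coefficient $1$; for $s\geq 1$ the exponent $s(s+d-a) \geq 1$ so the remaining coefficients lie in $q\Z[q]$. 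In the second case ($a\geq d$, $b\leq c$), re-indexing via $s = t + (a-d)$ shows the Bruhat-maximal matrix is $\left[\begin{smallmatrix}c-b&b\\d&0\end{smallmatrix}\right]$ at $t = 0$ with coefficient $1$, and the remaining coefficients $q^{t(t+c-b)}$ with $t\geq 1$ lie in $q\Z[q]$. Consequently the bar-invariant sum matches the standard leading-term characterization of the canonical basis (see \cref{weird} and surrounding discussion), and must equal the stated $\theta_A$.

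The main obstacle is the coalgebra bookkeeping: verifying uniqueness of the extremal summation indices in the fourfold application of \cref{Assad} and tracking the straightening $q$-factor correctly. Everything else is formal given the bar-invariance of merges and splits and the simple Bruhat-order description on $2\times 2$ matrices.
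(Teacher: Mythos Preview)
Your proposal is correct and follows essentially the same route as the paper. The paper computes the structure constants $Z(A',A,C_s)$ by extracting the $x_{A'}\otimes x_A$-coefficient of $\Delta(x_{C_s})$ via \cref{Assad} (deferring details to ``the same argument as in the proof of \cref{pigs}''), then deduces the canonical-basis identification from bar-invariance of merges and splits exactly as you do; your write-up is simply more explicit about the coalgebra bookkeeping and the Bruhat order, and you make the useful observation that the two cases are a single computation under the reparametrization $s=t+(a-d)$, whereas the paper just says the second case is ``similar''.
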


\begin{proof}
We just prove this when $a \leq d$, the other case is similar.
Since the merge
$\xi_{\left[\begin{smallmatrix}c\\d\end{smallmatrix}\right]}$
and the split 
$\xi_{\left[\begin{smallmatrix}a&b\end{smallmatrix}\right]}$
are bar invariant,
and the canonical basis element $\theta_A$ is the unique bar invariant element equal to $\xi_{A}$ plus 
a $q \Z[q]$-linear combination of other $\xi_B$,
the first equality follows from the second one.
To prove the second equality, we must show that the
$\xi_{\left[\begin{smallmatrix}s&c-s\\a-s&s+d-a\end{smallmatrix}\right]}$-coefficient of
$\xi_{\left[\begin{smallmatrix}c\\d\end{smallmatrix}\right]}\circ \xi_{\left[\begin{smallmatrix}a&c+d-a\end{smallmatrix}\right]}$
is $q^{s(s+d-a)}$.
This is the $x_{2,1}^{d} x_{1,1}^{c} \otimes x_{1,1}^a
x_{1,2}^{c+d-a}$-coefficient in $\Delta(x_{2,1}^{c-s} x_{1,1}^s
x_{2,2}^{s+d-a} x_{1,2}^{c-s})$, which may be computed by the same
argument as was used in the proof of \cref{pigs}.
\end{proof}

\section{Presentations}\label{s6-presentations}

We start now to represent morphisms in $\qSchur$ by string diagrams. 
Let $\one$ be the strict identity object, that is, the 
composition $()$ of length zero.
For $\lambda=(\lambda_1,\dots,\lambda_\ell) \in \Lambda$, the identity endomorphism $1_\lambda$ in $\qSchur$ will be represented by a sequence of strings labelled from left to right by $\lambda_1,\dots,\lambda_\ell$, which we think of as indicating the {\em thicknesses} of the strings.
We are including strings of zero thickness.
For $a,b \geq 0$,
we use 
the string diagrams
\begin{align}\label{ms}
\begin{tikzpicture}[baseline=-2.5mm]
\draw[line width=0pt] (0,-.3) to (0,0);
\spot{0,0};
\node at (0,-.42) {$\scriptstyle 0$};
\end{tikzpicture}
&:(0)\rightarrow \one,&
\begin{tikzpicture}[baseline=1mm]
\draw[line width=0pt] (0,.3) to (0,0);
\spot{0,0};
\node at (0,.42) {$\scriptstyle 0$};
\end{tikzpicture}
&:\one\rightarrow (0),&
\begin{tikzpicture}[anchorbase,scale=.8]
	\draw[-,line width=1pt] (0.28,-.3) to (0.08,0.04);
	\draw[-,line width=1pt] (-0.12,-.3) to (0.08,0.04);
	\draw[-,line width=2pt] (0.08,.4) to (0.08,0);
        \node at (-0.22,-.4) {$\scriptstyle a$};
        \node at (0.35,-.4) {$\scriptstyle b$};
        \node at (0.08,.55) {$\scriptstyle a+b$};
\end{tikzpicture} 
&:(a,b) \rightarrow (a+b),&
\begin{tikzpicture}[anchorbase,scale=.8]
	\draw[-,line width=2pt] (0.08,-.3) to (0.08,0.04);
	\draw[-,line width=1pt] (0.28,.4) to (0.08,0);
	\draw[-,line width=1pt] (-0.12,.4) to (0.08,0);
        \node at (-0.22,.5) {$\scriptstyle a$};
        \node at (0.36,.5) {$\scriptstyle b$};
              \node at (0.08,-.44) {$\scriptstyle a+b$};
\end{tikzpicture}
&:(a+b)\rightarrow (a,b)
\end{align}
to denote the standard basis vectors
$\xi_A$ where $A$ is the $0 \times 1$ matrix, the $1 \times 0$ matrix, the matrix 
$\left[\begin{smallmatrix}a&b\end{smallmatrix}\right]$ or the matrix 
$\left[\begin{smallmatrix}a\\b\end{smallmatrix}\right]$, respectively.
Henceforth, in string diagrams for morphisms in $\qSchur$,
we will omit thickness labels on strings when
they are implicitly determined by the other labels. 
We 
represent the positive crossing
$\xi_{\left[\begin{smallmatrix}0&b\\a&0\end{smallmatrix}\right]}$
by the string diagram
\begin{align}\label{gareth}
\begin{tikzpicture}[anchorbase,scale=.8]
	\draw[-,line width=1pt] (0.3,-.3) to (-.3,.4);
	\draw[-,line width=4pt,white] (-0.3,-.3) to (.3,.4);
	\draw[-,line width=1pt] (-0.3,-.3) to (.3,.4);
        \node at (0.3,-.42) {$\scriptstyle b$};
        \node at (-0.3,-.42) {$\scriptstyle a$};
\end{tikzpicture}
&:(a,b) \rightarrow (b,a).
\end{align}
This morphism is invertible by \cref{pigs},
so it makes sense to define
$\begin{tikzpicture}[anchorbase,scale=.6]
	\draw[-,line width=1pt] (-0.3,-.3) to (.3,.4);
	\draw[-,line width=4pt,white] (0.3,-.3) to (-.3,.4);
	\draw[-,line width=1pt] (0.3,-.3) to (-.3,.4);
        \node at (0.3,-.44) {$\scriptstyle a$};
        \node at (-0.3,-.43) {$\scriptstyle b$};
\end{tikzpicture}
:=
\Big(\begin{tikzpicture}[anchorbase,scale=.6]
	\draw[-,line width=1pt] (0.3,-.3) to (-.3,.4);
	\draw[-,line width=4pt,white] (-0.3,-.3) to (.3,.4);
	\draw[-,line width=1pt] (-0.3,-.3) to (.3,.4);
        \node at (0.3,-.43) {$\scriptstyle b$};
        \node at (-0.3,-.44) {$\scriptstyle a$};
\end{tikzpicture}\Big)^{-1}$.

\begin{theorem}\label{mainpres}
The $\Z[q,q^{-1}]$-linear monoidal category $\qSchur$ is generated
by the objects $(r)$ for $r \geq 0$ and the morphisms $\begin{tikzpicture}[anchorbase,scale=1]
\draw[line width=0pt] (0,-.3) to (0,0);
\spot{0,0};
\end{tikzpicture}\ $,
$\begin{tikzpicture}[anchorbase,scale=1]
\draw[line width=0pt] (0,.3) to (0,0);
\spot{0,0};
\end{tikzpicture}\ $,
$\begin{tikzpicture}[baseline=-1mm,scale=.6]
	\draw[-,line width=1pt] (0.28,-.3) to (0.08,0.04);
	\draw[-,line width=1pt] (-0.12,-.3) to (0.08,0.04);
	\draw[-,line width=2pt] (0.08,.4) to (0.08,0);
        \node at (-0.22,-.43) {$\scriptstyle a$};
        \node at (0.35,-.43) {$\scriptstyle b$};
\end{tikzpicture}$,
$\begin{tikzpicture}[baseline=0mm,scale=.6]
	\draw[-,line width=2pt] (0.08,-.3) to (0.08,0.04);
	\draw[-,line width=1pt] (0.28,.4) to (0.08,0);
	\draw[-,line width=1pt] (-0.12,.4) to (0.08,0);
        \node at (-0.22,.53) {$\scriptstyle a$};
        \node at (0.36,.55) {$\scriptstyle b$};
\end{tikzpicture}$ and
$\begin{tikzpicture}[baseline=-1.5mm,scale=.6]
	\draw[-,thick] (0.3,-.35) to (-.3,.43);
	\draw[-,line width=5pt,white] (-0.3,-.35) to (.3,.43);
	\draw[-,thick] (-0.3,-.3) to (.3,.4);
       \node at (-0.32,-.53) {$\scriptstyle a$};
        \node at (0.32,-.54) {$\scriptstyle b$};
\end{tikzpicture}$
for $a,b \geq 0$, subject only to the following relations
for $a,b,c,d \geq 0$ with $a+b=c+d$:
\begin{align}
\begin{tikzpicture}[anchorbase]
\draw[line width=0pt] (0,-.15) to (0,0.15);
\spot{0,.15};
\spot{0,-.15};
\end{tikzpicture}\ &=\ 1_\one,&
\begin{tikzpicture}[anchorbase]
\draw[line width=0pt] (0,-.1) to (0,-0.3);
\draw[line width=0pt] (0,.1) to (0,0.3);
\spot{0,.1};
\spot{0,-.1};
\end{tikzpicture}\ &=
\begin{tikzpicture}[anchorbase]
\draw[line width=0pt] (0,-.25) to (0,0.25);
\node at (0,-0.37) {$\scriptstyle 0$};
\node at (0,0.37) {$\scriptstyle \phantom{0}$};
\end{tikzpicture},\label{secondone}\\
\begin{tikzpicture}[anchorbase,scale=.8]
	\draw[-,line width=0pt] (0.28,-.3) to (0.08,0.04);
	\draw[-,line width=1pt] (-0.12,-.3) to (0.08,0.04);
	\draw[-,line width=1pt] (0.08,.4) to (0.08,0);
        \node at (-0.22,-.42) {$\scriptstyle a$};
        \node at (0.35,-.42) {$\scriptstyle 0$};
\end{tikzpicture} 
=
\begin{tikzpicture}[anchorbase,scale=.8]
	\draw[-,line width=1pt] (0.08,.4) to (0.08,-.3);
 	\draw[-,line width=0pt] (0.5,.1) to (0.5,-.3);
        \node at (0.08,-.42) {$\scriptstyle a$};
\spot{.5,.1};
\end{tikzpicture}\ ,\hspace{15mm}
\begin{tikzpicture}[anchorbase,scale=.8]
	\draw[-,line width=1pt] (0.28,-.3) to (0.08,0.04);
	\draw[-,line width=0pt] (-0.12,-.3) to (0.08,0.04);
	\draw[-,line width=1pt] (0.08,.4) to (0.08,0);
        \node at (-0.22,-.42) {$\scriptstyle 0$};
        \node at (0.35,-.42) {$\scriptstyle b$};
\end{tikzpicture} 
& =\begin{tikzpicture}[anchorbase,scale=.8]
	\draw[-,line width=0pt] (0.08,.1) to (0.08,-.3);
 	\draw[-,line width=1pt] (0.5,.4) to (0.5,-.3);
        \node at (0.5,-.43) {$\scriptstyle b$};
\spot{.08,.1};
\end{tikzpicture},&
\begin{tikzpicture}[anchorbase,scale=.8]
	\draw[-,line width=1pt] (0.08,-.3) to (0.08,0.04);
	\draw[-,line width=0pt] (0.28,.4) to (0.08,0);
	\draw[-,line width=1pt] (-0.12,.4) to (0.08,0);
        \node at (-0.22,.52) {$\scriptstyle a$};
        \node at (0.36,.52) {$\scriptstyle 0$};
\end{tikzpicture}&=\begin{tikzpicture}[anchorbase,scale=.8]
	\draw[-,line width=1pt] (0.08,-.4) to (0.08,.3);
 	\draw[-,line width=0pt] (0.5,-.1) to (0.5,.3);
        \node at (0.08,.42) {$\scriptstyle a$};
\spot{.5,-.1};
\end{tikzpicture}\ ,\hspace{15mm}
\begin{tikzpicture}[anchorbase,scale=.8]
	\draw[-,line width=1pt] (0.08,-.3) to (0.08,0.04);
	\draw[-,line width=1pt] (0.28,.4) to (0.08,0);
	\draw[-,line width=0pt] (-0.12,.4) to (0.08,0);
        \node at (-0.22,.52) {$\scriptstyle 0$};
        \node at (0.36,.53) {$\scriptstyle b$};
\end{tikzpicture}=\begin{tikzpicture}[anchorbase,scale=.8]
	\draw[-,line width=0pt] (0.08,-.1) to (0.08,.3);
 	\draw[-,line width=1pt] (0.5,-.4) to (0.5,.3);
        \node at (0.5,.45) {$\scriptstyle b$};
\spot{.08,-.1};
\end{tikzpicture},\label{zeroforks}\\
\label{assrel}
\begin{tikzpicture}[baseline = 0]
	\draw[-,thick] (0.35,-.3) to (0.08,0.14);
	\draw[-,thick] (0.1,-.3) to (-0.04,-0.06);
	\draw[-,line width=1pt] (0.085,.14) to (-0.035,-0.06);
	\draw[-,thick] (-0.2,-.3) to (0.07,0.14);
	\draw[-,line width=2pt] (0.08,.45) to (0.08,.1);
        \node at (0.45,-.41) {$\scriptstyle c$};
        \node at (0.07,-.4) {$\scriptstyle b$};
        \node at (-0.28,-.41) {$\scriptstyle a$};
\end{tikzpicture}
&=
\begin{tikzpicture}[baseline = 0]
	\draw[-,thick] (0.36,-.3) to (0.09,0.14);
	\draw[-,thick] (0.06,-.3) to (0.2,-.05);
	\draw[-,line width=1pt] (0.07,.14) to (0.19,-.06);
	\draw[-,thick] (-0.19,-.3) to (0.08,0.14);
	\draw[-,line width=2pt] (0.08,.45) to (0.08,.1);
        \node at (0.45,-.41) {$\scriptstyle c$};
        \node at (0.07,-.4) {$\scriptstyle b$};
        \node at (-0.28,-.41) {$\scriptstyle a$};
\end{tikzpicture}\:,
&
\begin{tikzpicture}[baseline = -1mm]
	\draw[-,thick] (0.35,.3) to (0.08,-0.14);
	\draw[-,thick] (0.1,.3) to (-0.04,0.06);
	\draw[-,line width=1pt] (0.085,-.14) to (-0.035,0.06);
	\draw[-,thick] (-0.2,.3) to (0.07,-0.14);
	\draw[-,line width=2pt] (0.08,-.45) to (0.08,-.1);
        \node at (0.45,.4) {$\scriptstyle c$};
        \node at (0.07,.42) {$\scriptstyle b$};
        \node at (-0.28,.4) {$\scriptstyle a$};
\end{tikzpicture}
&=\begin{tikzpicture}[baseline = -1mm]
	\draw[-,thick] (0.36,.3) to (0.09,-0.14);
	\draw[-,thick] (0.06,.3) to (0.2,.05);
	\draw[-,line width=1pt] (0.07,-.14) to (0.19,.06);
	\draw[-,thick] (-0.19,.3) to (0.08,-0.14);
	\draw[-,line width=2pt] (0.08,-.45) to (0.08,-.1);
        \node at (0.45,.4) {$\scriptstyle c$};
        \node at (0.07,.42) {$\scriptstyle b$};
        \node at (-0.28,.4) {$\scriptstyle a$};
\end{tikzpicture}\:,\\
\label{mergesplit}
\begin{tikzpicture}[anchorbase,scale=.8]
	\draw[-,line width=2pt] (0.08,-.8) to (0.08,-.5);
	\draw[-,line width=2pt] (0.08,.3) to (0.08,.6);
\draw[-,thick] (0.1,-.51) to [out=45,in=-45] (0.1,.31);
\draw[-,thick] (0.06,-.51) to [out=135,in=-135] (0.06,.31);
        \node at (-.33,-.05) {$\scriptstyle a$};
        \node at (.45,-.05) {$\scriptstyle b$};
\end{tikzpicture}
&= 
\qbinom{a+b}{a}_{\!q}\:\:
\begin{tikzpicture}[anchorbase,scale=.8]
	\draw[-,line width=2pt] (0.08,-.8) to (0.08,.6);
        \node at (.62,-.05) {$\scriptstyle a+b$};
\end{tikzpicture},&
\begin{tikzpicture}[anchorbase,scale=1]
	\draw[-,line width=1.2pt] (0,0) to (.275,.3) to (.275,.7) to (0,1);
	\draw[-,line width=1.2pt] (.6,0) to (.315,.3) to (.315,.7) to (.6,1);
        \node at (0,1.13) {$\scriptstyle c$};
        \node at (0.63,1.13) {$\scriptstyle d$};
        \node at (0,-.1) {$\scriptstyle a$};
        \node at (0.63,-.1) {$\scriptstyle b$};
\end{tikzpicture}
&=
\sum_{\substack{0 \leq s \leq \min(a,c)\\0 \leq t \leq \min(b,d)\\t-s=d-a=b-c}}
q^{st}
\begin{tikzpicture}[anchorbase,scale=1]
	\draw[-,thick] (0.58,0) to (0.58,.2) to (.02,.8) to (.02,1);
	\draw[-,line width=4pt,white] (0.02,0) to (0.02,.2) to (.58,.8) to (.58,1);
	\draw[-,thick] (0.02,0) to (0.02,.2) to (.58,.8) to (.58,1);
	\draw[-,thin] (0,0) to (0,1);
	\draw[-,thin] (0.6,0) to (0.6,1);
        \node at (0,1.13) {$\scriptstyle c$};
        \node at (0.6,1.13) {$\scriptstyle d$};
        \node at (0,-.1) {$\scriptstyle a$};
        \node at (0.6,-.1) {$\scriptstyle b$};
        \node at (-0.1,.5) {$\scriptstyle s$};
        \node at (0.75,.5) {$\scriptstyle t$};
\end{tikzpicture}.
\end{align}
Positive and negative crossings can be written in terms of other generating morphisms since we have that
\begin{align}
\begin{tikzpicture}[baseline=-1mm]
	\draw[-,line width=1pt] (0.3,-.3) to (-.3,.4);
	\draw[-,line width=4pt,white] (-0.3,-.3) to (.3,.4);
	\draw[-,line width=1pt] (-0.3,-.3) to (.3,.4);
        \node at (-0.3,-.4) {$\scriptstyle a$};
        \node at (0.3,-.4) {$\scriptstyle b$};
\end{tikzpicture}
&=\sum_{s=0}^{\min(a,b)}
(-q)^{s}
\begin{tikzpicture}[anchorbase,scale=1]
	\draw[-,line width=1.2pt] (0,0) to (0,1);
	\draw[-,thick] (-0.8,0) to (-0.8,.2) to (-.03,.4) to (-.03,.6)
        to (-.8,.8) to (-.8,1);
	\draw[-,thin] (-0.82,0) to (-0.82,1);
        \node at (-0.81,-.1) {$\scriptstyle a$};
        \node at (0,-.1) {$\scriptstyle b$};
        \node at (-0.4,.9) {$\scriptstyle b-s$};
        \node at (-0.4,.13) {$\scriptstyle a-s$};
\end{tikzpicture}
=
\sum_{s=0}^{\min(a,b)}
(-q)^{s}
\begin{tikzpicture}[anchorbase,scale=1]
	\draw[-,line width=1.2pt] (0,0) to (0,1);
	\draw[-,thick] (0.8,0) to (0.8,.2) to (.03,.4) to (.03,.6)
        to (.8,.8) to (.8,1);
	\draw[-,thin] (0.82,0) to (0.82,1);
        \node at (0.81,-.1) {$\scriptstyle b$};
        \node at (0,-.1) {$\scriptstyle a$};
        \node at (0.4,.9) {$\scriptstyle a-s$};
        \node at (0.4,.13) {$\scriptstyle b-s$};
\end{tikzpicture},\label{tourists}\\
\begin{tikzpicture}[baseline=-1mm]
	\draw[-,line width=1pt] (-0.3,-.3) to (.3,.4);
	\draw[-,line width=4pt,white] (0.3,-.3) to (-.3,.4);
	\draw[-,line width=1pt] (0.3,-.3) to (-.3,.4);
        \node at (-0.3,-.4) {$\scriptstyle a$};
        \node at (0.3,-.4) {$\scriptstyle b$};
\end{tikzpicture}
&=\sum_{s=0}^{\min(a,b)}
(-q)^{-s}
\begin{tikzpicture}[anchorbase,scale=1]
	\draw[-,line width=1.2pt] (0,0) to (0,1);
	\draw[-,thick] (-0.8,0) to (-0.8,.2) to (-.03,.4) to (-.03,.6)
        to (-.8,.8) to (-.8,1);
	\draw[-,thin] (-0.82,0) to (-0.82,1);
        \node at (-0.81,-.1) {$\scriptstyle a$};
        \node at (0,-.1) {$\scriptstyle b$};
        \node at (-0.4,.9) {$\scriptstyle b-s$};
        \node at (-0.4,.13) {$\scriptstyle a-s$};
\end{tikzpicture}
=
\sum_{s=0}^{\min(a,b)}
(-q)^{-s}
\begin{tikzpicture}[anchorbase,scale=1]
	\draw[-,line width=1.2pt] (0,0) to (0,1);
	\draw[-,thick] (0.8,0) to (0.8,.2) to (.03,.4) to (.03,.6)
        to (.8,.8) to (.8,1);
	\draw[-,thin] (0.82,0) to (0.82,1);
        \node at (0.81,-.1) {$\scriptstyle b$};
        \node at (0,-.1) {$\scriptstyle a$};
        \node at (0.4,.9) {$\scriptstyle a-s$};
        \node at (0.4,.13) {$\scriptstyle b-s$};
\end{tikzpicture}.\label{visas}
\end{align}
Moreover, the following hold:
\begin{enumerate}
\item
There is a unique braiding $c:-\star- \stackrel{\sim}{\Rightarrow} -\star^\rev-$ making $\qSchur$ into a braided monoidal category such that $c_{(a),(b)}=\begin{tikzpicture}[baseline=-1.5mm,scale=.6]
	\draw[-,thick] (0.3,-.35) to (-.3,.43);
	\draw[-,line width=5pt,white] (-0.3,-.35) to (.3,.43);
	\draw[-,thick] (-0.3,-.3) to (.3,.4);
       \node at (-0.32,-.53) {$\scriptstyle a$};
        \node at (0.32,-.54) {$\scriptstyle b$};
\end{tikzpicture}$.
\item
For any $A \in \Mat{\lambda}{\mu}$, the standard basis element $\xi_A$
is represented as a string diagram by
the double coset diagram for $A$
with all crossings drawn as positive crossings.
\item The anti-linear involution $-:\qSchur\rightarrow \qSchur$ is defined on string diagrams by interchanging positive and negative crossings.
\item The linear isomorphism $\T:\qSchur \rightarrow \qSchur^\op$
maps a string diagram to its rotation through 180$^\circ$ around a horizontal axis.
\end{enumerate}
\end{theorem}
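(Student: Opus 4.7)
My plan is to introduce the strict $\Z[q,q^{-1}]$-linear monoidal category $\mathcal{D}$ given by the stated presentation (with positive and negative crossings regarded as abbreviations for the right-hand sides of \eqref{tourists} and \eqref{visas}) and to show that the obvious monoidal functor $F\colon \mathcal{D} \to \qSchur$ is an isomorphism. The first step is to check each defining relation in $\qSchur$: the zero-string relations \eqref{secondone}--\eqref{zeroforks} and the associativity/coassociativity relations \eqref{assrel} come from $1_\lambda = \xi_{\diag(\lambda_1,\dots,\lambda_{\ell(\lambda)})}$ together with \cref{ged}; the digon identity \eqref{mergesplit}(left) is \cref{laura}; and the square-switch \eqref{mergesplit}(right) reduces after using \cref{ged} to isolate a $2\times 2$ local picture to the computation of $\xi_{\left[\begin{smallmatrix}c\\d\end{smallmatrix}\right]}\circ\xi_{\left[\begin{smallmatrix}a&b\end{smallmatrix}\right]}$ performed in \cref{rider}. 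The equivalence of the two expressions in \eqref{tourists} inside $\qSchur$ follows from \cref{pigs}, and \eqref{visas} is then its bar-image. These verifications produce the functor $F$.

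Fullness is immediate from the decomposition \eqref{ducks}: any standard basis element $\xi_A$ is the composition $\xi_{A^-}\circ\xi_{A^\circ}\circ\xi_{A^+}$, where $\xi_{A^\pm}$ are tensor products of merges and splits by \eqref{kitchen}--\eqref{cabinet}, and the positive permutation $\xi_{A^\circ}$ is a composition of simple positive crossings by \cref{chambers}.

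The heart of the argument is faithfulness. For each $A \in \Mat{\lambda}{\mu}$, I draw the double coset diagram of $A$ with every crossing positive, and interpret it as a morphism $\widetilde\xi_A \in \Hom_\mathcal{D}(\mu,\lambda)$ by reading top-to-bottom; one checks $\widetilde\xi_A$ is well defined using \eqref{assrel} to order the merges and splits and the braid relations in $\mathcal{D}$ (which I derive inside $\mathcal{D}$ from \eqref{mergesplit} by two applications of the square-switch in opposite orders) to rearrange the middle positive permutation. Then I show that $\{\widetilde\xi_A \mid A \in \Mat{\lambda}{\mu}\}$ spans $\Hom_\mathcal{D}(\mu,\lambda)$ by a straightening algorithm: an arbitrary composition of generators is rewritten using coassociativity to lift all splits, associativity to lower all merges, the digon of \eqref{mergesplit} to absorb bubbles, and the square-switch of \eqref{mergesplit} to convert any split-above-merge configuration into a sum of positive permutations; termination follows by induction on an ``inversion'' weight counting split/merge pairs that are out of order. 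Since $F(\widetilde\xi_A)=\xi_A$ by construction and $\{\xi_A\}$ is already a $\Z[q,q^{-1}]$-basis of $\Hom_{\qSchur}(\mu,\lambda)$, the spanning set $\{\widetilde\xi_A\}$ is then forced to be a basis, so $F$ is an isomorphism. The main obstacle will be verifying termination of the straightening procedure and the consequent well-definedness of $\widetilde\xi_A$; everything else is bookkeeping.

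With $F$ an isomorphism, the remaining assertions follow readily. For (a), naturality and the hexagons for the braiding $c$ defined by $c_{(a),(b)}= \begin{tikzpicture}[baseline=-1.5mm,scale=.5]\draw[-,thick] (0.3,-.35) to (-.3,.43);\draw[-,line width=5pt,white] (-0.3,-.35) to (.3,.43);\draw[-,thick] (-0.3,-.3) to (.3,.4);\end{tikzpicture}$ reduce to identities among double coset diagrams which we verify on the merge/split generators via the presentation (the merge/crossing and split/crossing naturalities are consequences of the square-switch), and uniqueness of $c$ is forced by its prescribed values on generating objects; (b) is immediate from how $\widetilde\xi_A$ is defined and the identification $\widetilde\xi_A = F^{-1}(\xi_A)$; (c) follows since the bar involution from \cref{bio} fixes merges and splits by \cref{pigs} while the two formulas \eqref{tourists} and \eqref{visas} are interchanged by $q\leftrightarrow q^{-1}$, and since the bar on $\mathcal{D}$ is determined by its effect on generators; and (d) is the observation that the linear anti-involution $\T$ of \cref{starinv} interchanges each generating merge with the corresponding split and fixes zero-caps, which is exactly rotation of the generating string diagrams through $180^\circ$ about a horizontal axis.
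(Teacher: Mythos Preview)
Your overall architecture matches the paper's: define the presented category, build a functor to $\qSchur$, prove it full via the factorization \cref{ducks}, and prove it faithful by showing the double coset morphisms span. But there are two real gaps.

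First, the theorem's presentation has the positive crossing as a \emph{generator}, subject only to \cref{secondone}--\cref{mergesplit}; the identities \cref{tourists}--\cref{visas} are \emph{consequences} to be proved, not definitions. Your $\mathcal{D}$, in which the crossing is an abbreviation for the right side of \cref{tourists}, is a priori a quotient of the category actually being presented. To close this gap you must derive \cref{tourists} inside the presented category from \cref{secondone}--\cref{mergesplit} alone, which is exactly the paper's relation \cref{thickcrossing}; this is not a one-liner and is carried out in the appendix using \cref{vanishing,second,A}. Relatedly, your remark that the equivalence of the two expressions in \cref{tourists} ``follows from \cref{pigs}'' is not right: \cref{pigs} gives invertibility of the crossing and bar-invariance of merges/splits, not either formula in \cref{tourists}.

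Second, your spanning argument leans on the braid relation in $\mathcal{D}$, which you say comes from ``two applications of the square-switch.'' In the paper this is a genuine derivation: one first proves \cref{jonsquare}--\cref{jonsquare2} (the square-switches) from \cref{mergesplit}, then \cref{thickcrossing}, then the slider relations \cref{sliders} and the invertibility relations \cref{swallows-symmetric}, each step using $q$-binomial identities; only then does \cref{braid} fall out. The paper then bypasses any general ``inversion-weight'' straightening by proving the concrete Claim that a single merge or split composed with a double coset morphism $\xi_A'$ is a $\Z[q,q^{-1}]$-linear combination of $\xi_B'$'s, arguing separately for merges (via \cref{assrel} and reducedness of the middle permutation) and splits (via \cref{mergesplit} and \cref{sliders}). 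Your plan is salvageable, but the work you are deferring is exactly the content of the appendix; the termination of your straightening and the braid derivation are where the difficulty lives, not ``bookkeeping.''
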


Before we prove this, some comments.
The relations \cref{secondone} imply that $(0) \cong \one$.
The relations \cref{zeroforks} mean that splits and merges with a string of thickness zero can be expressed in terms of the other generating morphisms, hence, can be eliminated from any string diagram. Using \cref{secondone}, \cref{zeroforks} and the definition of negative crossings, the second relation in \cref{mergesplit} implies that 
\begin{align}
\begin{tikzpicture}[anchorbase,scale=.8]
	\draw[-,line width=0pt] (0.3,-.3) to (-.3,.4);
	\draw[-,line width=4pt,white] (-0.3,-.3) to (.3,.4);
	\draw[-,line width=1pt] (-0.3,-.3) to (.3,.4);
        \node at (0.3,-.43) {$\scriptstyle 0$};
        \node at (-0.3,-.42) {$\scriptstyle a$};
\end{tikzpicture}
=
\begin{tikzpicture}[anchorbase,scale=.8]
	\draw[-,line width=1pt] (-0.3,-.3) to (.3,.4);
	\draw[-,line width=4pt,white] (0.3,-.3) to (-.3,.4);
	\draw[-,line width=0pt] (0.3,-.3) to (-.3,.4);
        \node at (0.3,-.43) {$\scriptstyle 0$};
        \node at (-0.3,-.42) {$\scriptstyle a$};
\end{tikzpicture}&=
\begin{tikzpicture}[anchorbase,scale=.8]
	\draw[-,line width=1pt] (0,-.3) to (0,.4);
        \node at (0,-.42) {$\scriptstyle a$};
	\draw[-,line width=0pt] (-0.3,.1) to (-0.3,.4);\spot{-.3,.1};
 	\draw[-,line width=0pt] (0.3,0) to (0.3,-.3);\spot{.3,0};
\end{tikzpicture}\ ,&
\begin{tikzpicture}[anchorbase,scale=.8]
	\draw[-,line width=1pt] (0.3,-.3) to (-.3,.4);
	\draw[-,line width=4pt,white] (-0.3,-.3) to (.3,.4);
	\draw[-,line width=0pt] (-0.3,-.3) to (.3,.4);
        \node at (0.3,-.44) {$\scriptstyle b$};
        \node at (-0.3,-.43) {$\scriptstyle 0$};
\end{tikzpicture}
=
\begin{tikzpicture}[anchorbase,scale=.8]
	\draw[-,line width=0pt] (-0.3,-.3) to (.3,.4);
	\draw[-,line width=4pt,white] (0.3,-.3) to (-.3,.4);
	\draw[-,line width=1pt] (0.3,-.3) to (-.3,.4);
       \node at (0.3,-.44) {$\scriptstyle b$};
        \node at (-0.3,-.43) {$\scriptstyle 0$};
\end{tikzpicture}&=\begin{tikzpicture}[anchorbase,scale=.8]
	\draw[-,line width=1pt] (0,-.3) to (0,.4);
        \node at (0,-.44) {$\scriptstyle b$};
	\draw[-,line width=0pt] (0.3,.1) to (0.3,.4);\spot{.3,.1};
 	\draw[-,line width=0pt] (-0.3,0) to (-0.3,-.3);\spot{-.3,0};
\end{tikzpicture}\ .
\end{align}
This means that crossings involving a string of thickness zero can also be expressed in terms of other morphisms, so these can be eliminated from string diagrams too.
Then all remaining
strings of thickness zero can be contracted to dots on the top and bottom boundaries. In this way,
any string diagram is equivalent to one without strings of thickness zero.
The relation \cref{assrel} means that we can introduce further diagrams as shorthands  more general 
splits 
and merges of $n$ strings.
For example, splits and merges of 3 strings
are
\begin{align}
\begin{tikzpicture}[baseline = 0]
	\draw[-,thick] (0.35,.3) to (0.09,-0.14);
	\draw[-,thick] (0.08,.3) to (.08,-0.1);
	\draw[-,thick] (-0.2,.3) to (0.07,-0.14);
	\draw[-,line width=2pt] (0.08,-.45) to (0.08,-.1);
        \node at (0.45,.41) {$\scriptstyle c$};
        \node at (0.07,.43) {$\scriptstyle b$};
        \node at (-0.28,.41) {$\scriptstyle a$};
\end{tikzpicture}
:=
\begin{tikzpicture}[baseline = 0]
	\draw[-,thick] (0.35,.3) to (0.08,-0.14);
	\draw[-,thick] (0.1,.3) to (-0.04,0.06);
	\draw[-,line width=1pt] (0.085,-.14) to (-0.035,0.06);
	\draw[-,thick] (-0.2,.3) to (0.07,-0.14);
	\draw[-,line width=2pt] (0.08,-.45) to (0.08,-.1);
        \node at (0.45,.41) {$\scriptstyle c$};
        \node at (0.07,.43) {$\scriptstyle b$};
        \node at (-0.28,.41) {$\scriptstyle a$};
\end{tikzpicture}
&=
\begin{tikzpicture}[baseline = 0]
	\draw[-,thick] (0.36,.3) to (0.09,-0.14);
	\draw[-,thick] (0.06,.3) to (0.2,.05);
	\draw[-,line width=1pt] (0.07,-.14) to (0.19,.06);
	\draw[-,thick] (-0.19,.3) to (0.08,-0.14);
	\draw[-,line width=2pt] (0.08,-.45) to (0.08,-.1);
        \node at (0.45,.41) {$\scriptstyle c$};
        \node at (0.07,.43) {$\scriptstyle b$};
        \node at (-0.28,.41) {$\scriptstyle a$};
\end{tikzpicture}\ ,&
\begin{tikzpicture}[baseline = 0]
	\draw[-,thick] (0.35,-.3) to (0.09,0.14);
	\draw[-,thick] (0.08,-.3) to (.08,0.1);
	\draw[-,thick] (-0.2,-.3) to (0.07,0.14);
	\draw[-,line width=2pt] (0.08,.45) to (0.08,.1);
        \node at (0.45,-.41) {$\scriptstyle c$};
        \node at (0.07,-.4) {$\scriptstyle b$};
        \node at (-0.28,-.41) {$\scriptstyle a$};
\end{tikzpicture}
:=
\begin{tikzpicture}[baseline = 0]
	\draw[-,thick] (0.35,-.3) to (0.08,0.14);
	\draw[-,thick] (0.1,-.3) to (-0.04,-0.06);
	\draw[-,line width=1pt] (0.085,.14) to (-0.035,-0.06);
	\draw[-,thick] (-0.2,-.3) to (0.07,0.14);
	\draw[-,line width=2pt] (0.08,.45) to (0.08,.1);
        \node at (0.45,-.41) {$\scriptstyle c$};
        \node at (0.07,-.4) {$\scriptstyle b$};
        \node at (-0.28,-.41) {$\scriptstyle a$};
\end{tikzpicture}
&=
\begin{tikzpicture}[baseline = 0]
	\draw[-,thick] (0.36,-.3) to (0.09,0.14);
	\draw[-,thick] (0.06,-.3) to (0.2,-.05);
	\draw[-,line width=1pt] (0.07,.14) to (0.19,-.06);
	\draw[-,thick] (-0.19,-.3) to (0.08,0.14);
	\draw[-,line width=2pt] (0.08,.45) to (0.08,.1);
        \node at (0.45,-.41) {$\scriptstyle c$};
        \node at (0.07,-.4) {$\scriptstyle b$};
        \node at (-0.28,-.41) {$\scriptstyle a$};
\end{tikzpicture}\:.
\end{align}
By \cref{kitchen,cabinet}, these are the standard basis vectors
$\xi_{\left[\begin{smallmatrix}a&b&c\end{smallmatrix}\right]}$
 and $\xi_{\left[\begin{smallmatrix}a\\b\\c\end{smallmatrix}\right]}$, respectively.

\begin{proof}[Proof of \cref{mainpres}]
Let $\qSchur'$ be the strict $\Z[q,q^{-1}]$-linear monoidal category defined by the generators and relations in the statement of the theorem. 
We also {\em define} the negative crossings in $\qSchur'$ by setting
\begin{equation}\label{tireddog}
\begin{tikzpicture}[baseline=-1mm,scale=.9]
	\draw[-,line width=1pt] (-0.3,-.3) to (.3,.4);
	\draw[-,line width=4pt,white] (0.3,-.3) to (-.3,.4);
	\draw[-,line width=1pt] (0.3,-.3) to (-.3,.4);
        \node at (-0.3,-.4) {$\scriptstyle a$};
        \node at (0.3,-.4) {$\scriptstyle b$};
\end{tikzpicture}
:=\sum_{s=0}^{\min(a,b)}
(-q)^{-s}
\begin{tikzpicture}[anchorbase,scale=.9]
	\draw[-,line width=1.2pt] (0,0) to (0,1);
	\draw[-,thick] (-0.8,0) to (-0.8,.2) to (-.03,.4) to (-.03,.6)
        to (-.8,.8) to (-.8,1);
	\draw[-,thin] (-0.82,0) to (-0.82,1);
        \node at (-0.81,-.1) {$\scriptstyle a$};
        \node at (0,-.1) {$\scriptstyle b$};
        \node at (-0.4,.9) {$\scriptstyle b-s$};
        \node at (-0.4,.13) {$\scriptstyle a-s$};
\end{tikzpicture}\ .
\end{equation}
At this point, some calculations are needed to deduce the following additional relations from the defining relations in
$\qSchur'$ (for all $a,b,c,d \geq 0$ that make sense):
\begin{align}
\begin{tikzpicture}[anchorbase,scale=.9]
	\draw[-,thick] (0,0) to (0,1);
	\draw[-,thick] (0.015,0) to (0.015,.2) to (.57,.4) to (.57,.6)
        to (.015,.8) to (.015,1);
	\draw[-,line width=1.2pt] (0.6,0) to (0.6,1);
        \node at (0.6,-.1) {$\scriptstyle b$};
        \node at (0,-.1) {$\scriptstyle a$};
        \node at (0.3,.82) {$\scriptstyle c$};
        \node at (0.3,.19) {$\scriptstyle d$};
\end{tikzpicture}
&=
\sum_{s=\max(0,c-b)}^{\min(c,d)}
q^{s(b-c+s)}
\qbinom{a-d+s}{s}_{\!q}\displaystyle
\begin{tikzpicture}[anchorbase,scale=.9]
	\draw[-,thick] (0.88,0) to (0.88,.2) to (.02,.8) to (.02,1);
	\draw[-,line width=4pt, white] (0.02,0) to (0.02,.2) to (.88,.8) to (.88,1);
	\draw[-,thick] (0.02,0) to (0.02,.2) to (.88,.8) to (.88,1);
	\draw[-,thin] (0,0) to (0,1);
	\draw[-,thin] (0.9,0) to (0.9,1);
        \node at (0,-.1) {$\scriptstyle a$};
        \node at (0.9,-.1) {$\scriptstyle b$};
        \node at (.36,.16) {$\scriptstyle d-s$};
        \node at (.36,.86) {$\scriptstyle c-s$};
\end{tikzpicture}=
\sum_{s=\max(0,c-b)}^{\min(c,d)}
\qbinom{a-b+c-d}{s}_{\!q}
\begin{tikzpicture}[anchorbase,scale=.9]
	\draw[-,line width=1.2pt] (0,0) to (0,1);
	\draw[-,thick] (0.79,0) to (0.79,.2) to (.03,.4) to (.03,.6)
        to (.79,.8) to (.79,1);
	\draw[-,thin] (0.81,0) to (0.81,1);
        \node at (0.8,-.1) {$\scriptstyle b$};
        \node at (0,-.1) {$\scriptstyle a$};
        \node at (0.38,.9) {$\scriptstyle d-s$};
        \node at (0.38,.14) {$\scriptstyle c-s$};
\end{tikzpicture},
\label{jonsquare}
\\
\begin{tikzpicture}[anchorbase,scale=.9]
	\draw[-,thick] (0,0) to (0,1);
	\draw[-,thick] (-.015,0) to (-0.015,.2) to (-.57,.4) to (-.57,.6)
        to (-.015,.8) to (-.015,1);
	\draw[-,line width=1.2pt] (-0.6,0) to (-0.6,1);
        \node at (-0.6,-.1) {$\scriptstyle b$};
        \node at (0,-.1) {$\scriptstyle a$};
        \node at (-0.3,.84) {$\scriptstyle c$};
        \node at (-0.3,.19) {$\scriptstyle d$};
\end{tikzpicture}
&=
\sum_{s=\max(0,c-b)}^{\min(c,d)}
q^{s(b-c+s)}
\qbinom{a-d+s}{s}_{\!q}\displaystyle
\begin{tikzpicture}[anchorbase,scale=.9]
	\draw[-,thick] (0.88,0) to (0.88,.2) to (.02,.8) to (.02,1);
	\draw[-,line width=4pt, white] (0.02,0) to (0.02,.2) to (.88,.8) to (.88,1);
	\draw[-,thick] (0.02,0) to (0.02,.2) to (.88,.8) to (.88,1);
	\draw[-,thin] (0,0) to (0,1);
	\draw[-,thin] (0.9,0) to (0.9,1);
        \node at (0,-.1) {$\scriptstyle b$};
        \node at (.9,-.1) {$\scriptstyle a$};
        \node at (.36,.16) {$\scriptstyle c-s$};
        \node at (.36,.87) {$\scriptstyle d-s$};
\end{tikzpicture}=
\!\!\sum_{s=\max(0,c-b)}^{\min(c,d)}
\qbinom{a-b+c-d}{s}_{\!q}
\begin{tikzpicture}[anchorbase,scale=.9]
	\draw[-,line width=1.2pt] (0,0) to (0,1);
	\draw[-,thick] (-0.8,0) to (-0.8,.2) to (-.03,.4) to (-.03,.6)
        to (-.8,.8) to (-.8,1);
	\draw[-,thin] (-0.82,0) to (-0.82,1);
        \node at (-0.81,-.1) {$\scriptstyle b$};
        \node at (0,-.1) {$\scriptstyle a$};
        \node at (-0.4,.9) {$\scriptstyle d-s$};
        \node at (-0.4,.15) {$\scriptstyle c-s$};
\end{tikzpicture},
\label{jonsquare2}\\
\label{thickcrossing}
\begin{tikzpicture}[anchorbase,scale=1.1]
	\draw[-,line width=1pt] (0.3,-.3) to (-.3,.4);
	\draw[-,line width=4pt,white] (-0.3,-.3) to (.3,.4);
	\draw[-,line width=1pt] (-0.3,-.3) to (.3,.4);
        \node at (0.3,-.42) {$\scriptstyle b$};
        \node at (-0.3,-.42) {$\scriptstyle a$};
\end{tikzpicture}
&=
\begin{tikzpicture}[baseline=3mm,scale=.8]
	\draw[-,line width=1.2pt] (.6,0) to (.315,.3) to (.315,.7) to (.6,1);
	\draw[-,line width=1.2pt] (0,0) to (.285,.3) to (.285,.7) to (0,1);
        \node at (0,-.1) {$\scriptstyle a$};
        \node at (0.6,-.1) {$\scriptstyle b$};
    \node at (0,1.13) {$\scriptstyle b$};
        \node at (0.63,1.13) {$\scriptstyle a$};
        \end{tikzpicture}
-\sum_{s=1}^{\min(a,b)} \!\!\!q^{s^2}
\begin{tikzpicture}[anchorbase,scale=.9]
	\draw[-,thick] (0.58,0) to (0.58,.2) to (.02,.8) to (.02,1);
	\draw[-,line width=4pt,white] (0.02,0) to (0.02,.2) to (.58,.8) to (.58,1);
	\draw[-,thick] (0.02,0) to (0.02,.2) to (.58,.8) to (.58,1);
	\draw[-,thin] (0,0) to (0,1);
	\draw[-,thin] (0.6,0) to (0.6,1);
        \node at (0,-.1) {$\scriptstyle a$};
        \node at (0.6,-.1) {$\scriptstyle b$};
        \node at (-0.1,.5) {$\scriptstyle s$};
        \node at (0.7,.5) {$\scriptstyle s$};
\end{tikzpicture}=
\sum_{s=0}^{\min(a,b)}(-q)^{s}
\begin{tikzpicture}[anchorbase,scale=.9]
	\draw[-,line width=1.2pt] (0,0) to (0,1);
	\draw[-,thick] (-0.8,0) to (-0.8,.2) to (-.03,.4) to (-.03,.6)
        to (-.8,.8) to (-.8,1);
	\draw[-,thin] (-0.82,0) to (-0.82,1);
        \node at (-0.81,-.1) {$\scriptstyle a$};
        \node at (0,-.1) {$\scriptstyle b$};
        \node at (-0.4,.9) {$\scriptstyle b-s$};
        \node at (-0.4,.13) {$\scriptstyle a-s$};
\end{tikzpicture}
=
\sum_{s=0}^{\min(a,b)}
(-q)^{s}
\begin{tikzpicture}[anchorbase,scale=.9]
	\draw[-,line width=1.2pt] (0,0) to (0,1);
	\draw[-,thick] (0.8,0) to (0.8,.2) to (.03,.4) to (.03,.6)
        to (.8,.8) to (.8,1);
	\draw[-,thin] (0.82,0) to (0.82,1);
        \node at (0.81,-.1) {$\scriptstyle b$};
        \node at (0,-.1) {$\scriptstyle a$};
        \node at (0.4,.9) {$\scriptstyle a-s$};
        \node at (0.4,.13) {$\scriptstyle b-s$};
\end{tikzpicture},\end{align}

\vspace{-4.5mm}

\begin{align}
\begin{tikzpicture}[anchorbase,scale=0.62]
	\draw[-,thick] (0.4,0) to (-0.6,1);
	\draw[-,line width=4pt,white] (0.1,0) to (0.1,.6) to (.5,1);
	\draw[-,thick] (0.1,0) to (0.1,.6) to (.5,1);
	\draw[-,thick] (0.08,0) to (0.08,1);
        \node at (0.6,1.13) {$\scriptstyle c$};
        \node at (0.1,1.16) {$\scriptstyle b$};
        \node at (-0.65,1.13) {$\scriptstyle a$};
\end{tikzpicture}
&=
\begin{tikzpicture}[anchorbase,scale=0.62]
	\draw[-,thick] (0.7,0) to (-0.3,1);
	\draw[-,line width=4pt,white] (0.1,0) to (0.1,.2) to (.9,1);
	\draw[-,line width=3pt,white] (0.08,0) to (0.08,1);
	\draw[-,thick] (0.1,0) to (0.1,.2) to (.9,1);
	\draw[-,thick] (0.08,0) to (0.08,1);
        \node at (0.9,1.13) {$\scriptstyle c$};
        \node at (0.1,1.16) {$\scriptstyle b$};
        \node at (-0.4,1.13) {$\scriptstyle a$};
\end{tikzpicture},&
\begin{tikzpicture}[anchorbase,scale=0.62]
	\draw[-,thick] (-0.08,0) to (-0.08,1);
	\draw[-,thick] (-0.1,0) to (-0.1,.6) to (-.5,1);
	\draw[-,line width=4pt,white] (-0.4,0) to (0.6,1);
	\draw[-,thick] (-0.4,0) to (0.6,1);
        \node at (0.7,1.13) {$\scriptstyle c$};
        \node at (-0.1,1.16) {$\scriptstyle b$};
        \node at (-0.6,1.13) {$\scriptstyle a$};
\end{tikzpicture}
&=
\begin{tikzpicture}[anchorbase,scale=0.62]
	\draw[-,thick] (-0.08,0) to (-0.08,1);
	\draw[-,thick] (-0.1,0) to (-0.1,.2) to (-.9,1);
	\draw[-,line width=4pt,white] (-0.7,0) to (0.3,1);
	\draw[-,thick] (-0.7,0) to (0.3,1);
        \node at (0.4,1.13) {$\scriptstyle c$};
        \node at (-0.1,1.16) {$\scriptstyle b$};
        \node at (-0.95,1.13) {$\scriptstyle a$};
\end{tikzpicture},&
\:\begin{tikzpicture}[baseline=-3.3mm,scale=0.62]
	\draw[-,thick] (0.08,0) to (0.08,-1);
	\draw[-,thick] (0.1,0) to (0.1,-.6) to (.5,-1);
	\draw[-,line width=4pt,white] (0.4,0) to (-0.6,-1);
	\draw[-,thick] (0.4,0) to (-0.6,-1);
        \node at (0.6,-1.13) {$\scriptstyle c$};
        \node at (0.07,-1.13) {$\scriptstyle b$};
        \node at (-0.6,-1.13) {$\scriptstyle a$};
\end{tikzpicture}
&=
\begin{tikzpicture}[baseline=-3.3mm,scale=0.62]
	\draw[-,thick] (0.08,0) to (0.08,-1);
	\draw[-,thick] (0.1,0) to (0.1,-.2) to (.9,-1);
	\draw[-,line width=4pt, white] (0.7,0) to (-0.3,-1);
	\draw[-,thick] (0.7,0) to (-0.3,-1);
        \node at (1,-1.13) {$\scriptstyle c$};
        \node at (0.1,-1.13) {$\scriptstyle b$};
        \node at (-0.4,-1.13) {$\scriptstyle a$};
\end{tikzpicture},\:&
\begin{tikzpicture}[baseline=-3.3mm,scale=0.62]
	\draw[-,thick] (-0.4,0) to (0.6,-1);
	\draw[-,line width=4pt,white] (-0.1,0) to (-0.1,-.6) to (-.5,-1);
	\draw[-,thick] (-0.1,0) to (-0.1,-.6) to (-.5,-1);
	\draw[-,thick] (-0.08,0) to (-0.08,-1);
        \node at (0.6,-1.13) {$\scriptstyle c$};
        \node at (-0.1,-1.13) {$\scriptstyle b$};
        \node at (-0.6,-1.13) {$\scriptstyle a$};
\end{tikzpicture}
&=
\begin{tikzpicture}[baseline=-3.3mm,scale=0.62]
	\draw[-,thick] (-0.7,0) to (0.3,-1);
	\draw[-,line width=4pt,white] (-0.1,0) to (-0.1,-.2) to (-.9,-1);
	\draw[-,line width=3pt,white] (-0.08,0) to (-0.08,-1);
	\draw[-,thick] (-0.1,0) to (-0.1,-.2) to (-.9,-1);
	\draw[-,thick] (-0.08,0) to (-0.08,-1);
        \node at (0.34,-1.13) {$\scriptstyle c$};
        \node at (-0.1,-1.13) {$\scriptstyle b$};
        \node at (-0.95,-1.13) {$\scriptstyle a$};
\end{tikzpicture},
\label{sliders}
\end{align}

\vspace{-4.5mm}

\begin{align}
\begin{tikzpicture}[anchorbase,scale=.62]
\draw[-,thick] (-.2,-.8) to [out=45,in=-45] (0.1,.31);
\draw[-,line width=4.3pt, white] (.36,-.8) to [out=135,in=-135] (0.06,.31);
\draw[-,thick] (.36,-.8) to [out=135,in=-135] (0.06,.31);
	\draw[-,line width=2pt] (0.08,.3) to (0.08,.5);
        \node at (-.3,-.95) {$\scriptstyle a$};
        \node at (.45,-.95) {$\scriptstyle b$};
\end{tikzpicture}
&=
q^{ab}\!\begin{tikzpicture}[anchorbase,scale=.62]
	\draw[-,line width=2pt] (0.08,.1) to (0.08,.5);
\draw[-,thick] (.46,-.8) to [out=100,in=-45] (0.1,.11);
\draw[-,thick] (-.3,-.8) to [out=80,in=-135] (0.06,.11);
        \node at (-.3,-.95) {$\scriptstyle a$};
        \node at (.43,-.95) {$\scriptstyle b$};
\end{tikzpicture},
&
\begin{tikzpicture}[baseline=.3mm,scale=.62]
\draw[-,thick] (.36,.8) to [out=-135,in=135] (0.06,-.31);
\draw[-,line width=4.3pt,white] (-.2,.8) to [out=-45,in=45] (0.1,-.31);
\draw[-,thick] (-.2,.8) to [out=-45,in=45] (0.1,-.31);
	\draw[-,line width=2pt] (0.08,-.3) to (0.08,-.5);
        \node at (-.3,.95) {$\scriptstyle a$};
        \node at (.45,.95) {$\scriptstyle b$};
\end{tikzpicture}
&=q^{ab}
\begin{tikzpicture}[baseline=.3mm,scale=.62]
	\draw[-,line width=2pt] (0.08,-.1) to (0.08,-.5);
\draw[-,thick] (.46,.8) to [out=-100,in=45] (0.1,-.11);
\draw[-,thick] (-.3,.8) to [out=-80,in=135] (0.06,-.11);
        \node at (-.3,.95) {$\scriptstyle a$};
        \node at (.43,.95) {$\scriptstyle b$};
\end{tikzpicture},
\label{swallows-symmetric}
&
\begin{tikzpicture}[baseline = -1mm,scale=0.71]
	\draw[-,thick] (-0.28,0) to[out=90,in=-90] (0.28,.6);
	\draw[-,thick] (0.28,-.6) to[out=90,in=-90] (-0.28,0);
	\draw[-,line width=4pt,white] (0.28,0) to[out=90,in=-90] (-0.28,.6);
	\draw[-,line width=4pt,white] (-0.28,-.6) to[out=90,in=-90] (0.28,0);
	\draw[-,thick] (0.28,0) to[out=90,in=-90] (-0.28,.6);
	\draw[-,thick] (-0.28,-.6) to[out=90,in=-90] (0.28,0);
        \node at (0.3,-.75) {$\scriptstyle b$};
        \node at (-0.3,-.75) {$\scriptstyle a$};
\end{tikzpicture}
&=
\begin{tikzpicture}[baseline = -1mm,scale=0.71]
	\draw[-,thick] (0.3,-.6) to (0.3,.6);
	\draw[-,thick] (-0.2,-.6) to (-0.2,.6);
        \node at (0.3,-.75) {$\scriptstyle b$};
        \node at (-0.2,-.75) {$\scriptstyle a$};
\end{tikzpicture}\ ,&
\begin{tikzpicture}[baseline = -1mm,scale=0.71]
	\draw[-,thick] (0.28,0) to[out=90,in=-90] (-0.28,.6);
	\draw[-,thick] (-0.28,-.6) to[out=90,in=-90] (0.28,0);
	\draw[-,line width=4pt,white] (-0.28,0) to[out=90,in=-90] (0.28,.6);
	\draw[-,line width=4pt,white] (0.28,-.6) to[out=90,in=-90] (-0.28,0);
	\draw[-,thick] (-0.28,0) to[out=90,in=-90] (0.28,.6);
	\draw[-,thick] (0.28,-.6) to[out=90,in=-90] (-0.28,0);
        \node at (0.3,-.75) {$\scriptstyle a$};
        \node at (-0.3,-.75) {$\scriptstyle b$};
\end{tikzpicture}
&=
\begin{tikzpicture}[baseline = -1mm,scale=0.71]
	\draw[-,thick] (0.2,-.6) to (0.2,.6);
	\draw[-,thick] (-0.3,-.6) to (-0.3,.6);
        \node at (0.2,-.75) {$\scriptstyle a$};
        \node at (-0.3,-.75) {$\scriptstyle b$};
\end{tikzpicture}\:,
\end{align}

\vspace{-4.5mm}

\begin{align}
\begin{tikzpicture}[baseline = -1mm,scale=0.71]
	\draw[-,thick] (0.45,-.6) to (-0.45,.6);
        \draw[-,thick] (0,-.6) to[out=90,in=-90] (-.45,0);
        \draw[-,line width=4pt,white] (-0.45,0) to[out=90,in=-90] (0,0.6);
        \draw[-,thick] (-0.45,0) to[out=90,in=-90] (0,0.6);
	\draw[-,line width=4pt,white] (0.45,.6) to (-0.45,-.6);
	\draw[-,thick] (0.45,.6) to (-0.45,-.6);
        \node at (0,-.77) {$\scriptstyle b$};
        \node at (0.5,-.77) {$\scriptstyle c$};
        \node at (-0.5,-.77) {$\scriptstyle a$};
\end{tikzpicture}
&=
\begin{tikzpicture}[baseline = -1mm,scale=0.71]
	\draw[-,thick] (0.45,-.6) to (-0.45,.6);
        \draw[-,thick] (0.45,0) to[out=90,in=-90] (0,0.6);
        \draw[-,line width=4pt,white] (0,-.6) to[out=90,in=-90] (.45,0);
        \draw[-,thick] (0,-.6) to[out=90,in=-90] (.45,0);
	\draw[-,line width=4pt,white] (0.45,.6) to (-0.45,-.6);
	\draw[-,thick] (0.45,.6) to (-0.45,-.6);
        \node at (0,-.77) {$\scriptstyle b$};
        \node at (0.5,-.77) {$\scriptstyle c$};
        \node at (-0.5,-.77) {$\scriptstyle a$};
\end{tikzpicture}.\label{braid}
\end{align}

\noindent
The derivations of these relations are similar to those in the appendix of \cite{BEEO} (which treats the $q=1$ case); \cref{vanishing,second,A} are needed along the way. The detailed calculations can be found in the appendix.

Now we prove (a) but for the presented category 
$\qSchur'$ rather than $\qSchur$ itself;
then (a) for $\qSchur$ follows at the end
when we have established that
$\qSchur'\cong \qSchur$.
We need natural isomorphisms
$c_{\lambda,\mu}:\lambda \star \mu\stackrel{\sim}{\rightarrow} \mu\star\lambda$ for all compositions $\lambda,\mu$. 
Given that $c_{(a),(b)}$ is the positive crossing, there is no choice for the definition of more general $c_{\lambda,\mu}$
in order for the hexagon axioms for a braided monoidal category to hold: it must be defined by composing
positive crossings according to a reduced expression
for the Grassmann permutation taking 
$1,\dots,\ell(\lambda)$ to $\ell(\mu)+1,\dots,\ell(\mu)+\ell(\lambda)$
and $\ell(\lambda)+1,\dots,\ell(\lambda)+\ell(\mu)$
to $1,\dots,\ell(\mu)$. 
As any two reduced expressions for a Grassmann permutation are equivalent by 
commuting braid relations,
the resulting morphism is well defined by the interchange law. 
The morphism $c_{\lambda,\mu}$ is an isomorphism
 since the positive crossing $\begin{tikzpicture}[baseline=-1.5mm,scale=.6]
	\draw[-,thick] (0.3,-.35) to (-.3,.43);
	\draw[-,line width=5pt,white] (-0.3,-.35) to (.3,.43);
	\draw[-,thick] (-0.3,-.3) to (.3,.4);
       \node at (-0.32,-.53) {$\scriptstyle a$};
        \node at (0.32,-.54) {$\scriptstyle b$};
\end{tikzpicture}$
is invertible; its two-sided inverse is
$\begin{tikzpicture}[baseline=-1.5mm,scale=.6]
	\draw[-,thick] (-0.3,-.35) to (.3,.43);
	\draw[-,line width=5pt,white] (0.3,-.35) to (-.3,.43);
	\draw[-,thick] (0.3,-.3) to (-.3,.4);
       \node at (-0.32,-.53) {$\scriptstyle b$};
        \node at (0.32,-.54) {$\scriptstyle a$};
\end{tikzpicture}$
 according to the last two relations in \cref{swallows-symmetric}. Naturality follows from \cref{sliders}.

Next, we show that there a strict $\Z[q,q^{-1}]$-linear monoidal 
functor $F:\qSchur' \rightarrow \qSchur$
taking $(r) \mapsto (r)$ and the generating morphisms 
of $\qSchur'$ to the morphisms in $\qSchur$ represented by the
same
diagrams. To prove this, we just need to check relations:
the relations \cref{secondone,zeroforks} are trivial to check in $\qSchur$,
\cref{assrel} follows from \cref{kitchen,cabinet}, and \cref{mergesplit}
follows from \cref{laura,rider}. 
By definition, the functor $F$ takes the positive crossing in $\qSchur'$ to the positive crossing in $\qSchur$, so the identity \cref{tourists} in $\qSchur$ follows by applying $F$ to \cref{thickcrossing}.
We have observed already that the negative crossing in $\qSchur'$ is 
the two-sided inverse of the positive crossing 
in $\qSchur'$, hence,
$$
F\Big(\begin{tikzpicture}[baseline=-1.5mm,scale=.6]
	\draw[-,thick] (-0.3,-.35) to (.3,.43);
	\draw[-,line width=5pt,white] (0.3,-.35) to (-.3,.43);
	\draw[-,thick] (0.3,-.3) to (-.3,.4);
       \node at (-0.32,-.53) {$\scriptstyle a$};
        \node at (0.32,-.54) {$\scriptstyle b$};
\end{tikzpicture}\Big)=
\begin{tikzpicture}[baseline=-1.5mm,scale=.6]
	\draw[-,thick] (-0.3,-.35) to (.3,.43);
	\draw[-,line width=5pt,white] (0.3,-.35) to (-.3,.43);
	\draw[-,thick] (0.3,-.3) to (-.3,.4);
       \node at (-0.32,-.53) {$\scriptstyle a$};
        \node at (0.32,-.54) {$\scriptstyle b$};
\end{tikzpicture}
$$
since the negative crossing in $\qSchur$ is also the inverse of the positive crossing by the original definition.
To prove that \cref{visas} holds in $\qSchur$,
the first equality follows by applying $F$ to 
\cref{tireddog}. The second equality follows by applying the bar involution to the second equality of
\cref{tourists}, remembering that this fixes splits and merges in $\qSchur$ 
thanks to \cref{pigs}.

For any $A \in \Mat{\lambda}{\mu}$, let $\xi_A'$ be the morphism in $\qSchur'$ obtained by taking the (reduced) double coset diagram for $A$, replacing all crossings by positive crossings, and
 interpreting the result as a morphism by composing generators as the diagram suggests.
 The resulting morphism is well defined independent of the choices
 made when doing this. 
For the split of $\ell(\mu)$ strings at the bottom and the merge of $\ell(\lambda)$ strings at the top, this depends on \cref{assrel} as explained in the comments 
after the statement of the theorem.
For the permutation of 
$\ell(\lambda)\ell(\mu)$ strings in the middle, one needs to draw the diagram according to a choice of a reduced expression, but the resulting morphism is independent of this by \cref{braid}.
We are ready to prove (b) by showing that $F(\xi_A') = \xi_A$. This follows from the factorization of $\xi_A$ explained in \cref{ducks}, together with \cref{chambers,kitchen,cabinet}, since these results show
$\xi_A$ can be obtained from merges, splits and positive crossings in exactly the same way as $\xi_A'$ is obtained from the correspnding generating morphisms for $\qSchur'$.

Now we can prove that $F$ is an isomorphism. 
It is clear that it defines a bijection between the object sets of $\qSchur'$ and $\qSchur$ (both are identified with $\Lambda$).
Since the morphisms $\xi_A (A \in \Mat{\lambda}{\mu})$ 
form a basis for $\Hom_{\qSchur}(\mu,\lambda)$ by the definition of $\qSchur$, we deduce using the previous paragraph 
that $F$ is full. 
It just remains to show that it is faithful, which we do by proving that the morphisms
$\xi_A'$ $(A \in \Mat{\lambda}{\mu})$ span
$\Hom_{\qSchur'}(\mu,\lambda)$ as a $\Z[q,q^{-1}]$-module.
This follows from our next claim,
since the merge and split morphisms $f$ described in the claim
for all $\lambda,\lambda'$ generate $\qSchur'$
as a $\Z[q,q^{-1}]$-linear category by \cref{tourists}.

\vspace{1mm}

\noindent
{\bf Claim.} {\em For any $\lambda,\lambda',\mu \in \Lambda$, $A \in \Mat{\lambda}{\mu}$
and $f:\lambda\rightarrow \lambda'$ that consists of a merge or split of 2 strings tensored on the left and/or right by some identity morphisms,
the composition $f \circ \xi_A'$ is a $\Z[q,q^{-1}]$-linear combination of the
morphisms $\xi_A'\:(A \in \Mat{\lambda}{\mu})$.}

\vspace{1mm}
\noindent
To prove the claim, there are two cases:
\begin{itemize}
\item
Suppose first that $f$ has a merge of two strings 
connecting to the $i$th
and $(i+1)$th thick strings at the top of $\xi_A'$. The double coset diagram of $A$ has a merge of $r$ strings
at its $i$th vertex and merge of $s$ strings at its $(i+1)$th vertex.
We use \cref{assrel} to convert $f \circ \xi_A'$ into a  diagram which has a merge of $(r+s)$ strings at its $i$th vertex.
For example:
\begin{equation}\label{spots}
\begin{tikzpicture}[anchorbase]
	\draw[-,line width=2pt] (0.08,.3) to (0.08,-0.04);
	\draw[-,line width=1pt] (0.28,-.4) to (0.08,0);
	\draw[-,line width=1pt] (-0.12,-.4) to (0.08,0);
	\draw[-,thin] (-0.127,.-.39) to (-0.28,-.7);
	\draw[-,thin] (-0.118,.-.39) to (-0.118,-.7);
	\draw[-,thin] (-0.109,.-.39) to (0.04,-.7);
	\draw[-,line width=.5pt] (.28,.-.39) to (.15,-.7);
	\draw[-,line width=.5pt] (0.285,.-.39) to (0.4,-.7);
\end{tikzpicture}=
\begin{tikzpicture}[anchorbase]
	\draw[-,line width=2pt] (0.08,.3) to (0.08,-0.06);
	\draw[-,thin] (.053,.-.04) to (-0.32,-.7);
	\draw[-,thin] (.067,.-.04) to (-0.14,-.7);
	\draw[-,thin] (.081,.-.04) to (0.08,-.7);
	\draw[-,thin] (.095,.-.04) to (0.3,-.7);
	\draw[-,thin] (.108,.-.04) to (0.52,-.7);
\end{tikzpicture}.
\end{equation}
The permutation arising in the middle section of the resulting diagram is not necessarily reduced, but it can be
converted to a scalar multiple of some $\xi_B'$ 
using
the relations \cref{assrel,mergesplit,swallows-symmetric,braid}.
\item
Now suppose that $f$ has a split connecting to the $i$th
vertex at the top of the double coset diagram of $A$. 
Say this vertex in the double coset diagram is part of 
an $n$-fold merge.
Using \cref{assrel,mergesplit,sliders}, we
rewrite the composition of the split in $f$ and this merge in $\xi_A'$ as a
sum of other $\xi_B'$. For example:
\begin{equation}\label{pots}
\begin{tikzpicture}[anchorbase,scale=1.2]
	\draw[-,line width=1.2pt] (-0.118,-.4) to (-0.118,-.18);
	\draw[-,thin] (-0.127,.-.39) to (-0.28,-.7);
	\draw[-,thin] (-0.118,.-.39) to (-0.118,-.7);
	\draw[-,thin] (-0.109,.-.39) to (0.04,-.7);
	\draw[-,line width=.8pt] (-.116,-.21) to (-.3,.1);
	\draw[-,line width=.8pt] (-0.12,-.21) to (0.05,.1);
\node at (-0.7,-.1) {$\scriptstyle f$};
\node at (-0.7,-0.54) {$\scriptstyle g$};
\end{tikzpicture}=
\sum\:
\begin{tikzpicture}[anchorbase,scale=1.3]
	\draw[-,thin] (-0.09,.03) to (0.24,-.7);
	\draw[-,line width=1.5pt,white] (0.09,.03) to (0,-.7);
	\draw[-,thin] (-0.09,.03) to (0,-.7);
	\draw[-,line width=1.5pt,white] (0.09,.03) to (-0.24,-.7);
	\draw[-,thin] (0.09,.03) to (-0.24,-.7);
	\draw[-,thin] (0.09,.03) to (0,-.7);
	\draw[-,thin] (0.09,.03) to (0.24,-.7);
	\draw[-,thin] (-0.09,.03) to (-0.24,-.7);
	\draw[-,line width=.5pt] (0.089,.01) to (0.089,.04);
	\draw[-,line width=.5pt] (-0.089,.01) to (-0.089,.04);
\end{tikzpicture}.
\end{equation}
Then compose these diagrams with the remainder of the diagram,
using \cref{sliders} then \cref{assrel} again
to commute the splits at the bottom of this part of the
resulting diagrams downwards past the positive crossings in $\xi_A'$. 
\end{itemize}

All that is left is to prove (c) and (d). Part (c) follows because the bar involution on $\qSchur$ fixes merges and splits
and interchanges positive and negative crossings by \cref{pigs}; it obviusly fixes the other two generating morphisms
$\begin{tikzpicture}[anchorbase]
\draw[line width=0pt] (0,-.3) to (0,0);
\spot{0,0};
\end{tikzpicture}$ and
$\begin{tikzpicture}[anchorbase]
\draw[line width=0pt] (0,.3) to (0,0);
\spot{0,0};
\end{tikzpicture}$.
Part (d) follows using (b) because $\T$ takes $\xi_A$ to $\xi_{A^\tr}$.
\end{proof}

\begin{corollary}\label{1984}
In $\qSchur$, we have that
\begin{equation*}
\begin{tikzpicture}[anchorbase,scale=0.8]
	\draw[-,thick] (0.28,0) to[out=90,in=-90] (-0.28,.6);
	\draw[-,line width=4pt,white] (-0.28,0) to[out=90,in=-90] (0.28,.6);
	\draw[-,thick] (0.28,-.6) to[out=90,in=-90] (-0.28,0);
	\draw[-,line width=4pt,white] (-0.28,-.6) to[out=90,in=-90] (0.28,0);
	\draw[-,thick] (-0.28,-.6) to[out=90,in=-90] (0.28,0);
	\draw[-,thick] (-0.28,0) to[out=90,in=-90] (0.28,.6);
        \node at (0.3,-.75) {$\scriptstyle b$};
        \node at (-0.3,-.75) {$\scriptstyle a$};
\end{tikzpicture}
=
\sum_{s=0}^{\min(a,b)}
q^{-s(s-1)/2+r(a+b-2s)}(q^{-1}-q)^s [s]_q^!
\begin{tikzpicture}[anchorbase,scale=1]
	\draw[-,thick] (0.58,0) to (0.58,.2) to (.02,.8) to (.02,1);
	\draw[-,line width=4pt,white] (0.02,0) to (0.02,.2) to (.58,.8) to (.58,1);
	\draw[-,thick] (0.02,0) to (0.02,.2) to (.58,.8) to (.58,1);
	\draw[-,thin] (0,0) to (0,1);
	\draw[-,thin] (0.6,0) to (0.6,1);
        \node at (0,-.1) {$\scriptstyle a$};
        \node at (0.6,-.1) {$\scriptstyle b$};
        \node at (-0.35,.5) {$\scriptstyle a-s$};
        \node at (.95,.5) {$\scriptstyle b-s$};
\end{tikzpicture}.
\end{equation*}
\end{corollary}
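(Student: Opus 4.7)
The plan is to reduce this identity directly to Remark~\ref{tolerate}, which already supplies the ``quadratic relation'' for two positive crossings, once both sides of the claimed equality are rewritten as standard basis elements of $\qSchur$.

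First I would identify the left-hand side. Unpacking the diagram from the bottom up, the bottom positive crossing is by definition \cref{gareth} the standard basis element $\xi_{\left[\begin{smallmatrix}0 & b \\ a & 0\end{smallmatrix}\right]} \colon (a,b) \to (b,a)$, and the top positive crossing is $\xi_{\left[\begin{smallmatrix}0 & a \\ b & 0\end{smallmatrix}\right]} \colon (b,a) \to (a,b)$. Hence the left-hand side equals the composition $\xi_{\left[\begin{smallmatrix}0 & a \\ b & 0\end{smallmatrix}\right]} \circ \xi_{\left[\begin{smallmatrix}0 & b \\ a & 0\end{smallmatrix}\right]}$ in $\Hom_{\qSchur}((a,b),(a,b))$.

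Next I would identify the ladder-shaped diagram appearing in each term of the right-hand side. Reading off the thicknesses labelling the four interior strings and recalling that the $ij$-entry of the matrix associated to a double coset diagram records the thickness of the string joining the $i$th top vertex to the $j$th bottom vertex, this ladder is precisely the double coset diagram of the matrix $A_s := \left[\begin{smallmatrix}a-s & s \\ s & b-s\end{smallmatrix}\right] \in \Mat{(a,b)}{(a,b)}$. Therefore by part (b) of \cref{mainpres}, the ladder diagram represents the standard basis element $\xi_{A_s}$.

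With these two identifications in hand, the corollary is exactly the formula established in \cref{tolerate}, and so no further argument is needed. The real work has already been done in the proof of that remark, where the coefficient of $\xi_{A_s}$ is computed by tracking the $x_{2,1}^b x_{1,2}^a$-coefficient in the normally-ordered expansion of $x_{2,1}^{b-s} x_{2,2}^s x_{1,1}^s x_{1,2}^{a-s}$ using the straightening relations of $\A_q(2)$, with the key input being \cref{fact}. There is no genuine obstacle here; the only step requiring care is correctly matching the interior labels of the ladder picture to the entries of $A_s$, and reconciling any apparent sign in the exponent of $q$ with the formula in \cref{tolerate}.
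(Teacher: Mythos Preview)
Your proposal is correct and follows exactly the paper's own approach: the paper's proof consists of the single sentence that this corollary is a translation of \cref{tolerate} into the graphical description of $\qSchur$ provided by \cref{mainpres}, and you have simply spelled out that translation explicitly by matching the double crossing with $\xi_{\left[\begin{smallmatrix}0&a\\b&0\end{smallmatrix}\right]}\circ\xi_{\left[\begin{smallmatrix}0&b\\a&0\end{smallmatrix}\right]}$ and the ladder with $\xi_{\left[\begin{smallmatrix}a-s&s\\s&b-s\end{smallmatrix}\right]}$. Your remark about reconciling the exponent of $q$ is apt: the ``$+r$'' in the displayed formula is a typo for ``$-s$'', as comparison with \cref{tolerate} confirms.
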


\begin{proof}
This is a translation of \cref{tolerate} into the graphical
description of $\qSchur$ 
provided by the theorem.
\end{proof}

We also have the following theorem, which gives an alternative presentation for $\qSchur$ with fewer generators and relations.

\begin{theorem}\label{altpres}
The strict $\Z[q,q^{-1}]$-monoidal category $\qSchur$ is
generated by the objects $(r)$ for $r \geq 0$ 
and the morphisms
$\begin{tikzpicture}[anchorbase,scale=1]
\draw[line width=0pt] (0,-.3) to (0,0);
\spot{0,0};
\end{tikzpicture}\ $,
$\begin{tikzpicture}[anchorbase,scale=1]
\draw[line width=0pt] (0,.3) to (0,0);
\spot{0,0};
\end{tikzpicture}\ $,
$\begin{tikzpicture}[anchorbase,scale=.6]
	\draw[-,line width=1pt] (0.28,-.3) to (0.08,0.04);
	\draw[-,line width=1pt] (-0.12,-.3) to (0.08,0.04);
	\draw[-,line width=2pt] (0.08,.4) to (0.08,0);
        \node at (-0.22,-.43) {$\scriptstyle a$};
        \node at (0.35,-.43) {$\scriptstyle b$};
\end{tikzpicture}$ and
$\begin{tikzpicture}[baseline=0mm,scale=.6]
	\draw[-,line width=2pt] (0.08,-.3) to (0.08,0.04);
	\draw[-,line width=1pt] (0.28,.4) to (0.08,0);
	\draw[-,line width=1pt] (-0.12,.4) to (0.08,0);
        \node at (-0.22,.53) {$\scriptstyle a$};
        \node at (0.36,.55) {$\scriptstyle b$};
\end{tikzpicture}$
subject only to the relations \cref{secondone,zeroforks}
for $a,b \geq 0$, \cref{assrel}
for $a,b,c > 0$,
and one of the two square-switch relations
from \cref{squareswitch} for all $a,b,c,d\geq 0$
with $d\leq a$ and $c \leq b+d$. 
\end{theorem}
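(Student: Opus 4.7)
Let $\qSchur''$ denote the strict $\Z[q,q^{-1}]$-linear monoidal category presented by the generators and relations in the statement. The relations \cref{secondone}, \cref{zeroforks}, and \cref{assrel} hold in $\qSchur$ by inspection, and the square-switches \cref{squareswitch} were derived in the proof of \cref{mainpres} as \cref{jonsquare} and \cref{jonsquare2}. Hence there is a strict monoidal functor $F\colon \qSchur''\to \qSchur$, the identity on objects, sending each generator to the morphism with the same string diagram. The two square-switches of \cref{squareswitch} are mirror images of each other, and either one together with the other relations of the theorem can be shown to imply the other, so assuming just one suffices. The goal is to show $F$ is an isomorphism.

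The plan is to construct an inverse $G\colon\qSchur\to\qSchur''$ by using the presentation of $\qSchur$ from \cref{mainpres}. For this, one defines positive crossings in $\qSchur''$ by the ladder formula \cref{tourists}, and then verifies that each defining relation of \cref{mainpres} holds in $\qSchur''$. The relations \cref{secondone} and \cref{zeroforks} are present in both presentations; the boundary cases of \cref{assrel} where one of $a,b,c$ equals zero reduce to the generic case by using \cref{zeroforks} to absorb the strings of thickness zero. What remains are the two relations in \cref{mergesplit}.

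For the merge-split $=$ sum-of-crossings relation (the second of \cref{mergesplit}), I would substitute the ladder formula \cref{tourists} for each positive crossing on the right-hand side. The resulting double sum of four-rung ladder diagrams should collapse via the square-switch together with the $q$-identities \cref{vanishing}, \cref{second}, and \cref{A}, yielding the plain merge-split on the left-hand side. For the bigon relation (the first of \cref{mergesplit}), I would proceed by induction on $\min(a,b)$: applying the square-switch to $\xi_{\left[\begin{smallmatrix}a&b\end{smallmatrix}\right]}\circ\xi_{\left[\begin{smallmatrix}a\\b\end{smallmatrix}\right]}$ produces a sum involving smaller bigons, and the Pascal-type recurrence \cref{pascal} identifies the overall scalar as $\qbinom{a+b}{a}_{\!q}$.

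The main obstacle is the combinatorial bookkeeping required to derive the two identities of \cref{mergesplit} from the single square-switch relation. These derivations are $q$-analogs of the calculations in the appendix of \cite{BEEO} for the case $q=1$, but carried out in the reverse direction: there the square-switch was derived as a consequence of \cref{mergesplit}, whereas here we need the converse implication. Once both relations of \cref{mergesplit} are established in $\qSchur''$, the inverse functor $G$ exists; since $F$ and $G$ agree on generators, they are mutually inverse, so $F$ is an isomorphism.
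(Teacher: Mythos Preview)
Your overall strategy is correct and matches the paper's: define $F$ into $\qSchur$ by checking the square-switch holds there (via \cref{jonsquare2}), then construct $G$ back using the presentation of \cref{mainpres} by defining the positive crossing in $\qSchur''$ via the ladder formula \cref{tourists}, and verify the relations \cref{mergesplit}. Two points of comparison are worth noting.

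First, your derivation of the bigon relation (the first of \cref{mergesplit}) by induction is more work than necessary. The paper simply specializes the chosen square-switch at $b=0$ and $c=d$: after contracting the strand of thickness zero using \cref{zeroforks}, the left-hand side becomes exactly the bigon, and the right-hand side collapses to the single term $s=c$ with coefficient $\qbinom{a}{c}_{\!q}$ times the identity. No induction or Pascal recurrence is needed.

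Second, the order matters: the paper derives the bigon relation \emph{first}, and then uses it as an ingredient in deriving the second relation of \cref{mergesplit} (the appendix Lemma explicitly assumes the first relation of \cref{mergesplit} in addition to the square-switch). Your sketch for the second relation---substitute \cref{tourists} for each crossing and collapse the resulting nested ladders---is the right idea and is exactly what the appendix Corollary does, but the collapsing step needs the bigon relation to combine parallel rungs, so you should establish the bigon first.

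Finally, your remark that either square-switch implies the other within $\qSchur''$ is not needed and not what the paper argues. The paper instead observes that the entire argument is symmetric under reflection in a vertical axis, so proving the theorem for one choice of square-switch immediately gives it for the other.
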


\begin{proof}
Let $\qSchur'$ be the strict $\Z[q,q^{-1}]$-linear monoidal category defined by the new 
presentation in the statement of the theorem, assuming for clarity that the {\em second} relation in \cref{squareswitch} is the chosen one.
All of the relations of $\qSchur'$ hold in $\qSchur$ 
thanks to \cref{jonsquare2}.
So there is a strict $\Z[q,q^{-1}]$-linear monoidal functor
$F:\qSchur'\rightarrow \qSchur$ taking $(r) \mapsto (r)$
and the generating morphisms for $\qSchur'$ to the morphisms represented by the same diagrams in $\qSchur$. In the next paragraph, we show that $F$ is an isomorphism, proving the theorem for this choice
of square-switch.
The proof of the theorem if one instead chooses the first square-switch relation from
\cref{squareswitch}, i.e., the one that is known to hold in $\qSchur$ by \cref{jonsquare}, is very similar---one simply needs to rotate all calculations in a vertical axis.

To prove that $F$ is an isomorphism, we use the presentation from \cref{mainpres} to
construct a two-sided inverse $G:\qSchur\rightarrow \qSchur'$.
This is defined on objects so that $(r) \mapsto (r)$
and, on generating morphisms, it maps the positive crossing to
\begin{equation}\label{dogsandcats}
\begin{tikzpicture}[anchorbase,scale=1.2]
	\draw[-,line width=1pt] (0.3,-.3) to (-.3,.4);
	\draw[-,line width=4pt,white] (-0.3,-.3) to (.3,.4);
	\draw[-,line width=1pt] (-0.3,-.3) to (.3,.4);
        \node at (0.3,-.42) {$\scriptstyle b$};
        \node at (-0.3,-.42) {$\scriptstyle a$};
\end{tikzpicture} :=
\sum_{s=0}^{\min(a,b)}
(-q)^{s}
\begin{tikzpicture}[anchorbase,scale=1]
	\draw[-,line width=1.2pt] (0,0) to (0,1);
	\draw[-,thick] (-0.8,0) to (-0.8,.2) to (-.03,.4) to (-.03,.6)
        to (-.8,.8) to (-.8,1);
	\draw[-,thin] (-0.82,0) to (-0.82,1);
        \node at (-0.81,-.1) {$\scriptstyle a$};
        \node at (0,-.1) {$\scriptstyle b$};
        \node at (-0.4,.9) {$\scriptstyle b-s$};
        \node at (-0.4,.13) {$\scriptstyle a-s$};
\end{tikzpicture} \in \Hom_{\qSchur'}((a)\star (b) \rightarrow (b)\star (a))
\end{equation}
and the other generating morphisms for $\qSchur$ to the morphisms represented by the same diagrams in $\qSchur'$.
That $G$ is indeed a two-sided inverse of $F$ follows using \cref{tourists}.
It remains to show that $G$ is well defined, which is another relations check.
The relations \cref{secondone,zeroforks} hold in $\qSchur'$ by its definition.
If one or more of $a,b,c$ is zero, the relations \cref{assrel} 
follow easily from \cref{secondone,zeroforks}, so the relations \cref{assrel} also hold in $\qSchur'$ for all $a,b,c\geq 0$.
The first relation from \cref{mergesplit} follows from the chosen
square-switch relation taking $b=0$ and $c=d$.
It remains to show that the second relation from \cref{mergesplit} holds in $\qSchur'$
using only \cref{secondone,zeroforks,assrel} and square-switch.
This is explained in the appendix; see part (a) of the corollary there.
\end{proof}

\section{A straightening formula for codeterminants}\label{s7-basis}

\begin{definition}\label{bqh}
Let $\mathcal{O}$ be a commutative Noetherian ring
and $K = \bigoplus_{\lambda,\mu \in \Comp} 1_\lambda K 1_\mu$
be a locally unital $\mathcal{O}$-algebra
with (mutually orthogonal) distinguished idempotents
$1_\lambda\:(\lambda \in \Comp)$ for some index
set $\Comp$.
We say that $K$ is a {\em based quasi-hereditary algebra} with {\em weight poset} $\Par$
if we are given 
a subset $\Par \subseteq \Comp$,
an upper finite partial order $\leq$ on $\Par$,
and finite sets $X(\lambda,\kappa) \subset 1_\lambda K 1_\kappa$ and $Y(\kappa,\lambda) \subset 1_\kappa K 1_\lambda$ for $\lambda \in \Comp,\kappa \in \Par$, such that the following axioms hold:
\begin{itemize}
\item The products $xy$ for $(x,y) \in \bigcup_{\lambda, \mu \in \Comp}
\bigcup_{\kappa \in \Par}
X(\lambda,\kappa)\times Y(\kappa,\mu)$
give a basis for $K$ as a free $\mathcal{O}$-module.
We refer to this as the {\em triangular basis}.
\item For $\lambda,\mu \in \Par$, 
we have that 
$X(\lambda,\mu)\neq\varnothing\Rightarrow \lambda \leq \mu$,
$Y(\lambda,\mu)\neq\varnothing\Rightarrow \lambda \geq \mu$, and
$X(\lambda,\lambda) = Y(\lambda,\lambda) = \{1_\lambda\}$.
\end{itemize}
We say that it is a {\em symmetrically-based quasi-hereditary algebra} if in addition there is an algebra anti-involution $\T:K \rightarrow K$
such that 
$Y(\kappa,\lambda) = \T (X(\lambda,\kappa))$
for all $\lambda \in \Comp$ and $\kappa\in\Par$
(in this case, there is no need to specify $Y(\kappa,\lambda)$ in the first place).
\end{definition}

\begin{remark}
When $\mathcal{O}$ is a field, \cref{bqh} is \cite[Def.~5.1]{BS}.
When the set $\Comp$ is finite, it
is a simplified version of the definition of based quasi-hereditary algebra given in \cite{KM}.
In that case, as explained in detail in \cite{KM}, 
$K$ is also a standardly full-based algebra in the sense of \cite{DR}, and a split quasi-hereditary algebra in the sense of \cite{CPS2}.
In the symmetrically-based case, 
$K$ is a cellular algebra in the sense of \cite{GL},
and
when $K$ is the path algebra of an $\mathcal{O}$-linear category
$\cC$ with object set $\Comp$,
\cref{bqh} is equivalent to
$\cC$ being a strictly object-adapted cellular category
in the sense of \cite[Def.~2.1]{EL} (the opposite partial order is used there). The
far-reaching consequences for the representation theory of $K$
 are well known, and are discussed in these references.
\end{remark}

For the remainder of the section, $\schur$ is the path algebra
\begin{equation}\label{norfolk}
\schur := \bigoplus_{\lambda,\mu \in \Lambda} 
\Hom_{\qSchur}(\mu,\lambda)
\end{equation}
of the $q$-Schur category with $0$-strings. This is a locally unital
$\Z[q,q^{-1}]$-algebra with the distinguished system $\{1_\lambda\:|\:\lambda \in \Lambda\}$
of mutually orthogonal idempotents coming from the identity endomorphisms of the objects of $\qSchur$.
Recall the set $\Row(\lambda,\mu)$ of {\em row tableaux} of shape $\mu$ and 
content $\lambda$ from \cref{s2-combinatorics}, and the bijection
$A:\Row(\lambda,\mu)\stackrel{\sim}{\rightarrow} \Mat{\lambda}{\mu}$ from \cref{Amap}.
We start now to index the standard and canonical bases by the sets $\Row(\lambda,\mu)$ instead of $\Mat{\lambda}{\mu}$,
introducing the shorthands
\begin{align}\label{shorthands}
\varphi_P &:= \xi_{A(P)},&
\beta_P &:= \theta_{A(P)}
\end{align}
for $P \in \Row(\lambda,\mu)$.
For a partition $\kappa$,
let $\Std(\lambda,\kappa)$ be the usual set of
{\em semistandard tableau of
shape $\kappa$ and content $\lambda$}, that is, is the subset of $\Row(\lambda,\kappa)$
consisting of the row tableaux of shape $\kappa$ and 
content $\lambda$
whose entries are also strictly increasing down columns.

\begin{lemma}\label{cella} 
For $\lambda,\mu \vDash r$,
the $\Z[q,q^{-1}]$-module $1_\lambda \schur 1_\mu$ 
is spanned by the
products $\varphi_{P}\T(\varphi_{Q})$
for $P \in \Row(\lambda,\kappa), Q
\in \Row(\mu,\kappa)$, where $\kappa$ is the dominant conjugate of $\mu$.
\end{lemma}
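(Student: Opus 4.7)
The plan is to produce an invertible morphism $\tau\in\Hom_{\qSchur}(\mu,\kappa)=1_\kappa\schur 1_\mu$ that coincides with $\T(\varphi_{Q_0})$ for some $Q_0\in\Row(\mu,\kappa)$; once such a $\tau$ is in hand, the lemma is immediate. For any $A\in\Mat{\lambda}{\mu}$ we can write $\xi_A=(\xi_A\circ\tau^{-1})\circ\tau$, and $\xi_A\circ\tau^{-1}\in 1_\lambda\schur 1_\kappa$ expands as a $\Z[q,q^{-1}]$-linear combination of the standard basis $\{\varphi_P\:|\:P\in\Row(\lambda,\kappa)\}$ of $1_\lambda\schur 1_\kappa$ (this reindexing of the standard basis by row tableaux comes from \cref{shorthands} together with the bijection \cref{Amap}), giving the claimed span.

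When $\mu$ has no zero parts, so $\ell(\mu)=\ell(\kappa)$, the construction of $\tau$ is explicit. I pick $w\in S_{\ell(\mu)}$ with $\mu\cdot w^{-1}=\kappa$ (possible since $\kappa$ is the decreasing reordering of $\mu$) and set $\tau:=\tau_{w;\mu}$ as in \cref{genperm}. Iterating \cref{chambers}, this is a composition of simple permutation morphisms, each tensored up from a positive crossing; since every positive crossing is invertible by \cref{pigs}, so is $\tau$. To identify $\tau$ as $\T(\varphi_{Q_0})$, note via \cref{genperm} that $\tau=\xi_N$ with $N\in\Mat{\kappa}{\mu}$ having $N_{w(j),j}=\mu_j$ and zeros elsewhere; taking $Q_0$ to be the row tableau whose $i$-th row consists of $\kappa_i$ copies of $w^{-1}(i)$ yields $A(Q_0)=N^\tr$, whence $\T(\varphi_{Q_0})=\xi_{A(Q_0)^\tr}=\xi_N=\tau$.

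The main obstacle is the case where $\mu$ has zero parts, since then $\ell(\mu)>\ell(\kappa)$ and the recipe from \cref{genperm} does not directly apply. Here I would set $\tau:=\xi_N$ for the generalized permutation matrix $N\in\Mat{\kappa}{\mu}$ defined by $N_{i,f(i)}=\kappa_i$ for an injection $f:\{1,\dots,\ell(\kappa)\}\hookrightarrow\{1,\dots,\ell(\mu)\}$ satisfying $\mu_{f(i)}=\kappa_i$. This $\tau$ is invertible in $\schur$ because its double coset diagram decomposes (modulo isotopy) into positive crossings, each invertible by \cref{pigs}, and dot morphisms on zero-thickness strings, each invertible by \cref{secondone}. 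Defining $Q_0\in\Row(\mu,\kappa)$ to have its $i$-th row equal to $\kappa_i$ copies of $f(i)$ then gives $A(Q_0)=N^\tr$ and $\T(\varphi_{Q_0})=\tau$, and the argument of the first paragraph closes the proof. The only real technical subtlety here is bookkeeping how zero-thickness strings thread through the crossings when one writes $\xi_N$ as a product of generators, but this introduces no new conceptual difficulties.
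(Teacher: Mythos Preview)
Your argument is correct and follows the same strategy as the paper: exhibit an isomorphism $\mu\cong\kappa$ and factor every morphism $\mu\to\lambda$ through $\kappa$. The paper's version is slightly more economical in that it does not bother to identify the isomorphism as a single $\T(\varphi_{Q_0})$; it simply observes that $\{\T(\varphi_Q)\mid Q\in\Row(\mu,\kappa)\}$ is a basis of $1_\kappa\schur 1_\mu$ (being the image of the standard basis under the linear anti-involution $\T$), so after factoring one can expand \emph{both} factors in standard bases and obtain the desired span directly---your extra work pinning down $Q_0$ is not needed.
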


\begin{proof}
The dominant conjugate $\kappa$ of $\mu$ 
is the unique partition
whose parts are a permutation of 
the non-zero
parts of $\mu$.
Using a 
morphism of the form 
$\tau_{w;\mu}$ from \cref{genperm}, we deduce 
$\mu \cong \kappa$ in $\qSchur$.
Consequently, any 
element of $1_\lambda \schur 1_\mu = \Hom_{\qSchur}(\mu,\lambda)$ is a morphism
which factors through
$\kappa$.
Since the morphisms $\varphi_{P}$ for $P \in \Row(\lambda,\kappa)$ 
give the standard basis for $1_\lambda \schur 1_\kappa = \Hom_{\qSchur}(\kappa,\lambda)$
and the morphisms $\T(\varphi_Q)$ for $Q \in \Row(\mu,\kappa)$
give
the standard basis for $1_\kappa \schur 1_\mu = \Hom_{\qSchur}(\mu,\kappa)$, we deduce
that the products $\varphi_{P}\T(\varphi_Q)$ span
$1_\lambda \schur 1_\mu$.
\end{proof}

Now we come to the main combinatorial lemma.
To formulate it, we use certain lexicographic total orders on 
tableaux and partitions.
On partitions, $\geq_{\operatorname{lex}}$ is just the usual lexicographical ordering; it is
a refinement of the dominance ordering on partitions into a
total order. To define the required ordering 
$\leq_{\operatorname{lex}}$ 
on tableaux of the same shape, given any tableau $T$, we let $\ZigzagDownRev(T)$
be the sequence obtained by reading its entries in order from right
to left along rows, starting with the top row.
Then we declare that $S \leq_{\operatorname{lex}} T$ if and only if
$\ZigzagDownRev(S) \leq_{\operatorname{lex}} \ZigzagDownRev(T)$ in the lexicographic
ordering on sequences.

\begin{lemma}\label{cellb}
For $\lambda \vDash r$, $\kappa \vdash r$ and $P \in
\Row(\lambda,\kappa)$ which is {\em not} semistandard,
$\varphi_{P}$ can be written as a $\Z[q,q^{-1}]$-linear combination of the
following elements:
\begin{itemize}
\item
$\varphi_{S}$ for $S \in
\Row(\lambda,\kappa)$
with $S <_{\operatorname{lex}} P$;
\item 
$\varphi_{P'}\T(\varphi_{Q'})$
for $P' \in \Row(\lambda,\kappa')$ and $Q' \in \Row(\kappa,\kappa')$
of shape $\kappa' \vdash r$ with $\kappa' >_{\operatorname{lex}} \kappa$.
\end{itemize}
\end{lemma}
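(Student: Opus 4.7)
\emph{Proof strategy.}
The plan is to expose a violation of the semistandard condition in $P$, use the factorization from \cref{ged} to isolate the offending pair of adjacent thick strings at the bottom of the double coset diagram, and then apply one of the square-switch relations from \cref{squareswitch}. Since $P$ is not semistandard, there exist a column index $j < \ell(\kappa)$ and a row index $i$ with
\[
\textstyle \sum_{k \leq i} a_{k, j+1} > \sum_{k < i} a_{k,j},
\]
where $A(P) = (a_{k,l})$. One should choose $(j, i)$ lex-extremally: first $j$ maximal (the rightmost violating column), and then $i$ minimal. Set $a = \sum_{k < i} a_{k,j}$, $b = \kappa_j - a$, $c = \sum_{k \leq i} a_{k, j+1}$, and $d = \kappa_{j+1} - c$; the violation then reads $c > a$.

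Applying \cref{ged}(b) twice, to columns $j$ and $j+1$ of $A(P)$, yields
\[
\varphi_P = \xi_{B} \circ \bigl(1_{(\kappa_1,\ldots,\kappa_{j-1})} \star \xi_{\left[\begin{smallmatrix}a\\b\end{smallmatrix}\right]} \star \xi_{\left[\begin{smallmatrix}c\\d\end{smallmatrix}\right]} \star 1_{(\kappa_{j+2},\ldots,\kappa_{\ell(\kappa)})}\bigr),
\]
where $B$ is obtained from $A(P)$ by splitting columns $j$ and $j+1$ into pairs summing to $(a,b)$ and $(c,d)$ respectively. Further applications of \cref{ged}(a) inside $\xi_B$ expose the two middle strands of thicknesses $b$ and $c$ as part of an H-shaped merge--split subdiagram that matches the left-hand side of one of the relations in \cref{squareswitch}. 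Substituting then rewrites $\varphi_P$ as a sum over $s$ of morphisms built from a crossing of thickness $s$ in place of the H-shape.

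Finally, each term of the resulting sum is analysed. The $s=0$ term recombines with the outer splits and merges to give $\varphi_{P_0}$ for a row tableau $P_0$ of shape $\kappa$ that differs from $P$ by transferring a block of entries between columns $j$ and $j+1$; the extremal choice of $(j, i)$ forces $P_0 <_{\operatorname{lex}} P$, since the modification happens as far to the right as possible within the reading word $\ZigzagDownRev(P)$. Each $s \geq 1$ term factors through an intermediate composition whose dominant rearrangement $\kappa' \vdash r$ is strictly lex-larger than $\kappa$, so by \cref{cella} it can be rewritten as $\varphi_{P'} \T(\varphi_{Q'})$ for some $P' \in \Row(\lambda,\kappa')$ and $Q' \in \Row(\kappa,\kappa')$. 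The main obstacle will be the combinatorial bookkeeping in this last step: for each $s \geq 1$ one must identify the intermediate shape explicitly and confirm $\kappa' >_{\operatorname{lex}} \kappa$, and the extremal choice of $(j,i)$ is precisely what rules out $\kappa' \leq_{\operatorname{lex}} \kappa$; any remaining $\varphi_{S}$ terms of the same shape that are themselves not semistandard are then handled by induction on the lex order, which terminates as $\Row(\lambda,\kappa)$ is finite.
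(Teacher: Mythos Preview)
Your proposal has a genuine gap at the step where you claim that further applications of \cref{ged}(a) inside $\xi_B$ ``expose the two middle strands of thicknesses $b$ and $c$ as part of an H-shaped merge--split subdiagram that matches the left-hand side of one of the relations in \cref{squareswitch}.'' After the two column splits via \cref{ged}(b), you have four strands of thicknesses $a,b,c,d$ at the bottom of $\xi_B$, and \cref{ged}(a) only factors off \emph{merges} at the top. The double coset diagram of $\xi_B$ consists of splits at the bottom, a reduced permutation in the middle, and merges at the top; there is no merge-then-split (or split-then-merge) subdiagram of the form appearing on the left of \cref{squareswitch}. Your four quantities $a,b,c,d$ are thicknesses of four distinct strands, whereas the $a,b,c,d$ in \cref{squareswitch} label two strands together with the thicknesses of two rungs between them; these do not match up. Consequently the analysis of the $s=0$ and $s\geq 1$ terms never gets off the ground, and in particular the claim that each $s\geq 1$ term automatically factors through a composition with lex-larger dominant rearrangement is unjustified.

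The paper's argument proceeds in the opposite direction and avoids any diagrammatic rewriting of $\varphi_P$ itself. It explicitly constructs a tableau $U$ of a shape $\mu$ (obtained from $\kappa$ by replacing the two offending rows by three rows) together with a specific $V\in\Row(\mu,\kappa)$, and then computes the product $\varphi_U\varphi_V$ directly from the structure constants \cref{schurs}, i.e.\ by expanding $\Delta(x_S)$ in the normally-ordered monomial basis of $\A_q(n)$. The outcome is that $\varphi_U\varphi_V=\varphi_P+\sum_{S<_{\mathrm{lex}}P} g_S\,\varphi_S$, so that $\varphi_P=\varphi_U\varphi_V-(\text{lex-smaller terms})$. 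Since $\mu$ has its first $r-1$ parts equal to $\kappa_1,\dots,\kappa_{r-1}$ and another part strictly larger than $\kappa_r$, its dominant conjugate is lex-larger than $\kappa$, and \cref{cella} then handles $\varphi_U\varphi_V$. No square-switch or H-shape is used; the heavy lifting is a direct coefficient calculation in $\A_q(n)$. If you want to salvage a diagrammatic approach, the correct object to study is a morphism like the paper's $\varphi_V$ (two splits followed by a middle merge, with no crossing), not a hypothetical H-shape inside $\varphi_P$.
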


\begin{proof}
Take $P$ as in the statement.
Since $P$ is not semistandard,
we may choose $a\geq 1$ and
$0 \leq m < n \leq \kappa_{a+1}$
so that the entries of $P$ in rows $a$ and $(a+1)$ look like
$$
\begin{array}{ccccccccccccccccc}
i_1 &\leq& \cdots&\leq &i_{m}& < &i_{m+1} &\leq &\cdots &\leq &i_{n}&\leq &i_{n+1}&\leq&\cdots&\leq &i_{\kappa_a}\\
&&&&&&\veebar&&&&\\
j_1 &\leq& \cdots&\leq &j_{m}& \leq &j_{m+1} &
=&\cdots &= &j_{n} &< &j_{n+1} &\leq&\cdots &\leq &j_{\kappa_{a+1}}.\\
\end{array}
$$
Let $U$ be the row tableau which is
identical to $P$ everywhere except in rows $a$ and $(a+1)$,
which are replaced by {\em three} (possibly empty) rows
as in the diagram:
$$
\begin{array}{lllllllllll}
i_1 &\leq& \cdots&\leq &i_{m}\\
j_1 &\leq &\cdots &\leq &j_n &\leq &i_{m+1} &\leq& \cdots& \leq i_{\kappa_a}\\
j_{n+1} &\leq &\cdots &\leq &j_{\kappa_{a+1}}.
\end{array}
$$
Let $\mu$ be the shape of the tableau $U$.
Let $V$
be the row tableau of shape $\kappa$ and content $\mu$ with all entries on row $b$ equal to $b$ for $b < a$, 
entries $a^{m}\ (a+1)^{\kappa_{a}-m}$ 
on row $a$,
entries $(a+1)^{n}\ (a+2)^{\kappa_{a+1}-n}$ 
on row
$a+1$, and all entries on
row $b$ equal to $b+1$ for $b > a+1$.
Expanding in terms of the standard basis, we have that
\begin{equation}\label{atlantic}
\varphi_{U} \varphi_{V}
=
\sum_{S \in \Row(\lambda,\kappa)} g_S \varphi_{S}
\end{equation}
for coefficients $g_S \in \Z[q,q^{-1}]$. 
We claim that $g_S = 0$
unless $S \leq_{\operatorname{lex}} P$ and that
$g_P=1$.
This suffices to prove the lemma. 
Indeed, assuming the claim, 
we rearrange \cref{atlantic} to obtain
$$
\varphi_{P} = \varphi_{U} \varphi_{V}
- 
\sum_{S <_{\operatorname{lex}} P} g_S \varphi_{S}.
$$
The second term on the right hand side is already of the desired form.
To understand the first term, note that
the first $(a-1)$ rows of $U$ are of lengths
$\kappa_1,\dots,\kappa_{a-1}$, 
and it also has a row of length
$\kappa_a+n-m > \kappa_a$.
Consequently, the dominant conjugate of the shape $\mu$ of $U$ is greater than $\kappa$
in the ordering $>_{\mathrm{lex}}$.
So, by \cref{cella}, the first term can be rewritten as a sum $\varphi_{P'}\T(\varphi_{Q'})$
for row tableaux $P', Q'$ of dominant shape $\kappa' >_{\operatorname{lex}} \kappa$.
This is also of the desired form.

It just remains to prove the claim. 
Take $S \in \Row(\lambda,\kappa)$.
Recalling that 
$x_S = x_{\zigzag(S), \bi^\kappa}$ and $x_U = x_{\zigzag(U), \bi^\mu}$, the definition of multiplication in $\schur$ gives that
$g_S$ is the $x_{\zigzag(U), \bi^\mu} \otimes x_{\zigzag(V),
  \bi^\kappa}$-coefficient
of 
$$
\Delta(x_{\zigzag(S), \bi^\kappa})
= 
\sum_{\bk \in \I_\mu} x_{\zigzag(S), \bk} \otimes x_{\bk,\bi^\kappa}
$$
when expanded in terms of the normally-ordered monomial basis.
To straighten $x_{\bk, \bi^\kappa}$ into normal order, we only need
the fourth relation from \cref{r1}, and see that this coefficient is non-zero if and only if 
$\bk = \ZigzagDownRev(R)$ for a 
tableau $R$ of shape $\kappa$
(not necessarily a row tableau)
that is obtained from $V$ by shuffling
entries within rows $a$ and $(a+1)$.
Moreover, the coefficient is $1$ in the case that $R = V$.
To complete the proof, we show for such a tableau $R$ that
the $x_{\zigzag(U), \bi^\mu}$-coefficient
of $x_{\zigzag(S), \zigzag(R)}$ is zero unless $S
\leq_{\operatorname{lex}} P$, it is 1 if $S = P$ and $R
= V$, and it is zero if $S = P$ and $R \neq V$.
Suppose the entries in rows $a$ and $(a+1)$ 
of $S$ are
$$
\begin{array}{lllllll}
i_1' &\leq&\cdots&\leq& i_{\kappa_a}'\\
j_1' &\leq&\cdots&\leq& j_{\kappa_{a+1}}'.
\end{array}
$$
In order to convert the monomial $x_{\zigzag(S), \zigzag(R)}$ into
normal order, we must apply the relations to commute products
of the form $x_{i_c',a+1} x_{i_b',a}$ for $1 \leq b < c
\leq \kappa_a$ or $x_{j_c',a+2} x_{j_b',a+1}$ for $1 \leq b < c \leq
\kappa_{a+1}$.
This can be done using the second and third relations from \cref{r1}.
We deduce that $$
x_{\zigzag(S), \zigzag(R)}
= 
\sum_{\substack{v \in (S_{\kappa_r} / S_m \times
S_{\kappa_a-m})_{\operatorname{min}}\\
w \in (S_{\kappa_{a+1}} / S_n \times
S_{\kappa_{a+1}-n})_{\operatorname{min}}}}
g_{v,w} 
x_{\zigzag(T_{v,w}),\bi^\mu}
$$
for some scalars $g_{v,w} \in \Z[q,q^{-1}]$
with $g_{1,1} = \delta_{R,V}$,
where 
$T_{v,w}$ is the tableau of shape $\mu$
obtained from $S$ by replacing its rows $a$ and $(a+1)$
by three
rows according to the diagram:
$$
\begin{array}{lllllllllll}
i'_{v(1)} &\leq& \cdots&\leq &i'_{v(m)}\\
j'_{w(1)} &\leq &\cdots &\leq &j'_{w(n)} &i'_{v(m+1)} &\leq& \cdots& \leq i'_{v(\kappa_a)}\\
j'_{w(n+1)} &\leq &\cdots &\leq &j'_{w(\kappa_{a+1})}.
\end{array}
$$
In particular, if $S = P$ then $T_{1,1}=U$.
Using the fourth relation, the
$x_{\zigzag(U),\bi^\mu}$-coefficient of $x_{\zigzag(T_{v,w}),\bi^\mu}$
is non-zero if and only if 
$T_{v,w}\sim_{\operatorname{row}} U$, i.e., they have the same entries in
each row counted with multiplicity, and the coefficient is $1$ if $T_{v,w}=U$.
Now it remains to check that 
\begin{itemize}
\item
$T_{v,w} \sim_{\operatorname{row}}U\Rightarrow S
\leq_{\operatorname{lex}} P$;
\item
$T_{v,w} \sim_{\operatorname{row}}U$ and $S = P
\Rightarrow (v,w)=(1,1)$.
\end{itemize}
To see this, suppose that $T_{v,w} \sim_{\operatorname{row}} U$. All rows of $S$
are clearly equal to the corresponding rows of $P$ except perhaps for rows $a$ and $(a+1)$.
Also the sequences $i'_{v(1)} \leq \cdots \leq i'_{v(m)}$ 
and $j_{w(n+1)}' \leq \cdots \leq j'_{w(\kappa_{a+1})}$
are equal to $i_1 \leq \cdots\leq i_m$ and $j_{n+1} \leq \cdots \leq
j_{\kappa_{a+1}}$, respectively.
So the $a$th row of $S$ is obtained by taking all of the entries in
the $a$th row
of $U$ together with $\kappa_a - m$ entries from row $(a+1)$,
and row $(a+1)$ of $S$ is obtained by taking all of the
remaining entries from row $(a+1)$ of $U$ plus all of the
entries in row $(a+2)$. It follows that $S
\leq_{\operatorname{lex}} P$. Moreover, if $S = P$, then 
we must have that $v = w = 1$ due to the assumptions that $i_m < i_{m+1}$ and
$j_n < j_{n+1}$.
\end{proof}

\begin{theorem}\label{secrets}
The path algebra $\schur = \bigoplus_{\lambda,\mu \in \Lambda} 1_\lambda \schur 1_\mu$ of $\qSchur$ is a symmetrically-based quasi-hereditary algebra.
The required data from \cref{bqh} is as follows:
\begin{itemize}
\item
The weight poset is the set $\Par \subset \Comp$ of  partitions ordered by the dominance ordering.
\item
The anti-involution
$\T:\schur\rightarrow\schur$ is the transposition map arising from \cref{starinv}.
\item
$X(\lambda,\kappa)
=\{\varphi_P\:|\: P \in \Std(\lambda,\kappa)\}$.
\end{itemize}
In particular, for $\lambda,\mu \vDash r$, the {\em codeterminants}
\begin{equation}\label{greenbasis}
\left\{\varphi_{P}\T(\varphi_Q)\:\big|\:\kappa \vdash r,
P \in \Std(\lambda,\kappa),
Q \in \Std(\mu,\kappa)
\right\}
\end{equation}
give a basis for $1_\lambda \schur 1_\mu$ as a free $\Z[q,q^{-1}]$-module.
\end{theorem}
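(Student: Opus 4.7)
The plan is to first show the codeterminants span $1_\lambda\schur 1_\mu$, then conclude they form a basis by a rank count using the classical Robinson-Schensted-Knuth correspondence, and finally to check the remaining axioms of \cref{bqh}. The RSK correspondence provides a bijection between $\I_\lambda\times\I_\mu/S_r$ and $\bigsqcup_{\kappa\vdash r}\Std(\lambda,\kappa)\times\Std(\mu,\kappa)$, so $|\Mat{\lambda}{\mu}|$ equals the total number of codeterminants in \cref{greenbasis}. Since $1_\lambda\schur 1_\mu$ is free of rank $|\Mat{\lambda}{\mu}|$ by construction and a surjection between free modules of equal finite rank over a commutative ring is an isomorphism, it suffices to establish the spanning property.

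For the spanning argument, I combine \cref{cella,cellb} in a double induction. Starting from \cref{cella}, every element of $1_\lambda\schur 1_\mu$ is expressible as a $\Z[q,q^{-1}]$-linear combination of products $\varphi_P \T(\varphi_Q)$ with $P\in\Row(\lambda,\kappa_0), Q\in\Row(\mu,\kappa_0)$ for $\kappa_0$ the dominant conjugate of $\mu$. I then perform an outer induction on $\kappa\vdash r$ in decreasing $\leq_{\operatorname{lex}}$ order (well-founded on the finite set of partitions of $r$), with base case $\kappa=(r)$ where every row tableau is trivially semistandard. Within each shape, I do an inner induction on $(P,Q)$ in decreasing $\leq_{\operatorname{lex}}$ order. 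If $P$ is not semistandard, apply \cref{cellb} to $\varphi_P$ and multiply on the right by $\T(\varphi_Q)$: terms of the form $\varphi_S\T(\varphi_Q)$ with $S<_{\operatorname{lex}} P$ of the same shape are absorbed by the inner induction, while terms $\varphi_{P'}\T(\varphi_{Q'})\T(\varphi_Q)=\varphi_{P'}\T(\varphi_Q\varphi_{Q'})$ land in $1_\lambda\schur 1_{\kappa'}$ with $\kappa'>_{\operatorname{lex}}\kappa$. Since $\kappa'$ is already a partition (so is its own dominant conjugate) and $\Row(\kappa',\kappa')$ contains only the tableau with $i$'s along row $i$ (whose associated $\varphi$ equals $1_{\kappa'}$), \cref{cella} reduces $\varphi_Q\varphi_{Q'}$ to a $\Z[q,q^{-1}]$-linear combination of $\varphi_{R''}$ for $R''\in\Row(\mu,\kappa')$, putting the terms back into the form $\varphi_{P'}\T(\varphi_{R''})$ of shape $\kappa'$, which the outer induction handles. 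The case where $P$ is semistandard but $Q$ is not is symmetric: apply $\T$ to the expression, use the same argument, then apply $\T$ again, exploiting that $\T$ is an anti-involution taking $\varphi_P\T(\varphi_Q)$ to $\varphi_Q\T(\varphi_P)$.

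Once the basis is in hand, the axioms of \cref{bqh} are straightforward to verify. The weight poset $\Par$ is upper-finite in the dominance order since partitions are only comparable when of equal size. The anti-involution axiom $Y(\kappa,\lambda)=\T(X(\lambda,\kappa))$ is built into the definitions, using $\T(\varphi_P)=\xi_{A(P)^\tr}$. For $\lambda,\kappa\in\Par$, the non-emptiness of $\Std(\lambda,\kappa)$ forces $\lambda\leq\kappa$ in dominance, as entries $\leq k$ in any $P\in\Std(\lambda,\kappa)$ can occupy only the top $k$ rows. Finally, $\Std(\lambda,\lambda)=\{T_\lambda\}$ for $T_\lambda$ the tableau with all $i$'s in row $i$, and $\varphi_{T_\lambda}=\xi_{\diag(\lambda_1,\dots,\lambda_{\ell(\lambda)})}=1_\lambda$, giving $X(\lambda,\lambda)=\{1_\lambda\}$.

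The principal obstacle I foresee is the choreography of the spanning induction: ensuring that the second type of term produced by \cref{cellb} can actually be re-expressed as a product $\varphi_{P'}\T(\varphi_{R''})$ of the required form before invoking the outer induction on shape. This step crucially exploits the fact that $\Row(\kappa',\kappa')$ is a singleton when $\kappa'$ is a partition, so the intermediate factor coming out of \cref{cella} collapses to the identity and the shape strictly increases in $\leq_{\operatorname{lex}}$, closing the induction.
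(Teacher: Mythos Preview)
Your approach is essentially the same as the paper's: first show the codeterminants span using \cref{cella,cellb} by induction on the lexicographic orders, then conclude basis by the RSK count, then verify the remaining axioms. The paper runs the same induction and invokes $\T$ for the case when $Q$ is not semistandard, just as you do.

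There is one genuine slip in your justification of the ``second type'' terms. You assert that $\Row(\kappa',\kappa')$ is a singleton when $\kappa'$ is a partition, and you use this together with \cref{cella} to conclude that $\varphi_Q\varphi_{Q'}$ is a $\Z[q,q^{-1}]$-linear combination of $\varphi_{R''}$ with $R''\in\Row(\mu,\kappa')$. The singleton claim is false: for instance if $\kappa'=(2,2)$ then $\Row(\kappa',\kappa')$ has three elements (the condition on row tableaux is only that rows be weakly increasing, so the tableau with rows $1\,2$ and $1\,2$, or with rows $2\,2$ and $1\,1$, are also allowed). What is true is that $\Std(\kappa',\kappa')$ is a singleton, which is the statement you need in the axiom check but not here.

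Fortunately the conclusion you want at this step is immediate for a simpler reason, and this is exactly what the paper does: $\varphi_Q\varphi_{Q'}\in 1_\mu\schur 1_{\kappa'}$, and by construction of $\qSchur$ the morphism space $1_\mu\schur 1_{\kappa'}$ has $\{\varphi_R\:|\:R\in\Row(\mu,\kappa')\}$ as its standard basis. So you can expand $\varphi_Q\varphi_{Q'}=\sum_R c_R\varphi_R$ directly, with no appeal to \cref{cella} and no need for the singleton claim. Once you make this replacement your argument goes through and coincides with the paper's.
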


\begin{proof}
The second axiom follows because there is a unique
semistandard tableau of shape and content $\kappa$, and there only exist
semistandard tableaux of shape $\kappa'$ and content $\kappa$ if $\kappa \leq \kappa'$.
It just remains to check for $\lambda,\mu\vDash r$ that the set \cref{greenbasis},
is a basis for $1_\lambda \schur 1_\mu$ as a free $\Z[q,q^{-1}]$-module.
By the original definition,
$1_\lambda \schur 1_\mu$ is a free
$\Z[q,q^{-1}]$-module with basis labelled by $\Mat{\lambda}{\mu}$.
It is well known that $|\Mat{\lambda}{\mu}| = \sum_{\kappa \vdash r}|\Std(\lambda,\kappa)\times\Std(\mu,\kappa)|$, e.g., this follows from the Robinson-Schensted-Knuth-type correspondence in \cref{birds} below.
So the set \cref{greenbasis} is of size $ \leq \operatorname{rank} 1_\lambda \schur 1_\mu$.
It remains to show that the set \cref{greenbasis} spans
$1_\lambda \schur 1_\mu$ as a $\Z[q,q^{-1}]$-module.

By \cref{cella},
the elements
$\varphi_P\T(\varphi_Q)$ 
for $P \in \Row(\lambda,\kappa), Q \in
\Row(\mu,\kappa)$ and $\kappa \vdash r$ span $1_\lambda \schur 1_\mu$.
To complete the proof, we show by induction on the
lexicographic orderings that any such 
$\varphi_P\T(\varphi_Q)$ can be written as a $\Z[q,q^{-1}]$-linear combination
of $\varphi_{P'}\T(\varphi_{Q'})$ such that either $P' \in \Std(\lambda,\kappa),
Q' \in \Std(\mu,\kappa)$ with $P' \leq_{\operatorname{lex}} P, Q'
\leq_{\operatorname{lex}} Q$, or $P' \in \Std(\lambda,\kappa'), Q' \in
\Std(\mu,\kappa')$ for $\kappa' >_{\operatorname{lex}} \kappa$.
Applying $\T$ if necessary, we may assume that $P$ is not semistandard.
Applying \cref{cellb}, we see that $\varphi_P\T(\varphi_Q)$
is a linear combination of elements
$\varphi_S\T(\varphi_Q)$ for $S \in \Row(\lambda,\kappa)$ with $S <_{\operatorname{lex}} P$,
and $\varphi_{P'} \T(\varphi_{Q'}) \T(\varphi_{Q})
= \varphi_{P'}\T(\varphi_Q \varphi_{Q'})$ 
with $P'$ of shape $\kappa' >_{\operatorname{lex}} \kappa$.
Both types of elements can then be expanded into the
required form by induction;
for the second type, one first expands
$\varphi_Q \varphi_{Q'}$ as a sum of terms $\varphi_R$ for
$R \in \Row(\mu,\kappa')$, then applies $\T$ to obtain a linear combination of
$\varphi_{P'}\T(\varphi_R)$'s, before invoking the induction hypothesis.
\end{proof}

\begin{remark}\label{fromsmallerones}
Let us explain how the canonical basis fits into this picture.
  In \cite[$\S$5.3]{DR}, one finds a Robinson-Schensted-Knuth-type
  correspondence giving a bijection
  \begin{align}\label{birds}
    \Mat{\lambda}{\mu} &\stackrel{\sim}{\rightarrow} \bigcup_{\kappa \in \Par} \Std(\lambda,\kappa)\times\Std(\mu,\kappa),
    &
    A &\mapsto (P(A), Q(A)),
  \end{align}
  which we explain more fully shortly.
  Also let $\kappa(A)$ be the common shape of the tableaux $P(A)$ and $Q(A)$ and recall \cref{shorthands}.
  Then
\cite[Th.~5.3.3]{DR} can be reformulated as follows:

\vspace{2mm}
\noindent
{\bf Theorem.} {\em
  The path algebra $\schur$ of $\qSchur$ has another triangular basis
  \begin{equation}\label{otherbasis}
  \big\{\beta_P \T(\beta_Q)
\:  \big|\:\textstyle(P,Q) \in \bigcup_{\lambda,\mu\in\Comp,\kappa \in \Par} \Std(\lambda,\kappa)\times\Std(\mu,\kappa)
\big\}\end{equation}
making it a symmetrically-based quasi-hereditary algebra
with $X(\lambda,\kappa) = \{\beta_P\:|\:P \in \Std(\lambda,\kappa)\}$ and all other data as in \cref{secrets}.
  Moreover, for $A \in \Mat{\lambda}{\mu}$ 
  we have that \begin{equation}
  \theta_A \equiv
  \beta_{P(A)}\T(\beta_{Q(A)})
  \pmod{\textstyle\sum_{B \in \Mat{\lambda}{\mu}\text{ with }\kappa(B) > \kappa(A)} \Z[q,q^{-1}] \theta_B}.
  \end{equation} 
  So the canonical basis is a cellular basis 
  which is equivalent to the triangular basis
  \cref{otherbasis}, i.e., it defines the same two-sided cell ideals and induces the same basis in each two-sided cell.}

  \vspace{2mm}

  
  \noindent
 To define the map \cref{birds} explicitly,
 take $A \in \Mat{\lambda}{\mu}$
 corresponding to $R \in \Row(\lambda,\mu)$ under the bijection \cref{Amap}.
 Let $\bi = (i_1,\dots,i_r) \in I_\lambda$ be the
 sequence $\ZigzagDownRev(R)$.
 Then we use {\em column insertion}\footnote{We mean the following algorithm to insert $i$ into a semistandard tableau:
 start with the first column; 
 if $i$ is bigger than all entries in the column then we add $i$ to the bottom of that column and stop; otherwise, we find the smallest entry $j$ in the column that is greater than or equal to $i$, replace that entry by $i$, then repeat to insert $j$ into the next column to the right.}
 to insert
 $i_1, \dots, i_r$ in order into the empty tableau,
 to end up with a semistandard tableau $P(A) \in \Std(\lambda,\kappa)$ for some $\kappa\vdash r$.
 We also obtain
 another semistandard tableau $Q(A) \in \Std(\mu,\kappa)$, namely, the {\em recording tableau}
 defined so that the entry of the 
 box that gets added  
 at the $r$th step of the algorithm is $i^\mu_r$.
 This concise description of the map \cref{birds} is equivalent to the more complicated description in \cite[$\S$5.3]{DR}. It takes some combinatorial work (omitted here) to establish the equivalence. 
For example, suppose that $A = \left[\begin{smallmatrix}1&0&2\\0&1&0\end{smallmatrix}\right]$,
  $\lambda = (3,1)$ and $\mu = (1,1,2)$.
  Then
$\bi = (1,2,1,1)$ and $\bi^\mu = (1,2,3,3)$.
Column insertion of the sequence $\bi$ gives
$\varnothing\stackrel{1}{\rightarrow} \begin{tikzpicture}[anchorbase,scale=1.2]
   \draw (0,.5) to (.25,.5);
    \draw (0,.25) to (.25,.25);
 \draw (0.25,.25) to (.25,.5);
 \draw (0,.25) to (0,.5);
   \node at (0.125,0.375) {$\scriptstyle 1$}; 
\end{tikzpicture}\stackrel{2}{\rightarrow}\begin{tikzpicture}[baseline=3.5mm,scale=1.2]
   \draw (0,.25) to (.25,.25);
   \draw (0,.5) to (.25,.5);
       \draw (0,0) to (.25,0);
 \draw (0.25,0) to (0.25,.5);
 \draw (0,0) to (0,.5);
   \node at (0.125,0.375) {$\scriptstyle 1$}; 
  \node at (0.125,0.125) {$\scriptstyle 2$}; 
 \end{tikzpicture}
\stackrel{1}{\rightarrow}\begin{tikzpicture}[baseline=3.5mm,scale=1.2]
   \draw (0,.5) to (.5,.5);
    \draw (0,.25) to (.5,.25);
 \draw (0,0) to (.25,0);
 \draw (0,0) to (0,.5);
 \draw (0.25,0) to (.25,.5);
 \draw (0.5,.25) to (0.5,.5);
   \node at (0.125,0.375) {$\scriptstyle 1$}; 
  \node at (0.375,0.375) {$\scriptstyle 1$}; 
  \node at (0.125,0.125) {$\scriptstyle 2$}; 
 \end{tikzpicture}\stackrel{1}{\rightarrow}\begin{tikzpicture}[baseline=3.5mm,scale=1.2]
   \draw (0,.5) to (.75,.5);
    \draw (0,.25) to (.75,.25);
 \draw (0,0) to (.25,0);
 \draw (0,0) to (0,.5);
 \draw (0.25,0) to (.25,.5);
 \draw (0.5,.25) to (0.5,.5);
 \draw (.75,.25) to (.75,.5);
   \node at (0.125,0.375) {$\scriptstyle 1$}; 
  \node at (0.375,0.375) {$\scriptstyle 1$}; 
  \node at (0.625,0.375) {$\scriptstyle 1$}; 
  \node at (0.125,0.125) {$\scriptstyle 2$}; 
\end{tikzpicture}$.
So we get that 
\begin{align*}
P(A)&= \begin{tikzpicture}[anchorbase,scale=1.2]
   \draw (0,.5) to (.75,.5);
    \draw (0,.25) to (.75,.25);
 \draw (0,0) to (.25,0);
 \draw (0,0) to (0,.5);
 \draw (0.25,0) to (.25,.5);
 \draw (0.5,.25) to (0.5,.5);
 \draw (.75,.25) to (.75,.5);
   \node at (0.125,0.375) {$\scriptstyle 1$}; 
  \node at (0.375,0.375) {$\scriptstyle 1$}; 
  \node at (0.625,0.375) {$\scriptstyle 1$}; 
  \node at (0.125,0.125) {$\scriptstyle 2$}; 
 \end{tikzpicture},&Q(A) &= \begin{tikzpicture}[anchorbase,scale=1.2]
   \draw (0,.5) to (.75,.5);
    \draw (0,.25) to (.75,.25);
 \draw (0,0) to (.25,0);
 \draw (0,0) to (0,.5);
 \draw (0.25,0) to (.25,.5);
 \draw (0.5,.25) to (0.5,.5);
 \draw (.75,.25) to (.75,.5);
   \node at (0.125,0.375) {$\scriptstyle 1$}; 
  \node at (0.375,0.375) {$\scriptstyle 3$}; 
  \node at (0.625,0.375) {$\scriptstyle 3$}; 
  \node at (0.125,0.125) {$\scriptstyle 2$}; 
 \end{tikzpicture}, & \kappa(A) &= (3,1).
 \end{align*}
\end{remark}

\section{Tilting modules}\label{s8-tilting}

For $n \geq 0$, let $\cI_n$ be the two-sided tensor ideal of $\qSchur$ generated 
by the identity morphisms $1_{(r)}$ for all $r > n$, then set
$\qSchur_{n} :=
\qSchur / \cI_n$.
This is a strict $\Z[q,q^{-1}]$-linear monoidal category.

\begin{theorem}\label{sfo}
  The path algebra $\schur_n$ of $\qSchur_{n}$ is a
  symmetrically-based quasi-hereditary
  algebra, with one possible triangular basis 
arising from the images of the
codeterminants from \cref{greenbasis}
for all $\kappa \in \Par$ 
satisfying $\kappa_1 \leq n$,
and another one given by the images of the
canonical basis products from \cref{otherbasis} for the same $\kappa$.
Also the images of the canonical basis
elements $\theta_A$ for $A \in \bigcup_{\lambda,\mu\in\Comp} \Mat{\lambda}{\mu}$ such that $\kappa(A)_1 \leq n$ give a cellular basis for $K_n$.
\end{theorem}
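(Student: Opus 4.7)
The strategy is to identify the tensor ideal $\cI_n$ with a union of cell ideals for the symmetrically-based quasi-hereditary structure on $\schur$ provided by \cref{secrets} (and also by \cref{fromsmallerones} for the canonical basis version). Both triangular bases for $\schur$ are compatible with the same cell filtration, so once $\cI_n$ is identified as a cellular ideal, the assertions follow immediately from general principles.

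First I would verify that the subset $\Par_{>n} := \{\kappa \in \Par : \kappa_1 > n\}$ is an upper set in the dominance order: if $\kappa \leq \mu$ and $\kappa_1 > n$, then $\mu_1 \geq \kappa_1 > n$. Consequently, in any symmetrically-based quasi-hereditary algebra with weight poset $\Par$, the $\Z[q,q^{-1}]$-span of the triangular basis elements whose shape lies in $\Par_{>n}$ is a two-sided ideal. Applying this to the codeterminant triangular basis of \cref{secrets}, set
\[
J_n := \bigoplus_{\kappa \in \Par_{>n}} \bigoplus_{\lambda,\mu \in \Comp,\, P \in \Std(\lambda,\kappa),\, Q \in \Std(\mu,\kappa)} \Z[q,q^{-1}]\, \varphi_P \T(\varphi_Q),
\]
which is a two-sided ideal of $\schur$ stable under $\T$.

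The heart of the proof is to show $\cI_n = J_n$. For the inclusion $\cI_n \subseteq J_n$: the unique semistandard tableau $P_0$ of shape $(r)$ and content $(r)$ gives $1_{(r)} = \varphi_{P_0}\T(\varphi_{P_0})$, so $1_{(r)} \in J_n$ for $r > n$; since $J_n$ is a tensor ideal (it is a two-sided ideal closed under horizontal composition with identities, and $\T$ extends to a monoidal anti-involution), $\cI_n \subseteq J_n$. For the reverse inclusion $J_n \subseteq \cI_n$, take $\kappa \in \Par_{>n}$ and a semistandard tableau $P \in \Std(\lambda,\kappa)$. Then $\varphi_P \in 1_\lambda \schur 1_\kappa$ factors as $\varphi_P = \varphi_P \circ 1_\kappa$, and $1_\kappa = 1_{(\kappa_1)} \star 1_{(\kappa_2)} \star \cdots \star 1_{(\kappa_{\ell(\kappa)})}$. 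Since $\kappa_1 > n$, the factor $1_{(\kappa_1)}$ lies in $\cI_n$ by definition, hence $1_\kappa \in \cI_n$ and therefore $\varphi_P \in \cI_n$. Applying $\T$ shows $\T(\varphi_Q) \in \cI_n$ as well, so every generator $\varphi_P\T(\varphi_Q)$ of $J_n$ lies in $\cI_n$.

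Having established $\cI_n = J_n$, the images in $\schur_n = \schur/\cI_n$ of the codeterminants $\varphi_P\T(\varphi_Q)$ with $\kappa_1 \leq n$ give a basis as a free $\Z[q,q^{-1}]$-module, and the axioms of \cref{bqh} for $\schur_n$ follow directly from those for $\schur$, using the weight poset $\{\kappa \in \Par : \kappa_1 \leq n\}$ (a lower set) and the sets $X(\lambda,\kappa) = \{\varphi_P : P \in \Std(\lambda,\kappa)\}$ for such $\kappa$. The anti-involution $\T$ descends because $\T(\cI_n) = \cI_n$ (indeed $\T(1_{(r)}) = 1_{(r)}$). For the canonical-basis triangular basis, apply the same argument to the basis of \cref{fromsmallerones}: since $\{\beta_P\T(\beta_Q)\}$ is another triangular basis with the same cell ideals $J_n$, the images of those with $\kappa_1 \leq n$ form a triangular basis for $\schur_n$. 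Finally, the congruence
\[
\theta_A \equiv \beta_{P(A)}\T(\beta_{Q(A)}) \pmod{\textstyle\sum_{\kappa(B) > \kappa(A)} \Z[q,q^{-1}]\theta_B}
\]
from \cref{fromsmallerones} shows that $\{\theta_A : \kappa(A)_1 > n\}$ spans the same ideal $J_n = \cI_n$, so the images of $\{\theta_A : \kappa(A)_1 \leq n\}$ form the claimed cellular basis of $\schur_n$.

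The main obstacle is the verification $J_n \subseteq \cI_n$, which requires recognizing that every semistandard $\varphi_P$ of shape $\kappa \in \Par_{>n}$ factors through the identity $1_\kappa$ of an object containing a generating thick string $(\kappa_1)$ of width exceeding $n$; once this factorization is exploited, the tensor-ideal property of $\cI_n$ does the rest.
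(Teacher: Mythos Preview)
Your approach is correct and coincides with the paper's: identify $\cI_n$ with the cell ideal attached to the upper set $\Par_{>n}=\{\kappa\in\Par:\kappa_1>n\}$, then pass to the quotient. The paper is slightly more streamlined, first observing that any $\lambda\in\Comp$ with a part exceeding $n$ is isomorphic (via the braiding) to some $\kappa\in\Par_{>n}$, so the \emph{tensor} ideal $\cI_n$ already equals the \emph{ordinary} two-sided ideal generated by $\{1_\kappa:\kappa\in\Par_{>n}\}$; it then cites \cite[Cor.~5.6]{BS} for the fact that quotienting a based quasi-hereditary algebra by a cell ideal preserves the structure.

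One point deserves care: your parenthetical assertion that $J_n$ is ``closed under horizontal composition with identities'' is exactly what needs arguing and is not automatic from $J_n$ being a cell ideal. The fix is the paper's observation above: for $\kappa\in\Par_{>n}$ and any $\mu$, the object $\mu\star\kappa$ is isomorphic to some $\kappa'\in\Par_{>n}$, hence $1_\mu\star 1_\kappa$ lies in the two-sided ideal generated by $1_{\kappa'}$, which is contained in $J_n$. With that in place, your argument for $\cI_n\subseteq J_n$ goes through.
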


\begin{proof}
  The two-sided tensor ideal $\cI_n$ is equal to the ordinary two-sided ideal of
  $\qSchur$ generated by the morphisms $1_\kappa$ for all partitions
  $\kappa \in \Par$ with $\kappa_1 > n$.
  This follows because every object $\lambda \in \Comp$ which has some part $r > n$ is isomorphic to such a partition $\kappa$.
  Hence, $\cI_n$ corresponds to the two-sided ideal $I_n \lhd \schur$ of the path algebra
$K$ of $\qSchur$  generated by the idempotents $1_\kappa$
  for all $\kappa \in \Par$ with $\kappa_1 > n$,
  and $\schur_n = \schur / I_n$.
  The set $\{\kappa \in \Par\:|\:\kappa_1 > n\}$
  is an upper set in the poset $\Par$, hence, $I_n$ is a {\em cell ideal} in the based quasi-hereditary algebra $\schur$.
  Consequently, by \cite[Cor.~5.6]{BS}, the quotient algebra $\schur_n$ is also a symmetrically-based quasi-hereditary algebra with bases as described in the statement of the theorem.
  \end{proof}

Now let $\k$ be a field
viewed as a $\Z[q,q^{-1}]$-algebra in some way, and consider the $\k$-linear monoidal categories
$\qSchur(\k): = \k \otimes_{\Z[q,q^{-1}]}\qSchur$
and $\qSchur_n(\k) := \k \otimes_{\Z[q,q^{-1}]} \qSchur_n$.
From
 the bases as free $\Z[q,q^{-1}]$-modules
 discussed in the proof of \cref{sfo}, it follows that $\qSchur_n(\k)$
 may identified with the quotient of $\qSchur(\k)$ by the two-sided tensor ideal
 $\cI_n(\k)$ generated by the morphisms $1_{(r)}$ for $r > n$.
 
  Let $\qTilt_n(\k)$ be the monoidal
  category of polynomial tilting modules for $\qG_n(\k)$, that is, the full additive Karoubian monoidal subcategory of the category of polynomial representations of $\qG_n(\k)$ generated by the exterior powers $\bigwedge^r V$ for $1 \leq r \leq n$.
  Here, to avoid too much more notation,
  we are re-using $\bigwedge^r V$ to denote the specializations
 of the $\Z[q,q^{-1}]$-modules from before. Note also that we defined
the braided monoidal category $\qTilt_n(\k)$ in the introduction in a different way in terms modules over the algebra
 $U_n(\k)$, but the two definitions are equivalent. This identification requires the specific choice of comultiplication $\Delta$ described in the introduction in order for the induced homomorphism $\dot U_n \rightarrow \schur$ to map 
\begin{align}\label{chickencurry}
E_i^{(r)} 1_\lambda &\mapsto 
\begin{tikzpicture}[anchorbase,scale=1.8]
\draw [thick] (0,0) to (0,.8);
\draw [thick] (.4,0) to (.4,.8);
\draw [thick] (.6,0) to (.6,.8);
\draw [line width=.8pt] (.59,0) to (.59,.3) to (.41,.5) to (.41,.8);
\draw [thick] (1,0) to (1,.8);
\node at (.2,.4) {$\scriptstyle\cdots$};
\node at (.8,.4) {$\scriptstyle\cdots$};
\node at (0,-.1) {$\scriptstyle \lambda_1$};
\node at (.4,-.1) {$\scriptstyle \lambda_i$};
\node at (.67,-.1) {$\scriptstyle \lambda_{i+1}$};
\node at (1,-.1) {$\scriptstyle \lambda_n$};
\node at (.51,.48) {$\scriptstyle r$};
\end{tikzpicture},&
F_i^{(r)} 1_\lambda &\mapsto 
\begin{tikzpicture}[anchorbase,scale=1.8]
\draw [thick] (0,0) to (0,.8);
\draw [thick] (.4,0) to (.4,.8);
\draw [thick] (.6,0) to (.6,.8);
\draw [line width=.8pt] (.41,0) to (.41,.3) to (.59,.5) to (.59,.8);
\draw [thick] (1,0) to (1,.8);
\node at (.2,.4) {$\scriptstyle\cdots$};
\node at (.8,.4) {$\scriptstyle\cdots$};
\node at (0,-.1) {$\scriptstyle \lambda_1$};
\node at (.4,-.1) {$\scriptstyle \lambda_i$};
\node at (1,-.1) {$\scriptstyle \lambda_n$};
\node at (.67,-.1) {$\scriptstyle \lambda_{i+1}$};
\node at (.49,.48) {$\scriptstyle r$};
\end{tikzpicture}
\end{align}
for $1 \leq i < n, r \geq 0$ and $\lambda \in \N^n$ with $\lambda_{i+1} \geq r$ or $\lambda_i \geq r$, respectively 
(they map to zero for all other $\lambda$).
To see that the defining relations of $\dot U_n$ hold in $\schur$, most of them are easy, indeed, this is the origin of the square-switch relation.
The Serre relation is deduced from the other relations in \cite[Lem.~2.2.1]{CKM}.

\begin{remark}
When $0 \leq a-d\leq b-c$, the expressions in \cref{jonsquare,jonsquare2}
are the canonical basis elements
$\theta_{\left[\begin{smallmatrix}a-d&c\\d&b-c\end{smallmatrix}\right]}$ and $\theta_{\left[\begin{smallmatrix}
b-c&d\\c&a-d
\end{smallmatrix}
\right]}$ from \cref{rank2canonical}. They
are also the images under the homomorphism \cref{chickencurry}
of the canonical basis elements
$E^{(c)}F^{(d)}1_{(a,b)}$ and
$F^{(c)}E^{(d)}1_{(b,a)}$ of $\dot U_2$. 
\end{remark}
 
 The monoidal functor $\Sigma_n$ from \cref{dthm} extends to define a $\k$-linear
 monoidal functor $\qSchur(\k) \rightarrow \qTilt_n(\k)$.
 Since $\bigwedge^r V = \{0\}$ for $r > n$,
 this factors through the quotient $\qSchur_n(\k)$
 to induce a $\k$-linear monoidal functor
$ \bar{\Sigma}_n:\qSchur_n(\k) \rightarrow \qTilt_n(\k)$.
 
 \begin{theorem}\label{sanbruno}
For any field $\k$, the functor $\bar{\Sigma}_n :\qSchur(\k) \rightarrow \qTilt_n(\k)$
   induces a $\k$-linear monoidal equivalence
   between the additive Karoubi envelope of 
   $\qSchur_n(\k)$ and $\qTilt_n(\k)$.
 \end{theorem}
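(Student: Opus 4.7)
The plan is to verify three properties of $\bar\Sigma_n$: essential surjectivity after passing to the additive Karoubi envelope, fullness, and faithfulness. The first two are direct; the third will be the substantive step, which I would handle by matching dimensions on each Hom space using Donkin's tilting module theory.

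For essential surjectivity onto the Karoubi envelope I would just invoke the definition: $\qTilt_n(\k)$ is by construction the additive Karoubian monoidal subcategory generated by $\bigwedge^r V$ for $r \ge 0$, and $\bar\Sigma_n$ sends the generating object $(r)$ of $\qSchur_n(\k)$ exactly to this module (with $\bigwedge^r V = 0$ consistent with $(r) \in \cI_n$ whenever $r > n$). Fullness reduces to fullness of $\Sigma_n$ established in \cref{dthm} (see also \cref{ginandtonic}(2)), because $\bar\Sigma_n$ is the factorization of $\Sigma_n$ through the quotient $\qSchur(\k) \twoheadrightarrow \qSchur_n(\k)$, and surjectivity on Hom spaces is preserved by such factorization.

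For faithfulness it suffices to show that for every pair $\lambda,\mu \vDash r$ the surjection
\[
\Hom_{\qSchur_n(\k)}(\mu, \lambda) \twoheadrightarrow \Hom_{U_n(\k)}\bigl(\textstyle\bigwedge^\mu V,\; \bigwedge^\lambda V\bigr)
\]
is an isomorphism of finite-dimensional $\k$-vector spaces, and this reduces to matching dimensions. On the source, \cref{sfo} gives
\[
\dim_\k \Hom_{\qSchur_n(\k)}(\mu, \lambda) = \sum_{\kappa \vdash r,\; \kappa_1 \leq n} |\Std(\lambda, \kappa)| \cdot |\Std(\mu, \kappa)|.
\]
For the target, I would invoke Donkin's theorem (his Ringel-duality work for $q$-Schur algebras referenced in the abstract) that each tensor product $\bigwedge^\lambda V$ is a tilting $U_n(\k)$-module, hence admits both Weyl and good filtrations with common multiplicities. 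Combining this with the symmetric-function identity $e_\lambda = \sum_\kappa K_{\kappa^\tr,\lambda}\, s_\kappa$, and the vanishing $\Delta(\kappa) = \nabla(\kappa) = 0$ for $\ell(\kappa) > n$, yields
\[
(\textstyle\bigwedge^\lambda V : \Delta(\kappa)) = (\bigwedge^\lambda V : \nabla(\kappa)) = K_{\kappa^\tr, \lambda} = |\Std(\lambda, \kappa^\tr)|
\]
for all $\kappa$ with $\ell(\kappa) \leq n$. Ext-reciprocity for modules with $\Delta$- and $\nabla$-filtrations in the quasi-hereditary category of polynomial $U_n(\k)$-modules then produces
\[
\dim_\k \Hom_{U_n(\k)}\bigl(\textstyle\bigwedge^\mu V,\; \bigwedge^\lambda V\bigr)
= \sum_{\kappa : \ell(\kappa) \leq n} K_{\kappa^\tr,\lambda}\, K_{\kappa^\tr,\mu}
= \sum_{\nu : \nu_1 \leq n} |\Std(\lambda, \nu)| \cdot |\Std(\mu, \nu)|,
\]
matching the source. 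Surjectivity plus equality of dimensions forces $\bar\Sigma_n$ to be an isomorphism on every Hom space.

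The hardest ingredient is the target dimension count, which rests on Donkin's theorem that $\bigwedge^\lambda V$ is a tilting $U_n(\k)$-module. Once that is granted, Ext-vanishing between Weyl and dual Weyl modules reduces the Hom dimension to the purely combinatorial quantity above, and the match with the codeterminant count from \cref{sfo} is immediate from the Schur expansion of $e_\lambda$.
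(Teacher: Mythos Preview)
Your proposal is correct and follows essentially the same route as the paper: fullness from \cref{dthm}/\cref{ginandtonic}(2), density from the definition of $\qTilt_n(\k)$, and faithfulness by matching the dimension of $\Hom_{\qSchur_n(\k)}(\mu,\lambda)$ from \cref{sfo} against $\dim\Hom_{\qG_n(\k)}(\bigwedge^\mu V,\bigwedge^\lambda V)$ computed via $\Delta$- and $\nabla$-filtrations and Ext-orthogonality in the highest weight category of polynomial representations. Your treatment of the target dimension via the Kostka expansion $e_\lambda=\sum_\kappa K_{\kappa^\tr,\lambda}s_\kappa$ is a little more explicit than the paper's appeal to the Littlewood--Richardson rule, but the content is the same.
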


 \begin{proof}
   We saw already in \cref{ginandtonic}(2) that $\bar{\Sigma}_n$ is full. It is dense by the definition of $\qTilt_n(\k)$. It just remains to show that it is faithful.
   Thus,
   we must show that the surjective
  $\k$-linear map
  $  \textstyle
  \Hom_{\qSchur_n(\k)}(\mu,\lambda) \twoheadrightarrow \Hom_{\qG_n(\k)}(\bigwedge^\mu V, \bigwedge^\lambda V)$
  induced by the functor is also injective for any $\lambda,\mu\vDash r$.
By \cref{sfo}, we know that the morphism space on the left is of dimension
$\sum_{\kappa \vdash r} |\Std(\lambda,\kappa) \times \Std(\mu,\kappa)|$.
This is also the dimension of
$\Hom_{\qG_n(\k)}(\bigwedge^\mu V, \bigwedge^\lambda V)$. Indeed, in the highest weight category of polynomial representations of $\qG_n(\k)$, the 
tilting module $\bigwedge^\mu V$ has a filtration
with sections that are standard modules $\Delta(\kappa')$ for partions $\kappa$ with $\kappa_1 \leq n$,
and $\bigwedge^\lambda V$ has a filtration with sections that are costandard modules
$\nabla(\kappa')$ for the same $\kappa$.
By the Littlewood-Richardson rule, the multiplicities
$(\bigwedge^\mu V: \Delta(\kappa'))$ and $(\bigwedge^\mu V: \nabla(\kappa'))$ are
$|\Row(\mu,\kappa)|$ and $|\Row(\lambda,\kappa)|$.
Since $\dim \Ext^i_{\qG_n(\k)}(\Delta(\lambda), \nabla(\mu)) = \delta_{\lambda,\mu} \delta_{i,0}$,
this is enough to prove that
$\Hom_{\qG_n(\k)}(\bigwedge^\mu V, \bigwedge^\lambda V)$ has the same dimension
as $\Hom_{\qSchur_n(\k)}(\mu,\lambda)$.
 \end{proof}

 \begin{corollary}\label{webn}
  The kernel of the full monoidal functor
  $\Sigma_n$ from \cref{dthm} is equal to $\cI_n$.
 \end{corollary}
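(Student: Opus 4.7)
The plan is to establish the two inclusions $\cI_n \subseteq \ker \Sigma_n$ and $\ker \Sigma_n \subseteq \cI_n$, with the first being essentially formal and the second reducing to the faithfulness result \cref{sanbruno}.

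First, for the inclusion $\cI_n \subseteq \ker \Sigma_n$: since $\bigwedge^r V = 0$ for $r > n$, we have $\Sigma_n(1_{(r)}) = \mathrm{id}_{\bigwedge^r V} = 0$ in $\qTilt_n$ for all $r > n$. Since $\Sigma_n$ is a monoidal functor and $\cI_n$ is the two-sided tensor ideal generated by these identity morphisms, every morphism in $\cI_n$ is annihilated by $\Sigma_n$.

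For the reverse inclusion, since $\cI_n \subseteq \ker\Sigma_n$, the functor $\Sigma_n$ factors through the quotient $\qSchur_n$ to give a $\Z[q,q^{-1}]$-linear monoidal functor $\bar{\Sigma}_n \colon \qSchur_n \to \Rep{\qG_n}$, and it suffices to show that $\bar{\Sigma}_n$ is faithful. Fix $\lambda,\mu \vDash r$. By \cref{sfo}, the morphism space $\Hom_{\qSchur_n}(\mu,\lambda)$ is a free $\Z[q,q^{-1}]$-module. Regard $\k := \Q(t)$ as a $\Z[q,q^{-1}]$-algebra via the injective homomorphism $q\mapsto t$, so that the base change map
\[
\Hom_{\qSchur_n}(\mu,\lambda)
\;\hookrightarrow\;
\k\otimes_{\Z[q,q^{-1}]}\Hom_{\qSchur_n}(\mu,\lambda)
=\Hom_{\qSchur_n(\k)}(\mu,\lambda)
\]
is injective. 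By \cref{sanbruno}, the induced functor $\qSchur_n(\k)\to \qTilt_n(\k)$ is fully faithful. Since the construction of $\Sigma_n$ and of $\phi_A$ in \cref{haircut} is manifestly compatible with base change, this specialized functor agrees with $\k\otimes_{\Z[q,q^{-1}]}\bar{\Sigma}_n$. Hence any $f\in \Hom_{\qSchur_n}(\mu,\lambda)$ with $\bar{\Sigma}_n(f)=0$ maps to zero in $\Hom_{\qSchur_n(\k)}(\mu,\lambda)$, and by injectivity of base change we conclude $f=0$. Therefore $\bar{\Sigma}_n$ is faithful and $\ker \Sigma_n = \cI_n$.

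The main thing to verify carefully is that the specialization $\k\otimes_{\Z[q,q^{-1}]}\bar{\Sigma}_n$ really is the functor to which \cref{sanbruno} applies; but this is immediate from the fact that $\bar{\Sigma}_n$ is defined on standard basis elements by the explicit formulas in \cref{haircut}, which are given by $\Z[q,q^{-1}]$-linear combinations of fixed vectors in $\bigwedge^\lambda V$, and these formulas are preserved under any base change.
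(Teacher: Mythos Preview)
Your proof is correct and follows essentially the same approach as the paper: show $\cI_n \subseteq \ker\Sigma_n$ trivially, then deduce faithfulness of the induced functor $\bar\Sigma_n$ by base-changing to the field $\Q(q)$ and invoking \cref{sanbruno}. You have simply spelled out in more detail the points the paper leaves implicit, namely the freeness of $\Hom_{\qSchur_n}(\mu,\lambda)$ from \cref{sfo} and the resulting injectivity of base change.
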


 \begin{proof}
Let $\cJ_n$ be the kernel of $\Sigma_n$.
  Since $\bigwedge^r V = \{0\}$ for $r > n$, it is clear that $\cI_n \subseteq \cJ_n$.
  Hence, $\Sigma_n$ factors through the quotient to induce a full $\Z[q,q^{-1}]$-linear monoidal functor from
  $\qSchur_n$ to the category of polynomial representations of $\qG_n$.
  To prove that $\cJ_n = \cI_n$, thereby proving the corollary, it remains to show that this induced functor is also faithful.
  This follows because it remains an isomorphism on base change to $\Q(q)$ by a special case of \cref{sanbruno}.
\end{proof}

\vspace{2mm}
\noindent
    {\em Proofs of results in the introduction.}
    Recall that in the introduction we were discussing the $q$-Schur category without $0$-strings.
This is the full subcategory 
of the $q$-Schur category with $0$-strings
generated by the objects $\sComp$. The path algebra $\sch$ of the category without $0$-strings
from \cref{pathalgebra} is the idempotent truncation
$\sch = \bigoplus_{\lambda,\mu \in \sComp} 1_\lambda \schur 1_\mu$ of the path algebra $\schur$
of the category with $0$-strings from \cref{norfolk}.
The set $\Par$ indexing the special idempotents is a subset of $\sComp \subset \Comp$.
In view of this, \cref{th2} follows immediately from \cref{secrets}.
Every object of the $q$-Schur category with $0$-strings is isomorphic to an object of the $q$-Schur category without $0$-strings. So the two path algebras $\schur$ and $\sch$ are Morita equivalent,
and the restriction of the equivalence from \cref{sanbruno} remains an equivalence.
\cref{th3} follows.
Finally, we explain how to establish the presentations in \cref{th1alt,th1}.
These are similar to the ones in \cref{mainpres} and \cref{altpres}, respectively, but we have omitted the relations involving 
the generators
$\begin{tikzpicture}[anchorbase]
\draw[line width=0pt] (0,-.3) to (0,0);
\spot{0,0};
\end{tikzpicture}$
and
$\begin{tikzpicture}[anchorbase]
\draw[line width=0pt] (0,.3) to (0,0);
\spot{0,0};
\end{tikzpicture}$. Instead, the relations \cref{squareswitch,altrels} need to be interpreted 
in a different way when strings labelled by 0 are present---simply omit those strings so that the splits and merges become identity morphisms.
That these relations hold follows from the ones in \cref{mainpres,altpres} by contracting $0$-strings. To complete the proof of \cref{th1alt}, one needs to show that we have a full set of relations. This follows by a straightening argument which is the same as the one used in the proof of \cref{mainpres}. Then \cref{th1} follows from \cref{th1alt} 
by the same argument that was used to deduce \cref{altpres} from \cref{mainpres}.

\renewcommand{\theequation}{\Alph{equation}}

\vspace{4mm}

\begin{center}
{\sc
Appendix: Relations}
\end{center}

\vspace{2mm}

The remaining pages contain some elementary calculations involving generators and relations
needed in the proofs of \cref{mainpres,altpres}.

\vspace{2mm}

\noindent{\bf Lemma.}
{\em
Assume
the relations \cref{secondone,zeroforks,assrel},
the first relation from \cref{mergesplit}, 
and the second square-switch relation from \cref{squareswitch}.
The following hold for all $a,b,c,d\geq 0$ with $a+b=c+d$:
\begin{enumerate}
\item
$\displaystyle\begin{tikzpicture}[anchorbase,scale=.9]
	\draw[-,line width=1.25pt] (0,0) to (.285,.3) to (.285,.7) to (0,1);
	\draw[-,line width=1.25pt] (.61,0) to (.325,.3) to (.325,.7) to (.61,1);
        \node at (0,1.13) {$\scriptstyle c$};
        \node at (0.6,1.13) {$\scriptstyle d$};
        \node at (0,-.1) {$\scriptstyle a$};
        \node at (0.6,-.1) {$\scriptstyle b$};
\end{tikzpicture}
=
\sum_{s=0}^{\min(a,c)}
q^{s(d-a+s)}
\sum_{t=0}^{\min(a,c)-s}
(-q)^{t}
\begin{tikzpicture}[anchorbase,scale=.9]
	\draw[-,thin] (0,0) to (0,1);
	\draw[-,thick] (0.02,0) to (0.02,.2) to (.7,.3) to (.7,.7) to (1.08,.8) to (1.08,1);
	\draw[-,thick] (1.08,0) to (1.08,.2) to (.7,.3) to (.7,.7) to (.02,.8) to (.02,1);
	\draw[-,thin] (.01,0) to (0.01,.22) to (.35,.27) to (.35,.73) to
        (0.01,.78) to (.01,1);
	\draw[-,thin] (1.1,0) to (1.1,1);
        \node at (0,1.13) {$\scriptstyle c$};
        \node at (1.13,1.13) {$\scriptstyle d$};
        \node at (0,-.1) {$\scriptstyle a$};
        \node at (1.13,-.1) {$\scriptstyle b$};
        \node at (-0.1,.5) {$\scriptstyle s$};
        \node at (1.6,.5) {$\scriptstyle d-a+s$};
        \node at (0.25,.5) {$\scriptstyle t$};
\end{tikzpicture}$.
\item
$\displaystyle\begin{tikzpicture}[anchorbase,scale=.9]
	\draw[-,line width=1.25pt] (0,0) to (.285,.3) to (.285,.7) to (0,1);
	\draw[-,line width=1.25pt] (.61,0) to (.325,.3) to (.325,.7) to (.61,1);
        \node at (0,1.13) {$\scriptstyle c$};
        \node at (0.6,1.13) {$\scriptstyle d$};
        \node at (0,-.1) {$\scriptstyle a$};
        \node at (0.6,-.1) {$\scriptstyle b$};
\end{tikzpicture}
=
\sum_{s=0}^{\min(a,c)}
q^{-s(d-a+s)}
\sum_{t=0}^{\min(a,c)-s}
(-q)^{-t}
\begin{tikzpicture}[anchorbase,scale=.9]
	\draw[-,thin] (0,0) to (0,1);
	\draw[-,thick] (0.02,0) to (0.02,.2) to (.7,.3) to (.7,.7) to (1.08,.8) to (1.08,1);
	\draw[-,thick] (1.08,0) to (1.08,.2) to (.7,.3) to (.7,.7) to (.02,.8) to (.02,1);
	\draw[-,thin] (.01,0) to (0.01,.22) to (.35,.27) to (.35,.73) to
        (0.01,.78) to (.01,1);
	\draw[-,thin] (1.1,0) to (1.1,1);
        \node at (0,1.13) {$\scriptstyle c$};
        \node at (1.13,1.13) {$\scriptstyle d$};
        \node at (0,-.1) {$\scriptstyle a$};
        \node at (1.13,-.1) {$\scriptstyle b$};
        \node at (-0.1,.5) {$\scriptstyle s$};
        \node at (1.6,.5) {$\scriptstyle d-a+s$};
        \node at (0.25,.5) {$\scriptstyle t$};
\end{tikzpicture}$.
\end{enumerate}
}

\begin{proof}
(a) Using \cref{assrel} and the first relation from \cref{mergesplit} to combine the strings of thickness $s$ and $t$, the right hand side of the identity to be
proved
simplifies to
$$
\sum_{s=0}^{\min(a,c)} 
\sum_{u=s}^{\min(a,c)}
q^{s(d-a+s)} (-q)^{u-s}
\qbinom{u}{s}_{\!q}
\begin{tikzpicture}[anchorbase,scale=1.35]
	\draw[-,thin] (0,0) to (0,1);
	\draw[-,thick] (0.02,0) to (0.02,.2) to (.56,.35) to (.56,.65) to (1.08,.8) to (1.08,1);
	\draw[-,thick] (1.08,0) to (1.08,.2) to (.56,.35) to (.56,.65) to (.02,.8) to (.02,1);
	\draw[-,thin] (1.1,0) to (1.1,1);
        \node at (0,1.13) {$\scriptstyle c$};
        \node at (1.13,1.13) {$\scriptstyle d$};
        \node at (0,-.1) {$\scriptstyle a$};
        \node at (1.13,-.1) {$\scriptstyle b$};
        \node at (-0.11,.5) {$\scriptstyle u$};
        \node at (1.5,.5) {$\scriptstyle d-a+s$};
        \node at (.3,.86) {$\scriptstyle c-u$};
        \node at (.28,.41) {$\scriptstyle a-u$};
        \node at (.83,.62) {$\scriptstyle a-s$};
        \node at (.83,.15) {$\scriptstyle c-s$};
        \end{tikzpicture}.
$$
Then we use the second square-switch relation from \cref{squareswitch}
to see that this equals
$$
\sum_{s=0}^{\min(a,c)}
\sum_{u=s}^{\min(a,c)}
\sum_{v=u}^{\min(a,c)}
q^{s(d-a+s)} (-q)^{u-s}
\qbinom{u}{s}_{\!q}
\qbinom{d-a+u}{v-s}_{\!q}
\begin{tikzpicture}[anchorbase,scale=1]
	\draw[-,thin] (0,0) to (0,1);
	\draw[-,thick] (0.02,0) to (0.02,.2) to (.86,.35) to
        (.86,.65) to (.02,.8) to (.02,1);
        \draw[-,line width=.4pt] (.86,.655) to (1.473,.575) to (1.473,.425) to (.86,.345);
	\draw[-,line width=1pt] (1.493,0) to (1.493,1);
        \node at (0,1.13) {$\scriptstyle c$};
        \node at (1.51,1.13) {$\scriptstyle d$};
        \node at (0,-.1) {$\scriptstyle a$};
        \node at (1.51,-.1) {$\scriptstyle b$};
        \node at (-0.14,.5) {$\scriptstyle u$};
        \node at (.55,.5) {$\scriptstyle v-u$};
\end{tikzpicture}.
$$
Using \cref{assrel} and the first relation from \cref{mergesplit} again, 
this simplifies to
\begin{equation*}
\sum_{s=0}^{\min(a,c)}
\sum_{u=s}^{\min(a,c)}
\sum_{v=u}^{\min(a,c)}
q^{s(d-a+s)}(-q)^{u-s}
\qbinom{u}{s}_{\!q}
\qbinom{d-a+u}{v-s}_{\!q}
\qbinom{v}{u}_{\!q}
\begin{tikzpicture}[anchorbase,scale=.9]
	\draw[-,thin] (0,0) to (0,1);
	\draw[-,thick] (0.02,0) to (0.02,.2) to (.66,.35) to
        (.66,.65) to (.02,.8) to (.02,1);
	\draw[-,line width=1pt] (.688,0) to (.688,1);
        \node at (0,1.13) {$\scriptstyle c$};
        \node at (.69,1.13) {$\scriptstyle d$};
        \node at (0,-.1) {$\scriptstyle a$};
        \node at (.69,-.1) {$\scriptstyle b$};
        \node at (-0.11,.5) {$\scriptstyle v$};
\end{tikzpicture}.
\end{equation*}
Next, switch the orders of the summations 
and rearrange binomial coefficients
to get
\begin{multline*}
\sum_{v=0}^{\min(a,c)}
\sum_{s=0}^v
\sum_{u=s}^{v}
q^{s(d-a+s)} 
(-q)^{u-s}
\qbinom{u}{s}_{\!q}
\qbinom{d-a+u}{v-s}_{\!q}
\qbinom{v}{u}_{\!q}
 \begin{tikzpicture}[anchorbase,scale=.9]
	\draw[-,thin] (0,0) to (0,1);
	\draw[-,thick] (0.02,0) to (0.02,.2) to (.66,.35) to
        (.66,.65) to (.02,.8) to (.02,1);
	\draw[-,line width=1pt] (.688,0) to (.688,1);
        \node at (0,1.13) {$\scriptstyle c$};
        \node at (.69,1.13) {$\scriptstyle d$};
        \node at (0,-.1) {$\scriptstyle a$};
        \node at (.69,-.1) {$\scriptstyle b$};
        \node at (-0.11,.5) {$\scriptstyle v$};
\end{tikzpicture}\\
=
\sum_{v=0}^{\min(a,c)}
\sum_{s=0}^v
q^{s(d-a+s)} 
(-q)^{v-s}
\qbinom{v}{s}_{\!q}
\left(
\displaystyle
\sum_{u=s}^{v}
(-q)^{u-v}
\qbinom{d-a+u}{u-s}_{\!q}
\qbinom{d-a+s}{v-u}_{\!q}\right)
 \begin{tikzpicture}[anchorbase,scale=.9]
	\draw[-,thin] (0,0) to (0,1);
	\draw[-,thick] (0.02,0) to (0.02,.2) to (.66,.35) to
        (.66,.65) to (.02,.8) to (.02,1);
	\draw[-,line width=1pt] (.688,0) to (.688,1);
        \node at (0,1.13) {$\scriptstyle c$};
        \node at (.69,1.13) {$\scriptstyle d$};
        \node at (0,-.1) {$\scriptstyle a$};
        \node at (.69,-.1) {$\scriptstyle b$};
        \node at (-0.11,.5) {$\scriptstyle v$};
\end{tikzpicture}.
\end{multline*}
Applying \cref{A}, taking $a,b,m$ and $s$ there to be
$u-s$, $v-u$, $b-c+s$ and
$v-s$ in the present setup, shows that 
the term in parentheses is equal to $q^{(b-c+s)(v-s)}$.
Hence, we have
$$
\sum_{v=0}^{\min(a,c)}
(-q)^{v} q^{(b-c)v}\left(\sum_{s=0}^v
(-1)^s q^{s(v-1)}\qbinom{v}{s}_{\!q}
\right)
\begin{tikzpicture}[anchorbase,scale=.9]
	\draw[-,thin] (0,0) to (0,1);
	\draw[-,thick] (0.02,0) to (0.02,.2) to (.66,.35) to
        (.66,.65) to (.02,.8) to (.02,1);
	\draw[-,line width=1pt] (.688,0) to (.688,1);
        \node at (0,1.13) {$\scriptstyle c$};
        \node at (.69,1.13) {$\scriptstyle d$};
        \node at (0,-.1) {$\scriptstyle a$};
        \node at (.69,-.1) {$\scriptstyle b$};
        \node at (-0.11,.5) {$\scriptstyle v$};
\end{tikzpicture}.
$$
By \cref{vanishing}, the term in parentheses is zero unless $v=0$,
leaving us just with the $v=0$ term which, after contracting the string of thickness zero, is equal to the desired
left hand side.

\vspace{2mm}
\noindent
(b)
This is just the same argument as (a) with $q$ replaced by $q^{-1}$ throughout (including in
\cref{A,vanishing}).
\end{proof}

\vspace{2mm}

\noindent{\bf Corollary.}
{\em
Assume the relations \cref{secondone,zeroforks,assrel},
the first relation from \cref{mergesplit}, 
and the second square-switch relation from \cref{squareswitch}.
\begin{enumerate}
\item
If 
$\displaystyle\begin{tikzpicture}[anchorbase,scale=1.1]
	\draw[-,line width=1pt] (0.3,-.3) to (-.3,.4);
	\draw[-,line width=4pt,white] (-0.3,-.3) to (.3,.4);
	\draw[-,line width=1pt] (-0.3,-.3) to (.3,.4);
        \node at (0.3,-.42) {$\scriptstyle b$};
        \node at (-0.3,-.42) {$\scriptstyle a$};
\end{tikzpicture} \!\!=\!\!
\sum_{s=0}^{\min(a,b)}\!\!
(-q)^{s}
\begin{tikzpicture}[anchorbase,scale=.9]
	\draw[-,line width=1.2pt] (0,0) to (0,1);
	\draw[-,thick] (-0.8,0) to (-0.8,.2) to (-.03,.4) to (-.03,.6)
        to (-.8,.8) to (-.8,1);
	\draw[-,thin] (-0.82,0) to (-0.82,1);
        \node at (-0.81,-.1) {$\scriptstyle a$};
        \node at (0,-.1) {$\scriptstyle b$};
        \node at (-0.4,.9) {$\scriptstyle b-s$};
        \node at (-0.4,.13) {$\scriptstyle a-s$};
\end{tikzpicture}$
for all $a,b \geq 0$ then
$\displaystyle
\begin{tikzpicture}[anchorbase,scale=.9]
	\draw[-,line width=1.2pt] (0,0) to (.275,.3) to (.275,.7) to (0,1);
	\draw[-,line width=1.2pt] (.6,0) to (.315,.3) to (.315,.7) to (.6,1);
        \node at (0,1.13) {$\scriptstyle c$};
        \node at (0.63,1.13) {$\scriptstyle d$};
        \node at (0,-.1) {$\scriptstyle a$};
        \node at (0.63,-.1) {$\scriptstyle b$};
\end{tikzpicture}\!\!\!
=\!
\sum_{s=0}^{\min(a,c)}\!\!
q^{s(d-a+s)}
\begin{tikzpicture}[anchorbase,scale=.9]
	\draw[-,thick] (0.58,0) to (0.58,.2) to (.02,.8) to (.02,1);
	\draw[-,line width=4pt,white] (0.02,0) to (0.02,.2) to (.58,.8) to (.58,1);
	\draw[-,thick] (0.02,0) to (0.02,.2) to (.58,.8) to (.58,1);
	\draw[-,thin] (0,0) to (0,1);
	\draw[-,line width=1pt] (0.59,0) to (0.59,1);
        \node at (0,1.13) {$\scriptstyle c$};
        \node at (0.6,1.13) {$\scriptstyle d$};
        \node at (0,-.1) {$\scriptstyle a$};
        \node at (0.6,-.1) {$\scriptstyle b$};
        \node at (-0.1,.5) {$\scriptstyle s$};
        \node at (1.15,.5) {$\scriptstyle d-a+s$};
\end{tikzpicture}
$
for all $a,b,c,d \geq 0$ with $a+b=c+d$.
\item
If
$
\displaystyle\begin{tikzpicture}[anchorbase,scale=1.1]
	\draw[-,line width=1pt] (-0.3,-.3) to (.3,.4);
 \draw[-,line width=4pt,white] (0.3,-.3) to (-.3,.4);
 \draw[-,line width=1pt] (0.3,-.3) to (-.3,.4);
        \node at (0.3,-.42) {$\scriptstyle b$};
        \node at (-0.3,-.42) {$\scriptstyle a$};
\end{tikzpicture}
\!\!=\!\!\sum_{s=0}^{\min(a,b)}\!\!
(-q)^{-s}
\begin{tikzpicture}[anchorbase,scale=.9]
	\draw[-,line width=1.2pt] (0,0) to (0,1);
	\draw[-,thick] (-0.8,0) to (-0.8,.2) to (-.03,.4) to (-.03,.6)
        to (-.8,.8) to (-.8,1);
	\draw[-,thin] (-0.82,0) to (-0.82,1);
        \node at (-0.81,-.1) {$\scriptstyle a$};
        \node at (0,-.1) {$\scriptstyle b$};
        \node at (-0.4,.9) {$\scriptstyle b-s$};
        \node at (-0.4,.13) {$\scriptstyle a-s$};
\end{tikzpicture}
$
for all $a,b \geq 0$
then
$\displaystyle
\begin{tikzpicture}[anchorbase,scale=.9]
	\draw[-,line width=1.2pt] (0,0) to (.275,.3) to (.275,.7) to (0,1);
	\draw[-,line width=1.2pt] (.6,0) to (.315,.3) to (.315,.7) to (.6,1);
        \node at (0,1.13) {$\scriptstyle c$};
        \node at (0.63,1.13) {$\scriptstyle d$};
        \node at (0,-.1) {$\scriptstyle a$};
        \node at (0.63,-.1) {$\scriptstyle b$};
\end{tikzpicture}
\!\!\!=\!
\sum_{s=0}^{\min(a,c)}
\!q^{-s(d-a+s)}
\begin{tikzpicture}[anchorbase,scale=.9]
	\draw[-,thick] (0.02,0) to (0.02,.2) to (.58,.8) to (.58,1);
 \draw[-,line width=4pt,white] (0.58,0) to (0.58,.2) to (.02,.8) to (.02,1);
 \draw[-,thick] (0.58,0) to (0.58,.2) to (.02,.8) to (.02,1);
	\draw[-,thin] (0,0) to (0,1);
	\draw[-,line width=1pt] (0.59,0) to (0.59,1);
        \node at (0,1.13) {$\scriptstyle c$};
        \node at (0.6,1.13) {$\scriptstyle d$};
        \node at (0,-.1) {$\scriptstyle a$};
        \node at (0.6,-.1) {$\scriptstyle b$};
        \node at (-0.1,.5) {$\scriptstyle s$};
        \node at (1.15,.5) {$\scriptstyle d-a+s$};
\end{tikzpicture}
$
for all $a,b,c,d \geq 0$ with $a+b=c+d$.
\end{enumerate}}

\begin{proof}
(a)
Using the assumed formula for positive crossings, the right hand side of the identity to be proved is equal to \begin{align*}
\sum_{s=0}^{\min(a,b)}
q^{s(d-a+s)}
&\sum_{t=0}^{\min(a,b)-s}
(-q)^{t}
\begin{tikzpicture}[anchorbase,scale=1]
	\draw[-,thin] (0,0) to (0,1);
	\draw[-,thick] (0.02,0) to (0.02,.2) to (.7,.3) to (.7,.7) to (1.08,.8) to (1.08,1);
	\draw[-,thick] (1.08,0) to (1.08,.2) to (.7,.3) to (.7,.7) to (.02,.8) to (.02,1);
	\draw[-,thin] (.01,0) to (0.01,.22) to (.35,.27) to (.35,.73) to
        (0.01,.78) to (.01,1);
	\draw[-,thin] (1.1,0) to (1.1,1);
        \node at (0,1.13) {$\scriptstyle c$};
        \node at (1.13,1.13) {$\scriptstyle d$};
        \node at (0,-.1) {$\scriptstyle a$};
        \node at (1.13,-.1) {$\scriptstyle b$};
        \node at (-0.1,.5) {$\scriptstyle s$};
        \node at (1.6,.5) {$\scriptstyle d-a+s$};
        \node at (0.25,.5) {$\scriptstyle t$};
\end{tikzpicture}.
\end{align*}
Then we apply the identity from part (a) of the lemma.

\vspace{2mm}
\noindent
(b) Similar using part (b) of the lemma.
\end{proof}

\vspace{2mm}
Now let $\qSchur'$ be as in the proof of \cref{mainpres}.
We are going to prove the relations 
\cref{jonsquare,jonsquare2,thickcrossing,sliders,swallows-symmetric,braid} needed in that proof. 
We must deduce these from
the defining relations \cref{secondone,zeroforks,assrel,mergesplit}
for $\qSchur'$ and the definition \cref{tireddog}.
We will use the strict $\Z[q,q^{-1}]$-linear monoidal isomorphisms
\begin{align*}
\mathtt{R}:\qSchur' &\rightarrow (\qSchur')^{\operatorname{rev}},&
\mathtt{T}:\qSchur'&\rightarrow (\qSchur')^\op.
\end{align*}
Both are defined on objects by $(r) \mapsto (r)$.
On generating 
morphisms, $\mathtt{R}$
rotates the diagrams around a vertical axis, i.e.,
$\begin{tikzpicture}[anchorbase,scale=1]
\draw[line width=0pt] (0,-.3) to (0,0);
\spot{0,0};
\end{tikzpicture}\mapsto 
\begin{tikzpicture}[anchorbase,scale=1]
\draw[line width=0pt] (0,-.3) to (0,0);
\spot{0,0};
\end{tikzpicture}\ $,
$\begin{tikzpicture}[anchorbase,scale=1]
\draw[line width=0pt] (0,.3) to (0,0);
\spot{0,0};
\end{tikzpicture}\mapsto \begin{tikzpicture}[anchorbase,scale=1]
\draw[line width=0pt] (0,.3) to (0,0);
\spot{0,0};
\end{tikzpicture}\ $,
$\begin{tikzpicture}[baseline=-1mm,scale=.6]
	\draw[-,line width=1pt] (0.28,-.3) to (0.08,0.04);
	\draw[-,line width=1pt] (-0.12,-.3) to (0.08,0.04);
	\draw[-,line width=2pt] (0.08,.4) to (0.08,0);
        \node at (-0.22,-.43) {$\scriptstyle a$};
        \node at (0.35,-.43) {$\scriptstyle b$};
\end{tikzpicture}\mapsto \begin{tikzpicture}[baseline=-1mm,scale=.6]
	\draw[-,line width=1pt] (0.28,-.3) to (0.08,0.04);
	\draw[-,line width=1pt] (-0.12,-.3) to (0.08,0.04);
	\draw[-,line width=2pt] (0.08,.4) to (0.08,0);
        \node at (-0.22,-.43) {$\scriptstyle b$};
        \node at (0.35,-.43) {$\scriptstyle a$};
\end{tikzpicture}$,
$\begin{tikzpicture}[baseline=0mm,scale=.6]
	\draw[-,line width=2pt] (0.08,-.3) to (0.08,0.04);
	\draw[-,line width=1pt] (0.28,.4) to (0.08,0);
	\draw[-,line width=1pt] (-0.12,.4) to (0.08,0);
        \node at (-0.22,.53) {$\scriptstyle a$};
        \node at (0.36,.55) {$\scriptstyle b$};
\end{tikzpicture}\mapsto 
\begin{tikzpicture}[baseline=0mm,scale=.6]
	\draw[-,line width=2pt] (0.08,-.3) to (0.08,0.04);
	\draw[-,line width=1pt] (0.28,.4) to (0.08,0);
	\draw[-,line width=1pt] (-0.12,.4) to (0.08,0);
        \node at (-0.22,.53) {$\scriptstyle b$};
        \node at (0.36,.55) {$\scriptstyle a$};
\end{tikzpicture}$ and
$\begin{tikzpicture}[baseline=-1.5mm,scale=.6]
	\draw[-,thick] (0.3,-.35) to (-.3,.43);
	\draw[-,line width=5pt,white] (-0.3,-.35) to (.3,.43);
	\draw[-,thick] (-0.3,-.3) to (.3,.4);
       \node at (-0.32,-.53) {$\scriptstyle a$};
        \node at (0.32,-.54) {$\scriptstyle b$};
\end{tikzpicture}
\mapsto \begin{tikzpicture}[baseline=-1.5mm,scale=.6]
	\draw[-,thick] (0.3,-.35) to (-.3,.43);
	\draw[-,line width=5pt,white] (-0.3,-.35) to (.3,.43);
	\draw[-,thick] (-0.3,-.3) to (.3,.4);
       \node at (-0.32,-.53) {$\scriptstyle b$};
        \node at (0.32,-.54) {$\scriptstyle a$};
\end{tikzpicture}$.
Similarly, $\mathtt{T}$ rotates diagrams around a horizontal axis.
Existence of $\mathtt{R}$ and $\mathtt{T}$ follows easily from the symmetry of the defining relations
of $\qSchur'$.

\vspace{2mm}

\noindent
{\bf \cref{jonsquare}--\cref{jonsquare2}}.
Note $a \geq d$.
To prove the first equality in \cref{jonsquare}, we expand the left hand side 
as a sum of diagrams involving a crossing
using \cref{mergesplit}, to see that
\begin{align*}
\begin{tikzpicture}[anchorbase,scale=1]
	\draw[-,thick] (0,0) to (0,1);
	\draw[-,line width=1.2pt] (0.6,0) to (0.6,1);
	\draw[-,thick] (0.015,0) to (0.015,.2) to (.57,.4) to (.57,.6)
        to (.015,.8) to (.015,1);
        \node at (0.6,-.1) {$\scriptstyle b$};
        \node at (0,-.1) {$\scriptstyle a$};
        \node at (0.3,.82) {$\scriptstyle c$};
        \node at (0.3,.19) {$\scriptstyle d$};
\end{tikzpicture}
&=
\sum_{s=\max(0,c-b)}^{\min(c,d)}
q^{s(b-c+s)}
\begin{tikzpicture}[anchorbase,scale=1]
	\draw[-,thick] (1.08,0) to (1.08,.3) to (.65,.5) to (.02,.8) to (.02,1);
	\draw[-,line width=3.5pt,white] (0.02,0) to (0.02,.2) to (.65,.5) to (1.08,.7) to (1.08,1);
	\draw[-,thick] (0.02,0) to (0.02,.2) to (.65,.5) to (1.08,.7) to (1.08,1);
	\draw[-,line width=.7pt] (.01,0) to (0.01,.21) to (.35,.37) to (.35,.63) to
        (0.01,.79) to (.01,1);
	\draw[-,thin] (0,0) to (0,1);
	\draw[-,thin] (1.1,0) to (1.1,1);
        \node at (0,-.1) {$\scriptstyle a$};
        \node at (1.13,-.1) {$\scriptstyle b$};
        \node at (-0.3,.5) {$\scriptstyle a-d$};
        \node at (0.26,.5) {$\scriptstyle s$};
        \node at (.2,.18) {$\scriptstyle d$};
        \node at (.2,.84) {$\scriptstyle c$};
\end{tikzpicture}.
\end{align*}
Then use \cref{assrel} and the first relation from \cref{mergesplit} to establish the result.
The first equality in \cref{jonsquare2} follows now by applying $\mathtt{R}$.
Then to prove the second equality in \cref{jonsquare}, we 
use the first equality from \cref{jonsquare2}
to expand the
right hand side to see that it equals
$$
\sum_{s=\max(0,c-b)}^{\min(c,d)}
\sum_{t=s}^{\min(c,d)}
q^{(t-s)(a-d+t)}\qbinom{a-b+c-d}{s}_{\!q}
\qbinom{b-c+t}{t-s}_{\!q}
\begin{tikzpicture}[anchorbase,scale=1]
	\draw[-,thick] (0.88,0) to (0.88,.2) to (.02,.8) to (.02,1);
	\draw[-,line width=4pt,white] (0.02,0) to (0.02,.2) to (.88,.8) to (.88,1);
	\draw[-,thick] (0.02,0) to (0.02,.2) to (.88,.8) to (.88,1);
	\draw[-,thin] (0,0) to (0,1);
	\draw[-,thin] (0.9,0) to (0.9,1);
        \node at (0,-.1) {$\scriptstyle a$};
        \node at (.9,-.1) {$\scriptstyle b$};
        \node at (.33,.18) {$\scriptstyle d-t$};
        \node at (.34,.85) {$\scriptstyle c-t$};
\end{tikzpicture}.
$$
Now switch the summations to get
$$
\sum_{t=\max(0,c-b)}^{\min(c,d)}
\!\!\!\!\!\!q^{t(b-c+t)}
\!\!\!\!\!\!\!\sum_{s=\max(0,c-b)}^{t}\!\!\!\!\!
q^{(a-b+c-d)(t-s)-s(b-c+t)}
 \qbinom{a-b+c-d}{s}_{\!q}
\qbinom{b-c+t}{t-s}_{\!q}
\begin{tikzpicture}[anchorbase,scale=1]
	\draw[-,thick] (0.88,0) to (0.88,.2) to (.02,.8) to (.02,1);
	\draw[-,line width=4pt,white] (0.02,0) to (0.02,.2) to (.88,.8) to (.88,1);
	\draw[-,thick] (0.02,0) to (0.02,.2) to (.88,.8) to (.88,1);
	\draw[-,thin] (0,0) to (0,1);
	\draw[-,thin] (0.9,0) to (0.9,1);
        \node at (0,-.1) {$\scriptstyle a$};
        \node at (.9,-.1) {$\scriptstyle b$};
        \node at (.33,.18) {$\scriptstyle d-t$};
        \node at (.34,.85) {$\scriptstyle c-t$};
\end{tikzpicture}.
$$
In the second summation, if $c > b$, we can add extra terms with $0
\leq s < c-b$ since the second binomial coefficient is zero for all
such $s$.
So we are in a position to apply \cref{second} to get
$$
\sum_{t=\max(0,c-b)}^{\min(c,d)}
q^{t(b-c+t)}
 \qbinom{a-d+t}{t}_{\!q}
\begin{tikzpicture}[anchorbase,scale=1]
	\draw[-,thick] (0.88,0) to (0.88,.2) to (.02,.8) to (.02,1);
	\draw[-,line width=4pt,white] (0.02,0) to (0.02,.2) to (.88,.8) to (.88,1);
	\draw[-,thick] (0.02,0) to (0.02,.2) to (.88,.8) to (.88,1);
	\draw[-,thin] (0,0) to (0,1);
	\draw[-,thin] (0.9,0) to (0.9,1);
        \node at (0,-.1) {$\scriptstyle a$};
        \node at (.9,-.1) {$\scriptstyle b$};
        \node at (.33,.18) {$\scriptstyle d-t$};
        \node at (.34,.85) {$\scriptstyle c-t$};
\end{tikzpicture}.
$$
The second equality in 
\cref{jonsquare2} follows by applying $\mathtt{R}$.

\vspace{2mm}
\noindent
{\bf\cref{thickcrossing}.}
Set $d := a$ and $c := b$ in the second identity from \cref{mergesplit}.
The $s=t=0$
term on the right hand side of the resulting identity 
is equal to the crossing
$\begin{tikzpicture}[baseline=-1.5mm,scale=.6]
	\draw[-,thick] (0.3,-.35) to (-.3,.43);
	\draw[-,line width=5pt,white] (-0.3,-.35) to (.3,.43);
	\draw[-,thick] (-0.3,-.3) to (.3,.4);
       \node at (-0.32,-.53) {$\scriptstyle a$};
        \node at (0.32,-.54) {$\scriptstyle b$};
\end{tikzpicture}$,
as follows by contracting
strings of thickness zero in the manner explained in the comments 
after the statement of the theorem.
Rearranging to make this term the subject proves
the first equality in \cref{thickcrossing}.
Also the final equality may be deduced from from the others by
applying $\mathtt{R}$. 

It remains to establish the middle equality. If $a=0$ or $b=0$, it follows by
contracting the strings of thickness zero.
So we may assume that $a,b \geq 1$.
Then we proceed by induction on $a+b$. The base case $a=b=1$ follows by contracting strings of thickness zero once again.
For the induction step, we have by first the equality and the induction hypothesis that
$$
\begin{tikzpicture}[anchorbase,scale=1.2]
	\draw[-,line width=1pt] (0.3,-.3) to (-.3,.4);
	\draw[-,line width=4pt,white] (-0.3,-.3) to (.3,.4);
	\draw[-,line width=1pt] (-0.3,-.3) to (.3,.4);
        \node at (0.3,-.42) {$\scriptstyle b$};
        \node at (-0.3,-.42) {$\scriptstyle a$};
\end{tikzpicture}
=
\begin{tikzpicture}[baseline=3mm,scale=.9]
	\draw[-,line width=1.2pt] (.6,0) to (.315,.3) to (.315,.7) to (.6,1);
	\draw[-,line width=1.2pt] (0,0) to (.285,.3) to (.285,.7) to (0,1);
        \node at (0,-.1) {$\scriptstyle a$};
        \node at (0.6,-.1) {$\scriptstyle b$};
    \node at (0,1.13) {$\scriptstyle b$};
        \node at (0.63,1.13) {$\scriptstyle a$};
        \end{tikzpicture}
-\sum_{s=1}^{\min(a,b)}q^{s^2}
\begin{tikzpicture}[anchorbase,scale=1]
	\draw[-,thick] (0.58,0) to (0.58,.2) to (.02,.8) to (.02,1);
	\draw[-,line width=4pt,white] (0.02,0) to (0.02,.2) to (.58,.8) to (.58,1);
	\draw[-,thick] (0.02,0) to (0.02,.2) to (.58,.8) to (.58,1);
	\draw[-,thin] (0,0) to (0,1);
	\draw[-,thin] (0.6,0) to (0.6,1);
        \node at (0,-.1) {$\scriptstyle a$};
        \node at (0.6,-.1) {$\scriptstyle b$};
        \node at (-0.1,.5) {$\scriptstyle s$};
        \node at (0.7,.5) {$\scriptstyle s$};
\end{tikzpicture}
=
\begin{tikzpicture}[baseline=3mm,scale=.9]
	\draw[-,line width=1.2pt] (.6,0) to (.315,.3) to (.315,.7) to (.6,1);
	\draw[-,line width=1.2pt] (0,0) to (.285,.3) to (.285,.7) to (0,1);
        \node at (0,-.1) {$\scriptstyle a$};
        \node at (0.6,-.1) {$\scriptstyle b$};
    \node at (0,1.13) {$\scriptstyle b$};
        \node at (0.63,1.13) {$\scriptstyle a$};
        \end{tikzpicture}
- \sum_{s=1}^{\min(a,b)}
q^{s^2}\sum_{t=0}^{\min(a,b)-s}
(-q)^{t}
\begin{tikzpicture}[anchorbase,scale=1]
	\draw[-,thin] (0,0) to (0,1);
	\draw[-,thick] (0.02,0) to (0.02,.2) to (.7,.3) to (.7,.7) to (1.08,.8) to (1.08,1);
	\draw[-,thick] (1.08,0) to (1.08,.2) to (.7,.3) to (.7,.7) to (.02,.8) to (.02,1);
	\draw[-,thin] (.01,0) to (0.01,.22) to (.35,.27) to (.35,.73) to
        (0.01,.78) to (.01,1);
	\draw[-,thin] (1.1,0) to (1.1,1);
 \draw[-,thin] (.355,.268) to (.355,.732);
        \node at (0,-.1) {$\scriptstyle a$};
        \node at (1.13,-.1) {$\scriptstyle b$};
        \node at (-0.1,.5) {$\scriptstyle s$};
        \node at (1.25,.5) {$\scriptstyle s$};
        \node at (0.25,.5) {$\scriptstyle t$};
\end{tikzpicture}\!.
$$
By (a) from 
the lemma at the start of this appendix (applicable since \cref{jonsquare2}
implies the second square-switch relation), we have that
\begin{equation*}
\begin{tikzpicture}[anchorbase,scale=1]
	\draw[-,line width=1.2pt] (0,0) to (.285,.3) to (.285,.7) to (0,1);
	\draw[-,line width=1.2pt] (.6,0) to (.325,.3) to (.325,.7) to (.6,1);
        \node at (0,1.13) {$\scriptstyle b$};
        \node at (0.6,1.13) {$\scriptstyle a$};
        \node at (0,-.1) {$\scriptstyle a$};
        \node at (0.6,-.1) {$\scriptstyle b$};
\end{tikzpicture}
=
\sum_{s=0}^{\min(a,b)}
q^{s^2}
\sum_{t=0}^{\min(a,b)-s}
(-q)^{t}
\begin{tikzpicture}[anchorbase,scale=1]
	\draw[-,thin] (0,0) to (0,1);
	\draw[-,thick] (0.02,0) to (0.02,.2) to (.7,.3) to (.7,.7) to (1.08,.8) to (1.08,1);
	\draw[-,thick] (1.08,0) to (1.08,.2) to (.7,.3) to (.7,.7) to (.02,.8) to (.02,1);
	\draw[-,thin] (.01,0) to (0.01,.22) to (.35,.27) to (.35,.73) to
        (0.01,.78) to (.01,1);
	\draw[-,thin] (1.1,0) to (1.1,1);
        \node at (0,1.13) {$\scriptstyle b$};
        \node at (1.13,1.13) {$\scriptstyle a$};
        \node at (0,-.1) {$\scriptstyle a$};
        \node at (1.13,-.1) {$\scriptstyle b$};
        \node at (-0.1,.5) {$\scriptstyle s$};
        \node at (1.2,.5) {$\scriptstyle s$};
        \node at (0.25,.5) {$\scriptstyle t$};
\end{tikzpicture}.
\end{equation*}
Using this, the previous equation simplifies to give
$$
\begin{tikzpicture}[anchorbase,scale=1.2]
	\draw[-,line width=1pt] (0.3,-.3) to (-.3,.4);
	\draw[-,line width=4pt,white] (-0.3,-.3) to (.3,.4);
	\draw[-,line width=1pt] (-0.3,-.3) to (.3,.4);
        \node at (0.3,-.42) {$\scriptstyle b$};
        \node at (-0.3,-.42) {$\scriptstyle a$};
\end{tikzpicture}
=
\sum_{t=0}^{\min(a,b)}
(-q)^{t}
\begin{tikzpicture}[anchorbase,scale=1]
	\draw[-,line width=0pt] (0,0) to (0,1);
	\draw[-,thick] (0.02,0) to (0.02,.2) to (.7,.3) to (.7,.7) to (1.08,.8) to (1.08,1);
	\draw[-,thick] (1.08,0) to (1.08,.2) to (.7,.3) to (.7,.7) to (.02,.8) to (.02,1);
	\draw[-,thin] (.01,0) to (0.01,.22) to (.35,.27) to (.35,.73) to
        (0.01,.78) to (.01,1);
        \draw[-,thin] (.355,.268) to (.355,.732);
	\draw[-,line width=0pt] (1.1,0) to (1.1,1);
        \node at (0,-.1) {$\scriptstyle a$};
        \node at (1.13,-.1) {$\scriptstyle b$};
        \node at (-0.1,.5) {$\scriptstyle 0$};
        \node at (1.25,.5) {$\scriptstyle 0$};
        \node at (0.25,.5) {$\scriptstyle t$};
\end{tikzpicture}
=\sum_{t=0}^{\min(a,b)}
(-q)^{t}
\begin{tikzpicture}[anchorbase,scale=1]
	\draw[-,thick] (0.02,0) to (0.02,.2) to (.7,.3) to (.7,.7) to (1.08,.8) to (1.08,1);
	\draw[-,thick] (1.08,0) to (1.08,.2) to (.7,.3) to (.7,.7) to (.02,.8) to (.02,1);
	\draw[-,thin] (.01,0) to (0.01,.22) to (.35,.27) to (.35,.73) to
        (0.01,.78) to (.01,1);
        \node at (0,-.1) {$\scriptstyle a$};
        \node at (1.13,-.1) {$\scriptstyle b$};
        \node at (0.25,.5) {$\scriptstyle t$};
\end{tikzpicture}=
\sum_{t=0}^{\min(a,b)}
(-q)^{t}
\begin{tikzpicture}[anchorbase,scale=1]
	\draw[-,thin] (0,0) to (0,1);
	\draw[-,thick] (0.015,0) to (0.015,.2) to (.57,.4) to (.57,.6)
        to (.015,.8) to (.015,1);
	\draw[-,line width=1.2pt] (0.59,0) to (0.59,1);
        \node at (0.59,-.1) {$\scriptstyle b$};
        \node at (0,-.1) {$\scriptstyle a$};
        \node at (-0.1,.5) {$\scriptstyle t$};
\end{tikzpicture},
$$
as required for the induction step.

\vspace{2mm}
\noindent
{\bf\cref{sliders}.}
Since it will be needed shortly,  we next argue that 
 $\mathtt{R}$ and $\T$
both map 
$\begin{tikzpicture}[baseline=-1.5mm,scale=.6]
	\draw[-,thick] (-0.3,-.35) to (.3,.43);
	\draw[-,line width=5pt,white] (0.3,-.35) to (-.3,.43);
	\draw[-,thick] (0.3,-.3) to (-.3,.4);
       \node at (-0.32,-.53) {$\scriptstyle a$};
        \node at (0.32,-.54) {$\scriptstyle b$};
\end{tikzpicture}
\mapsto \begin{tikzpicture}[baseline=-1.5mm,scale=.6]
	\draw[-,thick] (-0.3,-.35) to (.3,.43);
	\draw[-,line width=5pt,white] (0.3,-.35) to (-.3,.43);
	\draw[-,thick] (0.3,-.3) to (-.3,.4);
       \node at (-0.32,-.53) {$\scriptstyle b$};
        \node at (0.32,-.54) {$\scriptstyle a$};
\end{tikzpicture}$.
For $\T$, 
this is obvious from the definition \cref{tireddog}.
To see it for $\mathtt{R}$, we need to show that
$$
\sum_{s=0}^{\min(a,b)}
(-q)^{-s}
\begin{tikzpicture}[anchorbase,scale=1]
	\draw[-,line width=1.2pt] (0,0) to (0,1);
	\draw[-,thick] (0.8,0) to (0.8,.2) to (.03,.4) to (.03,.6)
        to (.8,.8) to (.8,1);
	\draw[-,thin] (0.82,0) to (0.82,1);
        \node at (0.81,-.1) {$\scriptstyle a$};
        \node at (0,-.1) {$\scriptstyle b$};
        \node at (0.4,.9) {$\scriptstyle b-s$};
        \node at (0.4,.13) {$\scriptstyle a-s$};
\end{tikzpicture}
=
\sum_{s=0}^{\min(a,b)}
(-q)^{-s}
\begin{tikzpicture}[anchorbase,scale=1]
	\draw[-,line width=1.2pt] (0,0) to (0,1);
	\draw[-,thick] (-0.8,0) to (-0.8,.2) to (-.03,.4) to (-.03,.6)
        to (-.8,.8) to (-.8,1);
	\draw[-,thin] (-0.82,0) to (-0.82,1);
        \node at (-0.81,-.1) {$\scriptstyle b$};
        \node at (0,-.1) {$\scriptstyle a$};
        \node at (-0.4,.9) {$\scriptstyle a-s$};
        \node at (-0.4,.13) {$\scriptstyle b-s$};
\end{tikzpicture}\ .
$$
This follows because we have simply that
$$
\begin{tikzpicture}[anchorbase,scale=1]
	\draw[-,line width=1.2pt] (0,0) to (0,1);
	\draw[-,thick] (0.8,0) to (0.8,.2) to (.03,.4) to (.03,.6)
        to (.8,.8) to (.8,1);
	\draw[-,thin] (0.82,0) to (0.82,1);
        \node at (0.81,-.1) {$\scriptstyle a$};
        \node at (0,-.1) {$\scriptstyle b$};
        \node at (0.4,.9) {$\scriptstyle b-s$};
        \node at (0.4,.13) {$\scriptstyle a-s$};
\end{tikzpicture}
=
\sum_{t=0}^{\min(a,b)-s}
\qbinom{0}{t}_{\!q}
\begin{tikzpicture}[anchorbase,scale=1.3]
	\draw[-,line width=1.2pt] (-0.02,0) to (-0.02,1);
	\draw[-,thick] (-0.8,0) to (-0.8,.2) to (-.03,.4) to (-.03,.6)
        to (-.8,.8) to (-.8,1);
	\draw[-,thin] (-0.81,0) to (-0.81,1);
        \node at (-0.81,-.1) {$\scriptstyle b$};
        \node at (0,-.1) {$\scriptstyle a$};
        \node at (-0.4,.9) {$\scriptstyle a-s-t$};
        \node at (-0.4,.13) {$\scriptstyle b-s-t$};
\end{tikzpicture}
=\begin{tikzpicture}[anchorbase,scale=1]
	\draw[-,line width=1.2pt] (0,0) to (0,1);
	\draw[-,thick] (-0.8,0) to (-0.8,.2) to (-.03,.4) to (-.03,.6)
        to (-.8,.8) to (-.8,1);
	\draw[-,thin] (-0.82,0) to (-0.82,1);
        \node at (-0.81,-.1) {$\scriptstyle b$};
        \node at (0,-.1) {$\scriptstyle a$};
        \node at (-0.4,.9) {$\scriptstyle a-s$};
        \node at (-0.4,.13) {$\scriptstyle b-s$};
\end{tikzpicture}
$$
by \cref{jonsquare2}.
Now for \cref{sliders}, the 
four identities are all equivalent since we can
rotate in
horizontal and/or vertical axes using $\mathtt{R}$ and $\T$. So it suffices to prove the first one:
$$
\begin{tikzpicture}[anchorbase,scale=0.7]
	\draw[-,thick] (0.4,0) to (-0.6,1);
	\draw[-,line width=4pt,white] (0.1,0) to (0.1,.6) to (.5,1);
	\draw[-,thick] (0.1,0) to (0.1,.6) to (.5,1);
	\draw[-,thick] (0.08,0) to (0.08,1);
        \node at (0.6,1.13) {$\scriptstyle c$};
        \node at (0.1,1.16) {$\scriptstyle b$};
        \node at (-0.65,1.13) {$\scriptstyle a$};
\end{tikzpicture}
=
\begin{tikzpicture}[anchorbase,scale=0.7]
	\draw[-,thick] (0.7,0) to (-0.3,1);
	\draw[-,line width=4pt,white] (0.1,0) to (0.1,.2) to (.9,1);
	\draw[-,line width=3pt,white] (0.08,0) to (0.08,1);
	\draw[-,thick] (0.1,0) to (0.1,.2) to (.9,1);
	\draw[-,thick] (0.08,0) to (0.08,1);
        \node at (0.9,1.13) {$\scriptstyle c$};
        \node at (0.1,1.16) {$\scriptstyle b$};
        \node at (-0.4,1.13) {$\scriptstyle a$};
\end{tikzpicture}.
$$
We proceed by induction on $a+b+c$. The base
case is when $a=0$, which is obvious since we can contract the string of thickness zero. 
For the induction step, 
we rewrite the crossing at the bottom
of the diagram on the left hand side using \cref{thickcrossing}:
$$
\begin{tikzpicture}[anchorbase,scale=0.7]
	\draw[-,thick] (0.4,0) to (-0.6,1);
	\draw[-,line width=4pt,white] (0.1,0) to (0.1,.6) to (.5,1);
	\draw[-,thick] (0.1,0) to (0.1,.6) to (.5,1);
	\draw[-,thick] (0.08,0) to (0.08,1);
        \node at (0.6,1.13) {$\scriptstyle c$};
        \node at (0.1,1.16) {$\scriptstyle b$};
        \node at (-0.65,1.13) {$\scriptstyle a$};
\end{tikzpicture}
 =
\sum_{s=0}^{\min(a,b+c)} (-q)^{s}
\begin{tikzpicture}[anchorbase,scale=.8]
	\draw[-,thin] (0.01,0) to (0.01,1);
	\draw[-,thick] (0.02,0) to (0.02,0.2) to (.88,0.4) to (.88,0.6) to
        (0.02,.8) to (0.02,1);
	\draw[-,thick] (0.03,0) to (0.03,0.2) to (.91,0.4) to (.91,1);
	\draw[-,thick] (.92,0) to (.92,0.8) to (1.3,1);
        \node at (0,-.15) {$\scriptstyle b+c$};
        \node at (0,1.15) {$\scriptstyle a$};
        \node at (-.1,.5) {$\scriptstyle s$};
        \node at (.9,1.15) {$\scriptstyle b$};
        \node at (1.4,1.15) {$\scriptstyle c$};
        \node at (.9,-.15) {$\scriptstyle a$};
\end{tikzpicture}
 =
\sum_{s=0}^{\min(a,b+c)} (-q)^{s}
\begin{tikzpicture}[anchorbase,scale=.8]
	\draw[-,thin] (0.01,0) to (0.01,1);
	\draw[-,thick] (0.02,0) to (0.02,0.2) to (.695,0.3) to (.695,0.7) to
        (0.02,.8) to (0.02,1);
	\draw[-,thick] (0.03,0) to (0.03,0.2) to (.71,0.3) to (.71,1);
	\draw[-,thick] (.73,0) to (.73,0.5) to (1.4,1);
        \node at (0,-.15) {$\scriptstyle b+c$};
        \node at (0,1.15) {$\scriptstyle a$};
        \node at (-.1,.5) {$\scriptstyle s$};
        \node at (.7,1.15) {$\scriptstyle b$};
        \node at (1.4,1.15) {$\scriptstyle c$};
        \node at (.73,-.15) {$\scriptstyle a$};
\end{tikzpicture}.
$$
By the second equation from 
\cref{mergesplit}, we have that
$$
\begin{tikzpicture}[anchorbase,scale=.8]
	\draw[-,line width=1.2pt] (0,0) to (.275,.3) to (.275,.7) to (0,1);
	\draw[-,line width=1.2pt] (.6,0) to (.315,.3) to (.315,.7) to (.6,1);
        \node at (-0.2,1.13) {$\scriptstyle a+b-s$};
        \node at (0.63,1.13) {$\scriptstyle c$};
        \node at (-0.2,-.1) {$\scriptstyle b+c-s$};
        \node at (0.63,-.1) {$\scriptstyle a$};
\end{tikzpicture}
=
\sum_{t=\max(0,s-b)}^{\min(a,c)}
q^{t(b+t-s)}\begin{tikzpicture}[anchorbase,scale=.8]
	\draw[-,thick] (1.08,0) to (1.08,.2) to (.02,.8) to (.02,1);
	\draw[-,line width=4pt,white] (0.02,0) to (0.02,.2) to (1.08,.8) to (1.08,1);
	\draw[-,thick] (0.02,0) to (0.02,.2) to (1.08,.8) to (1.08,1);
	\draw[-,thin] (0,0) to (0,1);
	\draw[-,thin] (1.1,0) to (1.1,1);
        \node at (-.2,1.13) {$\scriptstyle a+b-s$};
        \node at (1.13,1.13) {$\scriptstyle c$};
        \node at (-.2,-.1) {$\scriptstyle b+c-s$};
        \node at (1.13,-.1) {$\scriptstyle a$};
        \node at (1.23,.5) {$\scriptstyle t$};
        \node at (-.6,.5) {$\scriptstyle b+t-s$};
\end{tikzpicture}.
$$
We substitute this into our formula to obtain
\begin{equation*}
\begin{tikzpicture}[anchorbase,scale=0.7]
	\draw[-,thick] (0.4,0) to (-0.6,1);
	\draw[-,line width=4pt,white] (0.1,0) to (0.1,.6) to (.5,1);
	\draw[-,thick] (0.1,0) to (0.1,.6) to (.5,1);
	\draw[-,thick] (0.08,0) to (0.08,1);
        \node at (0.6,1.13) {$\scriptstyle c$};
        \node at (0.1,1.16) {$\scriptstyle b$};
        \node at (-0.65,1.13) {$\scriptstyle a$};
\end{tikzpicture}
 =
\sum_{s=0}^{\min(a,b+c)} 
\sum_{t=\max(0,s-b)}^{\min(a,c)}(-q)^{s}
q^{t(b+t-s)}
\begin{tikzpicture}[anchorbase,scale=.8]
	\draw[-,thick] (1.05,0)  to (1.05,.05) to (.345,.5) to (.465,.7)
        to (0.03,.85) to (0.03,1);
	\draw[-,line width=3pt,white] (0.04,0) to (0.04,.05) to (1.4,.96) to (1.4,1);
	\draw[-,thin] (0.04,0) to (0.04,.05) to (1.4,.96) to (1.4,1);
	\draw[-,thick] (0.03,0) to (0.03,.05) to (.64,1);
	\draw[-,thin] (1.07,0) to (1.07,.05) to (1.42,.96) to (1.42,1);
	\draw[-,thin] (0,0) to (0,1);
	\draw[-,thick] (0.01,0) to (0.01,.05) to (.42,.7) to (0.01,.85)
        to (0.01,1);
        \node at (0,-.15) {$\scriptstyle b+c$};
        \node at (0,1.15) {$\scriptstyle a$};
        \node at (-.1,.5) {$\scriptstyle s$};
        \node at (1.4,.5) {$\scriptstyle t$};
        \node at (.7,1.18) {$\scriptstyle b$};
        \node at (1.4,1.15) {$\scriptstyle c$};
        \node at (1.05,-.15) {$\scriptstyle a$};
\end{tikzpicture}.
\end{equation*}
By the second equation in \cref{mergesplit} again, we have that
$$
\begin{tikzpicture}[anchorbase,scale=.8]
	\draw[-,line width=1.2pt] (0,0) to (.275,.3) to (.275,.7) to (0,1);
	\draw[-,line width=1.2pt] (.6,0) to (.315,.3) to (.315,.7) to (.6,1);
        \node at (-0.1,1.15) {$\scriptstyle a-s$};
        \node at (0.63,1.15) {$\scriptstyle b$};
        \node at (-0.3,-.1) {$\scriptstyle b+t-s$};
        \node at (0.7,-.1) {$\scriptstyle a-t$};
\end{tikzpicture}
=
\sum_{u=\max(s,t)}^{\min(a,b+t)}
q^{(u-s)(u-t)}
\begin{tikzpicture}[anchorbase,scale=.8]
	\draw[-,thick] (1.08,0) to (1.08,.2) to (.02,.8) to (.02,1);
	\draw[-,line width=4pt,white] (0.02,0) to (0.02,.2) to (1.08,.8) to (1.08,1);
	\draw[-,thick] (0.02,0) to (0.02,.2) to (1.08,.8) to (1.08,1);
	\draw[-,thin] (0,0) to (0,1);
	\draw[-,thin] (1.1,0) to (1.1,1);
        \node at (-.2,1.15) {$\scriptstyle a-s$};
        \node at (1.13,1.15) {$\scriptstyle b$};
        \node at (-.2,-.1) {$\scriptstyle b+t-s$};
        \node at (1.13,-.1) {$\scriptstyle a-t$};
        \node at (1.43,.5) {$\scriptstyle u-t$};
        \node at (-.4,.5) {$\scriptstyle u-s$};
\end{tikzpicture}.
$$
Using this, \cref{assrel} the first equation in \cref{mergesplit},
and the induction hypothesis to pull a 
split past the string of thickness $c-t$,
we simplify further to get
\begin{align*}
\begin{tikzpicture}[anchorbase,scale=0.7]
	\draw[-,thick] (0.4,0) to (-0.6,1);
	\draw[-,line width=4pt,white] (0.1,0) to (0.1,.6) to (.5,1);
	\draw[-,thick] (0.1,0) to (0.1,.6) to (.5,1);
	\draw[-,thick] (0.08,0) to (0.08,1);
        \node at (0.6,1.13) {$\scriptstyle c$};
        \node at (0.1,1.16) {$\scriptstyle b$};
        \node at (-0.65,1.13) {$\scriptstyle a$};
\end{tikzpicture}
& =
\sum_{s=0}^{\min(a,b+c)} \sum_{t=\max(0,s-b)}^{\min(a,c)}
\sum_{u=\max(s,t)}^{\min(a,b+t)}
(-q)^{s} q^{t(b+t-s)} q^{(u-s)(u-t)}\qbinom{u}{s}_{\!q}
\begin{tikzpicture}[anchorbase,scale=.8]
	\draw[-,thin] (0.01,1) to (.01,.95) to (1.035,.05) to (1.035,0);
	\draw[-,thin] (1.03,0) to (1.03,.05) to (.705,.95) to (.705,1);
	\draw[-,line width=3pt,white] (0.35,0) to (.35,.05) to (.695,.95) to (.695,1);
	\draw[-,thin] (1.03,0) to (1.03,.05) to (.705,.95) to (.705,1);
	\draw[-,line width=3pt,white] (0.36,0) to (.36,.05) to (1.39,.95) to (1.39,1);
	\draw[-,thin] (0.36,0) to (.36,.05) to (1.39,.95) to (1.39,1);
	\draw[-,thin] (0.35,0) to (.35,.05) to (.695,.95) to (.695,1);
	\draw[-,thin] (1.04,0) to (1.04, .05) to (1.4,.95) to (1.4,1);
	\draw[-,thin] (0.34,0) to (.34,.05) to (0,.95) to (0,1);
        \node at (0.35,-.15) {$\scriptstyle b+c$};
        \node at (0,1.15) {$\scriptstyle a$};
        \node at (-.07,.5) {$\scriptstyle u$};
        \node at (1.4,.5) {$\scriptstyle t$};
        \node at (.7,1.18) {$\scriptstyle b$};
        \node at (1.4,1.15) {$\scriptstyle c$};
        \node at (1.05,-.15) {$\scriptstyle a$};
\end{tikzpicture}\\
&=
\sum_{t=0}^{\min(a,c)}
\sum_{u=t}^{\min(a,b+t)}
q^{-u(t-u)+t(b+t)}
\left(\sum_{s=0}^u 
(-1)^sq^{-s(u-1)} \qbinom{u}{s}_{\!q}\right)
\begin{tikzpicture}[anchorbase,scale=.8]
	\draw[-,thin] (0.01,1) to (.01,.95) to (1.035,.05) to (1.035,0);
	\draw[-,thin] (1.03,0) to (1.03,.05) to (.705,.95) to (.705,1);
	\draw[-,line width=3pt,white] (0.35,0) to (.35,.05) to (.695,.95) to (.695,1);
	\draw[-,thin] (1.03,0) to (1.03,.05) to (.705,.95) to (.705,1);
	\draw[-,line width=3pt,white] (0.36,0) to (.36,.05) to (1.39,.95) to (1.39,1);
	\draw[-,thin] (0.36,0) to (.36,.05) to (1.39,.95) to (1.39,1);
	\draw[-,thin] (0.35,0) to (.35,.05) to (.695,.95) to (.695,1);
	\draw[-,thin] (1.04,0) to (1.04, .05) to (1.4,.95) to (1.4,1);
	\draw[-,thin] (0.34,0) to (.34,.05) to (0,.95) to (0,1);
        \node at (0.35,-.15) {$\scriptstyle b+c$};
        \node at (0,1.15) {$\scriptstyle a$};
        \node at (-.07,.5) {$\scriptstyle u$};
        \node at (1.4,.5) {$\scriptstyle t$};
        \node at (.7,1.18) {$\scriptstyle b$};
        \node at (1.4,1.15) {$\scriptstyle c$};
        \node at (1.05,-.15) {$\scriptstyle a$};
\end{tikzpicture}.
\end{align*}
By \cref{vanishing}, the expression in parentheses is $\delta_{u,0}$,
so the only non-zero term arises when $u=t=0$, and we get 
exactly the right hand side we were after.

\vspace{2mm}
\noindent
{\bf\cref{swallows-symmetric}.}
Consider the first two relations.
Since we can apply $\mathtt{R}$ and $\T$, 
it suffices to prove the first equality
in the special case that $a \geq b$.
Replacing the crossing using \cref{tireddog},
then applying \cref{assrel} and the first relation from \cref{mergesplit} as usual, we have that
$$
\begin{tikzpicture}[anchorbase,scale=.7]
\draw[-,thick] (-.2,-.8) to [out=45,in=-45] (0.1,.31);
\draw[-,line width=4.3pt,white] (.36,-.8) to [out=135,in=-135] (0.06,.31);
\draw[-,thick] (.36,-.8) to [out=135,in=-135] (0.06,.31);
	\draw[-,line width=2pt] (0.08,.3) to (0.08,.5);
        \node at (-.3,-.95) {$\scriptstyle a$};
        \node at (.45,-.95) {$\scriptstyle b$};
\end{tikzpicture}
=
\sum_{s=0}^{b} (-q)^{-s}
\begin{tikzpicture}[anchorbase,scale=.7]
	\draw[-,line width=1.8pt] (0.08,.1) to (0.08,.5);
\draw[-,thick] (.47,-.8) to [out=100,in=-45] (0.09,.115);
\draw[-,thin] (-.29,-.8) to [out=80,in=-135] (0.05,.115);
\draw[-,thick] (-.275,-.8) to (-.26,-.7) to (.425,-.6) to (.39,-.45) to (-.18,-.3) to [out=70,in=-135] (0.07,.115);
        \node at (-.3,-.95) {$\scriptstyle a$};
        \node at (.43,-.95) {$\scriptstyle b$};
        \node at (-.26,-.02) {$\scriptstyle b$};
        \node at (.41,-.06) {$\scriptstyle a$};
        \node at (-.4,-.45) {$\scriptstyle s$};
\end{tikzpicture}
=
\left(\sum_{s=0}^{b}(-q)^{-s} \qbinom{a+b-s}{b-s}_{\!q}
\qbinom{a}{s}_{\!q} 
\right)\begin{tikzpicture}[anchorbase,scale=.7]
	\draw[-,line width=2pt] (0.08,.1) to (0.08,.5);
\draw[-,thick] (.46,-.8) to [out=100,in=-45] (0.1,.11);
\draw[-,thick] (-.3,-.8) to [out=80,in=-135] (0.06,.11);
        \node at (-.3,-.95) {$\scriptstyle a$};
        \node at (.43,-.95) {$\scriptstyle b$};
\end{tikzpicture}.
$$
It remains to observe that the coefficient here equals $q^{ab}$. 
This follows from \cref{A},
taking $a,b,m$ and $s$ there to be
$b-s,s,a$ and $b$ in the present setup.

Now consider the third and fourth relations in \cref{swallows-symmetric}.
Since we can apply $\mathtt{R}$ and $\mathtt{T}$,
it suffices just to prove the third one, and we can assume moreover that $a \leq b$.
We proceed by induction on $a$. The base case $a=0$
follows by contracting the string of thickness zero.
For the induction step, suppose that $a \geq 1$.
We claim for $0 \leq s < a$ that 
\begin{equation*}
\begin{tikzpicture}[anchorbase,scale=.9]
  \draw[-,thin] (.274,-.8) to (.274,-.65) to (-.279,-.29);
  \draw[-,line width=3pt,white] (-.274,-.8) to (-.274,-.65) to (.279,-.29);  \draw[-,thin] (-.274,-.8) to (-.274,-.65) to (.279,-.29);
  \draw[-,thin] (-.289,-.8) to (-.289,-.29);
  \draw[-,thin] (.289,-.8) to (.289,-.29);
	\draw[-,thick] (-0.28,-.3) to [out=90,in=-130] (0,.2) to [out=50,in=-90] (0.28,.6);
	\draw[-,white,line width=4pt] (0.28,-.3) to [out=90,in=-50] (0,.2) to [out=130,in=-90] (-0.28,.6);
  \draw[-,thick] (0.28,-.3) to [out=90,in=-50] (0,.2) to [out=130,in=-90] (-0.28,.6);
        \node at (0.3,-.95) {$\scriptstyle b$};
        \node at (-0.3,-.95) {$\scriptstyle a$};
        \node at (-.66,-.45) {$\scriptstyle a-s$};
         \node at (.66,-.45) {$\scriptstyle a-s$};      \node at (0.3,.77) {$\scriptstyle b$};
        \node at (-0.3,.75) {$\scriptstyle a$};
\end{tikzpicture}
=
q^{(a-s)(b-a+2s)}
\begin{tikzpicture}[anchorbase,scale=.9]
	\draw[-,thick] (0.02,.2) to (0.02,.3) to (.58,.9) to (.58,1);
 	\draw[-,line width=4pt,white] (.58,0.2) to (.58,.3) to (.02,.9) to (.02,1);
	\draw[-,thick] (.58,0.2) to (.58,.3) to (.02,.9) to (.02,1);
 \draw[-,thin] (0,0.2) to (0,1);
	\draw[-,thin] (.6,0.2) to (.6,1);
        \node at (0,.05) {$\scriptstyle a$};
        \node at (.58,.05) {$\scriptstyle b$};
        \node at (-0.16,.6) {$\scriptstyle s$};
       \node at (0.6,1.17) {$\scriptstyle b$};
        \node at (0,1.15) {$\scriptstyle a$};
\end{tikzpicture}.
\end{equation*}
To see this, one uses \cref{sliders},
the second relation from \cref{swallows-symmetric} and the induction hypothesis to
move the merges up past the crossing.
Using \cref{thickcrossing} and the second relation from \cref{swallows-symmetric},
we have that
\begin{align*}
\begin{tikzpicture}[anchorbase,scale=0.8]
	\draw[-,thick] (0.28,-.8) to[out=90,in=-90] (-0.28,-0.1);
	\draw[-,line width=4pt,white] (-0.28,-.8) to[out=90,in=-90] (0.28,-0.1);
	\draw[-,thick] (-0.28,-.8) to[out=90,in=-90] (0.28,-0.1);
	\draw[-,thick] (-0.28,-0.1) to[out=90,in=-90] (0.28,.6);
	\draw[-,line width=4pt,white] (0.28,-0.1) to[out=90,in=-90] (-0.28,.6);
	\draw[-,thick] (0.28,-0.1) to[out=90,in=-90] (-0.28,.6);
        \node at (0.3,-.95) {$\scriptstyle b$};
        \node at (-0.3,-.95) {$\scriptstyle a$};
       \node at (0.3,.77) {$\scriptstyle b$};
        \node at (-0.3,.75) {$\scriptstyle a$};
\end{tikzpicture}
&=
\begin{tikzpicture}[anchorbase,scale=0.8]
	\draw[-,thick] (-0.28,0) to[out=90,in=-90] (0.28,.6);	
  \draw[-,line width=4pt,white] (0.28,0) to[out=90,in=-90] (-0.28,.6);
\draw[-,thick] (0.28,0) to[out=90,in=-90] (-0.28,.6);
	\draw[-,thick] (-.3,-.8) to (-.01,-.5) to (-0.01,-.3) to[out=135,in=-90] (-0.28,0);
	\draw[-,thick] (0.3,-.8) to (.01,-.5) to (0.01,-.3) to[out=45,in=-90] (0.28,0);
        \node at (0.3,-.95) {$\scriptstyle b$};
        \node at (-0.3,-.95) {$\scriptstyle a$};
       \node at (0.3,.77) {$\scriptstyle b$};
        \node at (-0.3,.75) {$\scriptstyle a$};
\end{tikzpicture}
-\sum_{t=1}^{a}
q^{t^2}
\begin{tikzpicture}[anchorbase,scale=0.8]
  \draw[-,thin] (.274,-.8) to (.274,-.65) to (-.279,-.29);
  \draw[-,line width=3pt,white] (-.274,-.8) to (-.274,-.65) to (.279,-.29);
 \draw[-,thin] (-.274,-.8) to (-.274,-.65) to (.279,-.29);
	\draw[-,thick] (-0.28,-.3) to [out=90,in=-130] (0,.2) to [out=50,in=-90] (0.28,.6);
	\draw[-,line width=4pt,white] (0.28,-.3) to [out=90,in=-50] (0,.2) to [out=130,in=-90] (-0.28,.6);
  	\draw[-,thick] (0.28,-.3) to [out=90,in=-50] (0,.2) to [out=130,in=-90] (-0.28,.6);
\draw[-,thin] (.289,-.8) to (.289,-.29);
  \draw[-,thin] (-.289,-.8) to (-.289,-.29);
        \node at (0.3,-.95) {$\scriptstyle b$};
        \node at (-0.3,-.95) {$\scriptstyle a$};
       \node at (0.3,.77) {$\scriptstyle b$};
        \node at (-0.3,.75) {$\scriptstyle a$};
        \node at (-0.43,-.45) {$\scriptstyle t$};
\end{tikzpicture}=
q^{ab}
\begin{tikzpicture}[anchorbase,scale=0.8]
	\draw[-,thick] (-.3,-.8) to (-.01,-.3) to (-0.01,-.3) to (0.01,.1) to (-.3,.6);
	\draw[-,thick] (0.3,-.8) to (.01,-.3) to (0.01,-.3) to
        (0.01,.1) to (.3,.6);
        \node at (0.3,-.95) {$\scriptstyle b$};
        \node at (-0.3,-.95) {$\scriptstyle a$};
       \node at (0.3,.77) {$\scriptstyle b$};
        \node at (-0.3,.75) {$\scriptstyle a$};
\end{tikzpicture}
-\sum_{s=0}^{a-1}
q^{(a-s)^2}
\begin{tikzpicture}[anchorbase,scale=0.8]
  \draw[-,thin] (.274,-.8) to (.274,-.65) to (-.279,-.29);
  \draw[-,line width=3pt,white] (-.274,-.8) to (-.274,-.65) to (.279,-.29);  \draw[-,thin] (-.274,-.8) to (-.274,-.65) to (.279,-.29);
  \draw[-,thin] (-.289,-.8) to (-.289,-.29);
  \draw[-,thin] (.289,-.8) to (.289,-.29);
	\draw[-,thick] (-0.28,-.3) to [out=90,in=-130] (0,.2) to [out=50,in=-90] (0.28,.6);	\draw[-,line width=4pt,white] (0.28,-.3) to [out=90,in=-50] (0,.2) to [out=130,in=-90] (-0.28,.6);
 \draw[-,thick] (0.28,-.3) to [out=90,in=-50] (0,.2) to [out=130,in=-90] (-0.28,.6);
     \node at (0.3,-.95) {$\scriptstyle b$};
        \node at (-0.3,-.95) {$\scriptstyle a$};
       \node at (-.65,-.45) {$\scriptstyle a-s$};
       \node at (0.3,.77) {$\scriptstyle b$};
        \node at (-0.3,.75) {$\scriptstyle a$};
\end{tikzpicture}.
\end{align*}
Now we use (b) from the corollary at the start of the appendix and the claim to simplify to
$$
q^{ab}\sum_{s=0}^{a}
q^{-s(b-a+s)}
\begin{tikzpicture}[anchorbase,scale=.8]
	\draw[-,thick] (0.02,.2) to (0.02,.3) to (.58,.9) to (.58,1);
 \draw[-,line width=4pt,white] (.58,.2) to (.58,.3) to (.02,.9) to (.02,1); \draw[-,thick] (.58,.2) to (.58,.3) to (.02,.9) to (.02,1);
	\draw[-,thin] (0,0.2) to (0,1);
	\draw[-,thin] (.6,0.2) to (.6,1);
        \node at (0,.05) {$\scriptstyle a$};
        \node at (.58,.05) {$\scriptstyle b$};
        \node at (-0.16,.6) {$\scriptstyle s$};
       \node at (0.6,1.17) {$\scriptstyle b$};
        \node at (0,1.15) {$\scriptstyle a$};
\end{tikzpicture}
-\sum_{s=0}^{a-1}
q^{(a-s)^2+(a-s)(b-a+2s)}
\begin{tikzpicture}[anchorbase,scale=.8]
	\draw[-,thick] (0.02,0.2) to (0.02,.3) to (.58,.9) to (.58,1);
 \draw[-,line width=4pt,white] (.58,.2) to (.58,.3) to (.02,.9) to (.02,1);
  \draw[-,thick] (.58,.2) to (.58,.3) to (.02,.9) to (.02,1);
	\draw[-,thin] (0,0.2) to (0,1);
	\draw[-,thin] (.6,0.2) to (.6,1);
        \node at (0,.05) {$\scriptstyle a$};
        \node at (.58,.05) {$\scriptstyle b$};
        \node at (-0.16,.6) {$\scriptstyle s$};
       \node at (0.6,1.17) {$\scriptstyle b$};
        \node at (0,1.15) {$\scriptstyle a$};
\end{tikzpicture}
=
\begin{tikzpicture}[anchorbase,scale=.8]
	\draw[-,thick] (0,0.2) to (0,1);
	\draw[-,thick] (.6,0.2) to (.6,1);
        \node at (0,.05) {$\scriptstyle a$};
        \node at (.58,.05) {$\scriptstyle b$};
       \node at (0.6,1.17) {$\scriptstyle b$};
        \node at (0,1.15) {$\scriptstyle a$};
\end{tikzpicture}.
$$
We noted also here that
$(a-s)^2+(a-s)(b-a+2s)=ab-s(b-a+s)$.

\color{black}

\vspace{2mm}
\noindent
{\bf \cref{braid}.}
This follows from the other relations, since they are already enough to show that $\qSchur'$ is a braided monoidal category.
To see it directly, 
replace the crossing of the
strings of thickness $a,b$ on both sides 
with \cref{jonsquare}. Then use \cref{sliders,swallows-symmetric} 
to pull the string of thickness 
$c$ past this expansion of the crossing.

\bibliographystyle{alphaurl}
\bibliography{qschur}
\end{document}